\DeclareMathOperator{\Deriv}{\mathscr{D}\text{\kern -3pt {\calligra\large er}}\,}
\DeclareMathOperator{\Endo}{\mathscr{E}\text{\kern -3pt {\calligra\large nd}}\,}
\DeclareMathOperator{\Hom}{Hom \text{\kern -3pt }\,}
\DeclareMathOperator{\Der}{Der \text{\kern -3pt }\,}
\DeclareMathOperator{\End}{End \text{\kern -3pt }\,}
\DeclareMathOperator{\Coh}{Coh \text{\kern -2pt }\,}
\DeclareMathOperator{\Gal}{Gal \text{\kern -2pt }\,}
\DeclareMathOperator{\dev}{dev \text{\kern -2pt }\,}
\DeclareMathOperator{\Kdim}{Kdim \text{\kern -2pt }\,}
\DeclareMathOperator{\GKdim}{GKdim \text{\kern -2pt }\,}
\DeclareMathOperator{\Supp}{Supp \text{\kern -2pt }\,}
\DeclareMathOperator{\Ext}{Ext \text{\kern -2pt }\,}
\DeclareMathOperator{\QCoh}{QCoh \text{\kern -2pt }\,}
\DeclareMathOperator{\WCoh}{WCoh \text{\kern -2pt }\,}
\DeclareMathOperator{\Spec}{Spec \text{\kern -2pt }\,}
\DeclareMathOperator{\Frac}{Frac \text{\kern -2pt }\,}
\DeclareMathOperator{\res}{res \text{\kern -1pt }\,}
\DeclareMathOperator{\act}{act \text{\kern -1pt }\,}
\DeclareMathOperator{\Loc}{Loc \text{\kern -2pt }\,}
\DeclareMathOperator{\Lie}{Lie \text{\kern -2pt }\,}
\DeclareMathOperator{\loc}{loc \text{\kern -2pt }\,}
\DeclareMathOperator{\ob}{ob \text{\kern -2pt }\,}
\DeclareMathOperator{\op}{op \text{\kern -2pt }\,}
\DeclareMathOperator{\ad}{ad \text{\kern -2pt }\,}
\DeclareMathOperator{\fg}{fg \text{\kern -2pt }\,}
\DeclareMathOperator{\Mod}{Mod \text{\kern -2pt }\,}
\DeclareMathOperator{\Ann}{Ann \text{\kern -2pt }\,}
\DeclareMathOperator{\Sym}{Sym \text{\kern -2pt }\,}
\DeclareMathOperator{\Dif}{Dif \text{\kern -2pt }\,}
\DeclareMathOperator{\co}{co \text{\kern -2pt }\,}
\DeclareMathOperator{\Ad}{Ad \text{\kern -2pt }\,}
\DeclareMathOperator{\gr}{gr \text{\kern -2pt }\,}
\DeclareMathOperator{\Gr}{Gr \text{\kern -2pt }\,}
\DeclareMathOperator{\im}{im \text{\kern -2pt }\,}
\DeclareMathOperator{\Image}{Image \text{\kern -2pt }\,}
\DeclareMathOperator{\coim}{coim \text{\kern -2pt }\,}
\DeclareMathOperator{\id}{id \text{\kern -2pt }\,}
\DeclareMathOperator{\bimod}{bimod \text{\kern -2pt }\,}
\DeclareMathOperator{\height}{ht \text{\kern -2pt }\,}
\DeclareMathOperator{\tensor}{tensor \text{\kern -2pt }\,}
\DeclareMathOperator{\coker}{coker \text{\kern -2pt }\,}
\newcommand\reallywidehat[1]{%
\savestack{\tmpbox}{\stretchto{%
  \scaleto{%
    \scalerel*[\widthof{\ensuremath{#1}}]{\kern-.6pt\bigwedge\kern-.6pt}%
    {\rule[-\textheight/2]{1ex}{\textheight}}
  }{\textheight}%
}{0.5ex}}%
\stackon[1pt]{#1}{\tmpbox}%
}
\newcommand{\uset}[1]{\underset{#1}{\otimes}{}}
\newtheorem{theorem}{Theorem}[section]
\newtheorem{lemma}[theorem]{Lemma}
\newtheorem{proposition}[theorem]{Proposition}
\newtheorem{corollary}[theorem]{Corollary}
\newtheorem{definition}[theorem]{Definition}
\newtheorem{notation}[theorem]{Notation}
\newtheorem{assumption}[theorem]{Assumption}
\title{A geometric proof of Duflo's theorem}
\author{Ioan Stanciu }
\date{October 2020}
\begin{document}

\maketitle

\begin{abstract}
Let $R$ be a commutative ring: we explain the Beilinson-Bernstein localisation mechanism for sheaves of homogeneous twisted differential operators defined over a smooth, separated, locally of finite type $R$-scheme. As an application, we give a new proof of Duflo's theorem characterising the primitive ideals of the enveloping algebra of a complex semisimple Lie algebra.
\end{abstract}


\section{Introduction}

A classical problem in ring theory is the characterisation of the primitive spectrum. In this paper, we focus on the primitive spectrum of a semisimple Lie algebra. 




Let $K$ be a field of characteristic 0 and $G$ be a connected, simply-connected, split semisimple, smooth affine algebraic group over $K$ with Lie algebra $\mathfrak{g}= \Lie(G)$. Fix $\mathfrak{g}= \mathfrak{n}^{-} \oplus \mathfrak{h} \oplus \mathfrak{n}^+$ a Cartan decomposition. In a seminal paper \cite{BGG}, the authors define the category $\mathcal{O}$ of representations for the algebra $U(\mathfrak{g})$. The building blocks are given by Verma modules $M(\lambda)=U(\mathfrak{g}) \uset{U(\mathfrak{b})}  K_{\lambda}$ for each $\lambda \in  \mathfrak{h}^*$; $\mathfrak{b}= \mathfrak{h} \oplus \mathfrak{n}^{+}$. These are highest weight modules with unique maximal submodule $N(\lambda)$ and unique simple quotient $L(\lambda)$. Moreover, this category $\mathcal{O}$ is Artinian and the set of simple objects is characterised exactly by $L(\lambda)$ for $\lambda \in \mathfrak{h}^*$. An excellent exposition of category $\mathcal{O}$ can be found in \cite{Hu1}.

The importance of category $\mathcal{O}$ in the representation theory of the ring $U(\mathfrak{g})$ can be seen in the following theorem:

\begin{theorem}[Duflo's Theorem] \cite[Theorem 4.3]{Du}
\label{ClassicalDuflo}

Let $I$ be a primitive/prime ideal of $U(\mathfrak{g})$ with $K$-rational infinitesimal central character. Then 
$$I=\Ann(L(\lambda)) \text{ for some } \lambda \in \mathfrak{h}^*. $$

\end{theorem}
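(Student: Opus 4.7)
The plan is to combine the Beilinson-Bernstein localisation for twisted differential operators developed in the body of the paper with the Harish-Chandra bimodule viewpoint, and then to exploit the finiteness of $G$-orbits on the product of two flag varieties. Since $I$ is primitive, we can write $I = \Ann(M)$ for some simple $U(\fr{g})$-module $M$ with central character $\chi_{\lambda}$. The quotient $A := U(\fr{g})/I$ is then a simple Harish-Chandra bimodule, and after invoking translation functors (together with the rationality hypothesis on the central character) we reduce to the case where $\lambda$ is regular dominant integral.

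Next, I would view $A$ as a simple module over $U(\fr{g} \times \fr{g}) \cong U(\fr{g}) \otimes U(\fr{g})^{\op}$ with central character $(\chi_{\lambda^*}, \chi_\lambda)$, and apply the bimodule form of Beilinson-Bernstein localisation to produce a simple holonomic twisted $\mathcal{D}$-module $\mathcal{A}$ on $X \times X$, where $X = G/B$ is the flag variety. Because $I$ is two-sided, $A$ is $\Ad(G)$-locally finite, so $\mathcal{A}$ carries a natural $G$-equivariant structure for the diagonal action of $G$ on $X \times X$.

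The decisive geometric input is now the Bruhat decomposition: the $G$-orbits $O_w$ on $X \times X$ are parametrised by the Weyl group $W$. A simple $G$-equivariant holonomic $\mathcal{D}$-module is the intermediate extension of an irreducible $G$-equivariant local system on a single orbit $O_w$, and in the present setting the relevant local systems are essentially unique once $w$ is chosen. Hence $\mathcal{A}$ is classified by a single $w \in W$. Running the localisation equivalence in the opposite direction and matching the data on the closed (diagonal) orbit against the localisations of Verma-type modules, one identifies $\mathcal{A}$ with the bimodule $U(\fr{g})/\Ann(L(w \cdot \lambda))$, which yields $I = \Ann(L(w \cdot \lambda))$ as required.

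The hardest step is the last one: precisely matching the orbit $O_w$ with the annihilator of the correct $L(w \cdot \lambda)$ requires careful bookkeeping of twisting parameters, the $\rho$-shift in the dot action, and identifying which simple quotient the localisation functor produces on the closed orbit. A secondary obstacle lies in the initial reduction to regular dominant integral $\lambda$: the translation principle must be compatible with the $K$-rationality hypothesis so that the whole argument stays defined over the base field $K$ rather than requiring a passage to the algebraic closure. A final subtlety is verifying that the bimodule localisation really produces simple holonomic objects in this relative, scheme-theoretic setting; this is where the general framework for schemes of finite type over a commutative ring $R$ developed earlier in the paper becomes essential.
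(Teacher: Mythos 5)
There is a genuine gap at the very first step, and it propagates through the rest of the argument. You assert that $A:=U(\mathfrak{g})/I$ is a \emph{simple} Harish--Chandra bimodule because $I$ is primitive. This is false: primitivity of $I$ makes $A$ a prime ring, not a simple bimodule, and a sub-bimodule of $A$ is exactly a two-sided ideal of $A$. Already for $\mathfrak{g}=\mathfrak{sl}_2$ and $\lambda$ dominant regular integral, the minimal primitive ideal $I=\Ann(M(\lambda))=\ker(\chi_\lambda)U(\mathfrak{g})$ gives a quotient $U(\mathfrak{g})/I$ containing $\Ann(L(\lambda))/I$ as a proper nonzero sub-bimodule. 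Consequently the localisation of $A$ is not a simple holonomic object supported on a single $G$-orbit of $X\times X$, and the classification ``simple equivariant module $=$ intermediate extension from one Bruhat orbit $O_w$'' does not apply to it. The final identification is also not merely ``careful bookkeeping'': the simple $G$-equivariant $\mathcal{D}_{\lambda^*,\lambda}$-modules on $X\times X$ correspond to the \emph{simple} Harish--Chandra bimodules, of which there are $|W|$ in the regular integral case, whereas the algebras $U(\mathfrak{g})/\Ann(L(w\cdot\lambda))$ are neither simple as bimodules nor pairwise distinct (the map $w\mapsto \Ann(L(w\cdot\lambda))$ is far from injective in general, e.g.\ already for $\mathfrak{sl}_3$). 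So the proposed matching of orbits against annihilators of the $L(w\cdot\lambda)$ cannot be set up as stated; closing that gap is essentially the entire content of the theorem.

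For comparison, the paper takes a different and more robust route (Dixmier's strategy, made geometric). One shows that the functor $\mathscr{F}(M)=\Gamma(X, i_l^{\#}\Loc^{\lambda^*,\lambda}(M))$ on $G$-equivariant $U(\mathfrak{g}\times\mathfrak{g})^{\lambda^*,\lambda}$-modules is exact and kills no nonzero module (Proposition \ref{d2functorFisexact}, Lemma \ref{d2Fisalmostinjectiveonobjects}), computes $\mathscr{F}(I)\cong T(\lambda)I$ on two-sided ideals (Lemma \ref{d2computeFonideals}), and deduces $I=\Ann\bigl(M(\lambda)/IM(\lambda)\bigr)$ (Corollary \ref{d2annihilatorofidealverma}); the finite composition series of $M(\lambda)/IM(\lambda)$ together with primality of $I$ then yields $I=\Ann(L(\mu))$. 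This avoids any holonomicity or orbit-by-orbit classification and, crucially, never needs $U(\mathfrak{g})/I$ to be simple as a bimodule. If you want to salvage your approach, you would need to replace ``$A$ is simple'' by an argument about the full lattice of sub-bimodules of $A$, at which point you are effectively reproving the injectivity of $I\mapsto IM(\lambda)$.
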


Duflo's original statement requires the ground field to be $\mathbb{C}$. In their paper \cite{BG}, Bernstein and Gelfand extend this result for algebraically closed fields of characteristic $0$ which in turn can be extended to the generality stated by a base change argument. Another purely algebraic proof of Duflo's theorem can be found in \cite{Jos} and for a categorical proof, see \cite{Gin}. One should note that if $K$ is algebraically closed, all primitive ideals have $K$-rational central character, so the theorem gives a full classification of the primitive spectrum.

\textbf{Statement of the main results}

The proofs in the literature use deep knowledge about the structure of category $\mathcal{O}$ and Harish-Chandra bimodules. The Beilinson-Bernstein localisation theorem allows for the study of representation theory of $U(\mathfrak{g})$ using the geometry of $\mathcal{D}$-modules. Along with the BB localisation, the heart of the argument lies in the following informal description: "$G$-equivariant $\mathcal{D}$-modules on the double flag variety correspond to $B$-equivariant $\mathcal{D}$-modules on the flag variety".

For $\lambda$ dominant, we let $\chi_{\lambda}:Z(U(\mathfrak{g})) \to K$ denote the corresponding central character and $U(\mathfrak{g})^{\lambda}:=U(\mathfrak{g})/ \ker_{\chi_\lambda} U(\mathfrak{g})$.

\begin{proposition}[Lemma \ref{d2computeFonideals} - Corollary \ref{d2annihilatorofidealverma} ]
\label{injectionbetweenlatticeoftwosidedidealsandsubmodulesofverma}

The function $$\mathcal{F}: \{\text{two-sided ideals in } U(\mathfrak{g})^{\lambda} \} \to \{\text{submodules of } M(\lambda) \}, \qquad \mathcal{F}(I)=I M(\lambda) $$

is injective.
\end{proposition}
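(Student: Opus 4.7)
My approach is to reconstruct $I$ from $\mathcal{F}(I) = IM(\lambda)$ as an annihilator. Concretely, the goal is to establish the identity
\[
I = \Ann_{U(\mathfrak{g})^{\lambda}}(M(\lambda)/IM(\lambda)),
\]
which should be exactly the content of Corollary~\ref{d2annihilatorofidealverma}. Once this identity is in hand, injectivity of $\mathcal{F}$ is automatic: if $IM(\lambda) = JM(\lambda)$ then $M(\lambda)/IM(\lambda) = M(\lambda)/JM(\lambda)$ as $U(\mathfrak{g})^{\lambda}$-modules, and taking annihilators of both gives $I = J$.

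\textbf{Execution.} The inclusion $I \subseteq \Ann_{U(\mathfrak{g})^{\lambda}}(M(\lambda)/IM(\lambda))$ is tautological since $I$ is two-sided. For the reverse inclusion, which is the substantive part, I would appeal to the Beilinson--Bernstein localisation framework set up earlier in the paper. Let $X = G/B$ and let $\mathcal{D}^{\lambda}$ denote the sheaf of $\lambda$-twisted differential operators on $X$, so that $\Gamma(X,\mathcal{D}^{\lambda}) = U(\mathfrak{g})^{\lambda}$; write $\mathcal{M}(\lambda)$ for the $\mathcal{D}^{\lambda}$-module corresponding to $M(\lambda)$ under localisation. The plan has two steps: first (this should be Lemma~\ref{d2computeFonideals}), express $IM(\lambda) = \Gamma(X,\mathcal{I}\cdot\mathcal{M}(\lambda))$, where $\mathcal{I}\subseteq \mathcal{D}^{\lambda}$ is the two-sided sub-bimodule obtained by localising $I$; second, show that $\mathcal{I}$ can be recovered from the subsheaf $\mathcal{I}\cdot\mathcal{M}(\lambda)\subseteq\mathcal{M}(\lambda)$. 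Pushing this back through $\Gamma$ then upgrades to the missing inclusion $\Ann_{U(\mathfrak{g})^{\lambda}}(M(\lambda)/IM(\lambda))\subseteq I$.

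\textbf{Main obstacle.} The difficult step is the geometric faithfulness assertion just sketched: that a $G$-equivariant two-sided sub-bimodule $\mathcal{I}$ of $\mathcal{D}^{\lambda}$ is already determined by its action on the single Verma $\mathcal{D}^{\lambda}$-module $\mathcal{M}(\lambda)$. This is precisely where the paper's guiding principle from the introduction — $G$-equivariant $\mathcal{D}$-modules on the double flag variety $X\times X$ correspond to $B$-equivariant $\mathcal{D}$-modules on $X$ — must do the essential work: two-sided ideals of $U(\mathfrak{g})^{\lambda}$ correspond, via BB localisation on $X\times X$, to $G$-equivariant sub-bimodules of twisted differential operators, while the functor $I \mapsto IM(\lambda)$ corresponds to restriction along a distinguished $B$-orbit, and the resulting equivalence of equivariant categories is what forces the action of $\mathcal{I}$ on a single Verma module to be sharp enough to reconstruct $\mathcal{I}$ itself.
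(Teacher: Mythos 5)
Your overall strategy is exactly the paper's (and Dixmier's): reduce injectivity of $\mathcal{F}$ to the identity $I=\Ann(M(\lambda)/IM(\lambda))$, observe that $I\subseteq\Ann(M(\lambda)/IM(\lambda))$ is tautological, and obtain the reverse inclusion from the Beilinson--Bernstein plus Borho--Brylinski machinery. The top-level logic is sound. However, the substantive step is left as an acknowledged sketch, and your description of how the geometry enters is off in two ways that matter. First, the paper does not localise $I$ as a sub-bimodule $\mathcal{I}\subseteq\mathcal{D}^{\lambda}$ on $X$ and then try to ``recover $\mathcal{I}$ from $\mathcal{I}\cdot\mathcal{M}(\lambda)$''; it localises $I$ on the \emph{double} flag variety $X\times X$ as a $G$-equivariant $\mathcal{D}_{\lambda^*,\lambda}$-module (this is where $G$-equivariance of two-sided ideals, Corollary \ref{d2lambdaequivariancetwosided}, is used), pulls back along $i_l$, and takes global sections. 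The reverse inclusion is then not a ``reconstruction'' at all: setting $J=\Ann(M(\lambda)/IM(\lambda))\supseteq I$ one has $JM(\lambda)=IM(\lambda)$, and one concludes $J=I$ because the composite functor $\mathscr{F}=\Gamma\circ i_l^{\#}\circ\Loc^{\lambda^*,\lambda}$ is \emph{exact} (Proposition \ref{d2functorFisexact}, needing dominance of both weights and the equivalence of Theorem \ref{d2alaBorho-Brylinski}) and \emph{kills no nonzero object} (Lemma \ref{d2Fisalmostinjectiveonobjects}, resting on Corollary \ref{d2zeroglobalsectionsil}), so $\mathscr{F}(J/I)=0$ forces $J/I=0$. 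Your plan contains neither the exactness claim nor the vanishing criterion, and both require real work.

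Second, there is a gap in your step one: the functor $\mathscr{F}$ does not produce $IM(\lambda)$ on the nose but rather $T(\lambda)\otimes_{U(\mathfrak{g})^{\lambda}}I\cong T(\lambda)I$, where $T(\lambda)=K_{\lambda}\otimes_{U(\mathfrak{b}^-)}U(\mathfrak{g})$ is a \emph{right} $U(\mathfrak{g})$-module (the geometric fibre of $\mathcal{D}_{\lambda}$ at $eB$), not the Verma module. Comparing $T(\lambda)I$ with $IM(\lambda)$ requires the Chevalley involution $\sigma$ (Lemma \ref{d2TlambdaMlambdaiso}), which converts $T(\lambda)I$ into $\sigma(I)M(\lambda)$, and one must also track the appearance of $\lambda^*=-w_o\lambda$ coming from $U(\mathfrak{g})^{\lambda\,\op}\cong U(\mathfrak{g})^{\lambda^*}$ to make sense of a bimodule as a $U(\mathfrak{g}\times\mathfrak{g})^{\lambda^*,\lambda}$-module in the first place. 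The final argument therefore runs through $\sigma(I)\subseteq\sigma(J)$ and uses that $\sigma$ is an anti-automorphism; without this bookkeeping the identification of $\mathscr{F}(I)$ with $IM(\lambda)$, and hence your entire step one, does not go through as stated.
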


This was Dixmier's initial idea of proving Duflo's theorem. We give a new proof of this proposition using geometric tools. This approach has been sketched in \cite{BoBr} in the case $\lambda=0$. Our approach uses the language of $\lambda$-twisted differential operators on the flag variety and works for all $\lambda$ dominant. As a corollary, we obtain Duflo's theorem:

\begin{corollary}[Corollary \ref{d2twistedduflotheorem}]
Let $\lambda:\mathfrak{h} \to K$ be a dominant weight and $I$ a prime ideal in $U(\mathfrak{g})^{\lambda}$. Then:

$$I=\Ann(L(\mu)) \text{ for some } \mu:\mathfrak{h} \to K.$$
\end{corollary}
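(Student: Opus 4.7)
The plan is to deduce the corollary directly from Proposition~\ref{injectionbetweenlatticeoftwosidedidealsandsubmodulesofverma} by a standard extraction argument that turns the injectivity of $\mathcal{F}$ into an identification of $I$ with the annihilator of a module in category $\mathcal{O}$.

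First, let $I$ be a (proper) prime ideal of $U(\mathfrak{g})^{\lambda}$ and consider $N := M(\lambda)/IM(\lambda)$. Set $J := \Ann_{U(\mathfrak{g})^{\lambda}}(N)$. On the one hand $I \subseteq J$ by construction; on the other hand $JM(\lambda) \subseteq IM(\lambda)$, and combined with $IM(\lambda)\subseteq JM(\lambda)$ this gives $\mathcal{F}(I)=\mathcal{F}(J)$. The injectivity statement of Proposition~\ref{injectionbetweenlatticeoftwosidedidealsandsubmodulesofverma} then forces $I=J$, i.e.\ $I$ is exactly the annihilator of $N$. (The fact that $I$ proper implies $IM(\lambda)\neq M(\lambda)$ also follows from $\mathcal{F}$ being injective, since $\mathcal{F}(U(\mathfrak{g})^{\lambda})=M(\lambda)$.)

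Next, since $M(\lambda)\in\mathcal{O}$ has finite length, so does its quotient $N$. Choose a composition series $0=N_{0}\subset N_{1}\subset \dots \subset N_{k}=N$ with simple factors $N_{j}/N_{j-1}\cong L(\nu_{j})$ for some weights $\nu_{j}\in\mathfrak{h}^{*}$. A routine induction shows
\[
\Ann(L(\nu_{1}))\cdot\Ann(L(\nu_{2}))\cdots \Ann(L(\nu_{k}))\;\subseteq\;\Ann(N)\;=\;I,
\]
because an element of $\Ann(L(\nu_{j}))$ moves the filtration one step down. Since $I$ is prime as a two-sided ideal of $U(\mathfrak{g})^{\lambda}$, the product inclusion forces $\Ann(L(\nu_{j_{0}}))\subseteq I$ for some index $j_{0}$.

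Finally, because $L(\nu_{j_{0}})$ is a subquotient of $N$, every element of $I=\Ann(N)$ annihilates $L(\nu_{j_{0}})$, so the reverse inclusion $I\subseteq\Ann(L(\nu_{j_{0}}))$ also holds. Hence $I=\Ann(L(\nu_{j_{0}}))$, which proves the corollary with $\mu=\nu_{j_{0}}$. The only genuinely non-trivial input is Proposition~\ref{injectionbetweenlatticeoftwosidedidealsandsubmodulesofverma}; everything else is standard category-$\mathcal{O}$ and primality bookkeeping, and the main conceptual obstacle lies upstream, in proving that $\mathcal{F}$ is injective via the geometric/$\lambda$-twisted $\mathcal{D}$-module argument announced in the introduction.
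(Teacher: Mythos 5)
Your proof is correct and follows essentially the same route as the paper: the reduction $I=\Ann(M(\lambda)/IM(\lambda))$ (which you extract from the injectivity of $\mathcal{F}$, and which is exactly the paper's Corollary~\ref{d2annihilatorofidealverma}), followed by the composition-series and primality argument, which matches the paper's proof of Theorem~\ref{d2twistedduflotheorem} step for step.
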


\textbf{Strategy of the proof}

A secondary goal of the paper is to construct a good framework that we will use to prove an affinoid version of Duflo's theorem in \cite{Sta3}. Specifically, we explain the well-known Beilinson-Bernstein localisation mechanism in a more general setting.  We will work over a commutative ring $R$ of arbitrary characteristic rather than over an algebraically closed field of characteristic $0$. We will also consider not just classical twisted differential operators, but we will deform both the enveloping algebra and the sheaf of twisted differential operators. Let us summarise the main technical results used for proving Proposition \ref{injectionbetweenlatticeoftwosidedidealsandsubmodulesofverma}.

Let $R$ be a commutative ring and let $G$ be a connected, simply-connected, split semisimple, smooth affine algebraic group defined over $R$ and let $r \in R$ be a regular element. Fix $B$ a Borel subgroup of $G$ and let $X=G/B$ be the flag scheme.

Let $\mathcal{D}$ be a sheaf of \emph{$r$-deformed $G$-homogeneous twisted differential operators ($G$-htdo)} on the double flag variety $X \times X$, see \cite[Section 6]{Sta1} for the definition of an $r$-deformed $G$-htdo.  Let $i_r:X \to X \times X$ be the inclusion of $X$ into the right copy. One can define the pullback of $\mathcal{D}$, $i_r^{\#} \mathcal{D}$ as a sheaf of rings on $X$, see \cite[Section 7]{Sta1} for the definition of the pullback.

\begin{theorem}[Theorem \ref{d2alaBorho-Brylinski}]
\label{introtheorem1}

The pullback $i_r^{\#} \mathcal{D}$ is an $r$-deformed $B$-htdo on $X$ and there is an equivalence of categories between $G$-equivariant coherent $\mathcal{D}$-modules and $B$-equivariant coherent $i_r^{\#} \mathcal{D}$-modules.
\end{theorem}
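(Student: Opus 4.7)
The plan is to reduce the theorem to a descent statement along the principal $B$-bundle $q: G \times X \to X \times X$, $(g, x) \mapsto (gB, gx)$, which realises $X \times X$ as the induced space $G \times_B X$. Under this identification, the base point $x_0 = eB \in X$ has stabiliser $B$, the subscheme $\{x_0\} \times X \subset X \times X$ is $B$-stable under the diagonal $G$-action, and the closed immersion $i_r: X \to X \times X$ corresponds to the inclusion of the fibre of $G \times_B X \to G/B = X$ over $x_0$.

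For the first claim, $i_r$ is a closed immersion of a $B$-stable subscheme, so the pullback construction developed in Section 7 of \cite{Sta1} should transport the $r$-deformed $G$-htdo structure on $\mathcal{D}$ to an $r$-deformed $B$-htdo structure on $i_r^{\#}\mathcal{D}$; this follows formally from the fact that htdo pullback intertwines compatible equivariance data, once one restricts the $G$-equivariance to the stabiliser $B$ of the base point.

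For the equivalence of categories, I would construct a quasi-inverse pair of functors. Pullback $\mathcal{M} \mapsto i_r^* \mathcal{M}$ sends $G$-equivariant coherent $\mathcal{D}$-modules to $B$-equivariant coherent $i_r^{\#}\mathcal{D}$-modules, because the diagonal $G$-action restricts to the $B$-action on the fibre. For the inverse functor, given a $B$-equivariant coherent $i_r^{\#}\mathcal{D}$-module $\mathcal{N}$, I would pull $\mathcal{N}$ back along the second projection $p_2: G \times X \to X$ and descend the resulting $B$-equivariant sheaf along $q$ to obtain a $G$-equivariant coherent $\mathcal{D}$-module on $X \times X$; the left $G$-translation on the $G$-factor of $G \times X$ supplies the $G$-equivariance after descent.

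Finally, I would check that these functors are mutually inverse. Pulling back the induced module along $i_r$ recovers $\mathcal{N}$ because $i_r$ is identified with the fibre over the identity coset, while conversely any $G$-equivariant coherent $\mathcal{D}$-module on $X \times X$ is reconstructed from its restriction to $\{x_0\} \times X$ together with its equivariance datum by faithfully flat descent along $q$. The main obstacle will be carrying out this descent simultaneously at the level of sheaves of rings and their modules: one must check that the descent of $p_2^* \mathcal{N}$ really yields a $\mathcal{D}$-module structure (not merely an $\mathcal{O}$-module) and that the $r$-deformation parameter is preserved throughout, which ultimately reduces to verifying compatibility between the induction construction and the htdo pullback formalism of \cite{Sta1} — this is where the genuine work of the proof lies.
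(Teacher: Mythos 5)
Your proposal is correct and follows essentially the same route as the paper: the paper also factors everything through the intermediate space $G \times X$, using that $(g,x)\mapsto(gB,x)$ (your $q$, up to the shear $\alpha(g,x)=(g,g^{-1}x)$) is a $G$-equivariant locally trivial $B$-torsor and that $(g,x)\mapsto g^{-1}x$ (your $p_2$ in the sheared coordinates) is a $B$-equivariant locally trivial $G$-torsor, and then invokes the equivariant descent results of \cite{Sta1} simultaneously for htdos and their modules, exactly the compatibility you flag as the genuine work. The one step you leave implicit --- that the composite descent functor is quasi-inverse to $i_r^{\#}$ itself rather than merely an equivalence --- is handled in the paper via the identity $\act_G\circ s = i_r\circ\delta$ combined with $G$-homogeneity of $\mathcal{D}$ (yielding $d^{\#}\mathcal{D}\cong\delta^{\#}i_r^{\#}\mathcal{D}$ and likewise for modules) together with a small categorical lemma upgrading a one-sided quasi-inverse to a two-sided one.
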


In our applications, $\mathcal{D}$ will be a sheaf of $r$-deformed $\lambda$-\emph{twisted} differential operators on $X \times X$ for some $\lambda \in (r\mathfrak{h} \times r\mathfrak{h})^*$. Here $\mathfrak{h}=\Lie(H)$ is the Cartan subalgebra correspoding to a Cartan subgroup of $G$. In particular, when $r=1$, $R=\mathbb{C}$  and $\mathcal{D}=\mathcal{D}_{X \times X}$ we obtain the Borho-Brylinski equivalence \cite[Proposition 3.6]{BoBr}.

Next, we consider the $r$-deformation of the enveloping algebra of $\mathfrak{g}:=\Lie(G)$. This is isomorphic with $U(r\mathfrak{g})$; further we let $L$ be a closed subgroup of $G$ and $\mathcal{A}$ be a sheaf of $r$-deformed $L$-htdo on $X$. We prove in Proposition \ref{d2classicalloccomring} and \ref{d2classicalglocomring} that $L$-equivariance is preserved under localisation and taking global sections.

Therefore, we may consider the following diagram:

\begin{center}
\begin{tikzcd}
  &\Mod_{\fg}(U(\mathfrak{g} \times \mathfrak{g}),G)  \arrow[d,"\Loc"]  &\Mod_{\fg}(U(\mathfrak{g}),B).\\
  &\Coh(\mathcal{D},G) \arrow[r,"i_r^{\#}"] &\Coh(i_r^{\#}\mathcal{D},B) \arrow[u,"\Gamma"].
\end{tikzcd}    
\end{center}

By setting $R=K$ to be a field of characteristic $0$ and specialising at some $\lambda \in (\mathfrak{h} \times \mathfrak{h})^*$ dominant, we obtain Proposition \ref{injectionbetweenlatticeoftwosidedidealsandsubmodulesofverma} by considering the composition $\Gamma \circ i_r^{\#} \circ \Loc$ and noting that any two-sided ideal can be viewed as a $G$-equivariant $U(\mathfrak{g} \times \mathfrak{g})$-module. We should also remark that when $\lambda$ is also regular, all the arrows in the diagram are equivalences of categories.







\textbf{Structure of the paper}

The paper is organised as follows: in Section \ref{d2sectionbackground}, we introduce deformations and review the main results in \cite{Sta1}. Next, we prove in Section \ref{d2sectionBoBr} a more general version of Borho-Brylinski equivalence \cite[Proposition 3.6]{BoBr}. In Section \ref{d2sectionidealsinevelopingalgebra}, we introduce the notion of $G$-equivariant modules over deformed enveloping algebras and prove that certain two-sided ideals are $G$-equivariant as bimodules over deformed enveloping algebras. In Section \ref{d2sectionlocmec}, we explain the localisation mechanism connecting equivariant modules over deformed enveloping algebras with equivariant modules over deformed homogeneous twisted differential operators. Next, in Section \ref{d2sectionpullbackfromdoubleflagvariety} we combine the results in Sections \ref{d2sectionBoBr} and \ref{d2sectionlocmec} to compute the pullback of certain equivariant modules over deformed homogeneous twisted differential operators on the double flag variety under the left/right inclusion of the flag variety. Finally, in Section \ref{d2sectionproofofDuflos}, we complete the proof of classical Duflo's theorem.

\textbf{Conventions.} Throughout this document $R$ will denote a commutative Noetherian base ring of arbitrary characteristic. All the varieties/algebraic groups will be considered over $R$ unless otherwise specified. Unadorned tensor products and scheme products will be assumed
to be taken over $R$ and over Spec$(R)$, respectively. 

All the algebraic groups appearing in this document will be assumed to be connected and locally of finite type unless otherwise stated. All the modules will be regarded as left modules unless explicitly stated otherwise.

For a ring $R$, we will use $R^{\op}$ to denote the opposite ring. Given an $R$-algebra $A$ and any $R$-Lie algebra/module $\mathfrak{g}/M$, we define $\mathfrak{g}_A=\mathfrak{g} \uset{R} A$ and $M_A=M \uset{R} A$. Further, we will assume that all the ring filtrations appearing in this document are positive, exhaustive and separated.

For a scheme $X$ and $\mathcal{D}$ a sheaf of rings on $X$, a $\mathcal{D}$-module is called quasi-coherent if it is quasi-coherent as an $\mathcal{O}_X$-module.

Lastly, given $f:X \to Y$ a map of schemes, we use $f^*$ to denote the pullback in the category of $\mathcal{O}/\mathcal{D}$-modules and $f^{-1}/f_*$ to denote the inverse/direct image sheaf.

\section{Modules over deformed homogeneous twisted differential operators}
\label{d2sectionbackground}
\subsection{Deformations}
\label{subsectiondeformations}

\begin{definition}
Let $A$ be a positively $\mathbb{Z}$-filtered $R$-algebra with $F_0 A$ an $R$-subalgebra of $A$. We call $A$ a \emph{deformable} $R$-algebra if $\gr A$ is a flat $R$-module. A morphism of deformable $R$-algebras is an $R$-linear filtered ring homomorphism.
\end{definition}

\begin{definition}
Let $A$ be a deformable $R$-algebra and let $r \in R$ be a regular element. The $r$-th deformation of $A$ is the following $R$-submodule of $A$:

 $$A_r:= \sum_{i=0}^{\infty} r^i F_i A.$$

\end{definition}

By construction $A_r$ is a $R$-subalgebra of $R$. Further, the definition is clearly functorial, and the following lemma states that we have a family of endofunctors $A \mapsto A_r$.

\begin{lemma}
\label{associatedgradedofdeformations}

Let $A$ be a deformable $R$-algebra and $r \in R$ a regular element. Then $A_r$ is also a deformable $R$-algebra and there is a natural isomorphism $\gr A \cong \gr A_r$.

\end{lemma}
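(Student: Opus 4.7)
The plan is to put on $A_r$ the natural $\mathbb{Z}$-filtration making $r^i F_i A$ sit in degree $i$, namely
\[ F_i A_r := \sum_{j=0}^i r^j F_j A. \]
Before proceeding to the comparison of associated gradeds, one checks the routine facts: this is a positive, exhaustive, increasing filtration by $R$-submodules of $A_r$; it is a ring filtration because $(r^j F_j A)(r^k F_k A) \subseteq r^{j+k} F_{j+k} A$; and $F_0 A_r = F_0 A$ is an $R$-subalgebra by the deformability hypothesis on $A$.

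Next, for each $i \geq 0$ I would define an $R$-linear map
\[ \phi_i : \gr_i A \longrightarrow \gr_i A_r, \qquad a + F_{i-1} A \longmapsto r^i a + F_{i-1} A_r, \]
and assemble these into a graded ring homomorphism $\phi : \gr A \to \gr A_r$. Well-definedness of $\phi_i$ reduces to showing $r^i F_{i-1} A \subseteq F_{i-1} A_r$, which follows from the chain $r^i F_{i-1} A \subseteq r^{i-1} F_{i-1} A \subseteq F_{i-1} A_r$ (using that $F_{i-1} A$ is closed under multiplication by $r \in R$). Surjectivity is clear from the definition of $F_i A_r$, because every summand $r^j F_j A$ with $j < i$ already lies in $F_{i-1} A_r$, so only the top summand $r^i F_i A$ survives modulo $F_{i-1} A_r$. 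Compatibility with the product structures is a direct computation, and naturality in the deformable $R$-algebra $A$ is immediate from the explicit formula, since a morphism $f\colon A \to B$ is $R$-linear and filtered, so $f(r^i a) = r^i f(a)$.

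The main point, and the principal obstacle, is injectivity of $\phi_i$; this is where both hypotheses on $r$ and on $\gr A$ are used. Suppose $a \in F_i A$ satisfies $r^i a = \sum_{j < i} r^j a_j$ with $a_j \in F_j A$; the goal is to conclude $a \in F_{i-1} A$. Reducing this identity modulo $F_{i-1} A$ inside $F_i A$, the right-hand side vanishes (each $r^j a_j$ already lies in $F_j A \subseteq F_{i-1} A$), yielding $r^i \bar a = 0$ in $\gr_i A$, where $\bar a$ is the class of $a$. The deformability of $A$ provides that $\gr A$ is $R$-flat, so tensoring the exact sequence $0 \to R \xrightarrow{r} R$ over $R$ with $\gr A$ shows that multiplication by $r$, and hence by $r^i$, is injective on $\gr A$. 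Therefore $\bar a = 0$, i.e.\ $a \in F_{i-1} A$, proving $\phi_i$ is injective. Consequently $\phi$ is an isomorphism of graded $R$-algebras; in particular $\gr A_r \cong \gr A$ is $R$-flat, so $A_r$ is itself a deformable $R$-algebra, completing the proof.
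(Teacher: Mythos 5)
Your proof is correct and follows essentially the same route as the paper: both equip $A_r$ with the filtration $F_i A_r=\sum_{j=0}^i r^jF_jA$ (the paper starts from the subspace filtration $F_iA\cap A_r$ and identifies it with this sum via flatness), compare associated gradeds via $a\mapsto r^i a$ in each degree, and use $R$-flatness of $\gr A$ together with regularity of $r$ to get injectivity. You simply spell out the routine verifications (surjectivity, multiplicativity, naturality) that the paper leaves implicit.
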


\begin{proof}
We give $A_r$ the subspace filtration $F_i A_r:= F_i A \cap A_r$. As $\gr A$ is flat over $R$ we have $F_i A_r= \sum_{j=0}^i r^j F_j A$. For $i \geq 1$ define a $R$-linear map 
$$f:F_i A/ F_{i-1} A \to F_i A_r/F_{i-1} A_r, \quad f(x+F_{i-1} A)=r^i x +F_{i-1} A_r.$$

To finish the proof it is enough to check that $f$ is bijective. First, we prove that $f$ is injective. Assume that $r^i x \in F_{i-1} A_r $, so $r^i x \in F_{i-1} A$ which implies that $x \in F_{i-1} A$ since $\gr A$ is flat, so in particular $R$-torsion free. It is straightforward to see that $f$ is also surjective. 
\end{proof}

We recall the main constructions and results from \cite{Sta1}. We call an $R$-scheme $X$ that is smooth, separated and locally of finite type an \emph{$R$-variety}. For the rest of the section, we let $r \in R$ be a regular element.

\subsection{Deformed twisted differential operators}

Throughout this subsection, fix $X$ an $R$-variety. We write $\mathcal{T}_X$ for the sheaf of sections of the tangent bundle $TX$.

\begin{definition}\cite[Definition 4.2]{Annals}
\label{crystallinedifferentialoperatorsdefinition}

Let $X$ be an $R$-variety. The sheaf of crystalline differential operators is defined to be the enveloping algebra $\mathcal{D}_X$ of the Lie algebroid $\mathcal{T}_X$.
\end{definition}

We can view $\mathcal{D}_X$  as a sheaf of ring generated by $\mathcal{O}_X$ and $\mathcal{T}_X$ modulo the relations:

\begin{itemize}
\item{$f \partial =f \cdot \partial$;}
\item{$ \partial f - f \partial= \partial (f)$;}
\item{$ \partial \partial' - \partial' \partial=[\partial,\partial'],$}
\end{itemize}

for all $f \in \mathcal{O}_X$ and $\partial,\partial' \in \mathcal{T}_X$. The sheaf $\mathcal{D}_X$ comes equipped with a natural PBW filtration:

 $$0 \subset F_0 (\mathcal{D}_X) \subset  F_1 (\mathcal{D}_X) \subset \ldots $$

consisting of coherent $\mathcal{O}_X$-modules such that

$$F_0(\mathcal{D}_X)= \mathcal{O}_X, \quad F_1( \mathcal{D}_X)= \mathcal{O}_X \oplus \mathcal{T}_X, \quad F_m(\mathcal{D}_X) = F_1 (\mathcal{D}_X) \cdot F_{m-1}( \mathcal{D}_X)  \text{ for } m >1.                   $$

Since $X$ is smooth, the tangent sheaf $\mathcal{T}_X$ is locally free and the associated graded sheaf of algebras of $\mathcal{D}_X$ is isomorphic to the symmetric algebra of $\mathcal{T}_X$:

\begin{equation}
\label{gradingcrysdifop}
 \gr(\mathcal{D}_X)= \bigoplus_{m=1}^{\infty} \frac{F_m(\mathcal{D}_X)}{F_{m-1}(\mathcal{D}_X)} \cong \Sym_{\mathcal{O}_X} \mathcal{T}_X.
\end{equation}

If $q:T^*X \to X$ is the cotangent bundle of $X$ defined by the locally free sheaf $\mathcal{T}_X$, then we can also identify $\gr(\mathcal{D}_X)$ with $q_{*}\mathcal{O}_{T^*X}$.

Let $X$ be an $R$-variety and let $U=\Spec(A) \subset X$ be open affine. Further, we consider $\mathcal{M}$ a sheaf of $\mathcal{O}_X$-bimodules quasi-coherent with respect to the left action. We define a filtration on $M=\mathcal{M}(U)$ given by $F_{\bullet}M$:
\begin{itemize}
\item{$F_{-1}(M)=0,$}
\item{$F_n(M)= \{ m \in M | \ad(a_0)\ad(a_1)...\ad(a_n)(m)=0 \text{ for any } a_0,a_1, \ldots a_n \in A\}$}, for $n \geq 0$.

\end{itemize} 

We say that $M$ is differential if $M= \cup_{n \in \mathbb{N}^*} F_n(M)$ and we call $\mathcal{M}$ a differential $\mathcal{O}_X$-bimodule if there is an affine open cover $(U_i)_{i \in I}$ such that $\mathcal{M}(U_i)$ is a differential bimodule for all $i \in I$.

Let $\mathcal{M},\mathcal{N}$ be quasi-coherent $\mathcal{O}_X$-modules. For any affine open $U$, the set $\Hom_{R}(\mathcal{M}(U),\mathcal{N}(U))$ has the structure of a $\mathcal{O}_X(U)$-bimodule. Let $\mathcal{F} \in \Hom_{R}(\mathcal{M}, \mathcal{N}$); we say that $\mathcal{F}$ is a differential operator of degree $\leq n$ if for any affine open $U$, $\mathcal{F}(U) \in F_n(\Hom_{R}(\mathcal{M}(U),\mathcal{N}(U)).$


\begin{definition}
Let $\mathcal{A}$ be a $\mathcal{O}_X$-algebra. We say that $\mathcal{A}$ is a differential algebra if $\mathcal{A}$ is a flat $R$-module and multiplication makes $\mathcal{A}$ a differential $\mathcal{O}_X$-bimodule. The filtration $F_{\bullet}(A)$ becomes a ring filtration and with respect to this filtration $\gr^{F}(A)$ is commutative.

\end{definition}


\begin{definition}
\label{d2tdodefinition}
An algebra of $r$-deformed twisted differential operators ($r$-deformed tdo) is an $\mathcal{O}_X$-differential algebra $\mathcal{D}$ such that:
\begin{enumerate}[label=\roman*)]
\item{ The natural map $\mathcal{O}_X \to F_0(\mathcal{D})$ is an isomorphism.}
\item{ The morphism $\gr_{1}^F \mathcal{D} \to \mathcal{T}_X=\Der_R(X,X)$ defined by $\psi \mapsto \ad_{\psi}$ for $\psi \in F_1(\mathcal{D})$ induces an isomorphism $\gr_{1}^F \mathcal{D} \to r \mathcal{T}_X.$}
\item{The morphism of $\mathcal{O}_X$-algebras $\Sym_{\mathcal{O}_X}(\gr_1^F \mathcal{D}) \to \gr^F \mathcal{D}$ is an isomorphism.}
\end{enumerate}

\end{definition}

Further in the document we will use the following lemma:

\begin{lemma}
\label{d2deformationoftdoisrtdo}
Let $\mathcal{D}$ be a tdo($1$-tdo). Let $\mathcal{D}_r$ be the sheafification  obtained by composing $\mathcal{D}$ with the deformation functor in subsection \ref{subsectiondeformations}. Then $\mathcal{D}_r$ is an $r$-deformed tdo.
\end{lemma}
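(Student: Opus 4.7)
The plan is to verify directly the three axioms of Definition \ref{d2tdodefinition}, working on a small enough affine cover and invoking Lemma \ref{associatedgradedofdeformations}. Fix an affine open $U = \Spec(A) \subset X$ on which $\mathcal{T}_X(U)$ is free. The section algebra $D := \mathcal{D}(U)$, equipped with the tdo filtration, has $F_0 D = A$ and $\gr^F D \cong \Sym_A \mathcal{T}_X(U)$, which is $R$-flat since $A$ is flat and $\mathcal{T}_X(U)$ is free. Hence $D$ is a deformable $R$-algebra in the sense of Subsection \ref{subsectiondeformations}, so the deformation $D_r$ is defined, and $\mathcal{D}_r$ is its sheafification. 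Since the axioms of Definition \ref{d2tdodefinition} are local, it suffices to check them for $D_r$.

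First I would observe that $\mathcal{D}_r$ is a differential algebra: $R$-flatness of $D_r$ follows from Lemma \ref{associatedgradedofdeformations}, which gives $\gr^F D_r \cong \gr^F D$, together with an induction on the subspace filtration $F_i D_r = \sum_{j=0}^i r^j F_j D$; the differential bimodule condition is inherited from the inclusion $D_r \hookrightarrow D$ of filtered $A$-algebras; and commutativity of $\gr^F D_r$ is transported from that of $\gr^F D$. Axiom (i) is then immediate: $F_0 D_r = F_0 D \cap D_r = A$, so $\mathcal{O}_X \to F_0(\mathcal{D}_r)$ is an isomorphism.

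For axiom (ii), I would use the explicit isomorphism $F_1 D / F_0 D \to F_1 D_r / F_0 D_r$, $\overline{\psi} \mapsto \overline{r\psi}$, extracted from the proof of Lemma \ref{associatedgradedofdeformations}. If $\psi \in F_1 D$ satisfies $\ad_\psi = \theta \in \mathcal{T}_X(U)$, then $\ad_{r\psi}(a) = r\,\theta(a)$ for every $a \in A$. Composing the isomorphism $\gr_1^F D_r \cong \gr_1^F D \cong \mathcal{T}_X(U)$ (the latter being the tdo axiom for $\mathcal{D}$) with multiplication by $r$, one sees that the adjoint map on $\gr_1^F D_r$ yields an isomorphism onto $r\,\mathcal{T}_X(U)$, as required. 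Axiom (iii) then follows from the corresponding axiom for $D$: the canonical map $\Sym_A(\gr_1^F D_r) \to \gr^F D_r$ is identified, via the isomorphism of Lemma \ref{associatedgradedofdeformations}, with the symmetric algebra map for $D$, which is an isomorphism by hypothesis.

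The main technical point is axiom (ii): one must carefully track the factor of $r$ produced by the isomorphism of Lemma \ref{associatedgradedofdeformations} and confirm that the adjoint map lands in, and surjects onto, $r\,\mathcal{T}_X$ rather than $\mathcal{T}_X$ itself. Everything else amounts to transporting structure along $\gr^F D_r \cong \gr^F D$ and a routine sheafification to promote the local statements on $U$ to the required sheaf-theoretic axioms on $X$.
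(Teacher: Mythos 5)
Your proof is correct and follows essentially the same path as the paper's: reduce to an affine chart and verify the three axioms of Definition \ref{d2tdodefinition} using Lemma \ref{associatedgradedofdeformations}. The one genuine difference is in the local reduction. The paper asserts "we may assume $D=\mathcal{D}(X)$ is the sheaf of crystalline differential operators," which invokes (without citation) the fact that a tdo is Zariski-locally isomorphic to $\mathcal{D}_X$. You instead work directly from the tdo axioms on an affine chart where $\mathcal{T}_X$ is free, tracking how Lemma \ref{associatedgradedofdeformations} interacts with the anchor map in degree one. That is arguably cleaner: it avoids relying on local triviality of tdo's, which over a general Noetherian base is true but not proved in the paper, and your careful handling of the factor of $r$ in axiom (ii) — namely that $\ad_{r\psi} = r\,\ad_\psi$, so composing the isomorphism $\gr_1^F D_r \cong \gr_1^F D$ with the anchor lands precisely onto $r\mathcal{T}_X$ — is exactly the point the paper glosses over with "the first two axioms ... are satisfied." Both proofs also implicitly use that $R$-flatness of $D_r$ follows from that of $\gr^F D_r$ via the exhaustive, separated, positive filtration; you spell this out, the paper treats it as "clear."
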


\begin{proof}
The claim is local, so we may assume that $X$ is affine and $D=\mathcal{D}(X)$ is the sheaf of crystalline differential operators on $X$. Clearly, $D_r$ is a differential algebra. Next, we have $F_1 D= \mathcal{O}(X) \oplus \mathcal{T}(X)$. By construction we have $F_0 D_r= \mathcal{O}(X)$ and $F_1 D_r= \mathcal{O}(X) \oplus r \mathcal{T}(X)$, so the first two axioms of Definition \ref{d2tdodefinition} are satisfied. Since $r$ is regular, $r \gr_1 D_r \cong \gr_1 D$ as $\mathcal{O}(X)$-modules, so the last claim follows from Lemma \ref{associatedgradedofdeformations}.
\end{proof}

\subsection{Equivariant \texorpdfstring{$\mathcal{O}$}{O}-modules}
Let $G$ be an affine algebraic group scheme acting on a  scheme $X$; denote the action by $\sigma_X: G \times X \to X$. Furthermore, we denote $p_X:G \times X \to X$ and $p_{2X}:G \times G \times X \to X$ the projections on the $X$ factor, $p_{23X}:G \times G \times X \to G \times X$ the projection onto the second and third factor and $m:G \times G \to G$ the multiplication of the group $G$.

\begin{definition}

Let $G$ an algebraic group scheme acting on a scheme $X$. A $G$-equivariant $\mathcal{O}_X$-module is a pair $(\mathcal{M},\alpha)$ where $\mathcal{M}$ is a quasi-coherent $\mathcal{O}_X$-module and $\alpha:\sigma_X^*\mathcal{M} \to p_X^*\mathcal{M}$ is an isomorphism of $\mathcal{O}_{G \times X}$-modules such that the diagram

\begin{center}
\begin{tikzcd}

&(1_G \times \sigma_X)^*p_X^*\mathcal{M} \arrow[r," p_{23X}^* \alpha"]   &p_{2X}^*\mathcal{M} \\
&(1_G \times \sigma_X)^* \sigma_X^* \mathcal{M} \arrow [u,"(1_G \times \sigma_X)^* \alpha "] \arrow[r,leftrightarrow,"id "] &(m \times 1_X)^*\sigma_X^* \mathcal{M} \arrow[u, " (m \times 1_X)^* \alpha"]
\end{tikzcd}
\end{center}

of $\mathcal{O}_{G \times G \times X}$-modules commutes (the cocycle condition) and the pullback 
                        $$(e \times 1_X)^* \alpha: \mathcal{M} \to \mathcal{M}$$
is the identity map.
\end{definition}

\begin{lemma}\cite[Lemma 2.2]{Sta1}
\label{Oequivpreservefunctor}

Let $G$ be an affine algebraic group acting on schemes $X$ and $Y$ and let $f:Y \to X$ be a $G$-equivariant morphism. Then the pullback functor $f^*$ given by $$(\mathcal{M},\alpha) \mapsto (f^*\mathcal{M},(1_G \times f)^* \alpha)$$ defines a functor from $G$-equivariant $\mathcal{O}_X$-modules to $G$-equivariant $\mathcal{O}_Y$-modules.

\end{lemma}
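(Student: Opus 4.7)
The plan is to verify the three defining axioms of a $G$-equivariant $\mathcal{O}_Y$-module for the pair $(f^*\mathcal{M}, (1_G \times f)^* \alpha)$, using the $G$-equivariance of $f$ to identify the various pullbacks with their counterparts over $Y$. Functoriality of the assignment on morphisms is then automatic: a morphism of $G$-equivariant $\mathcal{O}_X$-modules is an $\mathcal{O}_X$-linear map compatible with $\alpha$, and applying $f^*$ preserves both the $\mathcal{O}$-linearity and (after the identifications below) the compatibility.

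First I would record that $G$-equivariance of $f$ means the two commutation relations $\sigma_X \circ (1_G \times f) = f \circ \sigma_Y$ and $p_X \circ (1_G \times f) = f \circ p_Y$ hold as morphisms $G \times Y \to X$. Applying $(-)^*\mathcal{M}$ and using pseudo-functoriality of pullback yields canonical isomorphisms $(1_G \times f)^* \sigma_X^* \mathcal{M} \cong \sigma_Y^* f^* \mathcal{M}$ and $(1_G \times f)^* p_X^* \mathcal{M} \cong p_Y^* f^* \mathcal{M}$. Under these identifications, $(1_G \times f)^*\alpha$ is an isomorphism $\sigma_Y^* f^* \mathcal{M} \to p_Y^* f^* \mathcal{M}$ of quasi-coherent $\mathcal{O}_{G \times Y}$-modules, exhibiting it as a candidate equivariance datum.

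For the cocycle axiom over $G \times G \times Y$, the strategy is to pull back the original cocycle triangle for $(\mathcal{M}, \alpha)$ along $1_G \times 1_G \times f : G \times G \times Y \to G \times G \times X$. This morphism is compatible with the three structure maps entering the diagram, namely
$$(1_G \times \sigma_X) \circ (1_G \times 1_G \times f) = (1_G \times f) \circ (1_G \times \sigma_Y),$$
$$p_{23X} \circ (1_G \times 1_G \times f) = (1_G \times f) \circ p_{23Y},$$
$$(m \times 1_X) \circ (1_G \times 1_G \times f) = (m \times 1_X \circ (1_G \times 1_G \times f)) = (1_G \times f) \circ (m \times 1_Y),$$
the first two by construction and the last by $G$-equivariance of $f$. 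Applying $(1_G \times 1_G \times f)^*$ to the commutative square satisfied by $\alpha$, and rewriting each node via these compatibilities, produces exactly the cocycle square required for $(f^*\mathcal{M}, (1_G \times f)^*\alpha)$.

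The unit axiom reduces to the identity of morphisms $(1_G \times f) \circ (e \times 1_Y) = (e \times 1_X) \circ f$, which yields
$$(e \times 1_Y)^*(1_G \times f)^*\alpha \;=\; f^*(e \times 1_X)^*\alpha \;=\; f^*\id_{\mathcal{M}} \;=\; \id_{f^*\mathcal{M}}.$$
The only real obstacle is the bookkeeping of 2-cells: one has to be confident that the canonical isomorphisms between iterated pullbacks are mutually coherent, so that the pullback of the cocycle square is genuinely of the form required by the definition rather than of some up-to-isomorphism cousin. I would handle this by assembling all the relevant pullback identifications into a single commutative cube of functors $(1_G\times 1_G\times f)^*$ versus the six pullbacks appearing in the two cocycle diagrams, so that coherence follows from the pseudo-functoriality of $(-)^*$ in one clean step.
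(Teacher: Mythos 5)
Your proof is correct and is the standard verification one would expect; note that the present paper does not actually prove this lemma but imports it from \cite[Lemma 2.2]{Sta1}, so there is no in-text argument to compare against. The only slip is in your attribution of which commutation relations use the hypothesis: it is the identity $(1_G \times \sigma_X) \circ (1_G \times 1_G \times f) = (1_G \times f) \circ (1_G \times \sigma_Y)$ (and likewise $\sigma_X \circ (1_G \times f) = f \circ \sigma_Y$) that requires $G$-equivariance of $f$, while the relations involving $p_{23}$ and $m \times 1$ hold by construction — the reverse of what you wrote — but this does not affect the substance of the argument.
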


\begin{definition}
Let $G$ an affine algebraic group acting on a scheme $X$ via $\sigma_X$. We define the category of $G$-equivariant quasi-coherent $\mathcal{O}_X$-modules. Objects are given by $G$-equivariant $\mathcal{O}_X$-modules.

A morphism of $G$-equivariant $\mathcal{O}_X$ modules $(\mathcal{M},\alpha_M)$ and $(\mathcal{N},\alpha_N)$ is a map $\phi \in \Hom_{\mathcal{O}_X}(\mathcal{M},\mathcal{N})$ such that the following diagram commutes:
\begin{center}
\begin{tikzcd}
&\sigma_X^* \mathcal{M} \arrow[d,"\sigma_X^*\phi"] \arrow[r,"\alpha_M"] &p_X^* \mathcal{M} \arrow[d,"p_X^* \phi"] \\
&\sigma_X^* \mathcal{N}  \arrow[r,"\alpha_N"] &p_X^* \mathcal{N}. 
\end{tikzcd} 
\end{center}

We call such a morphism $G$-equivariant and denote the category of $G$-equivariant $\mathcal{O}_X$-modules together with $G$-equivariant morphisms by $\QCoh(\mathcal{O}_X,G)$.


\end{definition}

\begin{proposition}\cite[Proposition 2.4]{Sta1}
\label{GequivariantAbeliancat}

Let $G$ an affine algebraic group scheme acting on a scheme $X$. Then the category $\QCoh(\mathcal{O}_X,G)$ is Abelian.

\end{proposition}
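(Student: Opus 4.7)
The plan is to promote the abelian structure from $\QCoh(\mathcal{O}_X)$ to $\QCoh(\mathcal{O}_X,G)$ by showing that kernels, cokernels, images and coimages of $G$-equivariant morphisms inherit canonical $G$-equivariant structures, and that the forgetful functor $\QCoh(\mathcal{O}_X,G)\to \QCoh(\mathcal{O}_X)$ is exact and reflects isomorphisms. The key technical input is that both pullback functors $\sigma_X^{*}$ and $p_X^{*}$ are exact on quasi-coherent sheaves: indeed, $p_X:G\times X\to X$ is a base change of the (flat) structure morphism $G\to\Spec(R)$, so it is flat, and $\sigma_X$ factors as $p_X$ composed with the ``shear'' isomorphism $G\times X\to G\times X$, $(g,x)\mapsto(g,gx)$, so $\sigma_X$ is also flat. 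Lemma \ref{Oequivpreservefunctor} then guarantees that these pullbacks take the category $\QCoh(\mathcal{O}_X,G)$ into the corresponding equivariant categories on $G\times X$.

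The first step is the construction. Given a morphism $\phi:(\mathcal{M},\alpha_M)\to(\mathcal{N},\alpha_N)$ in $\QCoh(\mathcal{O}_X,G)$, form $\mathcal{K}:=\ker\phi$ and $\mathcal{C}:=\coker\phi$ in $\QCoh(\mathcal{O}_X)$. Exactness of $\sigma_X^{*}$ and $p_X^{*}$ identifies $\sigma_X^{*}\mathcal{K}$ with the kernel of $\sigma_X^{*}\phi$ and $p_X^{*}\mathcal{K}$ with the kernel of $p_X^{*}\phi$, and similarly for cokernels. The defining commutative square for $\phi$ being $G$-equivariant then shows that $\alpha_M$ restricts uniquely to an isomorphism $\alpha_K:\sigma_X^{*}\mathcal{K}\xrightarrow{\sim}p_X^{*}\mathcal{K}$ and $\alpha_N$ descends uniquely to an isomorphism $\alpha_C:\sigma_X^{*}\mathcal{C}\xrightarrow{\sim}p_X^{*}\mathcal{C}$ by the universal property of kernels and cokernels.

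Next, one verifies that $\alpha_K$ and $\alpha_C$ satisfy the cocycle condition and the unit condition. This is formal: applying the exact functors $(1_G\times\sigma_X)^{*}$, $p_{23X}^{*}$, $(m\times 1_X)^{*}$ and $(e\times 1_X)^{*}$ identifies the relevant pullbacks of $\mathcal{K}$ and $\mathcal{C}$ with the kernels and cokernels of the corresponding pullbacks of $\phi$, so the cocycle and identity diagrams for $\mathcal{K}$ and $\mathcal{C}$ are obtained by restricting the already-commuting diagrams for $\mathcal{M}$ and $\mathcal{N}$ and invoking the same universal property. The resulting equivariant objects $(\mathcal{K},\alpha_K)$ and $(\mathcal{C},\alpha_C)$ are then easily seen to satisfy the universal properties of kernel and cokernel inside $\QCoh(\mathcal{O}_X,G)$.

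Finally, to conclude abelianness it remains to show that the canonical map $\coim\phi\to\im\phi$ is an isomorphism in $\QCoh(\mathcal{O}_X,G)$. Since the forgetful functor to $\QCoh(\mathcal{O}_X)$ commutes with kernels and cokernels by the previous step, this map is the canonical coimage--image comparison in $\QCoh(\mathcal{O}_X)$, which is an isomorphism because $\QCoh(\mathcal{O}_X)$ is abelian (Proposition \ref{GequivariantAbeliancat} applied for $G$ trivial, or simply the classical fact). The main obstacle in this proof is purely bookkeeping: writing the cocycle diagram at the level of kernels and cokernels legibly and checking that the induced $\alpha_K$, $\alpha_C$ are genuinely isomorphisms and not merely morphisms; exactness of $\sigma_X^{*}$ and $p_X^{*}$ is exactly what makes this step mechanical rather than delicate.
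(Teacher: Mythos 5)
Your proof is correct and is essentially the standard argument behind the cited result: flatness of $p_X$, and hence of $\sigma_X$ (which factors through the shear automorphism of $G\times X$), makes both pullbacks exact, so kernels and cokernels of equivariant morphisms inherit unique equivariant structures by the universal properties, the cocycle and unit conditions follow by restriction, and the coimage--image comparison is checked after applying the exact, isomorphism-reflecting forgetful functor to $\QCoh(\mathcal{O}_X)$. The only hypothesis worth making explicit is that $G$ is flat over the base $R$ (automatic here, since the paper's groups are smooth, and used elsewhere in the paper in the form ``$\mathcal{O}(G)$ is flat over $R$''), as this is precisely what makes $p_X$ flat.
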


From now on, when we use the notion of morphism of $G$-equivariant $\mathcal{O}_X$-modules, we always view it as a morphism in the category $\QCoh(\mathcal{O}_X,G)$.

\textbf{A reformulation of equivariance}

We wish to reformulate the notion of an equivariant $\mathcal{O}$-module. Until the end of the section, we fix $X$ a scheme defined over $R$ acted on by an affine algebraic group $G$. We start with a very simple observation: viewing $\mathcal{O}_X$ as a left $\mathcal{O}_X$-module, $(\mathcal{O}_X,\id)$ is a $G$-equivariant $\mathcal{O}_X$-module. We can reformulate this following ideas in \cite{MvdB}: for each $R$-algebra $A$ inducing a map $s:\Spec A \to \Spec R$ and for each point $i_{g}:\Spec A \to G$ which induces an automorphism $g:X_{A} \to X_{A}$ there exists an isomorphism

                                                                    $$q_g: s^* \mathcal{O} \to (g^{-1})^*s^* \mathcal{O}, \text{ satisfying }$$

\begin{equation}
\label{structureequiveq}
              q_e=\id \text{ and } q_{gh}=(g^{-1})^*(q_h)q_g
\end{equation}              
in such a way that $(q_g)$'s are compatible with base change. Let $r_g=g^* \circ q_g$. For each $U \subset X_A$ affine open, $r_g$ is a map $\mathcal{O}_A(U) \to \mathcal{O}_A(g^{-1}U)$. The equation \ref{structureequiveq} translates as $r_e=\id$ and $r_{gh}=r_hr_g$. Furthermore, the $\mathcal{O}$-module compatibility requires that for any $f_1,f_2 \in \mathcal{O}_A(U)$, we have $r_g(f_1f_2)=r_g(f_1)r_g(f_2)$.  

We define $r_g$ via $r_g(f)(x)=f(g^{-1}x)$ for all $R$-algebras $A$, $U \subset X_A$ affine open, $x \in U$, $f \in \mathcal{O}_A(U)$, $g:X_{A} \to X_{A}$ and it is easy to see that $r_g$'s make $\mathcal{O}_X$ a $G$-equivariant $\mathcal{O}_X$-module. We may now make an abuse of notation: for each $i_g: \Spec A \to G$ and each $ f \in \mathcal{O}_A(U)$, we denote $g.f=r_{g^{-1}}(f)$ and we translate  the equivariance structure as

$$ e.f_1=f, \quad g.(h.f_1)=(gh).f_1, \quad g.(f_1f_2)=(g.f_1)(g.f_2) \text{ for all  } g,h \in G,  f_1,f_2 \in \mathcal{O}_X.$$

\begin{lemma}\cite[Lemma 2.5]{Sta1}
A $\mathcal{O}_X$-module  $\mathcal{M}$ is $G$-equivariant if and only if for each $R$-algebra $A$, for each $s:\Spec A \to \Spec R$ and for each point $i_g: \Spec A \to G$ which induces an automorphism $g:X_A \to X_A$ there exists an isomorphism of $\mathcal{O}_{A}$-modules

      $$ q_g:s^* \mathcal{M} \to (g^{-1})^* s^* \mathcal{M}        $$

satisfying

\begin{equation}
\label{weaklyequiveqOmod}
q_{e}=\id \text{ and }  q_{gh}=(g^{-1})^*(q_h)q_g
\end{equation}

in such a way that $(q_g)$'s are compatible with base change.

\end{lemma}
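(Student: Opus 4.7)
The plan is to view equivariance through the functor of points: an isomorphism $\alpha:\sigma_X^*\mathcal{M}\to p_X^*\mathcal{M}$ on $G\times X$ should be equivalent, by Yoneda, to the compatible collection of its pullbacks along all $A$-valued test points of $G$, and the two diagrammatic axioms (identity and cocycle) will translate directly into the identity and cocycle conditions on those pullbacks.

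For the forward direction, I assume $(\mathcal{M},\alpha)$ is $G$-equivariant. Given $s:\Spec A\to\Spec R$ and $i_g:\Spec A\to G$, form the map $f_g:=(i_g,1_X):X_A\to G\times X$. Composition with $p_X$ is the structure morphism $\phi:X_A\to X$, while composition with $\sigma_X$ equals $\phi\circ g$, where $g:X_A\to X_A$ is the induced automorphism. Hence $f_g^*p_X^*\mathcal{M}=s^*\mathcal{M}$ and $f_g^*\sigma_X^*\mathcal{M}=g^*s^*\mathcal{M}$, and I define $q_g$ as the pullback $f_g^*\alpha$ transported across the canonical isomorphism $s^*\mathcal{M}\xrightarrow{\sim}(g^{-1})^*g^*s^*\mathcal{M}$, yielding $q_g:s^*\mathcal{M}\to(g^{-1})^*s^*\mathcal{M}$. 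Base-change compatibility is immediate from functoriality of pullback, and the identity axiom $(e\times 1_X)^*\alpha=\id$ specialises to $q_e=\id$. For the cocycle relation, pull back the commutative square in the definition along $(i_g,i_h,1_X):X_A\to G\times G\times X$; the four corners become $s^*\mathcal{M}$, $(h^{-1})^*s^*\mathcal{M}$, $(g^{-1})^*s^*\mathcal{M}$ and $((gh)^{-1})^*s^*\mathcal{M}$, and commutativity of the pulled-back square reads exactly $q_{gh}=(g^{-1})^*(q_h)\,q_g$.

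For the reverse direction, I would apply the forward construction at the universal point $A=\Gamma(G,\mathcal{O}_G)$ with $i_g=\id_G$, so that $f_g$ becomes the identity on $G\times X$; the isomorphism $q_{\id_G}$ is then, after the identifications above, precisely the desired $\alpha$ on $G\times X$. Compatibility of the family $(q_g)$ with base change guarantees this construction is well defined and that re-running the forward construction returns the original family. The unit and cocycle diagrams for $\alpha$ then follow by evaluating the corresponding relations for $(q_g)$ at the universal pair in $G\times G$ and invoking Yoneda to lift them to the scheme-level diagrams on $G\times G\times X$. The main obstacle will be purely bookkeeping---tracking $g$ versus $g^{-1}$ and identifying pullbacks along $\sigma_X$ with pullbacks along the induced automorphisms of $X_A$; the substantive content is just Yoneda for quasi-coherent sheaves on $G$-schemes together with the compatibility assumption on the $q_g$'s.
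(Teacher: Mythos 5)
This lemma is not proved in the present paper: it is quoted with a citation to \cite[Lemma~2.5]{Sta1}, and the surrounding text only sketches the reformulation in the special case $\mathcal{M}=\mathcal{O}_X$. So there is no in-paper proof against which to match your argument; I can only assess it on its own terms.

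Your Yoneda/functor-of-points strategy is the natural one and, as written, it works. In the forward direction, the identifications $f_g^*p_X^*\mathcal{M}\cong s^*\mathcal{M}$ and $f_g^*\sigma_X^*\mathcal{M}\cong g^*s^*\mathcal{M}$ (from $p_X\circ f_g=\mathrm{pr}_X$ and $\sigma_X\circ f_g=\mathrm{pr}_X\circ g$) are correct, and pulling back the cocycle square on $G\times G\times X$ along $(i_g,i_h,1_X)$ does give $f_h^*\alpha\circ h^*f_g^*\alpha=f_{gh}^*\alpha$, which after applying the appropriate $(\,\cdot\,)^{-1}$-pullbacks and absorbing the canonical isomorphisms is exactly $q_{gh}=(g^{-1})^*(q_h)\,q_g$. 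The reverse direction is phrased slightly loosely --- you cannot literally ``apply the forward construction'' since in that direction you have no $\alpha$ to start from --- but your intent is clear and correct: instantiate the given family $(q_g)$ at the universal point $A=\mathcal{O}(G)$, $i_g=\id_G$ (legitimate because $G$ is assumed affine), transport across $f_{\id_G}$ to produce $\alpha$ on $G\times X$, and then verify the identity and cocycle axioms by evaluating the hypotheses on $(q_g)$ at the universal pair over $G\times G$. Two points deserve explicit care when writing this out: (i) the base-change compatibility hypothesis should be stated precisely enough to guarantee that the two constructions are mutually inverse, i.e.\ that specialising the universal $\alpha$ back to an arbitrary $i_g:\Spec A\to G$ recovers the given $q_g$; and (ii) quasi-coherence of $\mathcal{M}$ should be carried along, since it is built into the definition of $G$-equivariant $\mathcal{O}_X$-module. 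Neither of these is a conceptual obstacle; they are the bookkeeping you already flagged.
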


Again by setting $s_g=g^* \circ q_g$, we may reformulate equation \ref{weaklyequiveqOmod} as:  for each $R$-algebra $A$ and for each $i_g:\Spec A \to G$, we have an isomorphism of $\mathcal{O}$-modules $s_g: \mathcal{M}_{X_A} \to \mathcal{M}_{X_A}$ such that for each $U \subset X_A$ affine open:

\begin{equation}
\label{alternativeeqOmod}
\begin{split}
&s_{e}=\id,\\
&s_{gh}=s_{h}s_{g},\\
&s_{g}'s \text{ are compatible with base change},\\
& r_g(f.m)=r_g(f).s_g(m) \text{ for all }  f \in \mathcal{O}_{Y_A}(U), m \in \mathcal{M}(U).
\end{split}
\end{equation}

Again, we make an abuse of notation:  for each $i_g: \Spec A \to G$ and each $ m \in \mathcal{M}_A(U)$, we denote $g.m=s_{g^{-1}}(m)$ and we translate  the equivariance structure as:

\begin{equation}
\label{easyweakOmodequivariant}
\begin{split}
&e.m=m, \\
&gh.m= g.(h.m),\\
&g.(f.m)=(g.f).(g.m),
\end{split}
\end{equation}

for all $g,h \in G$, $m \in \mathcal{M}$,  $ f \in \mathcal{O}_X$.

\subsection{Deformed Homogeneous twisted differential operators}

Throughout this subsection, we will assume that $G$ is a connected, smooth affine algebraic group scheme with Lie algebra $\mathfrak{g}:=\Lie(G)$.

\begin{definition}
\label{d2htdodef}
Let $\mathcal{D}$ be a differential $\mathcal{O}_X$-algebra. We call $\mathcal{D}$ a sheaf $r$-deformed $G$-\\homogeneous twisted differential operators($G$-htdo) if it is $G$-equivariant as a left $\mathcal{O}_X$-module, $\mathcal{D}$ is an $r$-deformed tdo  and $\mathcal{D}$ is equipped with a Lie algebra map $i_{\mathfrak{g}}:r\mathfrak{g} \to \mathcal{D}$ such that:

\begin{enumerate}[label=\roman*)]
\item{$g.1=1$ and $g.(d_1d_2)=(g.d_1)(g.d_2)$ for $g \in G$ and $d_1,d_2 \in \mathcal{D}.$ }
\item{$g.(fd)=(g.f)(g.d)$ for $f \in \mathcal{O}_X$ and $d \in \mathcal{D}$.}
\item{$i_{\mathfrak{g}}(g.\psi)=g.i_{\mathfrak{g}}(\psi)$ for $g \in G, \psi \in r\mathfrak{g}.$}
\item{The derivative of the $G$-action induces a $\mathfrak{g}$-action and so a $r \mathfrak{g} \subset \mathfrak{g}$-action. This must coincide with the action $d \to [i_{\mathfrak{g}}(\psi),d]$ for $\psi \in r\mathfrak{g}$ and $ d \in \mathcal{D}$.}

\item{$i_{\mathfrak{g}}(r\mathfrak{g}) \subset F_1 \mathcal{D}.$}
\item{$\eta=\rho \circ i_{\mathfrak{g}}$ as maps from $r \mathfrak{g}$ to $r \mathcal{T}_X$ where $\eta:\mathfrak{g} \to \mathcal{T}_X$ is the infinitesimal map and $\rho: F_1 \mathcal{D} \to \mathcal{T}_X$ is the natural anchor map.}
\end{enumerate}
\end{definition}


Let $Y$ be another $R$-variety such that $G$ acts on $Y$ and $f:Y \to X$ is a $G$-equivariant morphism. Then for $\mathcal{D}$ an $r$-deformed  $G$-htdo on $X$, we defined in \cite[Definition 7.5]{Sta1} its pullback, $f^{\#} \mathcal{D}$ and proved in \cite[Corollary 7.6]{Sta1} that it is an $r$-deformed $G$-htdo on $Y$.

Assume further that $f:Y \to X$ is a locally trivial $G$-torsor (see \cite[Section 4.3]{Annals} for the definition). Then for $\mathcal{A}$ an $r$-deformed $G$-htdo on $Y$, we defined its descent, $f_{\#} \mathcal{D}^G$, \cite[Definition 10.9]{Sta1} and proved in \cite[Lemma 10.10]{Sta1} it is an $r$-deformed tdo on $X$. The main proposition we need is:

\begin{proposition}\cite[Corollary 10.13]{Sta1}
\label{eqdescentliealg}
Let $f:Y \to X$ be a locally trivial $G$-torsor. Let $B$ be another smooth affine algebraic group acting on $X$ and $Y$, such that the actions of $G$ and $B$ on $Y$ commute. The maps $f_{\#}(-)^G$ and $f^{\#}(-)$ induce inverse bijections from the set of $r$-deformed  $G \times B$-equivariant htdo's on $Y$ to the set of $r$-deformed $B$-equivariant htdo's on $X$.
\end{proposition}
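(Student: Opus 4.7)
The plan is to upgrade the non-equivariant version of this bijection --- namely that $f^{\#}$ and $f_{\#}(-)^G$ are mutually inverse bijections between $r$-deformed $G$-htdo's on $Y$ and $r$-deformed tdo's on $X$, which is precisely what [Sta1, Corollary 7.6] together with [Sta1, Lemma 10.10] provide --- with the data of a $B$-action. The key point is that all the constructions in [Sta1] are functorial in equivariant data, so adding a second, commuting group action on both sides should transport automatically, provided one can build the Lie algebra map $i_{\mathfrak{b}}$ on the $X$-side and check the htdo axioms of Definition \ref{d2htdodef}.

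For the direction $Y \to X$, let $\mathcal{A}$ be an $r$-deformed $G\times B$-htdo on $Y$. Since the $G$- and $B$-actions on $Y$ commute and $f$ is $B$-equivariant for the induced $B$-action on $X$, the $B$-equivariance structure on $\mathcal{A}$ preserves the $G$-invariants $\mathcal{A}^G$ and therefore descends through $f_{\#}(-)^G$ to a $B$-equivariance on $f_{\#}\mathcal{A}^G$. For the Lie algebra map, I would restrict $i_{\mathfrak{g}\times\mathfrak{b}}\colon r(\mathfrak{g}\times\mathfrak{b}) \to \mathcal{A}$ to the $\mathfrak{b}$-summand: because $G$ acts trivially on the $\mathfrak{b}$-factor of the adjoint action of $G\times B$ on $r(\mathfrak{g}\times\mathfrak{b})$, axiom (iii) forces $i_{\mathfrak{b}}(r\mathfrak{b})\subset \mathcal{A}^G$, and applying $f_{\#}$ yields the required $i_{\mathfrak{b}}\colon r\mathfrak{b} \to f_{\#}\mathcal{A}^G$. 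Axioms (i)--(vi) of a $B$-htdo for $f_{\#}\mathcal{A}^G$ then follow from the corresponding axioms of $\mathcal{A}$ read along $\mathfrak{b}\subset \mathfrak{g}\times\mathfrak{b}$, combined with the $G$-invariance underlying the descent.

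For the direction $X\to Y$, let $\mathcal{D}$ be an $r$-deformed $B$-htdo on $X$. Its pullback $f^{\#}\mathcal{D}$ is already an $r$-deformed $G$-htdo on $Y$, and inherits a $B$-equivariance by functoriality of the pullback on $B$-equivariant $\mathcal{O}$-algebras (Lemma \ref{Oequivpreservefunctor}, extended to the differential-algebra level as in [Sta1, Section 7]); this $B$-structure commutes with the $G$-structure because the two group actions on $Y$ commute. The Lie algebra map $i_{\mathfrak{g}\times\mathfrak{b}}\colon r(\mathfrak{g}\times\mathfrak{b}) \to f^{\#}\mathcal{D}$ is obtained by combining the canonical map from $r\mathfrak{g}$ that $f^{\#}\mathcal{D}$ carries as a $G$-htdo with the pullback of $i_{\mathfrak{b}}$; the two land in commuting subspaces precisely because the $G$- and $B$-actions on $Y$ commute, which via axiom (iv) identifies $[i_{\mathfrak{g}}(\psi),i_{\mathfrak{b}}(\phi)]$ with the bracket of the corresponding infinitesimal derivations, which vanishes. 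The round-trip identities then follow from the non-equivariant inverse property established in [Sta1] together with the observation that the $B$-equivariance data and $i_{\mathfrak{b}}$ are built by purely functorial recipes that are inverse to each other.

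The main obstacle I anticipate is the verification of axiom (iv) on each side: that after descent the infinitesimal $B$-action on $f_{\#}\mathcal{A}^G$ is implemented by commutator with $i_{\mathfrak{b}}(r\mathfrak{b})$, and dually that after pullback the full infinitesimal $G\times B$-action on $f^{\#}\mathcal{D}$ is implemented by commutator with $i_{\mathfrak{g}\times\mathfrak{b}}(r(\mathfrak{g}\times\mathfrak{b}))$. Both require invoking the explicit local descriptions of $f^{\#}$ and $f_{\#}(-)^G$ from [Sta1, Sections 7 and 10] and the fact that commuting group actions produce commuting infinitesimal derivations; everything else amounts to diagrammatic bookkeeping dictated by the flatness and torsor hypotheses.
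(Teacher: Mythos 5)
This proposition is imported verbatim from \cite[Corollary 10.13]{Sta1}; the paper gives no proof of its own, so there is nothing here to compare your argument against line by line. Taken on its own terms, your strategy (establish the non-equivariant bijection, then transport the commuting $B$-structure through it) is the natural one, and the two key structural observations are correctly identified: axiom (iii) of Definition \ref{d2htdodef}, applied to $g \in G \subset G \times B$ acting trivially on the $\mathfrak{b}$-summand of $r(\mathfrak{g}\times\mathfrak{b})$, forces $i_{\mathfrak{g}\times\mathfrak{b}}(r\mathfrak{b}) \subset \mathcal{A}^G$, which is exactly what lets the Lie algebra map descend; and on the pullback side the images of $i_{\mathfrak{g}}$ and $f^{\#}i_{\mathfrak{b}}$ commute because $f^{\#}i_{\mathfrak{b}}(\phi)$ is $G$-invariant, so axiom (iv) identifies $[i_{\mathfrak{g}}(\psi), f^{\#}i_{\mathfrak{b}}(\phi)]$ with the infinitesimal $G$-action on a $G$-fixed element, i.e.\ zero. (Your phrasing of this last point --- ``the bracket of the corresponding infinitesimal derivations, which vanishes'' --- is not quite the right mechanism, but the correct one is available.)

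The genuine gap is in the input you take for granted. As this paper describes them, \cite[Corollary 7.6]{Sta1} and \cite[Lemma 10.10]{Sta1} only establish that $f^{\#}$ and $f_{\#}(-)^G$ are well-defined maps (pullback of an htdo is an htdo; descent of an htdo is a tdo); they do not assert the round-trip isomorphisms $f_{\#}(f^{\#}\mathcal{D})^G \cong \mathcal{D}$ and $f^{\#}(f_{\#}\mathcal{A}^G) \cong \mathcal{A}$, which are the actual content of the bijection and are precisely the $B = e$ case of the statement being proved. Those isomorphisms use the local triviality of the torsor in an essential way (on a trivialising cover $Y \cong G \times U$ one must identify a $G$-htdo with the external product of $\mathcal{D}_G$ and its sheaf of $G$-invariants), and your proof defers all of that to ``the non-equivariant inverse property established in [Sta1]'' without a citation that delivers it. Relatedly, even granting the non-equivariant bijection, the closing claim that ``the $B$-equivariance data and $i_{\mathfrak{b}}$ are built by purely functorial recipes that are inverse to each other'' is the thing to be checked rather than a reason: one must verify that, under the round-trip isomorphism of sheaves of algebras, the descended-then-pulled-back $B$-structure and Lie algebra map coincide with the original ones, a concrete (if routine) computation on a trivialising cover that the proposal does not carry out.
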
 

In particular, by setting $B$ to be a trivial group we obtain a bijection between the set of $r$-deformed $G$-htdo on $Y$ and the set of $r$-deformed tdo's on $X$.

\begin{definition}
\label{d2equivarianthtdomoddef}
Let $(\mathcal{D},i_{\mathfrak{g}})$ be a $r$-deformed $G$-htdo and $L$ be a closed subgroup of $G$, with Lie algebra $\mathfrak{l}$. A $\mathcal{D}$-module $\mathcal{M}$ is weakly $L$-equivariant if:

\begin{enumerate}[label=\roman*)]
\item{$\mathcal{M}$ is an $L$-equivariant  $\mathcal{O}_X$-module.}
\item{$g.(D.m)=(g.D).(g.m)$ for any $g \in L, d \in \mathcal{D}, m \in \mathcal{M}$.} 

We call $\mathcal{M}$ $L$-equivariant if in addition:

\item{ The $r\mathfrak{l}$-action induced by the derivative of the $L$-action on $\mathcal{M}$ coincides with the $r\mathfrak{l}$-action induced by the restriction of $i_{\mathfrak{g}}$ to $r\mathfrak{l}$.}

A morphism of (weakly) equivariant $\mathcal{D}$-modules is a morphism of $L$-equivariant $\mathcal{O}_X$-modules that is $\mathcal{D}$-linear.
\end{enumerate}

\end{definition}

In case $L=G$, we recover \cite[Definition 9.3]{Sta1}, but we will need this more general definition for explaining the localisation mechanism. We denote $\Coh(\mathcal{D},G)$ the category of coherent $G$-equivariant coherent $\mathcal{D}$-modules. 

Let $Y$ be another $R$-variety such that $G$ acts on $Y$, $f:Y \to X$ is a $G$-equivariant morphism and let $\mathcal{D}$ be $G$-htdo on $X$. Given a $G$-equivariant $\mathcal{D}$-module $\mathcal{M}$, we endow $f^* \mathcal{M}$ with an action $f^{\#} \mathcal{D}$ and we call this $f^{\#} \mathcal{M}$. We prove in \cite[Lemma 9.7]{Sta1} that $f^{\#} \mathcal{M}$ is $G$-equivariant.

We may redefine the notion of $G$-equivariance of an $r$-deformed $G$-htdo module. Denote the $G$-action by $\sigma_X: G \times X \to X$. Furthermore, we denote $p_X:G \times X \to X$ and $p_{2X}:G \times G \times X \to X$ the projections on the $X$ factor, $p_{23X}:G \times G \times X \to G \times X$ the projection onto the second and third factor and $m:G \times G \to G$ the multiplication of the group $G$. Then we define a $G$-equivariant $\mathcal{D}$-module as a pair $(\mathcal{M},\alpha)$, where $\mathcal{M}$ is a $\mathcal{D}$-module and $\alpha:\sigma_X^{\#} \mathcal{M} \to p_X^{\#} \mathcal{M}$ is an isomorphism of $p_X^{\#} \mathcal{D}$-modules such that the diagram:

\begin{equation}
\label{eqmoduleGhtdocommdia}
\begin{tikzcd}
&(1_G \times \sigma_X)^{\#}p_X^{\#}\mathcal{M} \arrow[r," p_{23X}^{\#} \alpha"]   &p_{2X}^{\#}\mathcal{M} \\
&(1_G \times \sigma_X)^{\#} \sigma_X^{\#} \mathcal{M} \arrow [u,"(1_G \times \sigma_X)^{\#} \alpha "] \arrow[r,leftrightarrow,"id "] &(m \times 1_X)^{\#}\sigma_X^{\#} \mathcal{M} \arrow[u, " (m \times 1_X)^{\#} \alpha"]
\end{tikzcd}
\end{equation}

commutes ( the cocycle condition) and the pullback 
                        $$(e \times 1_X)^{\#} \alpha: \mathcal{M} \to \mathcal{M}$$
is the identity map. We will ignore the equivariance structure when it is understood from the context.

The main result of \cite{Sta1} that we use in the paper is the equivariant descent for modules over deformed homogeneous twisted differential operators:

\begin{theorem}\cite[Corollary 11.9]{Sta1}
\label{eqdescentrep}
Let $G,B$ be smooth affine algebraic groups of finite type. Let $X,Y$ be $R$-varieties and let $f:Y \to X$ be a locally trivial $G$-torsor that is $B$-equivariant. Further, let $\mathcal{D}$ be a sheaf of $r$-deformed $G \times B$-homogeneous twisted differential operators on $Y$. The functors:

\begin{equation}
\begin{split}
&f_* (-)^G: \Coh(\mathcal{D},G\times B) \to \Coh(f_{\#} \mathcal{D}^G,B),\\
&f^{\#}(-): \Coh(f_{\#} \mathcal{D}^G,B) \to \Coh(\mathcal{D},G \times B).
\end{split} 
\end{equation}

are quasi-inverses equivalences of categories.

\end{theorem}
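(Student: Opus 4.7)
The plan is to prove this by faithfully flat equivariant descent, using the local triviality of the torsor $f: Y \to X$ to reduce to the trivial case and then invoking the algebra descent of Proposition \ref{eqdescentliealg} together with the equivariant descent for $\mathcal{O}$-modules in the formulation (\ref{easyweakOmodequivariant}). Since being a quasi-inverse pair of equivalences is a local property on $X$ and both functors $f_*(-)^G$ and $f^{\#}(-)$ commute with restriction to open subsets, I would choose an open cover $\{U_i\}$ of $X$ over which $f$ trivialises, and establish the equivalence over each $U_i$ separately before patching. This reduction already uses that $f_{\#}\mathcal{D}^G$ glues to a genuine sheaf on $X$, which is provided by Proposition \ref{eqdescentliealg}.

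In the trivial-torsor case $f: G \times X \to X$, the identity section $e \times 1_X: X \to Y$ splits $f$, and the standard equivariant descent for quasi-coherent sheaves identifies $(f_*\mathcal{M})^G$ with $(e \times 1_X)^*\mathcal{M}$; conversely $f^{\#}\mathcal{N}$ is identified with $p_X^*\mathcal{N}$ carrying the tautological $G$-equivariance. The unit $\eta_{\mathcal{N}}: \mathcal{N} \to f_*(f^{\#}\mathcal{N})^G$ and counit $\epsilon_{\mathcal{M}}: f^{\#}((f_*\mathcal{M})^G) \to \mathcal{M}$ come from the usual $(f^*, f_*)$-adjunction, and both are isomorphisms by the classical descent for $\mathcal{O}$-modules once one observes that the $\mathcal{D}$-module structure is carried along by Proposition \ref{eqdescentliealg}, which identifies $\mathcal{D}$ with $f^{\#}(f_{\#}\mathcal{D}^G)$. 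Compatibility with the $B$-action is automatic since $f$ is $B$-equivariant and the functors $f^*$, $f_*$, and $(-)^G$ preserve $B$-equivariance structures (for $f^*$ this is Lemma \ref{Oequivpreservefunctor}; the other cases are analogous since $B$ commutes with $G$ on $Y$).

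The main obstacle is verifying that $(f_*\mathcal{M})^G$ is genuinely a module over the descended algebra $f_{\#}\mathcal{D}^G$, and not just over the pushforward $(f_*\mathcal{D})^G$ of sections invariant under the outer $G$-action. The key input is the strong equivariance condition (iii) of Definition \ref{d2equivarianthtdomoddef}: it forces the map $i_{\mathfrak{g}}: r\mathfrak{g} \to \mathcal{D}$ to act on $\mathcal{M}$ as the derivative of the $G$-equivariance structure. Consequently, $i_{\mathfrak{g}}(r\mathfrak{g})$ acts as zero on the subsheaf of $G$-invariants, so the $\mathcal{D}$-action factors through the appropriate quotient defining $f_{\#}\mathcal{D}^G$. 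In the opposite direction, equipping $f^{\#}$ of a descended module with the tautological $G$-equivariance automatically makes the $r\mathfrak{g}$-action via $i_{\mathfrak{g}}$ agree with the derivative of the $G$-action, so the strong equivariance axiom is restored rather than merely the weak one. The deformation parameter $r$ plays no essential role beyond the flatness and filtration identifications of Lemma \ref{associatedgradedofdeformations}; the deformation functor commutes with pullback, pushforward, and invariants, so the argument is structurally identical to the undeformed case.
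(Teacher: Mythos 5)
This theorem is stated in the paper as an imported result --- it is cited from \cite[Corollary 11.9]{Sta1} and no proof is given or sketched here; the whole section is merely assembling results from \cite{Sta1} for later use. So there is no proof in this paper against which your proposal can be compared.

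That said, a few remarks on your sketch as a stand-alone argument. The overall shape --- reduce to the trivially-trivialised case and use the $(f^*,f_*)$-adjunction together with classical descent for quasi-coherent sheaves, then carry the $\mathcal{D}$-module structure along via the identification $\mathcal{D}\cong f^{\#}(f_{\#}\mathcal{D}^G)$ from Proposition~\ref{eqdescentliealg} --- is the standard route and is plausible. Two places would need considerably more care before this could count as a proof. First, your discussion of why $(f_*\mathcal{M})^G$ carries an $f_{\#}\mathcal{D}^G$-action is vague: you write of ``the appropriate quotient defining $f_{\#}\mathcal{D}^G$'' without pinning down what that quotient is. The naive invariant sheaf $(f_*\mathcal{D})^G$ is strictly larger than the descended tdo (indeed, the paper notes that $\widetilde{\mathcal{D}}_X=(\xi_*\mathcal{D}_{\widetilde X})^H$ is \emph{not} a tdo on $X$), so the construction of $f_{\#}\mathcal{D}^G$ must kill the vertical operators $i_{\mathfrak{g}}(r\mathfrak{g})$ in a precise way; you must verify that the $\mathcal{D}$-action on $\mathcal{M}$ actually factors through that particular construction, not merely observe that $i_{\mathfrak{g}}(r\mathfrak{g})$ annihilates invariant sections. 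Second, the claim that ``being a quasi-inverse pair of equivalences is a local property on $X$'' is correct for the unit and counit being isomorphisms, but you also need to check that the two functors are themselves well-defined and compatible with restriction for non-affine $X$ (so that the gluing of $f_{\#}\mathcal{D}^G$ and of descended modules is consistent); this is nontrivial bookkeeping that your sketch glosses over. In short: as a blind proof of the cited theorem the proposal identifies the right strategy but is too thin to be verified, and since the paper offers no proof of its own there is no basis for saying your route is the same or different.
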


In particular setting $B=e$ the trivial group, we obtain an equivalence of categories between coherent $G$-equivariant $\mathcal{D}$-modules and coherent $f_{\#} \mathcal{D}^G$-modules. 

\section{An equivalence a la Borho-Brylinski}
\label{d2sectionBoBr}

Throughout this section, we let $G$ be a connected, smooth, affine algebraic group over $\Spec R$,  $B$ a closed subgroup of $G$ and we make the following assumption:

\begin{assumption}
\label{d2loctrivialassumption}
The quotient scheme $X=G/B$ is an $R$-variety and the quotient map $d_B:G \to X$ given by $d_B(g)=gB$ is a locally trivial $B$-torsor with respect to the action $\diamond$ given by $b \diamond g=gb^{-1}$.
\end{assumption}

This is satisfied in particular when $B$ is a Borel subgroup of $G$ and $X=G/B$ becomes the flag scheme.

\subsection{Notation and preliminaries}

First, we introduce some notation: we will denote the standard action (left multiplication) of $G$ or $B$ on $X$ by $gx$ for all $g \in G$ and $x \in X$. Furthermore, the standard $B$ action on $X$ (the restriction of the $G$-action to $B$)  will also be denoted act$_B$.

Define: 
\begin{itemize}

\item{$\act_{1,G,B}(\square):(G \times B) \times( G \times X) \to (G \times X):
(g,b) \square (h,x):=(ghb^{-1},bx) $.}
\item{$\act_{2,G,B}(\triangle):(G \times B) \times( G \times X) \to (G \times X):
(g,b) \triangle (h,x):=(ghb^{-1},gx) $.}
\item{$p:G \times X \to X, \quad p(g,x):=x.$}
\item{$\delta: G \times X \to X, \quad \delta(g,x):=g^{-1}x$.}
\item{$\alpha: G \times X \to G \times X, \quad \alpha(g,x):=(g,g^{-1}x)$.}
\end{itemize}

\begin{lemma}
\label{d2introlemma}
The following statements are easy to see:

\begin{itemize}
\item{$p \circ \alpha= \delta$.}
\item{$\alpha$ is a bijective $G \times B$-equivariant map with respect to the actions $\triangle$ and $\square$.}
\end{itemize}
\end{lemma}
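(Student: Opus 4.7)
The plan is to verify both bullets by direct computation with the defining formulas, working at the level of the functor of points so that the identities hold universally over $\Spec R$.

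For the first bullet I would simply unwind the definitions: $p(\alpha(g,x)) = p(g, g^{-1}x) = g^{-1}x = \delta(g,x)$, which is the claim. For bijectivity of $\alpha$ in the second bullet I would exhibit an explicit inverse $\beta \colon (g,x) \mapsto (g, gx)$ and check that $\alpha \circ \beta$ and $\beta \circ \alpha$ both reduce to the identity using only the unit and associativity axioms of the $G$-action, e.g.\ $\alpha(\beta(g,x)) = \alpha(g, gx) = (g, g^{-1}gx) = (g,x)$.

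The substantive step is the $(G \times B)$-equivariance. Here I would act by a general element $(g,b) \in G \times B$ on a general point $(h,x) \in G \times X$ and compare the two composites. One side gives
\[ \alpha\bigl((g,b) \triangle (h,x)\bigr) = \alpha(ghb^{-1}, gx) = \bigl(ghb^{-1},\, (ghb^{-1})^{-1}(gx)\bigr) = (ghb^{-1},\, bh^{-1}x), \]
using $(ghb^{-1})^{-1} g = bh^{-1}g^{-1}g = bh^{-1}$. The other side gives
\[ (g,b) \square \alpha(h,x) = (g,b) \square (h, h^{-1}x) = (ghb^{-1},\, b \cdot h^{-1}x) = (ghb^{-1},\, bh^{-1}x). \]
The two expressions agree, which is the required equivariance.

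There is no real obstacle: the whole lemma is bookkeeping with the formulas for $\square$, $\triangle$, $\alpha$, $p$, and $\delta$. The only thing worth flagging is that these pointwise manipulations must be read through the functor-of-points formalism in force throughout the paper, but since every identity used is an instance of the group axioms and the $G$-action axioms applied naturally in $A$-points, this causes no difficulty.
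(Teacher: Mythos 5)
Your computation is correct and supplies exactly the routine verification the paper omits (the lemma is stated without proof, as "easy to see"): the identity $p\circ\alpha=\delta$, the inverse $\beta(g,x)=(g,gx)$, and the equivariance check $\alpha\bigl((g,b)\triangle(h,x)\bigr)=(g,b)\square\,\alpha(h,x)$ are all as intended, and your interpretation of which action lives on the source ($\triangle$) and which on the target ($\square$) is the one consistent with how the lemma is used in Lemma \ref{d2deltaloctrivialequivarianttorsor}. The remark about reading the pointwise manipulations through the functor of points is the right way to make this rigorous over $\Spec R$.
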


\begin{lemma}
\label{d2deltaloctrivialequivarianttorsor}
The map $\delta$ is a $B$-equivariant locally trivial $G$-torsor. Further, the $\triangle$ actions of $G$ and $B$ on $G \times X$ commute.
\end{lemma}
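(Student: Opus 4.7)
The statement packages three claims: (a) the commutativity of the $G$- and $B$-subactions of $\triangle$, (b) $B$-equivariance of $\delta$, and (c) local triviality of $\delta$ as a $G$-torsor under the $\triangle$-action of $G$. My plan is to dispatch (a) and (b) by brute calculation on $S$-points and then reduce (c) to the tautology that the projection $p:G \times X \to X$ is a trivial $G$-torsor, using the $G \times B$-equivariant isomorphism $\alpha$ already recorded in Lemma \ref{d2introlemma}.

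For (a), one simply notes that the $G$-subaction of $\triangle$ sends $(h,x)$ to $(gh,gx)$ while the $B$-subaction sends $(h,x)$ to $(hb^{-1},x)$, and then observes that $g \triangle (b \triangle (h,x))$ and $b \triangle (g \triangle (h,x))$ both equal $(ghb^{-1},gx)$; the two subactions commute and together assemble to the $G \times B$-action $\triangle$. For (b), one computes $\delta(b \triangle (h,x)) = (hb^{-1})^{-1}x = bh^{-1}x = b \cdot \delta(h,x)$, which shows $B$-equivariance with respect to the standard $B$-action on $X$.

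For (c), the key input is Lemma \ref{d2introlemma}: the map $\alpha$ is a scheme isomorphism (its inverse is $(h,y) \mapsto (h,hy)$) and it is $G \times B$-equivariant with respect to $\triangle$ on the source and $\square$ on the target, and moreover $\delta = p \circ \alpha$. Now under the $\square$-action the $G$-subaction on $G \times X$ is left multiplication on the first factor, so $p$ is manifestly a trivial (in particular locally trivial) $G$-torsor. Transporting this structure along $\alpha$ exhibits $\delta$ as a locally trivial $G$-torsor for the $\triangle$-action of $G$, completing the proof.

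I do not anticipate a genuine obstacle here; the work has already been done inside the definition of $\alpha$ and Lemma \ref{d2introlemma}. The only point that requires any care is bookkeeping the two different $G \times B$-actions $\square$ and $\triangle$ and keeping track that $\alpha$ swaps them in the correct direction, so that the trivial torsor structure on $p$ (for $\square$) translates to the desired torsor structure on $\delta$ (for $\triangle$).
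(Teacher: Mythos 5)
Your proposal is correct and follows essentially the same route as the paper: both anchor local triviality of $\delta$ on the fact that $p$ is a trivial $G$-torsor for the $\square$-action and transport this along the $G\times B$-equivariant isomorphism $\alpha$, using $\delta = p\circ\alpha$. The only cosmetic difference is that you verify the $B$-equivariance of $\delta$ and the commutativity of the two $\triangle$-subactions by direct $S$-point computation, whereas the paper notes these hold for $\square$ and transfers them through $\alpha$ via Lemma \ref{d2introlemma}; both are equally valid.
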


\begin{proof}
By construction, we have that $p$ is a locally trivial $G$-torsor. The $\square$ actions of $G$ and $B$ actions on $G \times X$ commute. The claim follows by Lemma \ref{d2introlemma}, by viewing $\alpha$ as a $B$-equivariant locally trivial $G$-torsor.
\end{proof}

We introduce further notation.

\begin{itemize}
\item{Let $\act_G: G \times (X \times X) \to X \times X, \act_G(g,x,y):=(gx,gy)$  be the diagonal action .}

\item{Let $d:G \times X \to X \times X, d(h,x):=(hB,x)$ be the quotient map.}
\end{itemize}

\begin{lemma}
\label{d2dislocallytrivialBtorsor}

By abuse of notation, we will denote  $\triangle$ the $B$ action on $G \times X$ given by $b \triangle (g,x)= (gb^{-1}, x) $. Then the descent map $d$ is a $G$-equivariant locally trivial $B$-torsor with respect to the $B$-action given by $\triangle$.

\end{lemma}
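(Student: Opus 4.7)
The plan is to observe that the map $d$ is essentially a direct product of $d_B$ and $\id_X$, and to transport the $B$-torsor structure on $d_B$ from Assumption \ref{d2loctrivialassumption} along this product decomposition.

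More precisely, I would first note that $d = d_B \times \id_X$ as maps $G \times X \to X \times X$, since $d(h,x) = (hB, x) = (d_B(h), x)$. Under this identification, the $B$-action $b \triangle (h,x) = (hb^{-1}, x) = (b \diamond h, x)$ is exactly the product of the action $\diamond$ on $G$ (for which $d_B$ is a torsor) and the trivial $B$-action on $X$.

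Next I would verify the torsor structure locally. By Assumption \ref{d2loctrivialassumption}, pick an affine open cover $(U_i)_{i\in I}$ of $X$ together with $B$-equivariant trivializations $\phi_i : d_B^{-1}(U_i) \xrightarrow{\sim} U_i \times B$ over $U_i$. Setting $V_i := U_i \times X \subset X \times X$, these open subschemes cover $X \times X$, and
\[
d^{-1}(V_i) = d_B^{-1}(U_i) \times X \xrightarrow[\phi_i \times \id_X]{\sim} U_i \times B \times X \cong V_i \times B,
\]
the last isomorphism being a mere swap of factors. This is $B$-equivariant because the $B$-action only touches the $G$-factor, so $d$ is a locally trivial $B$-torsor.

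For $G$-equivariance, a direct computation suffices: restricting the $\triangle$-action to $G \subset G \times B$ gives $g \triangle (h, x) = (gh, gx)$, hence
\[
d\bigl(g \triangle (h,x)\bigr) = d(gh, gx) = (ghB, gx) = \act_G\bigl(g, d(h,x)\bigr),
\]
so $d$ intertwines the $\triangle$-action on $G \times X$ with the diagonal action $\act_G$ on $X \times X$. Finally, I would observe that the restricted $G$- and $B$-actions via $\triangle$ on $G \times X$ commute (this is part of the statement that $\triangle$ defines an action of the product group $G \times B$, already implicit in the definition of $\act_{2,G,B}$), which is needed for the torsor statement to be compatible with the equivariance. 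There is no serious obstacle here; the only thing to be careful about is that the $B$-action on $G$ inside $\triangle$ really is the $\diamond$-action from Assumption \ref{d2loctrivialassumption}, so that the local trivializations of $d_B$ can be used verbatim.
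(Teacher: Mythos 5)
Your argument follows the same route as the paper's proof: write $d = d_B \times \id_X$, check $G$-equivariance by the direct computation $d(g\triangle(h,x)) = (ghB,gx) = \act_G(g,d(h,x))$, and transport the trivializations of the torsor $d_B$ across the product decomposition. The one defect is your choice of trivializing cover: $V_i = U_i \times X$ is open but not affine, since $X = G/B$ is the flag scheme and hence not affine for nontrivial $G$. The notion of locally trivial torsor in use here requires a trivializing \emph{affine} open cover whose coordinate rings are finitely generated $R$-algebras --- the paper's proof goes out of its way to verify exactly this, taking $V_{ij} = U_i \times U_j$ and noting that $\mathcal{O}_X(U_i) \otimes_R \mathcal{O}_X(U_j)$ is a finitely generated $R$-algebra. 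Refining your cover to $\{U_i \times U_j\}$ and restricting your trivializations $\phi_i \times \id$ accordingly repairs this at no cost; everything else in your write-up (the identification of $\triangle$ with $\diamond \times \mathrm{triv}$, the $B$-equivariance of the trivializations, the $G$-equivariance computation, and the commutation of the $G$- and $B$-actions) agrees with the paper's argument.
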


\begin{proof}
We have

$$d(g \triangle(h,x))=d(ghb^{-1},gx)=(ghB,gx)=\act_G(g,d(h,x)),$$
so $d$ is indeed $G$-equivariant.

We have that $d=d_B \times \id_X$ and since $d_B$ is faithfully flat and locally of finite type so is $d$. Since $d_B$ is a $B$-torsor, we have that the map

$$ G \times B \to G \times_{X} G$$

sending $(g,b) \to (g,gb^{-1})$ is an isomorphism, so the map

$$ (G \times X) \times B \to (G \times X) \times_ {X \times X} (G \times X)$$

sending $((g,x),b) \to ((g,x),(gb^{-1},x))$ is also an isomorphism. Therefore, $d$ is indeed a $B$-torsor.

To conclude, one needs to show that $d$ is locally trivial. Let $S=\{U_i\}_{i \in I}$ be an affine open cover of $X$ trivialising the torsor $d_B$ and let $\phi_i: B \times U_i \to d_B^{-1}(U_i)$ the corresponding $B$-invariant isomorphisms. Define $\{V_{ij}:=U_i \times U_j | U_i, U_j \in S \}$. Then $\{V_{ij}\}$ is an affine open cover of $X \times X$. We have that  

$$ \mathcal{O}_{X \times X}(V_{ij})= \mathcal{O}_{X \times X} (U_i \times U_j) \cong \mathcal{O}_{X}(U_i) \uset{R} \mathcal{O}_{X}(U_j) \text{ for any } i,j \in I.$$

By assumption, for any $k \in I, \mathcal{O}_{X}(U_k)$ is a finitely generated $R$-algebra; therefore the ring $\mathcal{O}_{X}(U_i) \uset{R} \mathcal{O}_{X}(U_j)$ is also a finitely generated $R$-algebra.

We claim that any  $\{V_{ij}\}$ trivialises the torsor $d$. We have by definition $d^{-1}(V_{ij})=(d_B^{-1}U_i,U_j)$.  $$ \text{Let } \varphi_{ij}: B \times U_i \times U_j \to (d_B^{-1}U_i,U_j) \text{ be defined by } \varphi(b,x,y):= (\phi_i(b,x),y).$$  

Then it is trivial to check that this is an isomorphism compatible with the $B$-action given that $\phi_i$ is. Therefore, we have checked all the conditions to make $d$ a locally trivial $B$-torsor.   
\end{proof}

\subsection{Pullback of representations of homogeneous twisted differential operators on \texorpdfstring{$X \times X$}{X times X}}

We keep the notation from the previous subsection. Let $i_r:X \to X \times X, i_r(x)=(eB,x)$ denote the inclusion of $X$ into the right copy of $X \times X$. We also fix $(\mathcal{D},i_{\mathfrak{g}})$ an $r$-deformed $G$-htdo on $X \times X$ with respect to the $G$-action defined by $\act_G$. The goal of this subsection is to prove the following theorem:

\begin{theorem}
\label{d2alaBorho-Brylinski}
The pullback $i_r^{\#}\mathcal{D}$ is an $r$-deformed $B$-htdo and the functor $$i_r^{\#}: \Coh(\mathcal{D},G) \to \Coh(i_r^{\#}\mathcal{D},B)$$

is an equivalence of categories.

\end{theorem}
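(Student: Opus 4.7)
The plan is to realise $i_r^{\#}$ as the composition of two equivariant descent equivalences provided by Theorem \ref{eqdescentrep}, using $G \times X$ as an intermediate. The two relevant torsors, $d: G \times X \to X \times X$ ($G$-equivariant locally trivial $B$-torsor, Lemma \ref{d2dislocallytrivialBtorsor}) and $\delta: G \times X \to X$ ($B$-equivariant locally trivial $G$-torsor, Lemma \ref{d2deltaloctrivialequivarianttorsor}), together with the $G \times B$-equivariant isomorphism $\alpha: G \times X \to G \times X$ of Lemma \ref{d2introlemma}, will let me construct natural identifications $\delta_{\#}(d^{\#}\mathcal{D})^{G} \cong i_r^{\#}\mathcal{D}$ and $\delta_*(d^{\#}\mathcal{M})^{G} \cong i_r^{\#}\mathcal{M}$. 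The geometric heart of the argument is the equality $d \circ \alpha^{-1} = \sigma_X \circ (\id_G \times i_r)$, which is immediate since both maps send $(g,y) \in G \times X$ to $(gB, gy)$.

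For the statement about htdo's, I combine this equality with the $G$-equivariance of $\mathcal{D}$, which provides an isomorphism $\sigma_X^{\#}\mathcal{D} \cong p_{X \times X}^{\#}\mathcal{D}$. Using $p_{X \times X} \circ (\id_G \times i_r) = i_r \circ p$, this yields
$$(\alpha^{-1})^{\#} d^{\#}\mathcal{D} \;\cong\; p^{\#} i_r^{\#}\mathcal{D},$$
so that $d^{\#}\mathcal{D} \cong \delta^{\#} i_r^{\#}\mathcal{D}$ as $r$-deformed $G \times B$-htdo's on $G \times X$. Proposition \ref{eqdescentliealg} applied to the $G$-torsor $\delta$ then gives $\delta_{\#}(d^{\#}\mathcal{D})^{G} \cong i_r^{\#}\mathcal{D}$. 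In particular $i_r^{\#}\mathcal{D}$ is a $B$-htdo (alternatively this is immediate from \cite[Corollary 7.6]{Sta1} since $i_r$ is $B$-equivariant and $\mathcal{D}$ restricts to a $B$-htdo).

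For the equivalence of categories, Theorem \ref{eqdescentrep} applied to the $B$-torsor $d$ (with the roles of $G$ and $B$ in the statement exchanged) yields an equivalence $d^{\#}: \Coh(\mathcal{D}, G) \xrightarrow{\sim} \Coh(d^{\#}\mathcal{D}, G \times B)$, and applied to the $G$-torsor $\delta$ it yields $\delta_*(-)^{G}: \Coh(d^{\#}\mathcal{D}, G \times B) \xrightarrow{\sim} \Coh(i_r^{\#}\mathcal{D}, B)$; their composition is therefore an equivalence $\Coh(\mathcal{D}, G) \simeq \Coh(i_r^{\#}\mathcal{D}, B)$. To identify this composite with $i_r^{\#}$, I run the previous argument at the module level: the $G$-equivariance structure of any $\mathcal{M} \in \Coh(\mathcal{D}, G)$ produces a natural $G \times B$-equivariant $d^{\#}\mathcal{D}$-linear isomorphism $d^{\#}\mathcal{M} \cong \delta^{\#} i_r^{\#}\mathcal{M}$, and descending via $\delta_*(-)^{G}$ recovers $\delta_*(d^{\#}\mathcal{M})^{G} \cong i_r^{\#}\mathcal{M}$ by Theorem \ref{eqdescentrep}.

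The main technical obstacle is compatibility of the comparison isomorphism with all structures at once: it must be $d^{\#}\mathcal{D}$-linear and equivariant for both the $G$-action (inherited via $\alpha$ from the equivariance datum of $\mathcal{M}$) and the $B$-action (coming from the $B$-torsor structure of $d$). Concretely, one has to chase the cocycle condition in diagram (\ref{eqmoduleGhtdocommdia}) through the $\square$ and $\triangle$ actions introduced in Section \ref{d2sectionBoBr}, tracking how $\alpha$ intertwines them. The underlying geometric picture is clean, but making the equivariance data line up coherently is where most of the care will be needed to promote the composite descent equivalence to an honest natural isomorphism with $i_r^{\#}$.
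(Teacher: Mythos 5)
Your proposal follows essentially the same route as the paper: the same intermediate space $G\times X$ with the two torsors $d$ and $\delta$, the same key geometric identity (your $d\circ\alpha^{-1}=\act_G\circ(\id_G\times i_r)$ is the paper's $\act_G\circ s=i_r\circ\delta$ in different packaging), the same descent results, and the same module-level isomorphism $d^{\#}\mathcal{M}\cong\delta^{\#}i_r^{\#}\mathcal{M}$. The only difference is cosmetic: you apply $\delta_*(-)^G$ to that isomorphism to identify $i_r^{\#}$ directly with the composite equivalence $\delta_*(-)^G\circ d^{\#}$, whereas the paper applies $d_*^B(-)$ to obtain only a left quasi-inverse and then upgrades it via a small abstract lemma (Lemma \ref{d2leftinverserightinverse}); your assembly is marginally more direct but rests on the same content.
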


To prove this theorem, we will need some additional results:

\begin{lemma}
\label{d2eqdescentfortorsord}
The pullback $d^{\#} \mathcal{D}$ is an $r$-deformed $G \times B$-htdo and the functors
\begin{equation}
\begin{split}
d^{\#}(-): \Coh(\mathcal{D},G) \to \Coh(d^{\#} \mathcal{D},G \times B),\\
d_*(-)^B:\Coh(d^{\#} \mathcal{D},G \times B) \to \Coh(\mathcal{D},G)
\end{split}
\end{equation}

are quasi-inverses equivalences of categories.
\end{lemma}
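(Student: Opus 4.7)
The plan is to apply the general equivariant descent machinery of Section \ref{d2sectionbackground} to the map $d:G \times X \to X \times X$, after reconciling the naming conventions: here the torsor group is $B$ and the extra equivariance group is $G$, which is the opposite of the convention in Proposition \ref{eqdescentliealg} and Theorem \ref{eqdescentrep}. By Lemma \ref{d2dislocallytrivialBtorsor}, $d$ is a $G$-equivariant locally trivial $B$-torsor, so the hypotheses of both cited results are met, up to swapping the roles of the two groups. First I would check the preliminary point that is implicit in Lemma \ref{d2dislocallytrivialBtorsor} but not explicitly stated: namely that the $B$-action $b \triangle (g,x)=(gb^{-1},x)$ and the $G$-action $g \triangle (h,x)=(gh,gx)$ on $G \times X$ commute. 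This is a one-line computation.

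Next, to establish that $d^{\#}\mathcal{D}$ is an $r$-deformed $G \times B$-htdo, I would invoke Proposition \ref{eqdescentliealg} with the torsor group being $B$ and the extra group being $G$. The proposition supplies inverse bijections $d^{\#}(-)$ and $d_{\#}(-)^B$ between $r$-deformed $G \times B$-htdo's on $G \times X$ and $r$-deformed $G$-htdo's on $X \times X$. Applied to the given $\mathcal{D}$, this yields the first assertion of the lemma and, as a byproduct, a canonical identification $d_{\#}(d^{\#}\mathcal{D})^B \cong \mathcal{D}$ which will be needed in the second step.

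For the equivalence of categories, I would apply Theorem \ref{eqdescentrep} verbatim to the torsor $d$, again with the roles of $G$ and $B$ swapped in its statement. It produces quasi-inverse equivalences
\[
d^{\#}(-):\Coh(d_{\#}(d^{\#}\mathcal{D})^B,G) \longleftrightarrow \Coh(d^{\#}\mathcal{D},G \times B):d_*(-)^B.
\]
Substituting the identification $d_{\#}(d^{\#}\mathcal{D})^B \cong \mathcal{D}$ from the first step yields the desired quasi-inverse equivalences between $\Coh(\mathcal{D},G)$ and $\Coh(d^{\#}\mathcal{D},G \times B)$.

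The main obstacle is entirely bookkeeping: making sure that the roles of the two groups, and in particular the distinction between the $G \times B$-action $\triangle$ on $G \times X$ appearing in $\act_{2,G,B}$ and its restrictions to the individual $B$- and $G$-factors used for the torsor structure and the equivariance, are tracked consistently. Once this is pinned down and the commutation of the two subactions is verified, the lemma is essentially a direct specialisation of the two descent results cited.
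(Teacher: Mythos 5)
Your proposal is correct and follows essentially the same route as the paper: the paper's proof is a one-line citation of Lemma \ref{d2dislocallytrivialBtorsor}, Proposition \ref{eqdescentliealg}, and Theorem \ref{eqdescentrep}, which is exactly the three-step argument you lay out (your commutativity check is automatic since both actions are restrictions of the single $G\times B$-action $\act_{2,G,B}$, and your note about the identification $d_{\#}(d^{\#}\mathcal{D})^B \cong \mathcal{D}$ is a helpful unpacking of why the two cited results chain together).
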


\begin{proof}
This follows from Lemma \ref{d2dislocallytrivialBtorsor}, \cite[Corollary 10.13]{Sta1} and \cite[Corollary 11.9]{Sta1}.
\end{proof}

\begin{lemma}
\label{d2eqdescentfortorsordelta}
Denote $\mathcal{A}:=\delta_{\#} (d^{\#} \mathcal{D})^G$. Then $\mathcal{A}$ is an $r$-deformed $B$-htdo and the functors:
\begin{equation}
\begin{split}
\delta^{\#}(-): \Coh(\mathcal{A},B) \to \Coh(d^{\#} \mathcal{D},G \times B),\\
\delta_*(-)^G:\Coh(d^{\#} \mathcal{D},G \times B) \to \Coh(\mathcal{A},B)
\end{split}
\end{equation}

are quasi-inverses equivalences of categories.

\end{lemma}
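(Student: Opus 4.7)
The plan is to deduce this lemma directly from the general equivariant descent results of Section \ref{d2sectionbackground}, applied to the map $\delta : G \times X \to X$. The groundwork has already been laid. By Lemma \ref{d2eqdescentfortorsord} the pullback $d^{\#}\mathcal{D}$ is an $r$-deformed $G \times B$-htdo on $G \times X$, where the $B$-component acts via the $\triangle$ action and the $G$-component acts via the $\square$ action (or equivalently via $\triangle$ after transport by $\alpha$). By Lemma \ref{d2deltaloctrivialequivarianttorsor} the map $\delta$ is a $B$-equivariant locally trivial $G$-torsor, and the $G$- and $B$-actions on $G \times X$ (both under $\triangle$) commute. Hence all the hypotheses required to invoke the descent theorems are in place, with $Y = G \times X$, $X = X$, $f = \delta$, and with the roles of the torsor group and the auxiliary commuting group played by $G$ and $B$ respectively.

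For the first assertion, I would apply Proposition \ref{eqdescentliealg} in this setup: it identifies the category of $r$-deformed $G \times B$-htdo's on $G \times X$ with that of $r$-deformed $B$-htdo's on $X$ via the mutually inverse operations $\delta_{\#}(-)^G$ and $\delta^{\#}(-)$. Since $d^{\#}\mathcal{D}$ lies in the former category, its descent $\mathcal{A} := \delta_{\#}(d^{\#}\mathcal{D})^G$ automatically inherits the structure of an $r$-deformed $B$-htdo on $X$.

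For the equivalence of categories, the analogous application of Theorem \ref{eqdescentrep} to $\delta$, with $G \times B$-htdo $d^{\#}\mathcal{D}$, yields that the functors
\[
\delta^{\#}(-) : \Coh(\mathcal{A}, B) \longrightarrow \Coh(d^{\#}\mathcal{D}, G \times B), \qquad \delta_*(-)^G : \Coh(d^{\#}\mathcal{D}, G \times B) \longrightarrow \Coh(\mathcal{A}, B)
\]
are quasi-inverse equivalences. This is precisely the content of the second claim.

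There is essentially no obstacle beyond bookkeeping: the proof is a direct citation of the general machinery, once one has recognised that Lemma \ref{d2deltaloctrivialequivarianttorsor} supplies exactly the geometric hypotheses (locally trivial torsor, commuting auxiliary action, equivariance of $\delta$) demanded by Proposition \ref{eqdescentliealg} and Theorem \ref{eqdescentrep}. The only mildly delicate point is keeping track of which $\triangle$ or $\square$ action is being used at each step, but this has already been sorted out in Lemma \ref{d2introlemma} via the $G \times B$-equivariant isomorphism $\alpha$ relating the two.
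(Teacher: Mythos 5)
Your proof is correct and follows exactly the paper's own route: the paper's proof is a one-line citation of Lemma \ref{d2deltaloctrivialequivarianttorsor} together with the descent results \cite[Corollary 10.13]{Sta1} and \cite[Corollary 11.9]{Sta1} (i.e.\ Proposition \ref{eqdescentliealg} and Theorem \ref{eqdescentrep}), applied to the torsor $\delta$. Your version merely spells out the same bookkeeping in more detail.
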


\begin{proof}
This follows from Lemma \ref{d2deltaloctrivialequivarianttorsor}, \cite[Corollary 10.13]{Sta1} and \cite[Corollary 11.9]{Sta1}.
\end{proof}

As a corollary, we obtain from the previous two lemmas an equivalence between $\Coh(\mathcal{D},G)$ and $\Coh(\mathcal{A},B)$.

\begin{lemma}
\label{d2isomorphismsofhtdo}
The sheaves of $r$-deformed twisted differential operators $\mathcal{A}$ and $ i_r^{\#} \mathcal{D}$ are isomorphic. In particular, $i_r^{\#} \mathcal{D}$ is an $r$-deformed $B$-htdo.
\end{lemma}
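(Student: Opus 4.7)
The plan is to exhibit a section of the $G$-torsor $\delta$ and push everything through it. Define the morphism
\[
j : X \to G \times X, \qquad j(x) = (e, x),
\]
where $e \in G$ is the identity. Two elementary identities then connect the three relevant maps:
\[
\delta \circ j = \id_X \quad \text{and} \quad d \circ j = i_r,
\]
since $\delta(e, x) = e^{-1}x = x$ and $d(e, x) = (eB, x) = i_r(x)$. Thus $j$ is a global section of the $G$-torsor $\delta$, and pulling back via $j$ converts $d$ into $i_r$.

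Next, Lemma \ref{d2eqdescentfortorsordelta} (which is just Proposition \ref{eqdescentliealg} applied to the $B$-equivariant $G$-torsor $\delta$) gives a canonical isomorphism of $r$-deformed $G \times B$-htdos on $G \times X$, namely
\[
\delta^{\#}\mathcal{A} \cong d^{\#}\mathcal{D},
\]
coming from the fact that $\delta^{\#}$ and $\delta_{\#}(-)^G$ are inverse bijections. Applying $j^{\#}$ to both sides and using the contravariant functoriality of the pullback of $r$-deformed twisted differential operators, together with the trivial identity $\id_X^{\#} = \id$, I obtain
\[
\mathcal{A} = (\delta \circ j)^{\#}\mathcal{A} \cong j^{\#}(\delta^{\#}\mathcal{A}) \cong j^{\#}(d^{\#}\mathcal{D}) \cong (d \circ j)^{\#}\mathcal{D} = i_r^{\#}\mathcal{D},
\]
which is the desired isomorphism. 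Since $\mathcal{A}$ is an $r$-deformed $B$-htdo by Lemma \ref{d2eqdescentfortorsordelta}, the isomorphism transfers this structure to $i_r^{\#}\mathcal{D}$, giving the second assertion.

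The main obstacle is the justification of functoriality of $(-)^{\#}$ in the chain above. The definition of $f^{\#}$ in \cite[Section 7]{Sta1} is phrased for equivariant morphisms, and $j : X \to G \times X$ is not equivariant for any natural action. However, the underlying construction of the pullback only uses the morphism of schemes (the equivariance is needed solely to transport the htdo structure), so the composition identities $(d \circ j)^{\#} \cong j^{\#} \circ d^{\#}$ and $(\delta \circ j)^{\#} \cong j^{\#} \circ \delta^{\#}$ reduce to the standard functoriality of pullback of $\mathcal{O}$-modules and differential operators. A secondary point is that these canonical isomorphisms should be compatible with the $B$-equivariance structures, so that the final isomorphism $\mathcal{A} \cong i_r^{\#}\mathcal{D}$ is one of $r$-deformed $B$-htdos; this is a routine diagram chase using the equivariant versions of the functoriality statements, since $\delta$ and $d$ are both $B$-equivariant, and any ambiguity introduced by the non-equivariant map $j$ is killed after passing to the $B$-htdo structure on $\mathcal{A}$.
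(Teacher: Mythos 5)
Your identities $\delta\circ j=\id_X$ and $d\circ j=i_r$ are correct, and the route through the identity section $j$ of the torsor $\delta$ is genuinely different from the paper's argument. However, there is a gap at exactly the step you flag as ``the main obstacle'' and then dismiss: both composition isomorphisms $(\delta\circ j)^{\#}\cong j^{\#}\circ\delta^{\#}$ and $(d\circ j)^{\#}\cong j^{\#}\circ d^{\#}$ apply the functoriality of $(-)^{\#}$ with first map $j$. But $j$ is the base change of the identity section $e:\Spec R\to G$, hence a closed immersion into $G\times X$, and in particular it is not flat. The only composition result available, \cite[Corollary 7.7]{Sta1}, is invoked throughout this paper exclusively after verifying that the map applied first is flat (this is why the proof of the lemma records that $s$ and $\delta$ are flat before each use). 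Your reduction to ``standard functoriality of pullback of $\mathcal{O}$-modules'' does not close this: $f^{\#}\mathcal{D}$ is $f^{*}\mathcal{D}$ equipped with a ring structure and a filtration making it an $r$-deformed tdo, and the content of the composition isomorphism is precisely that the canonical $\mathcal{O}$-module identification respects these extra structures --- which is where the flatness hypothesis enters in the deformed setting over a general base ring. As written, the two key links in your chain are therefore unjustified.

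This is exactly the difficulty the paper's proof is engineered to avoid. It factors $d=q\circ s$ with $s(g,x)=(g^{-1},gB,x)$ flat and $q$ the projection, uses the $G$-homogeneity of $\mathcal{D}$ to replace $q^{\#}\mathcal{D}$ by $\act_G^{\#}\mathcal{D}$, and then exploits the scheme identity $\act_G\circ s=i_r\circ\delta$ together with flatness of $\delta$ to obtain $d^{\#}\mathcal{D}\cong\delta^{\#}i_r^{\#}\mathcal{D}$; applying $\delta_{\#}(-)^{G}$ then yields $\mathcal{A}\cong i_r^{\#}\mathcal{D}$ without ever composing $(-)^{\#}$ through a non-flat map. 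To salvage your approach you would need to prove the composition isomorphism for $(-)^{\#}$ along a section of a smooth morphism (or along arbitrary morphisms of $R$-varieties), which is a nontrivial strengthening of \cite[Corollary 7.7]{Sta1}. Your closing paragraph about $B$-equivariance is a lesser issue: since $i_r^{\#}\mathcal{D}$ is a priori only a tdo, the $B$-htdo structure is simply transported through the isomorphism with $\mathcal{A}$, so no compatibility check is required once the isomorphism of $r$-deformed tdos is established.
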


\begin{proof}

Define the maps $s:G \times X \to G \times X \times X, s(g,x):=(g^{-1},gB,x)$ and $q:G \times X \times X \to X \times X, q(g,x,y):=(x,y)$.

By construction, $d=q \circ s$, so $d^{\#} \mathcal{D} \cong (q \circ s)^{\#} \mathcal{D}$. Since $d$ and $q$ are locally trivial $B$ respectively $G$-torsors, $s$ is flat, so by \cite[Corollary 7.7]{Sta1}, we get $(q \circ s)^{\#} \mathcal{D} \cong s^{\#} q^{\#} \mathcal{D}$. Further, since $\mathcal{D}$ is a deformed $G$-htdo, we have $q^{\#} \mathcal{D} \cong \act_G^{\#} \mathcal{D}$. Therefore, using again that $s$ is flat, we have by \cite[Corollary 7.7]{Sta1} $s^{\#}q^{\#} \mathcal{D} \cong (\act_G \circ s)^{\#} \mathcal{D}$.

By construction, one has $\act_G \circ s=i_r \circ \delta$, so $(\act_G \circ s)^{\#} \mathcal{D} \cong (i_r \circ \delta)^{\#} \mathcal{D}$. Since $\delta$ is smooth, it is in particular flat, so by \cite[Corollary 7.7]{Sta1}, $(i_r \circ \delta)^{\#} \mathcal{D} \cong \delta^{\#} i_r^{\#} \mathcal{D}$. Therefore, we have obtained that $d^{\#} \mathcal{D} \cong \delta^{\#} i_r^{\#} \mathcal{D}$. To conclude we use \cite[Corollary 10.13]{Sta1} to obtain:

\begin{equation*}
\mathcal{A}=\delta_{\#}(d^{\#} \mathcal{D})^G \cong \delta_{\#}(\delta^{\#} i_r^{\#} \mathcal{D})^G \cong i_r^{\#} \mathcal{D}. \qedhere
\end{equation*} 
\end{proof}

\begin{lemma}
\label{d2leftquasiinverseforir}
Let $\mathcal{M} \in \Coh(\mathcal{D},G)$. Then

$$(d_*^B \circ \delta^{\#}) \circ i_r^{\#} \mathcal{M} \cong \mathcal{M},$$

i.e. the functor  $d_*^B \circ  \delta^{\#}$ is a left quasi-inverse to $i_r^{\#}$.
\end{lemma}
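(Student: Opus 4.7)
The plan is to mirror the proof of Lemma \ref{d2isomorphismsofhtdo}, but carried out at the level of the module $\mathcal{M}$ rather than the ring $\mathcal{D}$, and then to finish by invoking the equivariant descent equivalence of Lemma \ref{d2eqdescentfortorsord}. Conceptually: the ring identification $d^{\#} \mathcal{D} \cong \delta^{\#} i_r^{\#} \mathcal{D}$ upgrades to a module identification $d^{\#} \mathcal{M} \cong \delta^{\#} i_r^{\#} \mathcal{M}$, and then one applies $d_*(-)^B$, which kills $d^{\#}$ by equivariant descent.

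First I would establish the chain of module isomorphisms. Following the notation in the proof of Lemma \ref{d2isomorphismsofhtdo}, decompose $d = q \circ s$ with $s(g,x)=(g^{-1}, gB, x)$ and $q(g,x,y)=(x,y)$, and recall that $\act_G \circ s = i_r \circ \delta$. Since $s$ and $\delta$ are flat (in fact smooth), \cite[Corollary 7.7]{Sta1} applied to the modules yields natural isomorphisms $d^{\#}\mathcal{M} \cong s^{\#} q^{\#} \mathcal{M}$ and $(i_r \circ \delta)^{\#} \mathcal{M} \cong \delta^{\#} i_r^{\#} \mathcal{M}$. The $G$-equivariance datum $\alpha$ on $\mathcal{M}$ (for the diagonal $G$-action on $X \times X$ via $\act_G$) supplies an isomorphism $q^{\#} \mathcal{M} \cong \act_G^{\#} \mathcal{M}$ lifting the corresponding identification for $\mathcal{D}$ used in Lemma \ref{d2isomorphismsofhtdo}. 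Pulling this back along $s$ and using $\act_G \circ s = i_r \circ \delta$ chains everything together into
\[
d^{\#} \mathcal{M} \cong s^{\#} q^{\#} \mathcal{M} \cong s^{\#} \act_G^{\#} \mathcal{M} \cong (\act_G \circ s)^{\#} \mathcal{M} = (i_r \circ \delta)^{\#} \mathcal{M} \cong \delta^{\#} i_r^{\#} \mathcal{M},
\]
an isomorphism living over the ring isomorphism produced in Lemma \ref{d2isomorphismsofhtdo}.

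The proof then concludes by applying $d_*(-)^B$. Since $d$ is a $G$-equivariant locally trivial $B$-torsor (Lemma \ref{d2dislocallytrivialBtorsor}) and $\mathcal{M} \in \Coh(\mathcal{D}, G)$, Lemma \ref{d2eqdescentfortorsord} gives $d_*(d^{\#}\mathcal{M})^B \cong \mathcal{M}$ in $\Coh(\mathcal{D}, G)$. Composing with the isomorphism above yields the required natural isomorphism $(d_*(-)^B \circ \delta^{\#} \circ i_r^{\#})(\mathcal{M}) \cong \mathcal{M}$.

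The main obstacle will be bookkeeping of equivariance: one has to check that each $\#$-pullback isomorphism in the chain is an isomorphism of $G \times B$-equivariant $d^{\#}\mathcal{D}$-modules, not merely of underlying sheaves, so that $d_*(-)^B$ actually applies and returns $\mathcal{M}$ in $\Coh(\mathcal{D}, G)$. The ring-level compatibilities were already handled in Lemma \ref{d2isomorphismsofhtdo}; the module-level verification is formal but requires that the isomorphism $\alpha$ feeds through $q^{\#}\mathcal{M} \cong \act_G^{\#}\mathcal{M}$ in a way compatible with both the $G$-equivariance on $\mathcal{M}$ and the $B$-equivariance that descent along the torsor $d$ imposes on $d^{\#}\mathcal{M}$.
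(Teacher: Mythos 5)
Your proof is correct and follows essentially the same route as the paper: the paper's own proof simply says ``by repeating the argument in the previous lemma, we get $d^{\#}\mathcal{M} \cong \delta^{\#} i_r^{\#} \mathcal{M}$'' and then concludes via Lemma \ref{d2eqdescentfortorsord}, which is exactly the chain of isomorphisms and the final descent step you spell out (your extra care about equivariance bookkeeping is a reasonable elaboration of what the paper leaves implicit).
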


\begin{proof}

By repeating the argument in the previous lemma, we get $d^{\#} \mathcal{M} \cong \delta^{\#} \circ i_r^{\#} \mathcal{M}$. The claim then follows by Lemma \ref{d2eqdescentfortorsord}.
\end{proof}

To complete the proof of the  equivalence theorem, we need one more categorical lemma:

\begin{lemma}
\label{d2leftinverserightinverse}
Let $C$ and $D$ be two categories and let $H:C \to D$ and $F_1,F_2: D \to C$ be functors such that:

$$HF_1 \cong 1_D, \quad F_1H \cong 1_C, \quad  HF_2 \cong 1_D.$$

Then $F_2H \cong 1_C$.
\end{lemma}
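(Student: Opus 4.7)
The plan is to show that the two hypothetical left/right quasi-inverses $F_1$ and $F_2$ must agree up to natural isomorphism, and then transport the relation $F_1 H \cong 1_C$ across that identification.

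First, I would produce a natural isomorphism $F_2 \cong F_1$ out of the three given isomorphisms. Starting from the identity natural isomorphism and inserting $F_1 H \cong 1_C$ on the left and $H F_2 \cong 1_D$ on the right, one gets the chain
\[
F_2 \;\cong\; 1_C \circ F_2 \;\cong\; (F_1 H) \circ F_2 \;=\; F_1 \circ (H F_2) \;\cong\; F_1 \circ 1_D \;\cong\; F_1.
\]
Each step is either the unit law for functor composition or whiskering one of the given natural isomorphisms by a functor, so each is a bona fide natural isomorphism of functors $D \to C$.

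Whiskering the resulting natural isomorphism $F_2 \cong F_1$ on the right by $H$ produces a natural isomorphism $F_2 H \cong F_1 H$. Combining this with the hypothesis $F_1 H \cong 1_C$ yields $F_2 H \cong 1_C$, as required.

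There is no real obstacle here: the argument is a purely formal manipulation of natural isomorphisms, and the only thing to be careful about is that whiskering and horizontal composition of natural isomorphisms are well-defined, which is standard. The lemma is the categorical fact that a left quasi-inverse and a right quasi-inverse of the same functor must coincide (up to natural isomorphism), so a functor that has both is automatically an equivalence with a unique (up to isomorphism) quasi-inverse.
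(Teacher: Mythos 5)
Your proof is correct, and it takes a mildly but genuinely different route from the paper's. You first establish $F_2 \cong F_1$ via the chain $F_2 \cong (F_1H)F_2 = F_1(HF_2) \cong F_1$ and then whisker by $H$; the paper instead sandwiches $F_2H$ as $F_1(HF_2HF_1)H$ and evaluates this expression in two ways, using associativity to reassociate the composite, without ever isolating the isomorphism $F_1 \cong F_2$. The two arguments are both purely formal manipulations of whiskered natural isomorphisms, but yours is the standard ``left inverse equals right inverse'' argument (as for elements of a monoid) and is more economical: it uses only the hypotheses $F_1H \cong 1_C$ and $HF_2 \cong 1_D$, so the assumption $HF_1 \cong 1_D$ in the statement is actually superfluous for the conclusion $F_2H \cong 1_C$. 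The paper's version uses all three hypotheses but reaches the same conclusion; what your version buys is the sharper observation that a left quasi-inverse and a right quasi-inverse of the same functor are automatically isomorphic, with the stated lemma as an immediate corollary.
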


\begin{proof}

We have

$$    H( F_2 H) F_1 \cong (HF_2)(HF_1) \cong 1_D, \text{ so }               $$

\begin{equation}
\label{d2leftrightinverseeq1}
F_1 (HF_2HF_1) H \cong F_1H \cong 1_C. 
\end{equation}
On the other hand we have:

\begin{equation}
\label{d2leftrightinverseeq2}
F_1(HF_2HF_1)H \cong (F_1H)(F_2H)(F_1H) \cong 1_C(F_2H)1_C=F_2H.
\end{equation}
Thus, by combining the equations \eqref{d2leftrightinverseeq1} and \eqref{d2leftrightinverseeq2} we obtain $F_2H \cong 1_C$.
\end{proof}

We can now prove Theorem \ref{d2alaBorho-Brylinski}:

\begin{proof}
We have by Lemmas \ref{d2eqdescentfortorsord}, \ref{d2eqdescentfortorsordelta} and \ref{d2isomorphismsofhtdo} that $d_*^B \circ \delta^{\#}$ and $\delta_*^G \circ d^{\#}$ provide quasi-inverse equivalences of the categories mentioned in the statement. By Lemma \ref{d2leftquasiinverseforir}, $d_*^B \circ \delta^{\#}$ is a left quasi-inverse to $i_r^{\#}$, so by applying Lemma \ref{d2leftinverserightinverse} it is also a right quasi-inverse. Thus, for $\mathcal{M}$ a $G$-equivariant coherent $\mathcal{D}$-module, we have $i_r^{\#}\mathcal{M} \cong \delta_*^G \circ d^{\#} \mathcal{M}$. 
\end{proof}

By setting $R=\mathbb{C}$, $r=1$ and $\mathcal{D}=\mathcal{D}_{X \times X}$, we recover the classical Borho-Brylinski equivalence \cite[Proposition 3.6d)]{BoBr}.

Let $i_l:X \to X \times X$, $i_l(x):=(x,eB)$ to be the inclusion of $X$ into the left copy. Then by duality, we get:

\begin{corollary}
\label{d2leftinclusionalaBorho-Brylinski}
The pullback $i_l^{\#}\mathcal{D}$ is a $B$-htdo and the functor $$i_l^{\#}: \Coh(\mathcal{D},G) \to \Coh(i_l^{\#}\mathcal{D},B)$$

is an equivalence of categories.
\end{corollary}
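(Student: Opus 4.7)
The plan is to reduce Corollary \ref{d2leftinclusionalaBorho-Brylinski} to Theorem \ref{d2alaBorho-Brylinski} by means of the swap automorphism $\tau : X \times X \to X \times X$ defined by $\tau(x,y) := (y,x)$. A direct computation shows that $\tau$ is $G$-equivariant for the diagonal action $\act_G$: indeed $\tau(gx,gy) = (gy,gx) = \act_G(g, \tau(x,y))$. Since $\tau$ is an involutive isomorphism ($\tau^{-1} = \tau$), it is in particular flat. The crucial identity is $\tau \circ i_r = i_l$, which follows by unwinding the definitions: $\tau(i_r(x)) = \tau(eB, x) = (x, eB) = i_l(x)$.

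From these observations I would deduce the corollary in two steps. First, since $\tau$ is $G$-equivariant, \cite[Corollary 7.6]{Sta1} tells us that $\tau^{\#}\mathcal{D}$ is again an $r$-deformed $G$-htdo on $X \times X$. Second, since $\tau$ is flat, the functoriality of the pullback \cite[Corollary 7.7]{Sta1} gives a natural isomorphism $i_l^{\#}\mathcal{D} = (\tau \circ i_r)^{\#}\mathcal{D} \cong i_r^{\#}(\tau^{\#}\mathcal{D})$. Applying Theorem \ref{d2alaBorho-Brylinski} to $\tau^{\#}\mathcal{D}$ in place of $\mathcal{D}$, we conclude that $i_r^{\#}(\tau^{\#}\mathcal{D})$, and hence $i_l^{\#}\mathcal{D}$, is an $r$-deformed $B$-htdo on $X$.

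For the equivalence of categories, I would factor the functor $i_l^{\#}$ as the composite
\[
\Coh(\mathcal{D}, G) \xrightarrow{\tau^{\#}} \Coh(\tau^{\#}\mathcal{D}, G) \xrightarrow{i_r^{\#}} \Coh(i_r^{\#}\tau^{\#}\mathcal{D}, B) \cong \Coh(i_l^{\#}\mathcal{D}, B).
\]
The first arrow is an equivalence because $\tau$ is a $G$-equivariant automorphism: the functor $(\tau^{-1})^{\#} = \tau^{\#}$ furnishes a quasi-inverse. The second arrow is an equivalence by Theorem \ref{d2alaBorho-Brylinski} applied to the $G$-htdo $\tau^{\#}\mathcal{D}$. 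Hence the composition $i_l^{\#}$ is an equivalence of categories.

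The argument is essentially formal once the swap trick is in place; the only point requiring care is verifying that the natural isomorphisms above respect all the relevant structures, namely the htdo map $i_{\mathfrak{g}}$ on $\tau^{\#}\mathcal{D}$ and the $G$- and $B$-equivariance data on modules. These compatibilities are already subsumed by the functoriality statements \cite[Corollary 7.6, Corollary 7.7]{Sta1}, since $\tau$ is a $G$-equivariant isomorphism, so no substantive new work beyond Theorem \ref{d2alaBorho-Brylinski} is required.
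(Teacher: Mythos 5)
Your proof is correct and fills in exactly what the paper leaves implicit: the author writes only ``By duality, we get:'' before stating the corollary, and the duality in question is precisely the $G$-equivariant swap involution $\tau(x,y)=(y,x)$ with $\tau\circ i_r=i_l$. Your reduction of $i_l^{\#}$ to $i_r^{\#}\circ\tau^{\#}$ via the functoriality results from \cite{Sta1}, and the observation that $\tau^{\#}$ is a self-quasi-inverse equivalence on equivariant coherent modules, is the intended argument and is sound.
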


\subsection{Global sections of \texorpdfstring{$\mathcal{O}$-modules}{O}}

We keep the notation from the previous subsection. We aim to prove the following proposition:

\begin{proposition}
\label{d2zeroglobalsections}
Let $\mathcal{N} \in \QCoh(\mathcal{O}_X)$ such that $\Gamma(X, \mathcal{N})=0$. Then
            $$\Gamma(X \times X,(d_*)^B \delta^{*} \mathcal{N})=0.$$

\end{proposition}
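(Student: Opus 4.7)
The plan is to reduce global sections of the descended sheaf on $X \times X$ to global sections on $G \times X$ via the torsor $d$, and then to exploit the affineness of $G$ together with the factorisation $\delta = p \circ \alpha$ to bring $\Gamma(X,\mathcal{N})$ into the picture.

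Since $(d_*)^B \delta^* \mathcal{N}$ is by construction a subsheaf of $d_* \delta^* \mathcal{N}$, taking global sections over $X \times X$ gives an inclusion
\[
\Gamma(X \times X, (d_*)^B \delta^* \mathcal{N}) \hookrightarrow \Gamma(X \times X, d_* \delta^* \mathcal{N}) = \Gamma(G \times X, \delta^* \mathcal{N}),
\]
so it suffices to prove $\Gamma(G \times X, \delta^* \mathcal{N}) = 0$. By Lemma \ref{d2introlemma} the scheme automorphism $\alpha$ of $G \times X$ satisfies $p \circ \alpha = \delta$, whence $\delta^* \mathcal{N} \cong \alpha^* p^* \mathcal{N}$ and, $\alpha$ being an isomorphism of schemes, $\Gamma(G \times X, \delta^* \mathcal{N}) \cong \Gamma(G \times X, p^* \mathcal{N})$.

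Because $G$ is affine, the projection $p : G \times X \to X$ is an affine morphism; a local computation on any affine open $U = \Spec A \subset X$ gives $\Gamma(p^{-1}(U), p^* \mathcal{N}) = \mathcal{O}(G) \otimes_R \Gamma(U, \mathcal{N})$, so that $p_* p^* \mathcal{N} \cong \mathcal{O}(G) \otimes_R \mathcal{N}$. Flatness of $\mathcal{O}(G)$ over $R$, which holds as $G$ is smooth, lets us commute $\mathcal{O}(G) \otimes_R -$ with the finite limit computing $\Gamma(X,-)$ on the qcqs $R$-variety $X$, yielding
\[
\Gamma(G \times X, p^* \mathcal{N}) \cong \mathcal{O}(G) \otimes_R \Gamma(X, \mathcal{N}) = 0
\]
by the hypothesis. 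The only step deserving a moment's thought is this last commutation, and it is entirely taken care of by flatness of $\mathcal{O}(G)$; the rest is purely formal.
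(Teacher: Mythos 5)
Your proof is correct and follows essentially the same route as the paper: reduce to $\Gamma(G\times X,\delta^*\mathcal{N})$ via the definition of $d_*$ and the inclusion of $B$-invariants, use $\delta=p\circ\alpha$ with $\alpha$ an automorphism, and then kill $\Gamma(G\times X,p^*\mathcal{N})$ using flatness of $\mathcal{O}(G)$ over $R$ together with a finite affine \v{C}ech cover of $X$. The only difference is presentational — you package the \v{C}ech computation as ``$p_*p^*\mathcal{N}\cong\mathcal{O}(G)\otimes_R\mathcal{N}$ and flat tensor commutes with the finite limit,'' where the paper writes out the \v{C}ech complex explicitly — which is the same argument.
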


We will need two additional lemmas:

\begin{lemma}
\label{d2globalsectionsprojections}
Let $\mathcal{N}$ be a quasi-coherent $\mathcal{O}_X$-module such that $\Gamma(X, \mathcal{N})=0$. Then 

 $$\Gamma(G \times X, p^* \mathcal{N})=0.$$

\end{lemma}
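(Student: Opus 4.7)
The plan is to exploit that $G$ is an affine algebraic group over $R$, so the projection $p: G \times X \to X$ is an affine morphism; we can then reduce the statement to the vanishing of a tensor product. Since $p$ is affine, pushforward is exact on quasi-coherent sheaves and $\Gamma(G \times X, p^*\mathcal{N}) \cong \Gamma(X, p_*p^*\mathcal{N})$. By the projection formula for affine morphisms, $p_* p^*\mathcal{N} \cong \mathcal{N} \otimes_{\mathcal{O}_X} p_*\mathcal{O}_{G \times X}$, and since $G \times X$ is the relative spectrum over $X$ of the quasi-coherent $\mathcal{O}_X$-algebra $\mathcal{O}(G) \otimes_R \mathcal{O}_X$, one has $p_*\mathcal{O}_{G \times X} \cong \mathcal{O}(G) \otimes_R \mathcal{O}_X$. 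Combining these gives $p_* p^*\mathcal{N} \cong \mathcal{O}(G) \otimes_R \mathcal{N}$.

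It then remains to show $\Gamma(X, \mathcal{O}(G) \otimes_R \mathcal{N}) = 0$. Since $G$ is smooth over $R$, $\mathcal{O}(G)$ is a flat $R$-module; by Lazard's theorem it is a filtered colimit of finitely generated free $R$-modules $R^{n_\alpha}$, so $\mathcal{O}(G) \otimes_R \mathcal{N}$ is the corresponding filtered colimit of the sheaves $\mathcal{N}^{n_\alpha}$. In the intended application $X = G/B$ is projective and thus quasi-compact and quasi-separated, hence $\Gamma(X,-)$ commutes with filtered colimits and with finite direct sums. This yields $\Gamma(X, \mathcal{O}(G) \otimes_R \mathcal{N}) \cong \mathcal{O}(G) \otimes_R \Gamma(X, \mathcal{N}) = 0$, as required.

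The main subtlety to be careful about is precisely this last commutation of $\Gamma(X,-)$ with $(-) \otimes_R \mathcal{O}(G)$. A more elementary alternative, avoiding Lazard, is a direct Čech computation: pick an affine open cover $\{U_i\}$ of $X$ (whose finite intersections remain affine by the separatedness hypothesis), note that $\{G \times U_i\}$ is an affine open cover of $G \times X$ because $G$ is affine, write down the degree-zero part of the Čech complex
\[
0 \to \Gamma(G \times X, p^*\mathcal{N}) \to \prod_i \mathcal{O}(G) \otimes_R \mathcal{N}(U_i) \to \prod_{i,j} \mathcal{O}(G) \otimes_R \mathcal{N}(U_i \cap U_j),
\]
and use exactness of $(-) \otimes_R \mathcal{O}(G)$ (by flatness of $\mathcal{O}(G)$) together with finiteness of the cover to identify the kernel with $\Gamma(X,\mathcal{N}) \otimes_R \mathcal{O}(G) = 0$.
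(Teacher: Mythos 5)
Your proposal is correct, and in fact it contains two arguments: your ``elementary alternative'' at the end is precisely the paper's proof (finite affine cover $\{U_i\}$ of $X$, affine cover $\{G\times U_i\}$ of $G\times X$, the left-exact Čech sequence, and flatness of $\mathcal{O}(G)$ over $R$ to preserve injectivity of $\prod_i\mathcal{N}(U_i)\to\prod_{i,j}\mathcal{N}(U_{ij})$), so on that route there is nothing to add. Your primary route is genuinely different in packaging: since $p$ is affine you identify $p_*p^*\mathcal{N}\cong\mathcal{O}(G)\otimes_R\mathcal{N}$ via the projection formula, and then use Lazard's theorem together with the commutation of $\Gamma(X,-)$ with filtered colimits on a quasi-compact separated scheme to conclude $\Gamma(X,\mathcal{O}(G)\otimes_R\mathcal{N})\cong\mathcal{O}(G)\otimes_R\Gamma(X,\mathcal{N})=0$. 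This is a cleaner statement of what is really going on (global sections commute with tensoring by a flat $R$-module on a qcqs scheme) and it proves slightly more than is needed, at the cost of invoking Lazard and the colimit-commutation result; the Čech argument buys the same conclusion with only left-exactness of the Čech complex and flatness of $\mathcal{O}(G)$. Both arguments, like the paper's, implicitly use that $X$ admits a finite affine cover, which you correctly flag as holding in the intended application $X=G/B$.
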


\begin{proof}

Let $(U_i)_{i \in I}$ be an affine finite open cover of $X$. Then $(G \times U_i)_{i \in I}$ is an affine finite open cover of $G \times X$ stable under the map $p$. Let $U_{ij}:=U_i \cap U_j$.

Consider the Cech complex

$$0 \to \mathcal{N}(X) \to \prod_{i \in I} \mathcal{N}(U_i) \to \prod_{i,j \in I} \mathcal{N}(U_{ij}).$$ 

The assumption tells us that the last map is injective.

Consider now the Cech Complex
\begin{equation}
\label{d2Cechcomplexequation}
0 \to p^*\mathcal{N}(G \times X) \to \prod_{i \in I} p^*(\mathcal{N})(G \times U_i) \to \prod_{i,j \in I} p^*(\mathcal{N})(G \times U_{ij}).
\end{equation}

We have $p^*(\mathcal{N})(G \times U_i)= \mathcal{O}(G) \uset{R} \mathcal{N}(U_i)$ and since tensor product commutes finite products we get

$$ \prod_{i \in I} p^*(\mathcal{N})(G \times U_i) \cong  \mathcal{O}(G) \uset{R} \prod_{i \in I} \mathcal{N}(U_i),$$

and similarly

$$ \prod_{i,j \in I} p^*(\mathcal{N})(G \times U_{ij}) \cong  \mathcal{O}(G) \uset{R} \prod_{i,j \in I} \mathcal{N}(U_{ij}).$$

Since $\mathcal{O}(G)$ is flat over $R$ we get that the tensor product preserves injections, so  the last map in equation \eqref{d2Cechcomplexequation} is injective;  the claim follows.
\end{proof}

\begin{lemma}
\label{d2globalsectionsautomorphism}
Let $Y$ be a smooth scheme let $\beta:Y \to Y$ be an automorphism and let $\mathcal{M}$ be a quasi-coherent $\mathcal{O}_Y$-module such that $\Gamma(Y, \mathcal{M})=0$. Then $ \Gamma(Y,\beta^* \mathcal{M})=0.$

\end{lemma}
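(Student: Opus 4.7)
The plan is to observe that the statement is essentially tautological: an automorphism $\beta$ of $Y$ induces an equivalence on quasi-coherent $\mathcal{O}_Y$-modules, and this equivalence is compatible with taking global sections, so $\Gamma(Y, \beta^{*} \mathcal{M}) \cong \Gamma(Y, \mathcal{M})$.

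First I would unpack the definition $\beta^{*}\mathcal{M} = \mathcal{O}_Y \otimes_{\beta^{-1}\mathcal{O}_Y} \beta^{-1}\mathcal{M}$. Since $\beta \colon Y \to Y$ is an isomorphism of schemes, the comorphism $\beta^{\#} \colon \mathcal{O}_Y \to \beta_{*}\mathcal{O}_Y$ is an isomorphism of sheaves of rings, and hence so is its adjoint $\beta^{-1}\mathcal{O}_Y \to \mathcal{O}_Y$. Consequently the $\mathcal{O}$-module pullback $\beta^{*}\mathcal{M}$ is canonically isomorphic to the sheaf-theoretic inverse image $\beta^{-1}\mathcal{M}$.

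Next, because $\beta$ is a homeomorphism of underlying topological spaces (in particular an open map), the inverse image $\beta^{-1}\mathcal{M}$ requires no colimit on opens: for any $U \subseteq Y$ open one has $(\beta^{-1}\mathcal{M})(U) = \mathcal{M}(\beta(U))$. Specialising to $U = Y$ yields $\Gamma(Y, \beta^{-1}\mathcal{M}) = \mathcal{M}(\beta(Y)) = \mathcal{M}(Y) = 0$ by hypothesis, and combining with the previous paragraph gives $\Gamma(Y, \beta^{*}\mathcal{M}) = 0$.

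There is no real obstacle here; the smoothness of $Y$ and the quasi-coherence of $\mathcal{M}$ are not actually used in the argument, and the whole proof reduces to unwinding definitions and using that $\beta$ is an iso on both structure sheaves and underlying spaces. The only mild care required is to confirm that the comparison map $\beta^{-1}\mathcal{O}_Y \to \mathcal{O}_Y$ is an isomorphism, which is immediate from $\beta$ being a scheme isomorphism.
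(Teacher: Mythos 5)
Your proof is correct, and it takes a genuinely different route from the paper's. The paper exploits that $\beta$ (and $\beta^{-1}$) is smooth, hence faithfully flat, to produce an injection $\Gamma(Y,\beta^{*}\mathcal{M}) \hookrightarrow \Gamma(Y,(\beta^{-1})^{*}\beta^{*}\mathcal{M}) \cong \Gamma(Y,\mathcal{M}) = 0$; this is why the hypotheses of smoothness and quasi-coherence appear in the statement. You instead observe that for an isomorphism of schemes the comparison map $\beta^{-1}\mathcal{O}_Y \to \mathcal{O}_Y$ is an isomorphism, so $\beta^{*}\mathcal{M} \cong \beta^{-1}\mathcal{M}$, and since $\beta$ is a homeomorphism the inverse image sheaf needs no sheafification and satisfies $(\beta^{-1}\mathcal{M})(U) = \mathcal{M}(\beta(U))$, giving $\Gamma(Y,\beta^{*}\mathcal{M}) = \Gamma(Y,\mathcal{M}) = 0$ directly. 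Your argument is more elementary and strictly more general: it shows $\Gamma(Y,\beta^{*}\mathcal{M}) \cong \Gamma(Y,\mathcal{M})$ for any scheme $Y$, any automorphism $\beta$ and any $\mathcal{O}_Y$-module $\mathcal{M}$, with no smoothness or quasi-coherence needed, as you correctly note. What the paper's route buys is uniformity with the neighbouring Lemma \ref{d2globalsectionsprojections}, where a flatness argument genuinely is required (there $p$ is a projection, not an isomorphism), so the author can dispatch both lemmas with the same flat-base-change reflex; but for this particular statement your unwinding of definitions is the cleaner proof.
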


\begin{proof}
Since $\beta$ is an automorphism of a smooth scheme, it is smooth, so in particular it is faithfully flat. The same is true for $\beta^{-1}$. Therefore, we get an injection $$\alpha: \Gamma(Y, \beta^* \mathcal{M}) \to \Gamma(Y, \beta^{-1^*} \beta^* \mathcal{M}) \cong \Gamma(Y, \mathcal{M})=0,$$

so the claim is proven.
\end{proof}

\begin{proof}[Proof of Proposition \ref{d2zeroglobalsections}]
 Since $\delta^* \mathcal{N}=\alpha^*p^* \mathcal{N}$, we have by Lemma \ref{d2globalsectionsprojections} and Lemma \ref{d2globalsectionsautomorphism} that $\Gamma(G \times X, \delta^* \mathcal{N})=0$. The claim follows from the definition of pushforward sheaf.
\end{proof}

As a corollary we obtain:

\begin{corollary}
Let $\mathcal{N} \in \Coh(\mathcal{D},G)$ with $\Gamma(X,i_r^{\#} \mathcal{N})=0$. Then $\Gamma(X \times X, \mathcal{N})=0$.

\end{corollary}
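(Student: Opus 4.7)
The plan is to reduce the claim directly to Proposition \ref{d2zeroglobalsections} using the equivalence of categories established in Theorem \ref{d2alaBorho-Brylinski}. The strategy is: recognise $\mathcal{N}$ itself as $d_*^B \delta^{\#}(i_r^{\#}\mathcal{N})$, forget $\mathcal{D}$-module structure, and apply the previous proposition to the quasi-coherent $\mathcal{O}_X$-module $i_r^{\#}\mathcal{N}$.

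More precisely, Lemma \ref{d2leftquasiinverseforir} (upgraded via Lemma \ref{d2leftinverserightinverse} in the proof of Theorem \ref{d2alaBorho-Brylinski}) shows that $d_*^B \circ \delta^{\#}$ is a two-sided quasi-inverse to $i_r^{\#}$ on $\Coh(\mathcal{D},G)$. Applied to our $\mathcal{N}$, this yields a natural isomorphism
$$\mathcal{N} \;\cong\; d_*^B\bigl(\delta^{\#}(i_r^{\#}\mathcal{N})\bigr) \quad \text{in } \Coh(\mathcal{D},G).$$
By construction of the functors $\delta^{\#}$ and $d_*^B$ in \cite{Sta1}, their underlying $\mathcal{O}$-module operations are exactly the usual $\mathcal{O}$-theoretic pullback $\delta^*$ and $B$-invariant direct image $d_*^B$; hence after forgetting the $\mathcal{D}$-module and equivariance structures the isomorphism above becomes an isomorphism of quasi-coherent $\mathcal{O}_{X \times X}$-modules
$$\mathcal{N} \;\cong\; d_*^B\bigl(\delta^{*}(i_r^{\#}\mathcal{N})\bigr).$$

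Finally, the hypothesis $\Gamma(X, i_r^{\#}\mathcal{N}) = 0$ allows me to apply Proposition \ref{d2zeroglobalsections} to the quasi-coherent $\mathcal{O}_X$-module $i_r^{\#}\mathcal{N}$, giving $\Gamma(X \times X, d_*^B \delta^* (i_r^{\#}\mathcal{N})) = 0$. Combined with the isomorphism above this forces $\Gamma(X \times X, \mathcal{N}) = 0$.

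The only substantive point is verifying that the $\mathcal{D}$-theoretic operations agree with their $\mathcal{O}$-theoretic counterparts at the level of underlying sheaves (so that Proposition \ref{d2zeroglobalsections} applies), but this is essentially tautological from the definitions in \cite{Sta1} and should not present a genuine obstacle. The real content of the corollary is thus already packaged into Theorem \ref{d2alaBorho-Brylinski} together with Proposition \ref{d2zeroglobalsections}; combining them is the entire proof.
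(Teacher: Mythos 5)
Your proof is correct and follows essentially the same route as the paper: identify $\mathcal{N}$ with $d_*^B\delta^{\#}(i_r^{\#}\mathcal{N})$ via the equivalence in Theorem \ref{d2alaBorho-Brylinski}, note that $i_r^{\#}=i_r^*$ and $\delta^{\#}=\delta^*$ at the level of underlying $\mathcal{O}$-modules, and then apply Proposition \ref{d2zeroglobalsections}. The paper states this more tersely, but the ingredients and their arrangement are the same.
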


\begin{proof}
We have that at the level of $\mathcal{O}$-modules that $i_r^{\#}=i_r^*$ and $\delta^{\#}=\delta^{*}$. The claim then follows from Theorem \ref{d2alaBorho-Brylinski} and Proposition \ref{d2zeroglobalsections}.
\end{proof}

By duality we obtain:

\begin{corollary}
\label{d2zeroglobalsectionsil}

Let $\mathcal{N} \in \Coh(\mathcal{D},G)$ with $\Gamma(X,i_l^{\#} \mathcal{N})=0$. Then $\Gamma(X \times X, \mathcal{N})=0$.

\end{corollary}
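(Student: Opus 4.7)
The plan is to reduce Corollary \ref{d2zeroglobalsectionsil} to the previous corollary by pulling back along the swap involution of the double flag variety. Let $\tau: X \times X \to X \times X$ be the isomorphism $(x,y) \mapsto (y,x)$. A direct check shows that $\tau$ is $G$-equivariant with respect to the diagonal action $\act_G$, since $\tau(gx,gy) = (gy,gx) = \act_G(g, \tau(x,y))$, and evidently $\tau \circ i_r = i_l$.

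First, I would observe that $\tau^{\#}\mathcal{D}$ is again an $r$-deformed $G$-htdo on $X \times X$ by \cite[Corollary 7.6]{Sta1}, and that $\tau^{\#}\mathcal{N}$ lies in $\Coh(\tau^{\#}\mathcal{D}, G)$ by \cite[Lemma 9.7]{Sta1}. Second, I would invoke functoriality of $(-)^{\#}$ along smooth morphisms (\cite[Corollary 7.7]{Sta1}) to produce a natural isomorphism
$$i_l^{\#}\mathcal{N} \;=\; (\tau \circ i_r)^{\#}\mathcal{N} \;\cong\; i_r^{\#}(\tau^{\#}\mathcal{N}).$$
Third, substituting this into the hypothesis gives $\Gamma(X, i_r^{\#}(\tau^{\#}\mathcal{N})) = 0$, and then applying the preceding corollary with $\mathcal{D}$ and $\mathcal{N}$ replaced by $\tau^{\#}\mathcal{D}$ and $\tau^{\#}\mathcal{N}$ yields $\Gamma(X \times X, \tau^{\#}\mathcal{N}) = 0$.

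Finally, I would transfer the vanishing back to $\mathcal{N}$ via Lemma \ref{d2globalsectionsautomorphism}. At the level of $\mathcal{O}$-modules $\tau^{\#} = \tau^{*}$, and since $\tau$ is a smooth automorphism satisfying $\tau \circ \tau = \id$, the lemma applied with $Y = X \times X$, $\beta = \tau$ and $\mathcal{M} = \tau^{\#}\mathcal{N}$ yields $\Gamma(X \times X, \tau^{*}\tau^{*}\mathcal{N}) = \Gamma(X \times X, \mathcal{N}) = 0$. I do not anticipate a substantive obstacle; the only care required is the routine bookkeeping to confirm that the $G$-equivariant structures on $\mathcal{D}$ and $\mathcal{N}$ transport correctly under $\tau^{\#}$ and that the identification $(\tau \circ i_r)^{\#} \cong i_r^{\#}\tau^{\#}$ is compatible with the htdo-pullback, both of which are immediate from the cited results of \cite{Sta1}.
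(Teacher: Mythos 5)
Your proof is correct and is essentially the paper's own argument: the paper disposes of this corollary with the single phrase ``by duality,'' and your swap involution $\tau$ together with Lemma \ref{d2globalsectionsautomorphism} is exactly the right way to make that duality precise. The only point to watch is your appeal to \cite[Corollary 7.7]{Sta1} for $(\tau\circ i_r)^{\#}\cong i_r^{\#}\tau^{\#}$ --- the paper only ever invokes that result after checking flatness of the inner map, and $i_r$ is a closed immersion, not flat --- but this is harmless here because the statement concerns only global sections, for which the identification $i_l^{*}\mathcal{N}\cong i_r^{*}\tau^{*}\mathcal{N}$ at the level of $\mathcal{O}$-modules (automatic from $\tau\circ i_r=i_l$) already suffices.
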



\section{Representations of deformed enveloping algebras}
\label{d2sectionidealsinevelopingalgebra}
\subsection{Notation}

Let $G$ be a connected, simply connected, split semisimple, smooth affine algebraic group over a commutative ring $R$ and $\mathfrak{g}$ denote its Lie algebra which is free as an $R$-module. Fix $\mathfrak{h}$ a Cartan subalgebra and $\mathfrak{g}= \mathfrak{n}^- \oplus \mathfrak{h} \oplus \mathfrak{n}$ a triangular decomposition. Let $\mathfrak{b}=\mathfrak{h} \oplus \mathfrak{n}$ be a Borel subalgebra.  We make the following notation:

\begin{notation}
\begin{itemize}

\item $\mathfrak{h}^*$ the dual space of $\mathfrak{h}$- elements of $\mathfrak{h}^*$ are called weights.
\item $\bm{\phi}$ - set of roots in $\mathfrak{g}$.
\item $\bm{\phi}^+$- set of positive roots.
\item $\triangle$- set of simple roots.
\item For a root $\alpha$, we denote $\alpha^{\vee}$ the corresponding coroot.
\item $\rho$- half sum of positive roots.
\item $W$- the Weyl group associated with $\mathfrak{g}$.
\item $w_{o}$- the longest element of $W$.
\end{itemize}
\end{notation}

\begin{definition}
We define a shifted dot action of $W$ on $\mathfrak{h}^*$ by 
               $$ w \cdot \lambda = w(\lambda+\rho) - \rho \text{ for } w \in W, \lambda \in \mathfrak{h}^*.$$
               
We say that a weight is \emph{dominant} if $(\lambda+\rho)(\alpha^{\vee}) \notin \mathbb{Z}^{\leq -1}$ for all $\alpha \in \bm{\phi}^+$. Any $W$-orbit contains a dominant weight.

We say that a weight is \emph{regular} if $(\lambda+\rho) (\alpha^{\vee}) \neq 0$ for all $\alpha \in \bm{\phi}^+$. This is equivalent to the stabiliser of $\lambda$ under the shifted dot action being trivial.             
\end{definition}

Let $Z(\mathfrak{g})=Z(U(\mathfrak{g}))$ denote the center of the universal enveloping algebra. For any $\lambda \in \mathfrak{h}^*$ there is an associated central character $\chi_{\lambda}:Z(\mathfrak{g}) \to R$.  Furthermore, we have that for all $w \in W$, $\chi_{\lambda}=\chi_{w \cdot \lambda}$.

\subsection{Equivariant modules over deformed enveloping algebras}
Fix $r \in R$ a regular element and consider the $r$-th deformation of $U(\mathfrak{g})$ denoted $U(\mathfrak{g})_r$. Using the PBW theorem we obtain that $U(\mathfrak{g})_r \cong U(r \mathfrak{g})$.
The enveloping algebra $U(\mathfrak{g})$ is a $G$-representation via the Adjoint action, so by the module-comodule duality we obtain a  map $\rho: U(\mathfrak{g}) \to \mathcal{O}(G) \uset{R} U(\mathfrak{g})$ making $U(\mathfrak{g})$ a comodule for the Hopf algebra $\mathcal{O}(G)$. Furthermore, since the $G$ action commutes with the $R$ action, the map $\rho$ restricts to a map $\rho:U(r\mathfrak{g}) \to \mathcal{O}(G) \uset{R} U(r \mathfrak{g})$.

Let $L$ be a closed subgroup of $G$. Then $L$ also acts on $U(r\mathfrak{g})$ via the restriction to $L$ of the Adjoint action of $G$. Again, by duality we obtain a comodule map $\rho_{r\mathfrak{g},L}:U(r\mathfrak{g}) \to \mathcal{O}(L) \uset{R} U(r\mathfrak{g})$.

Let $M$ be a $U(r\mathfrak{g})$-module that is also an $\mathcal{O}(L)$-comodule. The comodule structure induces an action of $L$;  the derivative of the $L$-action induces an action of the Lie algebra $\mathfrak{l}=\Lie(L)$, and so of $r \mathfrak{l}$, on $M$. Furthermore, since $U(r\mathfrak{g})$ and $M$ are $\mathcal{O}(L)$-comodules, so is $U(r\mathfrak{g}) \uset{R} M$, see \cite[Section 1.8]{Mont}  for details.

\begin{definition}
\label{d2classicaldefinitionofequivUmodule}
A weakly $L$-equivariant $U(r\mathfrak{g})$ module is a triple $(M,\alpha,\rho)$, where $M$ is a $R$-module, $\alpha:U(r\mathfrak{g}) \uset{R} M \to M$ is a left $U(r\mathfrak{g})$-action, $\rho:M \to \mathcal{O}(L) \uset{R} M$ is a $\mathcal{O}(L)$ co-action such that $\alpha$ is a morphism of $\mathcal{O}(L)$-comodules.

Furthermore, if the action of $r\mathfrak{l} \subset \mathfrak{l}=\Lie(L)$ induced by $\rho$ by derivating coincides with the restriction of the $r\mathfrak{g}$ action to $r\mathfrak{l}$, we say that $(M,\alpha,\rho)$ is $L$-equivariant. As for equivariant $\mathcal{D}$-modules, we will omit the equivariance structure when it is understood from the context.

A morphism of (weakly) $L$-equivariant $U(r\mathfrak{g})$-modules $(M,\alpha,\rho_1)$ and $(N,\beta,\rho_2)$ is a map $f:M \to N$ of Abelian groups that is $U(r\mathfrak{g})$-linear with respect to actions $\alpha,\beta$ and $\mathcal{O}(L)$-co-linear with respect to $\rho_1$ and $\rho_2$. We call such a morphism $L$-equivariant.

Denote $\Mod(U(r\mathfrak{g}),L)$ the category of consisting of $L$-equivariant $U(r\mathfrak{g})$-modules together with $L$-equivariant morphisms.

\end{definition}

We can reformulate the weakly equivariant condition in the following way: by the module-comodule correspondence $M$ can be viewed as a representation of the algebraic group $L$. Since $U(r\mathfrak{g})$ is also an $L$-representation we may rewrite the condition that the map $\alpha:U(r\mathfrak{g}) \uset{R} M \to M$ is a morphism of $\mathcal{O}(L)$-comodules as:

$$ l.( \psi.m)=(l.\psi).(l.m),$$

for all $R$-algebras $A$, $l \in L(A)$, $\psi \in U(r\mathfrak{g})_A$ and $m \in M_A$.  By abuse of language we define an equivalent notion of a weakly $L$-equivariant $U(r\mathfrak{g})$-module by:

\begin{center}
$M$ is a representation of $L$.
\end{center}
\begin{equation}
\label{d2equivliealgmodeq}
l.(\psi.m)=(l.\psi).(l.m) \text{ for all } l \in L, \psi \in U(r\mathfrak{g}), m \in M.
\end{equation} 

We will also need the following notion: let $\phi:U(r \mathfrak{g}) \to S$ be a map of rings. We say that an $S$-module is (weakly) $L$-equivariant if it (weakly) $L$-equivariant as $U(r \mathfrak{g})$-module. We denote $\Mod(S,L)$ the category of $L$-equivariant $S$-modules.

\subsection{Equivariance of two-sided ideals in classical enveloping algebras}

In this subsection we prove that any two-sided ideal in $U(\mathfrak{g})$ is a $G$-equivariant $U(\mathfrak{g} \times \mathfrak{g})$-module ($U(\mathfrak{g})- U(\mathfrak{g})$-bimodule) when the base ring is a field of characteristic 0. Here we view $G$ via its isomorphism with the diagonal subgroup of $G \times G$. The group $G$ acts on the enveloping algebra $U(\mathfrak{g})$ via the Adjoint action-denoted $\Ad$.


Let ${\tau}$ denote the principal anti-automorphism of $U(\mathfrak{g})$ induced by $x \mapsto -x$ for all $x \in \mathfrak{g}$. To simplify the notation, we will use $x^{\tau}$ to denote $\tau(x)$. We have an action of $U(\mathfrak{g} \times \mathfrak{g}) \cong U(\mathfrak{g}) \otimes U(\mathfrak{g})$ (by \cite[III.2.2)]{Ser}) on $U(\mathfrak{g})$ via

 $$ (x \otimes y) \cdot a= yax^{\tau} \text{ for all } x,a,y \in U(\mathfrak{g}).$$

\begin{proposition}
\label{d2gentwosidedidealequiv}
Assume that $R$ is a field of characteristic 0. Let $I$ be a two-sided ideal in $U(\mathfrak{g})$. Then $I \in \Mod(U(\mathfrak{g} \times \mathfrak{g}), G)$.

\end{proposition}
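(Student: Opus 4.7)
The plan is to verify the four properties that make $I$ a $G$-equivariant module over $U(\mathfrak{g}\times\mathfrak{g})$, namely: closedness under the bimodule action, $G$-stability (i.e.\ $I$ is a rational $G$-subrepresentation of $U(\mathfrak{g})$), the cocycle/weak equivariance condition, and matching of the derived action with the diagonal $\mathfrak{g}\subset\mathfrak{g}\times\mathfrak{g}$ action.

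First I would note that closedness of $I$ under the $U(\mathfrak{g}\times\mathfrak{g})$-action $(x\otimes y)\cdot a=yax^{\tau}$ is immediate from $I$ being a two-sided ideal. Next I would upgrade the obvious $\ad(\mathfrak{g})$-stability of $I$ (for $x\in\mathfrak{g}$, $a\in I$, $\ad(x)(a)=xa-ax\in I$) to $\Ad(G)$-stability. The filtration $U^{\leq n}(\mathfrak{g})$ is $G$-stable since $\Ad(g)$ preserves the filtration degree, and each $U^{\leq n}(\mathfrak{g})$ is a finite-dimensional rational $G$-representation. Therefore $I\cap U^{\leq n}(\mathfrak{g})$ is a finite-dimensional $\ad(\mathfrak{g})$-stable subspace. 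Since $R$ has characteristic $0$ and $G$ is connected, a finite-dimensional subspace of a rational $G$-representation is $G$-stable iff it is $\mathfrak{g}$-stable, so each $I\cap U^{\leq n}(\mathfrak{g})$, and hence $I=\bigcup_n I\cap U^{\leq n}(\mathfrak{g})$, is $G$-stable. This gives $I$ the structure of a rational $G$-representation, which by module-comodule duality is exactly the $\mathcal{O}(G)$-comodule structure required.

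For the weak equivariance condition \eqref{d2equivliealgmodeq}, I would compute directly: for $g\in G(A)$, $x,y,a\in U(\mathfrak{g})_A$,
\[g.((x\otimes y)\cdot a)=\Ad(g)(yax^{\tau})=\Ad(g)(y)\,\Ad(g)(a)\,\Ad(g)(x^{\tau}),\]
using that $\Ad(g)$ is an algebra automorphism of $U(\mathfrak{g})_A$. Since $\tau$ agrees with $-\id$ on $\mathfrak{g}$ and $\Ad(g)$ is linear on $\mathfrak{g}$, the two anti-automorphism/automorphism combinations $\Ad(g)\circ\tau$ and $\tau\circ\Ad(g)$ coincide on generators and hence on all of $U(\mathfrak{g})_A$, giving $\Ad(g)(x^{\tau})=\Ad(g)(x)^{\tau}$. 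On the other side, the diagonal action gives $g.(x\otimes y)=\Ad(g)(x)\otimes\Ad(g)(y)$, so $(g.(x\otimes y))\cdot(g.a)=\Ad(g)(y)\,\Ad(g)(a)\,\Ad(g)(x)^{\tau}$, matching the above.

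Finally, I would check compatibility of the derived action. The derivative of $\Ad$ is $\ad$, so $x\in\mathfrak{g}$ acts on $a\in I$ by $\ad(x)(a)=xa-ax$. The diagonal embedding $\mathfrak{g}\hookrightarrow\mathfrak{g}\times\mathfrak{g}$ sends $x\mapsto (x,x)$, i.e.\ to $x\otimes 1+1\otimes x$ in $U(\mathfrak{g}\times\mathfrak{g})$. Under the bimodule action this acts on $a$ as $1\cdot a\cdot x^{\tau}+x\cdot a\cdot 1=-ax+xa=[x,a]$, matching $\ad(x)(a)$. The main obstacle is the second step: ensuring that $\ad(\mathfrak{g})$-stability of the ideal $I$ integrates to $\Ad(G)$-stability. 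This is where connectedness of $G$ and characteristic zero enter the argument; once the filtration argument above reduces to finite-dimensional rational $G$-modules, the integration is standard.
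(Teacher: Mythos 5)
Your proof is correct and follows essentially the same structure as the paper's: establish $\Ad(G)$-stability of $I$, verify the weak equivariance identity $\Ad(g)\cdot((x\otimes y)u)=(\Ad(g)x\otimes\Ad(g)y)\cdot\Ad(g)u$ using $\Ad(g)\circ\tau=\tau\circ\Ad(g)$, and match the derived action with the diagonal embedding $x\mapsto x\otimes 1+1\otimes x$. The only difference is that where you integrate the $\ad(\mathfrak{g})$-stability of $I$ to $\Ad(G)$-stability by hand, via the exhaustion of $I$ by the finite-dimensional $\mathfrak{g}$-stable subspaces $I\cap U^{\leq n}(\mathfrak{g})$ of rational $G$-representations, the paper simply cites \cite[Proposition 2.4.17]{Dix} for this fact.
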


\begin{proof}
First, since $R$ is a field of characteristic $0$, we have  by \cite[Proposition 2.4.17]{Dix} that $I$ is invariant under the Adjoint action, so $I$ is a $G$-module. By construction, it is clear $I$ is a $U(\mathfrak{g}) \otimes U(\mathfrak{g})$-module under the action defined above. We then have that for all $g \in G$, $x,y \in U(\mathfrak{g}), u \in I$ that

\begin{equation}
\begin{split}
\Ad(g) \cdot ((x \otimes y) u) &= \Ad(g)(xuy^{\tau}) \\
                               &=\Ad(g)y \Ad(g)u \Ad(g)x^{\tau} \\
                               &=\Ad(g)y \Ad(g)u (\Ad(g)x)^{\tau} \\
                               &=(\Ad(g)x \otimes \Ad(g)y) \cdot \Ad(g)u.\end{split}
\end{equation}

The derivative of the $\Ad$ action is the $\ad$-action of the Lie Algebra $\mathfrak{g}$. Since $\Lie(G)$ embeds into $U(\mathfrak{g}) \otimes U(\mathfrak{g})$ via $x \to x \otimes 1 +1  \otimes x$ for $x \in \mathfrak{g}$, the two actions coincide. Therefore, we have proven all the conditions for $I$ to be a $G$-equivariant $U(\mathfrak{g} \times \mathfrak{g})$-module.
\end{proof}

We now specialise to two-sided ideals in $U(\mathfrak{g})$ with a given central character. For $\lambda \in \mathfrak{h}^* $, we let $\chi_{\lambda}:Z(\mathfrak{g}) \to R$ the associated central character. Let $m_{\lambda}=\ker(\chi_{\lambda})$ and $U(\mathfrak{g})^{\lambda}= U(\mathfrak{g})/ m_{\lambda} U(\mathfrak{g})$.

Consider the centre $C:=Z(U(\mathfrak{g} \times \mathfrak{g}))=Z(\mathfrak{g} \times \mathfrak{g})$. Since, $U(\mathfrak{g} \times \mathfrak{g}) \cong U(\mathfrak{g}) \otimes U(\mathfrak{g})$ and since $U(\mathfrak{g})$ is a free $R$-module one obtains $C \cong Z(\mathfrak{g}) \otimes Z(\mathfrak{g})$. Any central character of $C$ is determined by a pair $\theta_1,\theta_2$, where $\theta_i:Z(\mathfrak{g}) \to R$. Let $\lambda,\mu \in \mathfrak{h}^*$ and let $\theta_1=\ker_{\chi_{\lambda}}$ and $\theta_2=\ker_{\chi_{\mu}},$ so that we obtain: 

 $$\ker(\chi_{\lambda},\chi_{\mu})= m_{\lambda} \otimes Z(\mathfrak{g}) + Z(\mathfrak{g}) \otimes m_{\mu}.$$  

Therefore we obtain:

\begin{equation}
\label{d2quotientuguglambdalamba}
\begin{split}
U(\mathfrak{g} \times \mathfrak{g})^{\lambda,\mu} & \cong U(\mathfrak{g}) \otimes U(\mathfrak{g})/U(\mathfrak{g}) \otimes U(\mathfrak{g})  (m_{\lambda} \otimes Z(\mathfrak{g}) + Z(\mathfrak{g}) \otimes m_{\mu}) \\
           &\cong U(\mathfrak{g})/U(\mathfrak{g}) m_{\lambda} \otimes U(\mathfrak{g})/U(\mathfrak{g}) m_{\mu} \\
           &\cong U(\mathfrak{g})^{\lambda} \otimes U(\mathfrak{g})^{\mu}.
\end{split}
\end{equation}

Recall that if $I$ is a two-sided ideal in $U(\mathfrak{g})$ and $R$ a field of characteristic 0, we proved in Proposition \ref{d2gentwosidedidealequiv} that $I \in (U(\mathfrak{g} \times \mathfrak{g}),G)$. Furthermore, if $I$ is a two-sided ideal in  $U(\mathfrak{g})^{\lambda}$, we can view it as a $U(\mathfrak{g})^{\lambda}$-bimodule, so a module over the ring $U(\mathfrak{g})^{\lambda^{\op}} \otimes U(\mathfrak{g})^{\lambda}$. Further, we have by \cite[Lemma 5.4-Equation 5.5]{BeGi} that $\tau$ induces an isomorphism $U(\mathfrak{g})^{\lambda^{\op}} \cong U(\mathfrak{g})^{-w_{o} \lambda}$; recall that $w_{o}$ denotes the longest element of $W$. Therefore, using equation \eqref{d2quotientuguglambdalamba}, we deduce that a $U(\mathfrak{g})^{\lambda}$-bimodule is the same as $U(\mathfrak{g} \times \mathfrak{g})^{-w_{o} \lambda,\lambda }$-module. In particular, we obtain:
\begin{corollary}
\label{d2lambdaequivariancetwosided}
Assume $R$ is a field of characteristic 0. Let $I$ be a two-sided ideal in $U(\mathfrak{g})^{\lambda}$. Then $I \in \Mod(U(\mathfrak{g} \times \mathfrak{g})^{-w_{o} \lambda,\lambda},G)$.

\end{corollary}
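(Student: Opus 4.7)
The plan is to lift $I$ to a two-sided ideal in $U(\mathfrak{g})$, invoke Proposition \ref{d2gentwosidedidealequiv}, and then verify that the resulting equivariant bimodule structure descends through the central quotients on both sides.

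First I would let $\pi : U(\mathfrak{g}) \twoheadrightarrow U(\mathfrak{g})^{\lambda}$ denote the canonical surjection and set $\tilde{I} := \pi^{-1}(I)$, a two-sided ideal of $U(\mathfrak{g})$ containing $m_{\lambda} U(\mathfrak{g})$. Proposition \ref{d2gentwosidedidealequiv} then gives that $\tilde{I}$ is a $G$-equivariant $U(\mathfrak{g} \times \mathfrak{g})$-module. Since $m_{\lambda} \subseteq Z(\mathfrak{g})$ is pointwise fixed by the Adjoint action, the submodule $m_{\lambda} U(\mathfrak{g})$ is itself a $G$-equivariant $U(\mathfrak{g} \times \mathfrak{g})$-sub-bimodule of $\tilde{I}$, so the quotient $I \cong \tilde{I}/m_{\lambda} U(\mathfrak{g})$ inherits a $G$-equivariant $U(\mathfrak{g} \times \mathfrak{g})$-module structure.

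Second, I would check that this action factors through $U(\mathfrak{g} \times \mathfrak{g})^{-w_{o}\lambda, \lambda}$. By the identification \eqref{d2quotientuguglambdalamba} it suffices to show that each of the summands $Z(\mathfrak{g}) \otimes m_{\lambda}$ and $m_{-w_{o}\lambda} \otimes Z(\mathfrak{g})$ annihilates $I$. The former is immediate from $(1 \otimes y) \cdot a = ya$, which lies in $m_{\lambda} U(\mathfrak{g})$ whenever $y \in m_{\lambda}$. For the latter, the crucial input is \cite[Lemma 5.4, Equation 5.5]{BeGi}, which identifies $\tau$ with the isomorphism $U(\mathfrak{g})^{\lambda^{\op}} \xrightarrow{\sim} U(\mathfrak{g})^{-w_{o}\lambda}$; in particular $\tau(m_{-w_{o}\lambda}) \subseteq m_{\lambda}$, and combined with the centrality of $m_{\lambda}$ (so $U(\mathfrak{g}) m_{\lambda} = m_{\lambda} U(\mathfrak{g})$) this yields $(x \otimes 1) \cdot a = a x^{\tau} \in m_{\lambda} U(\mathfrak{g})$ for $x \in m_{-w_{o}\lambda}$.

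The only genuinely non-formal step is the $\tau$-identification of central characters, which is precisely the content of \cite[Lemma 5.4]{BeGi}; once that is in place, both the descent of the bimodule action and the descent of the $G$-equivariance are routine. The anticipated difficulty is therefore essentially bookkeeping: confirming that the conventions for the action $(x \otimes y) \cdot a = y a x^{\tau}$ match the indices $(-w_{o}\lambda, \lambda)$ rather than some reversed variant.
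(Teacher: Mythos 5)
Your proposal is correct and follows essentially the same route as the paper: invoke Proposition \ref{d2gentwosidedidealequiv} for the lift of $I$ to $U(\mathfrak{g})$, use the decomposition \eqref{d2quotientuguglambdalamba} together with the identification $\tau: U(\mathfrak{g})^{\lambda^{\op}} \cong U(\mathfrak{g})^{-w_o\lambda}$ from \cite[Lemma 5.4]{BeGi} to see that the bimodule action factors through $U(\mathfrak{g} \times \mathfrak{g})^{-w_o\lambda,\lambda}$. You merely spell out more explicitly than the paper the verification that both central ideals annihilate $I$ and that the equivariance descends through the quotient by $m_\lambda U(\mathfrak{g})$, which the paper leaves implicit.
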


\subsection{Equivariance of two-sided ideals in deformed enveloping algebras}

\textbf{Throughout this subsection only, we will assume that $R$ is a Noetherian local ring of characteristic 0.}

Let $r \in R$ a regular element and consider the $r$-th deformation $U(\mathfrak{g})_r \cong U(r \mathfrak{g})$. Unfortunately, it is not true that any two-sided ideal in $U(r \mathfrak{g})$ is $G$-equivariant with respect to the Adjoint action, so we restrict to a special class of ideals. We call a two-sided ideal $I$ in $U(r \mathfrak{g})$ an \emph{r-ideal} if $r^n i \in I$ for some $n \in \mathbb{N}$ and $i \in U(\mathfrak{g})$ implies that $i \in I$. This is equivalent to $U(r \mathfrak{g})/I$ having no $r$-torsion.

\begin{lemma}
\label{d2idealsinU(rg)closedunderadjointaction}
Let $I$ be an $r$-ideal in $U(r\mathfrak{g})$ and $x \in U(\mathfrak{g})$. Then $\ad_{x}(I) \subset I$. In other words, $I$ is closed under the adjoint action of $\mathfrak{g}$ on $U(\mathfrak{g})$.

\end{lemma}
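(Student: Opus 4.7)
The strategy is to reduce the statement to the case $x \in \mathfrak{g}$ and then exploit the $r$-ideal property to ``divide by $r$''.

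First I would reduce to the case $x \in \mathfrak{g}$. The adjoint action of $U(\mathfrak{g})$ on itself (via the Hopf algebra structure) satisfies $\ad_{xy} = \ad_{x} \circ \ad_{y}$: one checks directly that for $x_1, x_2 \in \mathfrak{g}$, using $\Delta(x_1 x_2) = x_1x_2 \otimes 1 + x_1 \otimes x_2 + x_2 \otimes x_1 + 1 \otimes x_1 x_2$ and $S(x_1 x_2) = x_2 x_1$, both $\ad_{x_1 x_2}(y)$ and $\ad_{x_1}(\ad_{x_2}(y))$ evaluate to $x_1 x_2 y - x_1 y x_2 - x_2 y x_1 + y x_2 x_1$. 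Since $\mathfrak{g}$ generates $U(\mathfrak{g})$ as an $R$-algebra, once the claim is established for elements of $\mathfrak{g}$, it propagates to arbitrary $x \in U(\mathfrak{g})$ by induction on PBW degree and $R$-linearity.

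Next I would handle the core case $x \in \mathfrak{g}$, $y \in I$. The point is to land inside $U(r\mathfrak{g})$: one has $rx \in r\mathfrak{g} \subset U(r\mathfrak{g})$, so since $I$ is a two-sided ideal of $U(r\mathfrak{g})$,
\[
 (rx)y - y(rx) \;=\; r(xy - yx) \;=\; r \cdot \ad_{x}(y) \;\in\; I.
\]
Now $\ad_{x}(y) = xy - yx$ lies in $U(\mathfrak{g})$ (in fact in $U(r\mathfrak{g})$ since $y \in U(r\mathfrak{g})$ and adjoint action by $\mathfrak{g}$ raises the filtration degree only mildly, but one need not check this). Because $I$ is an $r$-ideal and $r \cdot \ad_{x}(y) \in I$ with $\ad_{x}(y) \in U(\mathfrak{g})$, the defining property of an $r$-ideal immediately gives $\ad_{x}(y) \in I$, as desired.

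The argument has no serious obstacle: the only delicate point is that the commutator $[x,y]$ a priori lies in $U(\mathfrak{g})$ rather than $U(r\mathfrak{g})$, and this is exactly why the $r$-ideal hypothesis is formulated with the larger ring $U(\mathfrak{g})$ in the test condition; regularity of $r$ is implicit in the well-definedness of the $r$-deformation $U(r\mathfrak{g})$ and in the fact that multiplication by $r$ is injective, making the reduction in the final step legitimate. Combining the two paragraphs above concludes the proof.
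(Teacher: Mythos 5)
Your proof is correct and rests on the same core mechanism the paper uses: push the element into $U(r\mathfrak{g})$ by multiplying by a power of $r$, use the two-sided ideal property there, and then invoke the $r$-ideal hypothesis to divide by $r$ again. The decomposition is organised slightly differently: the paper takes arbitrary $x\in U(\mathfrak{g})$ of PBW degree $\le n$, notes $r^{n}x\in U(r\mathfrak{g})$, observes $\ad_{r^{n}x}(I)\subseteq I$ (since the Hopf adjoint action of $U(r\mathfrak{g})$ preserves its two-sided ideals), and cancels $r^{n}$ in one step; you instead first settle the case $x\in\mathfrak{g}$, where $rx\in r\mathfrak{g}\subset U(r\mathfrak{g})$ and the commutator argument is completely elementary, and then propagate via the algebra-action identity $\ad_{xy}=\ad_{x}\circ\ad_{y}$. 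Your route trades one appeal to the general Hopf-adjoint fact for a small induction on PBW degree, which makes the step requiring the $r$-ideal property visibly as simple as possible (only a single $r$ ever needs to be cancelled); the paper's version is shorter but leaves the multiplicativity of the adjoint action implicit. Both are valid.
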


\begin{proof}

Since $x \in U(\mathfrak{g})$, there exists $n \in \mathbb{N}$ such that $r^n x \in U(r\mathfrak{g})$. Since $I$ is an ideal in $U(r \mathfrak{g})$, we have $\ad_{r^n x}(I) \subset I$. The claim follows since $I$ is an $r$-ideal.
\end{proof}

For $\alpha \in \bm{\phi}$, we let $x_{\alpha}:G_a \to G$ and $e_{\alpha}=(dx_{\alpha})(1) \in \mathfrak{g}$ be the root homomorphism and root vector corresponding to $\alpha$, respectively.

\begin{corollary}
\label{d2ridealclosedunderAdjointactions}
Let $I$ be an $r$-ideal in $U(r\mathfrak{g})$. Then $\Ad(G(R))(I) \subset I$; in other words $I$ is closed under the Adjoint action. 

\end{corollary}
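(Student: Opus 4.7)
The plan is to reduce to root subgroup generators of $G$ and then use the exponential formula $\Ad(x_\alpha(t)) = \exp(t \, \ad e_\alpha)$ together with Lemma \ref{d2idealsinU(rg)closedunderadjointaction}. Since $G$ is split, simply-connected, semisimple, smooth and affine over $R$, it is the Chevalley--Demazure group attached to its root datum; over the Noetherian local ring $R$, one expects $G(R)$ to coincide with the elementary subgroup generated by the root elements $x_\alpha(t)$, $\alpha \in \bm{\phi}$, $t \in R$ (any remaining torus generators are Steinberg-generated from the $x_\alpha(t)$'s in the simply-connected case). Granted this, it is enough to prove that $\Ad(x_\alpha(t))(I) \subseteq I$ for each such root subgroup generator.

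For a root element, I would use that $\ad e_\alpha$ is locally nilpotent on $U(\mathfrak{g})$, so that for any fixed $u \in I$ the series
\[
\Ad(x_\alpha(t))(u) \;=\; \sum_{k \geq 0} \frac{t^k}{k!}(\ad e_\alpha)^k(u)
\]
is actually a finite sum. Lemma \ref{d2idealsinU(rg)closedunderadjointaction}, iterated with $x = e_\alpha \in U(\mathfrak{g})$, yields $(\ad e_\alpha)^k(u) \in I$ for every $k \geq 0$. Since $R$ has characteristic zero it embeds into $R_\mathbb{Q} := R \otimes_\mathbb{Z} \mathbb{Q}$, and inside $U(r\mathfrak{g})_\mathbb{Q}$ the displayed sum therefore lands in $I_\mathbb{Q} := I \otimes_\mathbb{Z} \mathbb{Q}$. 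On the other hand the Adjoint coaction $\rho$ restricts to a comodule structure $\rho : U(r\mathfrak{g}) \to \mathcal{O}(G) \otimes U(r\mathfrak{g})$, so evaluation at the $R$-point $x_\alpha(t)$ gives $v := \Ad(x_\alpha(t))(u) \in U(r\mathfrak{g})$. Combining these, $v \in U(r\mathfrak{g}) \cap I_\mathbb{Q}$.

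The final step, and the main technical obstacle, is to descend from $U(r\mathfrak{g}) \cap I_\mathbb{Q}$ back to $I$. Clearing denominators: if $N$ bounds the nilpotence order of $\ad e_\alpha$ on $u$, then
\[
N!\cdot v \;=\; \sum_{k=0}^{N} \tfrac{N!}{k!}\, t^k (\ad e_\alpha)^k(u) \;\in\; I,
\]
since each coefficient $\tfrac{N!}{k!}\,t^k$ lies in $R$ and each $(\ad e_\alpha)^k(u)$ lies in $I$. One then factors $N! = r^a m$ in the local ring $R$, with $m$ a unit (this being the place where the hypothesis that $R$ is Noetherian local of characteristic zero, with $r$ compatible with the residue characteristic in the applications of interest, is actually used), and invokes the $r$-saturation defining property of $I$ to conclude $v \in I$. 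This descent is automatic when $R$ is a $\mathbb{Q}$-algebra, but in the general local setting of the subsection the subtle interplay between the factorial denominators appearing in the exponential and the $r$-saturation of $I$ is precisely the point on which the argument turns.
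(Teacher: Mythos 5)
Your plan follows the same overall strategy as the paper: reduce to root subgroup generators $x_\alpha(s)$ of $G(R)$ (the paper cites \cite[Proposition 1.6]{Ab} for the fact that, over a local ring, $G(R)$ is generated by such elements), and then control $\Ad(x_\alpha(s))$ on $I$ via the exponential formula and the $\ad$-stability of Lemma~\ref{d2idealsinU(rg)closedunderadjointaction}. Up through showing $N!\,v\in I$ and $v\in U(r\mathfrak g)$, your steps are correct.

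The final descent step, however, is a genuine gap. From $N!\,v\in I$ and $v\in U(r\mathfrak g)$ you want $v\in I$, and you propose to write $N!=r^a m$ with $m$ a unit in $R$ so that the $r$-saturation of $I$ applies. But in the stated generality -- $R$ an arbitrary Noetherian local ring of characteristic $0$ and $r$ an arbitrary regular element -- there is no reason for $N!$ to factor this way. For instance, take $R=\mathbb{Z}_{(2)}$ and $r=4$: then $2\in R$ is not a unit and is not associate to any power of $r$; or take $R=\mathbb{Z}_p[[t]]$ and $r=t$: then $p$ is not a unit and has nothing to do with $r$. The $r$-ideal condition only kills $r$-power denominators; it says nothing about denominators at other primes of $R$, so the module $U(r\mathfrak g)/I$ could well have $N!$-torsion. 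You flag this yourself (``with $r$ compatible with the residue characteristic in the applications of interest''), but that is an extra hypothesis not present in the statement of the corollary, so the argument as given does not close. The paper avoids this altogether by invoking \cite[Lemma 4.1b,c)]{Munster}, which packages the exponential formula in a way that directly yields $x_\alpha(s)\cdot I\subseteq I$ from $\ad$-stability without ever passing through the rational form $I_{\mathbb Q}$ and without the need to clear factorial denominators afterward. In other words, the hard integrality work that your plan defers to step~7 is precisely what the cited lemma supplies, and it is the one ingredient your write-up is missing.
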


\begin{proof}

We have by \cite[Lemma 4.1b,c)]{Munster} that for all $\alpha \in \bm{\phi}$ and $s \in R$:

\begin{equation}
\label{d2Partofadjointactioneq}
x_{\alpha}(s) \cdot a= \sum_{i=0}^{\infty} \frac{ \ad(s e_{\alpha})^m}{m!} (a), 
\end{equation}
and there exists $n \in \mathbb{N}$ such that $\frac{ \ad(s e_{\alpha})^n}{n!} (a)=0$ for all $a \in U(\mathfrak{g})$. In particular, combining  Lemma \ref{d2idealsinU(rg)closedunderadjointaction} with equation \eqref{d2Partofadjointactioneq}, we obtain $x_{\alpha}(s) \cdot I \subset I$. To finish,  we have that $R$ is a local ring, so by \cite[Proposition 1.6]{Ab} the Chevalley group $G(R)$ is generated by elements of the form $x_{\alpha}(s)$.
\end{proof}

Recall that  $^{\tau}$ denote the principal anti-automorphism of $U(\mathfrak{g})$ induced by $x \mapsto -x$ for all $x \in \mathfrak{g}$. We have by combining \cite[Lemma 3.3]{KdimIwasawa} and the PBW theorem that $U(\mathfrak{g} \times \mathfrak{g})_r \cong U(r\mathfrak{g}) \otimes U(r\mathfrak{g})$ . We get an action of $ U(r\mathfrak{g}) \otimes U(r\mathfrak{g})$ on $U(r\mathfrak{g})$ via

 $$ (x \otimes y) \cdot a= yax^{\tau} \text{ for all } x,a,y \in U(r\mathfrak{g}).$$

\begin{proposition}
\label{d2r-idealareequiv}
Let $I$ be an $r$-ideal in $U(r\mathfrak{g})$. Then $I \in \Mod(U(\mathfrak{g} \times \mathfrak{g})_r, G)$.

\end{proposition}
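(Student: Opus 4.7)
The plan is to follow the same three-step skeleton as the proof of Proposition \ref{d2gentwosidedidealequiv}: (a) exhibit an $\mathcal{O}(G)$-subcomodule structure on $I$, (b) verify that the bimodule action of $U(\mathfrak{g}\times\mathfrak{g})_r$ on $I$ is $G$-equivariant, and (c) match the derivative of the $G$-action on $I$ with the action of $r\mathfrak{g}$ sitting diagonally in $\mathfrak{g}\times\mathfrak{g}$. The essential new input in place of Dixmier's field-of-characteristic-zero invariance is Lemma \ref{d2idealsinU(rg)closedunderadjointaction} together with Corollary \ref{d2ridealclosedunderAdjointactions}.

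For (a), I would argue through the PBW filtration. Each $F_n U(r\mathfrak{g})$ is a finitely generated free $R$-module and an $\mathcal{O}(G)$-subcomodule of $U(r\mathfrak{g})$ because the adjoint coaction preserves filtration degree. The intersection $I_n := I\cap F_n U(r\mathfrak{g})$ is $\ad(\mathfrak{g})$-stable by Lemma \ref{d2idealsinU(rg)closedunderadjointaction}. I would then invoke the characteristic-zero principle that, on a rational $G$-module whose underlying $R$-module is free of finite rank with $G$ connected, a submodule is stable under the $\mathcal{O}(G)$-coaction if and only if it is stable under $\ad(\mathfrak{g})$; concretely, this is visible through the finite exponential formula $x_\alpha(s)\cdot a = \sum_m \tfrac{\ad(se_\alpha)^m}{m!}(a)$ appearing in the proof of Corollary \ref{d2ridealclosedunderAdjointactions}, which expresses each matrix coefficient of the coaction as a polynomial in the $\ad(e_\alpha)$-images of $a$. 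Taking the union $I = \bigcup_n I_n$ then produces the desired subcomodule structure.

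For (b), I would repeat verbatim the computation from Proposition \ref{d2gentwosidedidealequiv}. The principal anti-automorphism $\tau$ is the lift of $x\mapsto -x$ on $\mathfrak{g}$, and negation commutes with every $\Ad(g)$, so $\tau$ intertwines with the adjoint action. Combined with $\Ad(g)$ being an algebra automorphism, one obtains, for every $R$-algebra $A$, every $g\in G(A)$, and $x,y,a\in U(r\mathfrak{g})_A$,
\[
\Ad(g)\bigl((x\otimes y)\cdot a\bigr) = \Ad(g)(y a x^\tau) = \bigl(\Ad(g)x\otimes\Ad(g)y\bigr)\cdot \Ad(g)a,
\]
which is precisely the equivariance of the bimodule action with respect to the diagonal $G$-action. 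For (c), the diagonal embedding $r\mathfrak{g}\hookrightarrow U(\mathfrak{g}\times\mathfrak{g})_r$ sends $\psi$ to $\psi\otimes 1+1\otimes\psi$, which acts on $a\in I$ as $a\psi^\tau + \psi a = \psi a - a\psi = \ad_\psi(a)$; this coincides with the derivative of the adjoint action restricted to $r\mathfrak{g}$, as required by Definition \ref{d2classicaldefinitionofequivUmodule}.

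I expect the main obstacle to be step (a): bootstrapping $\ad$-stability (or equivalently $G(R)$-stability, as in Corollary \ref{d2ridealclosedunderAdjointactions}) up to full $\mathcal{O}(G)$-comodule stability over a base that is not a field. This is where the hypotheses that $R$ is Noetherian local of characteristic zero and that $G$ is connected and semisimple are genuinely used, via the locally finite, polynomial structure of the adjoint coaction on each PBW piece. Once that is secured, steps (b) and (c) are essentially formal and mirror the classical case.
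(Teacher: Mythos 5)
Your proposal is correct and follows essentially the same route as the paper: the paper likewise deduces the $G$-module structure on $I$ from Lemma \ref{d2idealsinU(rg)closedunderadjointaction} and Corollary \ref{d2ridealclosedunderAdjointactions}, repeats the equivariance computation of Proposition \ref{d2gentwosidedidealequiv}, and matches the derivative of $\Ad$ with the diagonal $r\mathfrak{g}$-action via $\psi \mapsto \psi\otimes 1 + 1\otimes\psi$. Your step (a) merely spells out, via the polynomial exponential formula, the passage from $\ad$-stability to comodule stability that the paper leaves implicit when it says ``$I$ is a $G$-module.''
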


\begin{proof}
We have by the Corollary above that $I$ is a $G$-module. By construction, it is clear $I$ is a $U(r\mathfrak{g}) \otimes U(r\mathfrak{g})$-module under the action defined above. We then have by the same arguments as in Proposition \ref{d2gentwosidedidealequiv} that for all $g \in G$, $x,y \in U(r\mathfrak{g}), u \in I$ 

$$\Ad(g) \cdot ((x \otimes y)u)=(\Ad(g)x \otimes \Ad(g)y) \cdot \Ad(g)u.$$

The derivative of the $\Ad$ is the $\ad$-action. Since $r \Lie(G)$ embeds into $U(r \mathfrak{g}) \otimes U(r \mathfrak{g})$ via $x \to x \otimes 1 + 1 \otimes  
x$ for $x \in r \mathfrak{g}$, the derivative of the $G$-action coincides with the Lie algebra action. This concludes the proof.
\end{proof}

\subsection{Verma modules}

An important tool in studying the representation theory of semisimple Lie Algebras are the Verma modules.

\begin{definition}

Let $\lambda \in r\mathfrak{h}^*$. We extend $\lambda$ to a map $r\mathfrak{n}^- \oplus r\mathfrak{h} \to R$ and to a ring morphism $\lambda:U(r\mathfrak{b}) \to R$ and denote $R_{\lambda}$ the corresponding $U(r\mathfrak{b})$-module. The Verma module of weight $\lambda$ is defined to be

         $$M(\lambda)= U(r\mathfrak{g}) \uset{U(r\mathfrak{b})} R_{\lambda}.$$

\end{definition}

For the rest of this subsection assume that $R=K$ is \emph{a field of characteristic $0$}. Given an $U(\mathfrak{g})$-module $M$ and a weight $\lambda\in \mathfrak{h}^*$, we denote 
$M_{\lambda}=\{m \in M| hm=\lambda(h)m\} \text{ for all } h \in \mathfrak{h}.$ Consider the BGG category $\mathcal{O}$ of finitely generated $U(\mathfrak{g})$-modules $M$ such that $M=\bigoplus_{\lambda \in \mathfrak{h}^*} M_{\lambda}$ and the action of $\mathfrak{n}$ on $M$ is locally finite.

We recall some basic facts about objects in category $\mathcal{O}$ that will be useful in the later sections.

\begin{proposition}
\label{d2vermammodulesproperties}
\begin{itemize}
\item The Verma module $M(\lambda)$ has a unique maximal submodule denoted $N(\lambda)$ and a unique simply quotient denoted $L(\lambda)$.
\item The annihilator of $M(\lambda)$ is given by $\ker (\chi_{\lambda}) U(\mathfrak{g})$.
\item Any module in category $\mathcal{O}$ has finite length.
\item The composition factors of objects in $\mathcal{O}$ are of the form $L(\mu)$ for $\mu \in \mathfrak{h}^*$.
\end{itemize}
\end{proposition}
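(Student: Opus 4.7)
I would treat the four items separately, in order of increasing depth.

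\emph{Item (i).} The PBW isomorphism gives $M(\lambda)\cong U(\mathfrak{n}^-)\otimes_K K_{\lambda}$, and a standard computation shows $M(\lambda) = \bigoplus_{\mu}M(\lambda)_{\mu}$ as an $\mathfrak{h}$-module, with $M(\lambda)_{\lambda} = K v_{\lambda}$ one-dimensional and every other weight $\mu$ satisfying $\lambda-\mu\in\mathbb{Z}_{\geq 0}\bm{\phi}^+$ strictly. Every submodule is $\mathfrak{h}$-stable and hence a sum of weight spaces; any submodule meeting $M(\lambda)_{\lambda}$ nontrivially contains $v_{\lambda}$ and thus equals $M(\lambda)$. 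Therefore every proper submodule lies in $\bigoplus_{\mu<\lambda}M(\lambda)_{\mu}$, and the sum $N(\lambda)$ of all proper submodules is itself proper and is the unique maximal one. The quotient $L(\lambda):=M(\lambda)/N(\lambda)$ is then the unique simple quotient.

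\emph{Item (ii).} The inclusion $\ker(\chi_{\lambda})U(\mathfrak{g})\subseteq \Ann(M(\lambda))$ is immediate, since $Z(\mathfrak{g})$ acts on $M(\lambda)$ via $\chi_{\lambda}$. The reverse is the substantive point; I would prove it in two stages. First, a classical result of Verma produces an antidominant weight $\mu$ in the dot-orbit $W\cdot\lambda$ together with an embedding $M(\mu)\hookrightarrow M(\lambda)$; for such $\mu$ the module $M(\mu)$ is simple, so $\Ann(M(\lambda))\subseteq \Ann(M(\mu))$ and it suffices to handle $M(\mu)$. Second, for antidominant $\mu$ I would combine Kostant's freeness theorem (which identifies $U(\mathfrak{g})\otimes_{Z(\mathfrak{g})}K_{\chi_{\mu}}$ with $U(\mathfrak{g})/\ker(\chi_{\mu})U(\mathfrak{g})$) with the PBW projection $\pi\colon U(\mathfrak{g})\to U(\mathfrak{h})$ and the Harish-Chandra isomorphism to show that the natural map $U(\mathfrak{g})/\ker(\chi_{\mu})U(\mathfrak{g})\to \End_K(M(\mu))$ is injective. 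I note that this statement is precisely the content of Proposition \ref{injectionbetweenlatticeoftwosidedidealsandsubmodulesofverma}, which is proved geometrically later in the paper and which can therefore be invoked here as an alternative.

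\emph{Items (iv) and (iii).} For (iv), let $L\in\mathcal{O}$ be simple. Local nilpotency of $\mathfrak{n}^+$ together with the weight decomposition forces a nonzero $\mathfrak{n}^+$-invariant weight vector $v\in L_{\mu}$; by the universal property of Verma modules this produces a surjection $M(\mu)\twoheadrightarrow L$, and simplicity of $L$ combined with item (i) gives $L\cong L(\mu)$. For (iii), let $M\in\mathcal{O}$ be finitely generated. Using PBW applied to a finite generating set one checks that every weight space $M_{\nu}$ is finite-dimensional and that the weights of $M$ are bounded above by finitely many maximal elements. The action of $Z(\mathfrak{g})$ preserves weight spaces and is locally finite, so $M$ splits as a finite direct sum $M=\bigoplus_{[\lambda]}M_{[\lambda]}$ of generalised central character blocks. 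Within each block the only possible composition factors are the $L(w\cdot\lambda)$ for $w\in W$, and the multiplicity of $L(w\cdot\lambda)$ is bounded by $\dim M_{w\cdot\lambda}<\infty$, so the total length is finite.

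\textbf{Main obstacle.} The genuinely non-formal step is item (ii); everything else reduces to bookkeeping with weight spaces and the central character decomposition. In the context of this paper (ii) is not truly auxiliary at all, but rather the conclusion of the main injectivity proposition that is established later by the Beilinson-Bernstein localisation machinery.
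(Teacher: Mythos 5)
The paper offers no proof of this proposition at all: it is stated as a recollection of standard facts about category $\mathcal{O}$, to be found in \cite{Hu1} and \cite{Dix}, so any honest argument is necessarily ``a different route''. Your treatments of items (i), (iii) and (iv) are exactly the standard ones --- the weight-space decomposition of $M(\lambda)$ with one-dimensional generating top weight space, the existence of a maximal weight vector killed by $\mathfrak{n}^+$ in any simple object, and the bound on composition multiplicities by the finite-dimensional weight spaces $M_{w\cdot\lambda}$ inside a fixed central-character block --- and they are correct.

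Item (ii) is the one place where your sketch is materially thinner than the statement. The inclusion $\ker(\chi_\lambda)U(\mathfrak{g})\subseteq \Ann(M(\lambda))$ is trivial, but the equality is Duflo's annihilation theorem \cite[8.4.3]{Dix}. Your reduction to the antidominant representative $\mu$ of the orbit $W\cdot\lambda$ via the Verma/BGG embedding $M(\mu)\hookrightarrow M(\lambda)$ is the correct first step, but the remaining claim --- injectivity of $U(\mathfrak{g})/\ker(\chi_\mu)U(\mathfrak{g})\to\End_K(M(\mu))$ --- \emph{is} the theorem, and ``combine Kostant's freeness with the projection $\pi$ and Harish--Chandra'' is a list of ingredients rather than an argument; the actual proof requires Kostant's separation of variables together with a nontrivial specialisation/genericity argument in $\lambda$ and occupies a section of Dixmier's book, so it should either be carried out or cited. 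Your proposed alternative of invoking Proposition \ref{injectionbetweenlatticeoftwosidedidealsandsubmodulesofverma} is not circular (the chain of results leading to Corollary \ref{d2annihilatorofidealverma} nowhere uses item (ii)), but it only yields the statement for \emph{dominant} $\lambda$, since that is the standing hypothesis of Section \ref{d2sectionproofofDuflos}; as your own reduction passes through the \emph{antidominant} weight of the orbit, the two halves do not splice together to give $\Ann(M(\lambda))=\ker(\chi_\lambda)U(\mathfrak{g})$ for arbitrary $\lambda\in\mathfrak{h}^*$ without a further argument.
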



\section{The localisation mechanism}
\label{d2sectionlocmec}

Throughout this section $G$ will denote a connected, simply-connected, split semisimple, smooth affine algebraic group over $R$, $\mathfrak{g}=\Lie(G)$ its Lie algebra, $X$ will denote an $R$-variety with a $G$-action and $r \in R$ a regular element.

\subsection{Equivariant localisation theory}

We fix $(\mathcal{D},i_{\mathfrak{g}})$ an $r$-deformed $G$-htdo on $X$. We aim to prove that even though Beilinson-Bernstein equivalence theorem does not work over an arbitrary commutative ring, the equivariance structure is preserved under localisation and taking global sections. Let $L$ be a closed subgroup of $G$; since $\mathcal{D}$ is a $G$-htdo, it is in particular an $L$-htdo and we denote by $\star$ the $L$-action on $\mathcal{D}$. The Lie algebra map $i_{\mathfrak{g}}: r \mathfrak{g} \to \mathcal{D}$ can be extended to a ring homomorphism $i_{\mathfrak{g}}:U(r \mathfrak{g}) \to \mathcal{D}$.

\begin{proposition}
\label{d2classicalloccomring}

Let $L$ be a closed subgroup of $G$ and $M$ an $L$-equivariant  $U(r\mathfrak{g})$-module. Then $\mathcal{D} \uset{U(r\mathfrak{g})} M$ is an $L$-equivariant quasi-coherent $\mathcal{D}$-module.  

\end{proposition}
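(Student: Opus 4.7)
The plan is to equip $\mathcal{D} \otimes_{U(r\mathfrak{g})} M$ with the diagonal $L$-action defined by
\[
l.(d \otimes m) := (l.d) \otimes (l.m),
\]
where the first factor uses the $L$-action on $\mathcal{D}$ obtained by restricting the $G$-htdo structure and the second uses the $\mathcal{O}(L)$-comodule structure on $M$. Quasi-coherence is automatic since $\mathcal{D}$ is quasi-coherent as a left $\mathcal{O}_X$-module and the right factor is a module over the constants. The substance of the proposition is then to check that this formula is well-defined across the tensor relation and that the resulting object satisfies the three axioms of Definition \ref{d2equivarianthtdomoddef}.

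For well-definedness, the task is to verify $l.((d \cdot i_{\mathfrak{g}}(\psi)) \otimes m) = l.(d \otimes \psi.m)$ for $\psi \in U(r\mathfrak{g})$. I would compute
\[
l.(d \cdot i_{\mathfrak{g}}(\psi) \otimes m) = (l.d)(l.i_{\mathfrak{g}}(\psi)) \otimes l.m = (l.d) \cdot i_{\mathfrak{g}}(l.\psi) \otimes l.m = (l.d) \otimes (l.\psi).(l.m) = (l.d) \otimes l.(\psi.m),
\]
where the first equality iterates condition i of Definition \ref{d2htdodef} so that the $L$-action is by ring automorphisms, the second applies condition iii of the same definition extended multiplicatively from $r\mathfrak{g}$ to $U(r\mathfrak{g})$, the third passes $i_{\mathfrak{g}}(l.\psi)$ across the tensor, and the fourth invokes equation \eqref{d2equivliealgmodeq}. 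The cocycle and unit conditions descend directly from the two factors, and the weak-equivariance identities $l.(f.x) = (l.f)(l.x)$ and $l.(D.x) = (l.D)(l.x)$ for $f \in \mathcal{O}_X$ and $D \in \mathcal{D}$ follow by the same mechanism, using conditions i and ii of Definition \ref{d2htdodef}.

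The main obstacle is the derivative compatibility, i.e.\ condition iii of Definition \ref{d2equivarianthtdomoddef}: the derivative of the $L$-action on $r\mathfrak{l}$ must agree with left multiplication by $i_{\mathfrak{g}}|_{r\mathfrak{l}}$. Differentiating the diagonal action by the Leibniz rule, for $\psi \in r\mathfrak{l}$ one obtains
\[
\psi.(d \otimes m) = (\psi.d) \otimes m + d \otimes (\psi.m) = [i_{\mathfrak{g}}(\psi),d] \otimes m + d \otimes (\psi.m),
\]
where the second equality applies condition iv of Definition \ref{d2htdodef} on the left factor and the equivariance of $M$ on the right factor. Expanding the commutator yields $i_{\mathfrak{g}}(\psi)d \otimes m - d \cdot i_{\mathfrak{g}}(\psi) \otimes m + d \otimes (\psi.m)$; sliding $i_{\mathfrak{g}}(\psi)$ across the tensor in the middle term turns it into $-d \otimes (\psi.m)$, which cancels the third term and leaves exactly $i_{\mathfrak{g}}(\psi).(d \otimes m)$. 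This final manipulation is the one place where the three pieces of the equivariance structure—the htdo axioms, the $L$-action on $M$, and the tensor relation—all have to cooperate simultaneously, and is where any subtlety would appear.
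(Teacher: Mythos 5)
Your proposal is correct and follows essentially the same route as the paper, which splits the argument into a weak-equivariance lemma (checking well-definedness across the tensor relation via axioms i) and iii) of Definition \ref{d2htdodef} and equation \eqref{d2equivliealgmodeq}, exactly your chain of equalities run in reverse) and a derivative-compatibility lemma (the Leibniz-rule computation with the commutator from axiom iv) cancelling against the $M$-term). No substantive differences.
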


To prove the proposition we will use two additional lemmas:

\begin{lemma}
\label{d2weaklyBBloc}

Let $M$ be weakly $L$-equivariant  $U(r\mathfrak{g})$-module. Then $\mathcal{D} \uset{U(r\mathfrak{g})} M$ is a weakly $L$-equivariant quasi-coherent $\mathcal{D}$-module.

\end{lemma}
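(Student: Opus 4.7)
The plan is to define the weak $L$-equivariance on $\mathcal{D} \otimes_{U(r\mathfrak{g})} M$ by the prescription $l.(d \otimes m) := (l.d) \otimes (l.m)$ on pure tensors (extended $R$-linearly and sheafified) and then to verify the axioms in display \eqref{easyweakOmodequivariant} together with the $\mathcal{D}$-module compatibility. Quasi-coherence will be automatic, since on an affine open $U \subset X$ we have $(\mathcal{D} \otimes_{U(r\mathfrak{g})} M)(U) = \mathcal{D}(U) \otimes_{U(r\mathfrak{g})} M$, a quotient of the quasi-coherent $\mathcal{O}_X$-module $\mathcal{D} \otimes_R M$.

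The first genuine check is well-definedness of the prescription modulo the tensor relation $d \cdot i_{\mathfrak{g}}(\psi) \otimes m = d \otimes \psi.m$. Applying the proposed $L$-action, the left-hand side becomes $(l.d)(l.i_{\mathfrak{g}}(\psi)) \otimes (l.m)$, while the right-hand side becomes $(l.d) \otimes (l.\psi).(l.m)$ by weak equivariance of $M$. These coincide because axiom (iii) of Definition \ref{d2htdodef} says that $i_{\mathfrak{g}}$ is $G$-equivariant, hence $L$-equivariant, so $l.i_{\mathfrak{g}}(\psi) = i_{\mathfrak{g}}(l.\psi)$, and the tensor relation over $U(r\mathfrak{g})$ absorbs this factor.

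Once well-definedness is in place, the identities $e.(d\otimes m) = d\otimes m$ and $(gh).(d\otimes m) = g.(h.(d\otimes m))$ reduce immediately to the corresponding identities for $\mathcal{D}$ and $M$, and base-change compatibility is automatic because the prescription is manifestly functorial in the test $R$-algebra. The $\mathcal{O}_X$-module compatibility reduces to axiom (ii) of Definition \ref{d2htdodef} for $\mathcal{D}$, while the $\mathcal{D}$-module compatibility is the one-line computation
\begin{equation*}
l.(D.(d\otimes m)) = (l.(Dd))\otimes (l.m) = (l.D)(l.d)\otimes (l.m) = (l.D).(l.(d\otimes m)),
\end{equation*}
using axiom (i) of Definition \ref{d2htdodef} for $\mathcal{D}$. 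The main subtlety, rather than a genuine obstacle, is that the entire construction must be carried out functorially for arbitrary $R$-algebras $A$ and points $i_g:\Spec A \to L$ in order for it to supply the equivariance datum in the sense reformulated just before this lemma, but since tensor products and all the relevant actions commute with base change this is a formality.
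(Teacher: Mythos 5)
Your proposal is correct and follows essentially the same route as the paper: both define the action on pure tensors by $l.(d\otimes m)=(l.d)\otimes(l.m)$, verify the $\mathcal{D}$-module compatibility via axiom (i) of Definition \ref{d2htdodef}, and establish well-definedness over $U(r\mathfrak{g})$ by combining axiom (iii) (equivariance of $i_{\mathfrak{g}}$) with the weak equivariance of $M$. The only difference is cosmetic — you spell out the group-action axioms, base-change functoriality and quasi-coherence, which the paper leaves implicit.
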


\begin{proof}

We follow the idea in \cite[Section 10.4]{GRT}. Consider the induced $G$-actions on $\mathcal{D}$ and $M$ induced by equivariance condition. We define a twisted $G$-action on $\mathcal{D} \uset{R} M$ by viewing $M$ as a constant sheaf on $X$ and defining:

$$ l \cdot( D \otimes m)= l \star D \otimes l.m, $$

for any $D \in \mathcal{D}$, $m \in M$ and $l \in L$.

We begin by proving that the $\mathcal{D}$-module $\mathcal{D} \uset{R} M$ is weakly $L$-equivariant for the action defined above. We have:

\begin{equation}
\begin{split}
l \cdot D_1(D_2 \otimes m)&=  l \cdot (D_1D_2 \otimes m) \\
                            &=  l \star(D_1D_2) \otimes l.m \\
                            &= (l \star D_1) (l \star D_2) \otimes l.m \text { (by \ref{d2htdodef} i))} \\
                            &= (l \star D_1) ( l \cdot(D_2 \otimes m)), 
\end{split}
\end{equation}

for all $l \in L$, $D_1,D_2 \in \mathcal{D}, m \in M$.

 To prove that the $L$-action is well-defined on $\mathcal{D} \uset{U(r\mathfrak{g})} M$ (and thus  $\mathcal{D} \uset{U(r\mathfrak{g})} M$ is also weakly $L$-equivariant) it remains to prove that for any $\psi \in U(r\mathfrak{g})$, $m \in M$, $l \in L$ and $D \in \mathcal{D}$ a local section that

$$ l \cdot( D i_{\mathfrak{g}}(\psi) \otimes m)= l \cdot( D \otimes \psi.m).$$ 

We have

\begin{equation}
\begin{split}
l \cdot( D \otimes \psi.m) &= l \star D \otimes l.(\psi.m) \\
     &= l\star D \otimes (\Ad(l) \psi). (l.m) \text{ (by equation \eqref{d2equivliealgmodeq})} \\
     &=(l \star D) i_{\mathfrak{g}}(\Ad(l) \psi) \otimes l.m \\
     &=(l \star D) (l \star i_{\mathfrak{g}}(\psi)) \otimes l.m  \text{ (by \ref{d2htdodef} iii))}\\
     &=l \star (D i_{\mathfrak{g}}\psi) \otimes l.m \text{ (by \ref{d2htdodef} i))} \\
     &= l \cdot( D i_{\mathfrak{g}}(\psi) \otimes m). \qedhere
\end{split}
\end{equation}
\end{proof}

\begin{lemma}
\label{d2diffBB}

Let $L$ a closed subgroup of $G$ and let $M$ a $U(r\mathfrak{g})$-module that is also is an $\mathcal{O}(L)$-comodule and assume that the action of $r\mathfrak{l} \subset \mathfrak{l}=\Lie(L)$ induced by derivating the $L$-action coincides with the restriction of the $r\mathfrak{g}$ action to $r\mathfrak{l}$.

Consider the restriction to $r\mathfrak{l}$ of the $r\mathfrak{g}$-action on $\mathcal{D} \uset{U(\mathfrak{g})} M$ induced by $i_{\mathfrak{g}}:r\mathfrak{g} \to \mathcal{D}$ and the action of $r\mathfrak{l}$ induced by the derivative of the $L$ action on $\mathcal{D} \uset{U(\mathfrak{g})} M$. Then these two actions coincide.

\end{lemma}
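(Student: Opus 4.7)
The plan is to check, for each $\psi \in r\mathfrak{l}$ and each pure tensor $D \otimes m$ in $\mathcal{D} \uset{U(r\mathfrak{g})} M$, that the two candidate $r\mathfrak{l}$-actions agree on $D \otimes m$; the general case then follows by $R$-linearity. The first action, coming from the Lie algebra map $i_{\mathfrak{g}}: r\mathfrak{g} \to \mathcal{D}$ together with the $\mathcal{D}$-module structure, is simply left multiplication by $i_{\mathfrak{g}}(\psi)$:
$$\psi \cdot_1 (D \otimes m) = i_{\mathfrak{g}}(\psi) D \otimes m.$$
The second action is obtained by differentiating at the identity the twisted diagonal $L$-action $l \cdot (D \otimes m) = (l \star D) \otimes (l.m)$ constructed in the proof of Lemma \ref{d2weaklyBBloc}. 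By the Leibniz rule applied to this product action, we obtain
$$\psi \cdot_2 (D \otimes m) = (\psi \star D) \otimes m + D \otimes (\psi.m),$$
where $\psi \star D$ denotes the derivative of $l \mapsto l \star D$ and $\psi.m$ denotes the corresponding derivative for the action on $M$.

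To reconcile the two expressions, I would invoke axiom iv) of Definition \ref{d2htdodef}, which states that the derivative of the $G$-action on $\mathcal{D}$ coincides with the inner derivation by $i_{\mathfrak{g}}$. Restricting from $G$ to $L$ and specialising to $\psi \in r\mathfrak{l}$, this yields $\psi \star D = [i_{\mathfrak{g}}(\psi), D] = i_{\mathfrak{g}}(\psi) D - D\, i_{\mathfrak{g}}(\psi)$. Substituting,
$$\psi \cdot_2 (D \otimes m) = i_{\mathfrak{g}}(\psi) D \otimes m - D\, i_{\mathfrak{g}}(\psi) \otimes m + D \otimes (\psi.m).$$
By the hypothesis on $M$, the derivative of the $L$-action on $M$ restricted to $r\mathfrak{l}$ coincides with the restriction of the $U(r\mathfrak{g})$-module structure, so the element $\psi.m$ on the right equals $i_{\mathfrak{g}}(\psi).m$ computed via the $U(r\mathfrak{g})$-action on $M$. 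Since the tensor product is over $U(r\mathfrak{g})$, we can slide $i_{\mathfrak{g}}(\psi)$ across the tensor sign to get $D\, i_{\mathfrak{g}}(\psi) \otimes m = D \otimes (\psi.m)$. The last two terms therefore cancel, leaving $\psi \cdot_2 (D \otimes m) = i_{\mathfrak{g}}(\psi) D \otimes m = \psi \cdot_1 (D \otimes m)$, as required.

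The only genuinely delicate step is the justification of the Leibniz rule for the derivative of the diagonal $L$-action on $\mathcal{D} \uset{R} M$ at the Lie-algebra level, since $\mathcal{D}$ is a sheaf rather than an ordinary module. This is a standard fact, but to be rigorous one would either work locally on an affine open and use the comodule description (where the Leibniz rule is visibly encoded by the compatibility of the co-action with the coproduct on $\mathcal{O}(L)$), or differentiate along a one-parameter subgroup and appeal to the product rule. Once that is in place the argument above is entirely formal: the whole lemma reduces to a single application of axiom iv) of Definition \ref{d2htdodef} combined with the bimodule balancing relation over $U(r\mathfrak{g})$.
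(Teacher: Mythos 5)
Your proposal is correct and follows essentially the same route as the paper: differentiate the twisted $L$-action to get a Leibniz-rule decomposition, invoke axiom iv) of Definition \ref{d2htdodef} to identify $\psi \star_1 D$ with the commutator $[i_{\mathfrak{g}}(\psi), D]$, use the hypothesis on $M$ to identify $\psi \star_2 m$ with $\psi m$, and then cancel across the $U(r\mathfrak{g})$-balanced tensor product. Your closing remark on the need to justify the Leibniz rule is a fair point of caution, but it does not represent a divergence from the paper's argument, which cites the same facts.
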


\begin{proof}

Fix $\psi \in r\mathfrak{l}$, $D \in \mathcal{D}_X$ a local section and $m \in M$. 

The first action is given by $ \psi \cdot ( D \otimes m)= i_{\mathfrak{g}}(\psi) D \otimes m$.

Let $\star_1$ be the derivative of the $L$-action on $\mathcal{D}$ and $\star_2$ the derivative of the $L$-action on $M$. Then the second action (denote it $\star$) is given applying the chain rule by

$$ \psi \star (D \otimes m) = \psi \star_1 D \otimes m +  D \otimes \psi \star_2 m.$$

We have by Definition \ref{d2htdodef} iv) that $\psi \star_1 D= i_{\mathfrak{g}}(\psi)D- D i_{\mathfrak{g}}(\psi)$ and because of the assumption on $M$, $\psi \star_2 m= \psi m$. Therefore, we get

\begin{equation}
\begin{split}
\psi \star (D \otimes m) &=\psi \star_1 D \otimes m +  D \otimes \psi \star_2 m \\
        &= [i_{\mathfrak{g}}(\psi)D- D i_{\mathfrak{g}}(\psi)] \otimes m + D \otimes \psi m \\
        &= i_{\mathfrak{g}}(\psi)D  \otimes m- D i_{\mathfrak{g}}(\psi) \otimes m +  D \otimes \psi m \\
        &= i_{\mathfrak{g}}(\psi)D  \otimes m -  D \otimes \psi m + D \otimes \psi m \\
        &= i_{\mathfrak{g}}(\psi)D  \otimes m \\
        &= \psi \cdot (D  \otimes m).
\end{split}
\end{equation}

Thus the lemma is proved.
\end{proof}

Proposition \ref{d2classicalloccomring} now follows from Lemmas \ref{d2weaklyBBloc} and \ref{d2diffBB}.

\begin{proposition}
\label{d2classicalglocomring}
Let $L$ be a closed subgroup of $G$ and $\mathcal{M}$ an $L$-equivariant quasi-coherent $\mathcal{D}$-module. Then $M:=\Gamma(X,\mathcal{M})$ is an $L$-equivariant $U(r\mathfrak{g})$-module.  

\end{proposition}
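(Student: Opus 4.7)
The plan is to transport the equivariance data from $\mathcal{M}$ to $M := \Gamma(X, \mathcal{M})$ piece by piece, with each verification a formal consequence of the analogous sheaf-level identity.

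First I would equip $M$ with a $U(r\mathfrak{g})$-module structure via the composition $U(r\mathfrak{g}) \xrightarrow{i_{\mathfrak{g}}} \Gamma(X, \mathcal{D}) \to \End_R(M)$. Second, the $L$-equivariant $\mathcal{O}_X$-module structure on $\mathcal{M}$ should yield an $\mathcal{O}(L)$-coaction on $M$: for every $R$-algebra $A$ and every $l: \Spec A \to L$ the equivariance isomorphism provides an automorphism $s_l: M_A \to M_A$ satisfying the relations \eqref{alternativeeqOmod} functorially in $A$, which is exactly the data of a comodule structure.

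Next I would verify the weak equivariance condition \eqref{d2equivliealgmodeq}. Axioms i) and iii) of Definition \ref{d2htdodef} imply that the extended map $i_{\mathfrak{g}}: U(r\mathfrak{g}) \to \mathcal{D}$ satisfies $l.i_{\mathfrak{g}}(\psi) = i_{\mathfrak{g}}(\Ad(l)\psi)$ for all $l \in L$ and $\psi \in U(r\mathfrak{g})$. Combined with the weak equivariance of $\mathcal{M}$ as a $\mathcal{D}$-module (axiom ii) of Definition \ref{d2equivarianthtdomoddef}), passing to global sections gives
$$ l.(\psi.m) \;=\; l.(i_{\mathfrak{g}}(\psi).m) \;=\; (l.i_{\mathfrak{g}}(\psi)).(l.m) \;=\; i_{\mathfrak{g}}(\Ad(l)\psi).(l.m) \;=\; (\Ad(l)\psi).(l.m). $$

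Finally, for the strong equivariance condition I must check that the $r\mathfrak{l}$-action on $M$ coming from differentiating the $L$-action agrees with the restriction to $r\mathfrak{l}$ of the $U(r\mathfrak{g})$-action. This already holds for $\mathcal{M}$ on the nose by axiom iii) of Definition \ref{d2equivarianthtdomoddef}, and should descend to $M$ because differentiation along $\Spec R[\epsilon]/(\epsilon^2) \to L$ commutes with the formation of global sections of quasi-coherent sheaves. The main technical obstacle is the first step: producing a genuine $\mathcal{O}(L)$-coaction on $M$ from the equivariance isomorphism $\sigma_X^* \mathcal{M} \cong p_X^* \mathcal{M}$ in a manner compatible with base change. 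Once that bookkeeping is in place, the remaining conditions are formal consequences of applying $\Gamma$ to the corresponding sheaf-level identities.
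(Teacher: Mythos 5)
Your proposal matches the paper's argument: the paper also equips $M$ with a $U(r\mathfrak g)$-action via $\psi.m=i_{\mathfrak g}(\psi).m$, verifies weak equivariance by the same four-term computation using Definition \ref{d2equivarianthtdomoddef} ii) and Definition \ref{d2htdodef} iii), and then checks the strong (derivative) condition separately. The paper (Lemmas \ref{d2weaklyBBglo}, \ref{d2diffBBglo}) is terser and takes the $\mathcal{O}(L)$-coaction on $M$ for granted, whereas you sensibly flag the base-change bookkeeping needed to produce it; otherwise the approaches coincide.
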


We will do this in two steps:

\begin{lemma}
\label{d2weaklyBBglo}
Let $L$ be a closed subgroup of $G$ and $\mathcal{M}$ a weakly $L$-equivariant quasi-coherent $\mathcal{D}$-module. Then $M:=\Gamma(X,\mathcal{M})$ is a weakly $L$-equivariant  $U(\mathfrak{g})$-module.  
\end{lemma}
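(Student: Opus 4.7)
The plan is to transfer the weakly $L$-equivariant structure of $\mathcal{M}$ (both as an $\mathcal{O}_X$-module and as a $\mathcal{D}$-module) to the level of global sections in a direct way. First, I would equip $M := \Gamma(X, \mathcal{M})$ with a $U(r\mathfrak{g})$-module structure by composing the ring map $i_{\mathfrak{g}} : U(r\mathfrak{g}) \to \Gamma(X, \mathcal{D})$ (obtained by taking global sections of the Lie algebra map in Definition \ref{d2htdodef} and extending multiplicatively) with the natural $\Gamma(X, \mathcal{D})$-action on $M$. Since $\mathcal{M}$ is in particular an $L$-equivariant $\mathcal{O}_X$-module, the functor-of-points description \eqref{easyweakOmodequivariant} endows $M$ with an $L$-representation structure $l.m$, or equivalently an $\mathcal{O}(L)$-comodule structure, and the compatibility with base change is inherited from that of $\mathcal{M}$.

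The key step is to verify \eqref{d2equivliealgmodeq}: for every $R$-algebra $A$, every $l \in L(A)$, $\psi \in U(r\mathfrak{g})_A$ and $m \in M_A$, one has $l.(\psi.m) = (\Ad(l)\psi).(l.m)$. I would first check this for $\psi \in r\mathfrak{g}$. Global sections inherit from $\mathcal{D}$ an $L$-action $\star$ for which axiom (iii) of Definition \ref{d2htdodef} reads $l \star i_{\mathfrak{g}}(\psi) = i_{\mathfrak{g}}(\Ad(l)\psi)$. Applying the defining weak $L$-equivariance identity for $\mathcal{M}$ as a $\mathcal{D}$-module (Definition \ref{d2equivarianthtdomoddef}(ii)) to $D = i_{\mathfrak{g}}(\psi)$ yields
\[
l.(\psi.m) \;=\; l.(i_{\mathfrak{g}}(\psi).m) \;=\; (l \star i_{\mathfrak{g}}(\psi)).(l.m) \;=\; i_{\mathfrak{g}}(\Ad(l)\psi).(l.m) \;=\; (\Ad(l)\psi).(l.m),
\]
which is the required identity on $r\mathfrak{g}$. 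To pass to arbitrary $\psi \in U(r\mathfrak{g})$ one argues by induction on the PBW filtration: if the identity holds for $\psi_1, \psi_2$, then using Definition \ref{d2htdodef}(i) one gets $l \star (i_{\mathfrak{g}}(\psi_1)i_{\mathfrak{g}}(\psi_2)) = (l \star i_{\mathfrak{g}}(\psi_1))(l \star i_{\mathfrak{g}}(\psi_2))$, and the same computation delivers the identity for $\psi_1 \psi_2$; since $r\mathfrak{g}$ generates $U(r\mathfrak{g})$ as an $R$-algebra this covers everything.

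The main obstacle I expect is purely bookkeeping: one needs to make the passage to the functor-of-points perspective precise, so that the $\mathcal{D}$-module equivariance condition \eqref{eqmoduleGhtdocommdia} (phrased in terms of pullbacks along $\sigma_X$ and $p_X$) really translates into the pointwise identity $l.(D.m) = (l \star D).(l.m)$ on global sections compatibly across all base changes. Once this translation is laid out --- exactly as was done in \eqref{alternativeeqOmod} for $\mathcal{O}$-modules, but now keeping track of the $\mathcal{D}$-action --- the remainder of the argument is the short algebraic calculation above, and the lemma is proved.
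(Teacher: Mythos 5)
Your proposal is correct and follows essentially the same route as the paper: define the $U(r\mathfrak{g})$-action on $M$ via $i_{\mathfrak{g}}$, then combine Definition \ref{d2equivarianthtdomoddef}(ii) with Definition \ref{d2htdodef}(iii) to get $l.(\psi.m)=(l.\psi).(l.m)$. The only difference is that you spell out the extension of axiom (iii) from $r\mathfrak{g}$ to all of $U(r\mathfrak{g})$ via multiplicativity, which the paper leaves implicit.
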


\begin{proof}

First, notice that the $U(r\mathfrak{g})$-module structure on $M$ is given by 

$$ \psi.m=i_{\mathfrak{g}}(\psi).m, \qquad \text{ for } \psi \in U(r\mathfrak{g}), m \in M.$$


We have that for $l \in L$, $\psi \in U(r\mathfrak{g})$ and $m \in M$

\begin{equation}
\begin{split}
l .( \psi .m) &= l. (i_{\mathfrak{g}}(\psi).m) \\
              &=(l. i_{\mathfrak{g}}(\psi)).(l.m) \text{ (by \ref{d2equivarianthtdomoddef} ii))} \\
              &=i_{\mathfrak{g}}(l.\psi).(l.m) \text{ (by \ref{d2htdodef} iii))} \\ 
              &=(l.\psi).(l.m). \qedhere
\end{split}
\end{equation}
\end{proof}

\begin{lemma}
\label{d2diffBBglo}
Let $\mathcal{M}$ be weakly $L$-equivariant $\mathcal{D}$-module and assume that the $r \mathfrak{l} \subset \mathfrak{l}=\Lie(L)$ action induced by the derivative of the $L$-action coincides with the restriction to $r\mathfrak{l}$ of the $r\mathfrak{g}$ action on $\mathcal{M}$ induced by $i_{\mathfrak{g}}$. Then the same holds on $M:=\Gamma(X,\mathcal{M})$.
\end{lemma}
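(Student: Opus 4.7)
The plan is to observe that both the $r\mathfrak{l}$-actions on $M = \Gamma(X, \mathcal{M})$ in question are obtained by taking global sections of the corresponding $r\mathfrak{l}$-actions on $\mathcal{M}$, which by hypothesis already coincide.

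More concretely, I would proceed as follows. Let $\star$ denote the $r\mathfrak{l}$-action on $\mathcal{M}$ obtained by differentiating the weak $L$-equivariance structure, and let $\cdot$ denote the $r\mathfrak{l}$-action on $\mathcal{M}$ defined by $\psi \cdot s := i_{\mathfrak{g}}(\psi) s$ for any open $U \subset X$, any $s \in \mathcal{M}(U)$ and any $\psi \in r\mathfrak{l}$. The hypothesis states precisely that these two actions on the sheaf $\mathcal{M}$ agree. In particular, setting $U = X$, we obtain $\psi \star m = i_{\mathfrak{g}}(\psi) m$ for every $m \in M$ and every $\psi \in r\mathfrak{l}$.

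It therefore remains to identify each side with the corresponding action on $M$. The right-hand side is, by the definition given in the proof of Lemma \ref{d2weaklyBBglo}, precisely the restriction to $r\mathfrak{l}$ of the $U(r\mathfrak{g})$-action on $M$ induced by $i_{\mathfrak{g}} : U(r\mathfrak{g}) \to \Gamma(X, \mathcal{D})$. For the left-hand side I would note that the $L$-action on $M$ is by construction obtained by applying the functor $\Gamma(X, -)$ to the $L$-action on $\mathcal{M}$ (this is how Lemma \ref{d2weaklyBBglo} produces the weak $L$-equivariance on $M$), and differentiation commutes with $\Gamma(X,-)$: passing to the dual numbers $R[\epsilon]/(\epsilon^2)$ to define the Lie algebra action, both the sheaf-level derivative and the module-level derivative are extracted from the same $L(R[\epsilon]/(\epsilon^2))$-action by looking at the $\epsilon$-component of the comodule map, and these commute because taking $\Gamma(X,-)$ is $R$-linear and compatible with base change to $R[\epsilon]/(\epsilon^2)$.

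I expect the mildly subtle step to be the last identification, i.e.\ verifying that the derivative of the $L$-action on the global-sections module $M$ really is the restriction to $X$ of the derivative of the $L$-action on the sheaf $\mathcal{M}$. In the context of this paper, where equivariance is packaged via the comodule-like reformulation of Section \ref{subsectiondeformations} (equations \eqref{weaklyequiveqOmod} and \eqref{alternativeeqOmod}), this compatibility is essentially formal: the comodule map $\rho_M : M \to \mathcal{O}(L) \otimes_R M$ is obtained by specialising the sheaf-level structure isomorphism $\alpha$ to global sections, and the Lie algebra action is read off from $\rho_M$ via the augmentation ideal of $\mathcal{O}(L)$ in exactly the same way as for $\mathcal{M}$. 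Once this compatibility is made explicit, the lemma follows at once from the sheaf-level hypothesis.
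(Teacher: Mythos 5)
Your proposal is correct and follows essentially the same route as the paper, which simply notes that the claim is immediate from Definition \ref{d2equivarianthtdomoddef} since both $r\mathfrak{l}$-actions on $\Gamma(X,\mathcal{M})$ are the global-sections restrictions of the corresponding sheaf-level actions. Your explicit verification that differentiation of the $L$-action commutes with $\Gamma(X,-)$ (via base change to dual numbers) fills in the detail the paper leaves implicit.
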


\begin{proof}
The lemma follows from the fact that $\mathcal{D}$ is $G$-htdo and definition \ref{d2equivarianthtdomoddef}, notice it is much easier to prove than the corresponding Lemma \ref{d2diffBB}.  
\end{proof}

Proposition \ref{d2classicalglocomring} now follows from Lemmas \ref{d2weaklyBBglo}, \ref{d2diffBBglo}.

\subsection{Homogeneous twisted differential operators on the flag variety}

In this subsection, we aim to prove that the $r$-th deformation of the sheaf of  $\lambda$-twisted differential operators on the flag variety is an $r$-deformed $G$-htdo. We begin by reviewing the constructions in \cite[Section 4, Section 6.4]{Annals} and \cite[Section 5.1]{Eqdcap}. For now, we keep the notation from the previous section. 
Recall that by \cite[Definition 5.1.1]{Eqdcap} there is an infinitesimal action of $\mathfrak{g}$ on $X$ given by a $R$-linear Lie algebra map $\varphi': \mathfrak{g} \to \mathcal{T}_X(X)$. This morphism is $G$-equivariant by \cite[Lemma 5.1.3]{Eqdcap} and can be extended to a $G$-equivariant ring homomorphism $\alpha: U(\mathfrak{g}) \to \Gamma(X,\mathcal{D}_X)$. 

\begin{lemma}
\label{d2diffoperatorsexhtdo}
The sheaf $(\mathcal{D}_X,\alpha)$ is a $G$-htdo.
\end{lemma}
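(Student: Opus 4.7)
The plan is to verify each of the six axioms in Definition \ref{d2htdodef} for $(\mathcal{D}_X,\alpha)$ in the case $r=1$, and then invoke Lemma \ref{d2deformationoftdoisrtdo} if needed for the general $r$-deformed version.

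First, I would equip $\mathcal{D}_X$ with its canonical $G$-equivariant structure. For any $R$-algebra $A$ and $A$-point $g:\Spec A \to G$, the induced automorphism $g:X_A \to X_A$ is an isomorphism, so conjugation by $g$ transports local crystalline differential operators. This produces a $G$-action on $\mathcal{D}_X$ by ring automorphisms which is $\mathcal{O}_X$-semilinear in the sense of equation \eqref{easyweakOmodequivariant}. The fact that $\mathcal{D}_X$ is a tdo (for $r=1$) is essentially built into the construction: Definition \ref{crystallinedifferentialoperatorsdefinition} and equation \eqref{gradingcrysdifop} give $F_0\mathcal{D}_X=\mathcal{O}_X$, $F_1\mathcal{D}_X=\mathcal{O}_X\oplus\mathcal{T}_X$, and $\gr^F\mathcal{D}_X\cong\Sym_{\mathcal{O}_X}\mathcal{T}_X$, which are exactly axioms (i)-(iii) of Definition \ref{d2tdodefinition} with $r=1$.

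Next, define $i_{\mathfrak{g}}:\mathfrak{g}\to\mathcal{D}_X$ by sheafifying the composition $\mathfrak{g}\xrightarrow{\varphi'}\mathcal{T}_X\hookrightarrow F_1\mathcal{D}_X$; this is a Lie algebra map because $\varphi'$ is, and it lands in $F_1\mathcal{D}_X$, giving axiom (v). Axiom (vi) is then tautological: the anchor map $\rho:F_1\mathcal{D}_X\to\mathcal{T}_X$ is the projection onto the $\mathcal{T}_X$ summand, so $\rho\circ i_{\mathfrak{g}}=\varphi'=\eta$. Axiom (iii), the $G$-equivariance of $i_{\mathfrak{g}}$, follows because $\varphi'$ is $G$-equivariant by the cited \cite[Lemma 5.1.3]{Eqdcap} and the inclusion $\mathcal{T}_X\hookrightarrow F_1\mathcal{D}_X$ is $G$-equivariant (the $G$-action on $\mathcal{D}_X$ restricts to the natural $G$-action on $\mathcal{T}_X$ and $\mathcal{O}_X$). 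Axioms (i) and (ii) are immediate because the $G$-action is by ring automorphisms compatible with the $\mathcal{O}_X$-module structure.

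The main obstacle is axiom (iv), which requires identifying the derivative of the $G$-action on $\mathcal{D}_X$ with the inner derivation $[i_{\mathfrak{g}}(\psi),-]$ for $\psi\in\mathfrak{g}$. I would first check this on the generating subsheaves. On $\mathcal{O}_X\subset\mathcal{D}_X$, the derivative of the $G$-action is by definition the action of $\mathfrak{g}$ via the vector fields $\varphi'(\psi)$; but in $\mathcal{D}_X$ the defining relation $\partial f-f\partial=\partial(f)$ for $\partial\in\mathcal{T}_X$ gives $[\varphi'(\psi),f]=\varphi'(\psi)(f)$, which matches. On $\mathcal{T}_X\subset\mathcal{D}_X$, the derivative of the $G$-action on the tangent sheaf is the Lie derivative, which on vector fields equals the Lie bracket $[\varphi'(\psi),\partial]$ of vector fields; this agrees with the commutator in $\mathcal{D}_X$ by the defining relation $\partial\partial'-\partial'\partial=[\partial,\partial']$. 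Since both $\psi\star(-)$ and $[i_{\mathfrak{g}}(\psi),-]$ are derivations of $\mathcal{D}_X$ (the first because $G$ acts by ring automorphisms, the second tautologically), and $\mathcal{D}_X$ is generated as an $R$-algebra by $\mathcal{O}_X$ and $\mathcal{T}_X$, agreement on generators propagates to all of $\mathcal{D}_X$. This completes the verification of axiom (iv) and hence of the lemma.
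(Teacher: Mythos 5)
Your proof is correct and follows essentially the same route as the paper: establish the tdo axioms from the PBW filtration of $\mathcal{D}_X$, endow $\mathcal{D}_X$ with the natural $G$-action, and verify the six axioms of Definition \ref{d2htdodef} one by one. The only difference is that where you prove axiom (iv) directly by observing that the derivative of the $G$-action and $[i_{\mathfrak{g}}(\psi),-]$ are both derivations agreeing on the generating subsheaves $\mathcal{O}_X$ and $\mathcal{T}_X$, the paper delegates this step to the proof of \cite[Proposition 2.2]{MvdB}; your argument is the standard one and supplies exactly what that citation covers.
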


\begin{proof}

We have by \cite[Definition 4.2]{Sta1}  that $\mathcal{D}_X$ is a differential algebra with $F_0 \mathcal{D}_X \cong \mathcal{O}_X$ and $\gr_1 \mathcal{D}_X \cong \mathcal{T}_X$. Further by \cite[Equation 15]{Sta1}, we have $\gr \mathcal{D}_X \cong \Sym_{\mathcal{O}_X} \mathcal{T}_X$, so $\mathcal{D}_X$ is a tdo on $X$.

We endow $\mathcal{D}_X$ with a $G$ action by setting:

\begin{equation}
\begin{split}
&g.f(x)=f(gx) \text{ for } g \in G, f \in \mathcal{O}, x \in X, \\
&g.\tau(f)= g. \tau(g^{-1} f) \text{ for } g \in G, \tau \in \mathcal{T}, f \in \mathcal{O}.
\end{split}
\end{equation}

With this action, $\mathcal{D}_X$ satisfies axioms $i)$ and $ii)$ from Definition \ref{d2htdodef}. Since $\alpha$ is $G$-equivariant, axiom $iii)$ is also satisfied. Further, it is clear that the map $\alpha$ satisfies axioms $v)$ and $vi)$. Finally, axiom $iv)$ follows from the proof of \cite[Proposition 2.2]{MvdB}.
\end{proof}

\begin{lemma}
\label{d2imageofdiffeq}
Let $H$ be another smooth affine algebraic group acting on $X$ such that the actions of $G$ and $H$ commute. Then $\im(\alpha) \subset \Gamma(X,\mathcal{D}_X)^{H}$. 

\end{lemma}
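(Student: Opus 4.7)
The plan is first to reduce the claim to showing that the infinitesimal map $\varphi':\mathfrak{g}\to\mathcal{T}_X(X)$ lands in the $H$-invariants, and then to verify this by directly using the commutation of the $G$- and $H$-actions on $X$.

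For the reduction, I would observe that since $H$ acts on $X$, it acts on $\mathcal{O}_X$ and on $\mathcal{T}_X$ by the formulas recorded in the proof of Lemma \ref{d2diffoperatorsexhtdo}, and these extend to an action of $H$ on $\mathcal{D}_X$ by ring automorphisms (the axiom $g.(d_1 d_2)=(g.d_1)(g.d_2)$ of Definition \ref{d2htdodef} applies equally to $H$, since $\mathcal{D}_X$ is built only out of $\mathcal{O}_X$ and $\mathcal{T}_X$). Taking global sections, $\Gamma(X,\mathcal{D}_X)^H$ is then a subring of $\Gamma(X,\mathcal{D}_X)$. Since $\alpha$ is a ring homomorphism and $U(\mathfrak{g})$ is generated as an $R$-algebra by $\mathfrak{g}$, it suffices to show that $\alpha(\mathfrak{g})=\varphi'(\mathfrak{g})\subset \mathcal{T}_X(X)^H$.

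For the verification, I would compute directly: by the reformulation of equivariance recalled before Definition \ref{d2htdodef}, for $h\in H$ the action on a vector field $\tau$ is $(h.\tau)(f)=h.(\tau(h^{-1}.f))$, where the action on functions is $(g.f)(y)=f(g^{-1}\cdot y)$. Taking $\tau=\varphi'(x)$ and unwinding, $(h.\varphi'(x))(f)(y)$ becomes $\tfrac{d}{dt}|_{t=0} f(h^{-1}\exp(tx) h\cdot y)$, which equals $\tfrac{d}{dt}|_{t=0} f(\exp(tx)\cdot y)=\varphi'(x)(f)(y)$ because, by hypothesis, the $G$- and $H$-actions on $X$ commute. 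Hence $h.\varphi'(x)=\varphi'(x)$, which together with the previous step proves the lemma.

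A cleaner, conceptual way to phrase the same step is that the $G$-action map $\sigma:G\times X\to X$ is automatically $H$-equivariant when $H$ is declared to act trivially on the $G$-factor; differentiating $\sigma$ at the identity, which by definition is $\varphi'$, then yields an $H$-equivariant map $\mathfrak{g}\to\mathcal{T}_X(X)$ with $H$ acting trivially on $\mathfrak{g}$, so the image lands in $\mathcal{T}_X(X)^H$. The main thing to be careful about is matching the sign and side conventions for the various actions used in the paper; once this bookkeeping is done the argument is immediate and does not require any of the deeper theory developed in earlier sections.
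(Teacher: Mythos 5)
Your proof is correct. Note, though, that the paper does not actually argue this lemma at all: its entire proof is the citation ``This follows from \cite[Section 4.8]{Annals}'', so what you have written is essentially a self-contained expansion of the content of that reference rather than a different route. Your reduction is the right one: the $H$-action on $\mathcal{D}_X$ is by ring automorphisms (the same formulas used for $G$ in Lemma \ref{d2diffoperatorsexhtdo}), so $\Gamma(X,\mathcal{D}_X)^H$ is a subring, and since $\alpha$ is a ring homomorphism extending $\varphi'$ and $U(\mathfrak{g})$ is generated by $\mathfrak{g}$, everything comes down to $\varphi'(\mathfrak{g})\subset\mathcal{T}_X(X)^H$. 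The one caveat is that your computation with $\frac{d}{dt}\big|_{t=0}f(\exp(tx)\cdot y)$ is only a heuristic over a general commutative base ring $R$ (there is no exponential, and $R$ may have positive characteristic); but your ``cleaner, conceptual'' paragraph already repairs this: the action map $\sigma\colon G\times X\to X$ is $H$-equivariant for the trivial $H$-action on the $G$-factor precisely because the two actions commute, and differentiating $\sigma$ along the identity section of $G$ — which is how $\varphi'$ is defined in \cite[Definition 5.1.1]{Eqdcap} — produces an $H$-equivariant map $\mathfrak{g}\to\mathcal{T}_X(X)$ with $H$ acting trivially on the source, forcing the image into the invariants. That functorial version is the argument that actually works in the paper's level of generality, and with it your proof is complete; the analytic computation should be presented as motivation rather than as the proof.
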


\begin{proof}

This follows from \cite[Section 4.8]{Annals}.
\end{proof}

Recall that we assume that $G$ is a connected, simply connected, split semisimple, smooth affine algebraic group scheme over $R$. Let $B$ be a closed and flat Borel $R$-subgroup scheme, $N$ its unipotent radical and $H=B/N$ the abstract Cartan group. Let $\widetilde{X}=G/N $ denote the basic affine space and $X=G/B$ denote the flag scheme. Define an action of $H$ on $\widetilde{X}$ via 
 
       $$bN \cdot gN:=gbN, \qquad b \in B, g \in G.$$

\begin{lemma}[{\cite[Lemma 4.7]{Annals}}]
\label{d2Hloctriviallytorsor}
\begin{enumerate}[label=\roman*)]
\item{The action of $H$ commutes with the natural action of $G$.}
\item{$\widetilde{X}$ and $X$ are smooth separated schemes over $R$, locally of finite type.}
\item{The natural projection $\xi:\widetilde{X}\to X$ is a locally trivial $H$-torsor.}
\end{enumerate}

\end{lemma}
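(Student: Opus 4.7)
The plan is to handle the three claims separately, with (i) being a direct computation, (ii) an appeal to standard representability and descent results, and (iii) a localisation argument building on the known fact that $d_B : G \to X$ is a locally trivial $B$-torsor.

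For (i), I would spell out the two actions explicitly. The natural $G$-action on $\widetilde{X} = G/N$ sends $g \cdot hN = ghN$, while the $H$-action is given by $bN \cdot hN = hbN$ (well-defined because $N$ is normal in $B$, so different representatives $bn$ give $hbnN = hbN$). A one-line calculation
\[
    g \cdot (bN \cdot hN) \;=\; g \cdot (hbN) \;=\; ghbN \;=\; bN \cdot (ghN) \;=\; bN \cdot (g \cdot hN)
\]
(valid functorially on points over any $R$-algebra) shows the actions commute.

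For (ii), I would invoke the representability of fppf quotients by closed flat subgroup schemes of smooth affine group schemes: since $B$ (and hence $N$) is closed and flat in $G$, the fppf sheaf quotients $G/B$ and $G/N$ are representable by $R$-schemes, and the quotient maps $G \to G/B$ and $G \to G/N$ are faithfully flat and locally of finite presentation. Smoothness, separatedness, and being locally of finite type then descend from $G$ along these covers by standard fpqc descent. This is precisely the content of Assumption \ref{d2loctrivialassumption} applied to $B$ (respectively to $N$, using that $N$ is also a closed flat subgroup of $G$), and matches the cited statement in \cite{Annals}.

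For (iii), the key point is that the quotient map $d_B: G \to X$ is already known to be a locally trivial $B$-torsor. Pick an affine open cover $\{U_i\}$ of $X$ together with $B$-equivariant isomorphisms $\phi_i : B \times U_i \xrightarrow{\sim} d_B^{-1}(U_i)$, where $B$ acts on the source by $b' \cdot (b, u) = (bb'^{-1}, u)$ (matching the torsor convention). Since $N$ is normal in $B$, quotienting both sides by this induced $N$-action gives an isomorphism
\[
    H \times U_i \;=\; (B/N) \times U_i \;\xrightarrow{\sim}\; d_B^{-1}(U_i)/N \;=\; \xi^{-1}(U_i),
\]
and by construction this isomorphism intertwines the residual $H$-action on the left (on the first factor) with the $H$-action on $\widetilde{X}$ defined in the statement. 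Thus $\xi$ is a locally trivial $H$-torsor.

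The main technical point I expect to stumble over is bookkeeping the equivariance directions correctly: the right $N$-action on $B$ (via the inclusion $N \hookrightarrow B$) must match, under $\phi_i$, the restriction to $N$ of the $B$-torsor structure on $d_B^{-1}(U_i)$, and one must check that the quotient $(B \times U_i)/N$ in the category of $R$-schemes really exists and is computed on the first factor (which follows because the $N$-action on the second factor is trivial and $B \to B/N = H$ is itself a locally trivial $N$-torsor). Once these identifications are in place, the trivialisations of $d_B$ pull through to trivialisations of $\xi$ with no further work.
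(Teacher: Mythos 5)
This lemma is imported verbatim from \cite[Lemma 4.7]{Annals}; the paper gives no proof of its own, so there is nothing internal to compare against. Your reconstruction is essentially the standard argument and is correct in outline: (i) is the right one-line computation on points (well-definedness using $N\trianglelefteq B$, and note that the formula $bN\cdot gN=gbN$ is a left action only because $[B,B]\subseteq N$, i.e.\ $H$ is commutative); (iii) correctly reduces local triviality of $\xi$ to local triviality of $d_B$ by quotienting the trivialisations $\phi_i:B\times U_i\to d_B^{-1}(U_i)$ by the right $N$-action, using that $B\to H$ is itself a (here even globally trivial, since $B\cong T\ltimes N$ for split $B$) $N$-torsor so that the quotient is computed on the first factor.

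The one place you over-claim is in (ii): ``the fppf sheaf quotients $G/B$ and $G/N$ are representable by $R$-schemes'' is not a consequence of a general representability theorem for quotients by closed flat subgroup schemes --- no such theorem holds in that generality, and $N$ is not normal in $G$. For split reductive $G$ over a base ring this is genuine structure theory (\cite[II.1.8--1.10]{Jan1} or SGA3), which is what \cite{Annals} leans on. Alternatively, you could make your argument self-contained by reversing the logical order: once $X=G/B$ exists with $d_B$ Zariski-locally trivial (the paper's Assumption \ref{d2loctrivialassumption}, verified via \cite[II,1.10(2)]{Jan1}), the gluing of the quotients $(B\times U_i)/N\cong H\times U_i$ from your step (iii) simultaneously \emph{constructs} $\widetilde{X}$ as a scheme, proves it is smooth, separated and locally of finite type (each chart is $H\times U_i$), and exhibits $\xi$ as a locally trivial $H$-torsor --- so (ii) for $\widetilde{X}$ comes for free rather than needing a separate representability input.
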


\begin{definition}
The relative enveloping algebra is the sheaf of $H$-invariants of $\xi_* \mathcal{D}_{\widetilde{X}}$:

           $$\widetilde{\mathcal{D}}_X:=(\xi_* \mathcal{D}_{\widetilde{X}})^H.$$

\end{definition}

The sheaf comes with a natural filtration $F_i \widetilde{\mathcal{D}}_X:= (\xi_* F_i \mathcal{D}_{\widetilde{X}})^H$ induced by the natural filtration on  $\mathcal{D}_{\widetilde{X}}$.

\begin{proposition}
\label{d2basicpropdtilde}
Let $\widetilde{\mathcal{T}}_X:=(\xi_* \mathcal{T}_{\widetilde{X}})^H$, $\mathfrak{h}=\Lie(H)$, $U$ be an affine open set of $X$ trivialising $\xi$. Then:

\begin{enumerate}[label=\roman*)]
\item{$\mathcal{D}_{\widetilde{X}}(U) \uset{R} U(\mathfrak{h}) \cong  \widetilde{\mathcal{D}}_X(U).$}
\item{$\Sym_{\mathcal{O}_X} \widetilde{\mathcal{T}}_X \cong \gr \widetilde{\mathcal{D}}_X$.}
\item{The derivative of the $H$ action on $\widetilde{X}$ induces a central embedding $j: \mathfrak{h} \to \widetilde{\mathcal{D}}_X$. }
\end{enumerate}

\end{proposition}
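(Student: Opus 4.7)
The plan is to prove all three parts by reducing to a local trivialisation of the $H$-torsor $\xi$. Fix $U \subset X$ affine trivialising $\xi$, so $\xi^{-1}(U) \cong U \times H$ with $H$ acting trivially on $U$ and by translation on itself, according to the convention $bN \cdot gN = gbN$. For part i), which I read as the assertion $\mathcal{D}_X(U) \uset{R} U(\mathfrak{h}) \cong \widetilde{\mathcal{D}}_X(U)$ (the displayed $\mathcal{D}_{\widetilde{X}}$ on the left-hand side appears to be a typo, since $\widetilde{\mathcal{D}}_X$ is a subsheaf of $\xi_*\mathcal{D}_{\widetilde{X}}$), I would first invoke the standard product decomposition of crystalline differential operators on smooth factors, $\mathcal{D}_{\widetilde{X}}(\xi^{-1}(U)) \cong \mathcal{D}_X(U) \uset{R} \mathcal{D}_H(H)$. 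Taking $H$-invariants leaves the left factor unchanged and replaces the right factor by the enveloping algebra of translation-invariant vector fields on $H$, namely $U(\mathfrak{h})$.

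For part ii), I would pass to associated gradeds in the formula from i). The PBW filtration on $\widetilde{\mathcal{D}}_X$ is inherited from $\mathcal{D}_{\widetilde{X}}$ and respects the product decomposition above, so locally
\begin{equation*}
\gr \widetilde{\mathcal{D}}_X(U) \cong \gr \mathcal{D}_X(U) \uset{R} \gr U(\mathfrak{h}) \cong \Sym_{\mathcal{O}_X(U)} \mathcal{T}_X(U) \uset{R} \Sym_R \mathfrak{h}.
\end{equation*}
The same trivialisation identifies $\widetilde{\mathcal{T}}_X(U)$ with $\mathcal{T}_X(U) \oplus \mathcal{O}_X(U) \uset{R} \mathfrak{h}$, using $\mathcal{T}_H(H)^H \cong \mathfrak{h}$, so $\Sym_{\mathcal{O}_X(U)} \widetilde{\mathcal{T}}_X(U)$ matches the display. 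Hence the canonical symbol map $\Sym_{\mathcal{O}_X} \widetilde{\mathcal{T}}_X \to \gr \widetilde{\mathcal{D}}_X$ is a local, and therefore global, isomorphism.

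For part iii), the derivative of the $H$-action on $\widetilde{X}$ yields a Lie algebra map $\mathfrak{h} \to \Gamma(\widetilde{X}, \mathcal{T}_{\widetilde{X}})$. Since the $H$-action commutes with itself, the image lies in $(\xi_*\mathcal{T}_{\widetilde{X}})^H = \widetilde{\mathcal{T}}_X \subset \widetilde{\mathcal{D}}_X$, giving the desired map $j$. Injectivity follows from the freeness of the $H$-action on $\widetilde{X}$. Centrality can be checked locally via i): the image of $j$ corresponds to $1 \otimes \mathfrak{h}$ inside $\mathcal{D}_X(U) \uset{R} U(\mathfrak{h})$, which commutes with $\mathcal{D}_X(U) \otimes 1$ since the tensor factors commute and with $1 \otimes U(\mathfrak{h})$ since $\mathfrak{h}$ is abelian.

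The main technical difficulty will be the identification $\mathcal{D}_H(H)^H \cong U(\mathfrak{h})$ together with the careful tracking of left-versus-right translation implicit in the convention $bN \cdot gN = gbN$; one must ensure that the relevant invariant vector fields are the translation-invariant ones on the correct side, and in particular that the map $\mathfrak{h} \to \mathcal{T}_H(H)$ arising from the $H$-action on itself is injective and hits exactly the invariant sections. Once this is set up, the remaining steps are standard: the tensor product formula for crystalline differential operators on a product of smooth schemes, the compatibility of invariants with the PBW filtration, and the gluing of the local isomorphisms to global sheaf isomorphisms.
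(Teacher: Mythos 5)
Your proof is correct and is the standard unwinding, via the local trivialisation $\xi^{-1}(U)\cong U\times H$, of exactly the facts the paper leaves to its cited reference, so it takes essentially the same approach. Your reading of part i) is also right: since $\widetilde{\mathcal{D}}_X$ is a subsheaf of $\xi_*\mathcal{D}_{\widetilde{X}}$, the displayed left-hand side must be $\mathcal{D}_X(U)\uset{R}U(\mathfrak{h})$, which is precisely what your K\"unneth-plus-$H$-invariants computation (together with $\mathcal{D}_H(H)^H\cong U(\mathfrak{h})$ and the abelianness of $H$ for invariance of the fundamental vector fields and for centrality) produces.
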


\begin{proof}
The first two claims follow by \cite[Proposition 4.6]{Annals} and the third follows by \cite[Section 4.10]{Annals} along with Lemma \ref{d2Hloctriviallytorsor} $ii)$ and $iii)$.
\end{proof}

The sheaf $\widetilde{\mathcal{D}}_X$ will be used to construct the sheaf of $\lambda$-twisted differential operators. Proposition \ref{d2basicpropdtilde} $i)$ shows that $\widetilde{\mathcal{D}}_X$ can not be a htdo as it is not a tdo on $X$.

\begin{lemma}
\label{d2Dtildeaxioms}
The sheaf $(\widetilde{\mathcal{D}}_X,\alpha)$ is a differential algebra, $G$-equivariant as an $\mathcal{O}_X$-module and furthermore it satisfies axioms $i)$ to $vi)$ of Definition \ref{d2htdodef}.
\end{lemma}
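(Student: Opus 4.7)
The plan is to bootstrap all properties of $\widetilde{\mathcal{D}}_X$ from the corresponding properties of $\mathcal{D}_{\widetilde{X}}$ using the fact that $\xi:\widetilde{X} \to X$ is a locally trivial $H$-torsor (Lemma \ref{d2Hloctriviallytorsor}) whose $H$-action commutes with the $G$-action. Locally on an affine open $U \subset X$ trivialising $\xi$, Proposition \ref{d2basicpropdtilde}(i) identifies $\widetilde{\mathcal{D}}_X(U)$ with the tensor product $\mathcal{D}_{\widetilde{X}}(U) \otimes_R U(\mathfrak{h})$, from which I would read off that $\widetilde{\mathcal{D}}_X$ is a differential $\mathcal{O}_X$-algebra: the $F_0$-term is $(\xi_* \mathcal{O}_{\widetilde{X}})^H = \mathcal{O}_X$ because $\xi$ is an $H$-torsor, and commutativity of $\gr$ together with the filtration properties transfer directly from $\mathcal{D}_{\widetilde{X}}$. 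The $G$-equivariance as an $\mathcal{O}_X$-module comes for free: since the $G$- and $H$-actions on $\widetilde{X}$ commute, the induced $G$-action on $\xi_* \mathcal{D}_{\widetilde{X}}$ preserves $H$-invariants, so it restricts to $\widetilde{\mathcal{D}}_X$.

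Next, I would construct the Lie algebra map $\alpha:\mathfrak{g} \to \Gamma(X, \widetilde{\mathcal{D}}_X)$. The infinitesimal $G$-action on $\widetilde{X}$ gives $\varphi': \mathfrak{g} \to \mathcal{T}_{\widetilde{X}}(\widetilde{X})$, which extends to a ring homomorphism $\widetilde{\alpha}:U(\mathfrak{g}) \to \Gamma(\widetilde{X}, \mathcal{D}_{\widetilde{X}})$. Since the $H$-action on $\widetilde{X}$ commutes with the $G$-action, Lemma \ref{d2imageofdiffeq} places the image inside $\Gamma(\widetilde{X}, \mathcal{D}_{\widetilde{X}})^H = \Gamma(X, \widetilde{\mathcal{D}}_X)$. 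Sheafifying in the usual way (each $\psi \in \mathfrak{g}$ defines a global section, hence a section over every open), I obtain the desired $\alpha$.

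Finally I would go through axioms (i)--(vi) of Definition \ref{d2htdodef}. Axioms (i)--(ii) are ring-theoretic compatibilities of the $G$-action already verified for $\mathcal{D}_{\widetilde{X}}$ in Lemma \ref{d2diffoperatorsexhtdo} (applied to $\widetilde{X}$), and they are inherited by $H$-invariants. Axiom (iii), the $G$-equivariance of $\alpha$, follows because $\varphi'$ is itself $G$-equivariant (\cite[Lemma 5.1.3]{Eqdcap}) and taking $H$-invariants is $G$-linear. Axiom (v) holds because $\varphi'(\mathfrak{g}) \subset \mathcal{T}_{\widetilde{X}} \subset F_1 \mathcal{D}_{\widetilde{X}}$, hence $\alpha(\mathfrak{g}) \subset (\xi_* F_1 \mathcal{D}_{\widetilde{X}})^H = F_1 \widetilde{\mathcal{D}}_X$. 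Axiom (vi), the compatibility with anchor maps, reduces to the identity $\xi_* \varphi' = \varphi$ linking the infinitesimal actions on $\widetilde{X}$ and $X$, which is immediate from functoriality of tangent sheaves applied to the $G$-equivariant map $\xi$.

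The main obstacle I anticipate is axiom (iv): the assertion that the derivative of the $G$-action on $\widetilde{\mathcal{D}}_X$ coincides with $\mathrm{ad}_{\alpha(\psi)}$. On $\widetilde{X}$ this is the familiar statement that the Lie derivative of a vector field by an infinitesimal action equals the corresponding commutator in $\mathcal{D}_{\widetilde{X}}$, established in \cite[Proposition 2.2]{MvdB} (as used at the end of the proof of Lemma \ref{d2diffoperatorsexhtdo}). The subtlety is that I need to transfer this identity through the pushforward $\xi_*$ and the passage to $H$-invariants. I would handle this by working locally on a trivialising open $U$, using the identification of Proposition \ref{d2basicpropdtilde}(i) to reduce to the known statement for $\mathcal{D}_{\widetilde{X}}(U)$, and invoking Lemma \ref{d2imageofdiffeq} once more to ensure everything descends to $H$-invariants.
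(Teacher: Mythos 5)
Your proposal is correct and follows essentially the same route as the paper: everything is descended from $\mathcal{D}_{\widetilde{X}}$ through $H$-invariants using the commutativity of the $G$- and $H$-actions, with Lemma \ref{d2imageofdiffeq} and the $G$-equivariance of $\alpha$ and $\xi$ handling axioms $iii)$--$vi)$. The paper's own proof is just a terser version of this; your extra care with axiom $iv)$ (transferring the \cite[Proposition 2.2]{MvdB} identity through $\xi_*$ and $H$-invariants) fills in a detail the paper leaves implicit under ``follow from construction.''
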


\begin{proof}
The filtration on $\widetilde{\mathcal{D}}_X$ makes it a differential algebra. Since the actions of $G$ and $H$ commute we have by \cite[Lemma 3.6]{Sta1} that the $G$ action on $\mathcal{D}_{\widetilde{X}}$ descends to a $G$-action on $\widetilde{\mathcal{D}}_X$. Thus, axioms $i),ii)$ of Definition \ref{d2htdodef} are satisfied. Axioms $iii)-vi)$ follow from construction and Lemma \ref{d2imageofdiffeq} since $\alpha$ and $\xi$ are $G$-equivariant.
\end{proof}

\begin{definition}
Let $\widetilde{\mathcal{D}}_r$ be the sheafification of the presheaf obtained by postcomposing $\widetilde{\mathcal{D}}_X$ with the deformation functor $A \to  A_r$. Since the $G$-action preserves the filtration on $\widetilde{\mathcal{D}}_X$ induced by deformation, it restricts to a $G$-action on $\widetilde{\mathcal{D}}_r$.
\end{definition}

Now, let $\lambda: r\mathfrak{h} \to R$; the map $\lambda$ can be extended to a ring homomorphism $\lambda:U(r\mathfrak{h}) \to R$ and we denote $R_{\lambda}$ the resulting $U(r\mathfrak{h})$-module. Furthermore, recall that by Proposition \ref{d2basicpropdtilde} $iii)$, there exists a central embedding $j: \mathfrak{h} \to \widetilde{\mathcal{D}}_X$ which can be extended to a ring morphism $j: U(\mathfrak{h}) \to \widetilde{\mathcal{D}}_X$. By applying the deformation functor we obtain a homomorphism $j:U(r\mathfrak{h}) \to \widetilde{\mathcal{D}}_r$.

\begin{definition}
\label{d2Dlambdardefinition}

The sheaf of $r$-deformed $\lambda$-twisted differential operators on the flag variety $X$ is the central reduction 

         $$\mathcal{D}_{\lambda,r}:=\widetilde{\mathcal{D}}_r \uset{U(r\mathfrak{h})} R_{\lambda}.$$

We give $R_{\lambda}$ the trivial filtration and we view $\mathcal{D}_{\lambda,r}$ as a sheaf of filtered $R$-algebras with the tensor filtration.

\end{definition}

\begin{corollary}
\label{d2Dlambdaishtdo}

The sheaf $(\mathcal{D}_{\lambda,r},\alpha)$ is an $r$-deformed $G$-htdo on the flag variety.

\end{corollary}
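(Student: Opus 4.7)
The plan is to transfer the structure of $\widetilde{\mathcal{D}}_X$ established in Lemma \ref{d2Dtildeaxioms} first through the deformation functor to obtain $\widetilde{\mathcal{D}}_r$, and then through the central reduction $\otimes_{U(r\mathfrak{h})} R_\lambda$ to obtain $\mathcal{D}_{\lambda,r}$. The main substantive point to check is that $\mathcal{D}_{\lambda,r}$ is an $r$-deformed tdo on $X$; the remaining parts of Definition \ref{d2htdodef} are inherited by functoriality, and the resulting Lie algebra map $i_{\mathfrak{g}}: r\mathfrak{g} \to \mathcal{D}_{\lambda,r}$ is the composition of the deformed $\alpha: U(r\mathfrak{g}) \to \widetilde{\mathcal{D}}_r$ with the quotient map.

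First, I would verify that $\widetilde{\mathcal{D}}_r$ inherits the relevant structure from $\widetilde{\mathcal{D}}_X$: the deformation is a $G$-equivariant differential $\mathcal{O}_X$-algebra carrying the deformed map $\alpha: U(r\mathfrak{g}) \to \widetilde{\mathcal{D}}_r$, with $\alpha(r\mathfrak{g}) \subset F_1 \widetilde{\mathcal{D}}_r$, using that $\alpha(\mathfrak{g}) \subset F_1\widetilde{\mathcal{D}}_X$ combined with the equality $F_1 \widetilde{\mathcal{D}}_r = F_0 \widetilde{\mathcal{D}}_X + r F_1 \widetilde{\mathcal{D}}_X$. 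The central embedding $j: U(\mathfrak{h}) \to \widetilde{\mathcal{D}}_X$ from Proposition \ref{d2basicpropdtilde}(iii) deforms to a central, $G$-invariant map $j: U(r\mathfrak{h}) \to \widetilde{\mathcal{D}}_r$, $G$-invariance being a consequence of the fact that the $G$ and $H$ actions on $\widetilde{X}$ commute. The analogues of axioms i)--vi) of Definition \ref{d2htdodef} then transfer from $\widetilde{\mathcal{D}}_X$ to $\widetilde{\mathcal{D}}_r$ by functoriality of deformation.

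Next, I would verify that $\mathcal{D}_{\lambda,r}$ is an $r$-deformed tdo by a local computation: on an affine open $U \subset X$ trivialising the $H$-torsor $\xi$, Proposition \ref{d2basicpropdtilde}(i) identifies $\widetilde{\mathcal{D}}_X(U)$ with a tensor product of the form $\mathcal{D}_X(U) \otimes_R U(\mathfrak{h})$, where $U(\mathfrak{h})$ sits in filtration degree zero via $j$. Applying the deformation functor and then tensoring with $R_\lambda$ over $U(r\mathfrak{h})$ produces $\mathcal{D}_{\lambda,r}(U) \cong \mathcal{D}_{X,r}(U)$, which is an $r$-deformed tdo by Lemmas \ref{d2diffoperatorsexhtdo} and \ref{d2deformationoftdoisrtdo}. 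Finally, the $G$-action on $\widetilde{\mathcal{D}}_r$ fixes $j(U(r\mathfrak{h}))$ pointwise, so it descends to $\mathcal{D}_{\lambda,r}$ along with $\alpha$, producing the required $i_{\mathfrak{g}}: r\mathfrak{g} \to \mathcal{D}_{\lambda,r}$; axioms i)--vi) then follow from the corresponding axioms for $\widetilde{\mathcal{D}}_r$ because the quotient map is a $G$-equivariant filtered ring homomorphism.

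The main obstacle is tracking filtrations through the local model: one must check that the deformation functor is compatible with the tensor product in Proposition \ref{d2basicpropdtilde}(i), and that central reduction by $\lambda$ produces the correct graded pieces, in particular $\gr^F_1 \mathcal{D}_{\lambda,r} \cong r\mathcal{T}_X$. Here one uses the local splitting $\widetilde{\mathcal{T}}_X \cong \mathcal{T}_X \oplus (\mathcal{O}_X \otimes_R \mathfrak{h})$ induced by the trivialisation of $\xi$, together with the identification $\gr \widetilde{\mathcal{D}}_r \cong \gr \widetilde{\mathcal{D}}_X$ from Lemma \ref{associatedgradedofdeformations}, so that the central reduction kills exactly the $r\mathfrak{h}$-summand and leaves behind $r\mathcal{T}_X$ in the first graded piece.
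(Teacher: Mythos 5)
Your argument is correct in outline and handles the equivariance part exactly as the paper does: the ideal $\mathcal{I}=\langle j(h)-\lambda(h)\mid h\in r\mathfrak{h}\rangle$ is $G$-stable because the $G$- and $H$-actions on $\widetilde{X}$ commute (Lemma \ref{d2imageofdiffeq} with the roles of $G$ and $H$ swapped), so the $G$-action and the map $\alpha$ descend from $\widetilde{\mathcal{D}}_r$ to $\mathcal{D}_{\lambda,r}$ and the axioms of Definition \ref{d2htdodef} are inherited through the filtered $G$-equivariant quotient. Where you diverge is the tdo verification: the paper simply cites the known fact that $\mathcal{D}_{\lambda}=\mathcal{D}_{\lambda,1}$ is a tdo and observes that $\mathcal{D}_{\lambda,r}$ is its $r$-th deformation, so Lemma \ref{d2deformationoftdoisrtdo} applies; you instead redo the local computation on a trivialising open $U$, identifying $\mathcal{D}_{\lambda,r}(U)$ with $\mathcal{D}_{X,r}(U)$ directly. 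Your route is more self-contained and, usefully, makes explicit the commutation of the deformation functor with the central reduction, which the paper's phrase ``$\mathcal{D}_{\lambda,r}$ can be viewed as the $r$-th deformation of $\mathcal{D}_{\lambda}$'' quietly assumes; the paper's route is shorter because it outsources the undeformed case to the cited reference.

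One slip to correct: you write that in the local identification $\widetilde{\mathcal{D}}_X(U)\cong \mathcal{D}_X(U)\otimes_R U(\mathfrak{h})$ the factor $U(\mathfrak{h})$ ``sits in filtration degree zero via $j$.'' It does not: $j$ is induced by the derivative of the $H$-action on $\widetilde{X}$, so $j(\mathfrak{h})$ consists of (descended) vector fields and lies in $F_1\widetilde{\mathcal{D}}_X\setminus F_0$, consistent with Proposition \ref{d2basicpropdtilde} $ii)$, which gives $\gr_1\widetilde{\mathcal{D}}_X\cong\widetilde{\mathcal{T}}_X$ containing the summand $\mathcal{O}_X\otimes_R\mathfrak{h}$. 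Your final paragraph in fact uses the correct picture — the relation $j(h)=\lambda(h)$ identifies a degree-one element with a scalar, which is precisely why the central reduction removes the $r\mathfrak{h}$-summand from $\gr_1$ and leaves $r\mathcal{T}_X$ — so this is an internal inconsistency of exposition rather than a gap in the argument, but as stated the degree-zero claim would make $\gr_1$ infinitely generated over $\mathcal{O}_X$ and should be fixed.
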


\begin{proof}

We have by \cite[Lemma 6.4]{Annals} that $\mathcal{D}_{\lambda}$ is a tdo on $X$. Since $\mathcal{D}_{\lambda,r}$ can be viewed as the $r$-th deformation of $\mathcal{D}_{\lambda}$, we have by Lemma \ref{d2deformationoftdoisrtdo} that $\mathcal{D}_{\lambda,r}$ is an $r$-deformed tdo. 
Let $\mathcal{I}=\langle j(h)-\lambda(h) | h \in r\mathfrak{h} \rangle$ be two-sided ideal in $\widetilde{\mathcal{D}}_r$ such that $\mathcal{D}_{\lambda,r}=\widetilde{\mathcal{D}}_{r} / \mathcal{I}$. We have by Lemma \ref{d2imageofdiffeq} (note that we swap $G$ and $H$) that $g.j(h)=h$. Therefore, the ideal $\mathcal{I}$ is stable under the $G$-action, so the $G$-action on $\widetilde{\mathcal{D}}_r$ descends to $\mathcal{D}_{\lambda,r}$. By abuse of notation we denote $\alpha$ the composition $\alpha:U(r\mathfrak{g}) \to \widetilde{\mathcal{D}}_r$ with the projection $\widetilde{\mathcal{D}}_r \to \mathcal{D}_{\lambda,r}$. Then $(\mathcal{D}_{\lambda,r},\alpha)$ is an $r$-deformed $G$-htdo.
\end{proof}

We finish the subsection by computing the geometric fibre of $\mathcal{D}_{\lambda}=\mathcal{D}_{\lambda,1}$ at the identity. This is also sketched is \cite[I.2.4]{Mil2}, but we use different conventions to construct the sheaf $\mathcal{D}_{\lambda}$.  Since the group $G$ is split we can find a Cartan group $T$ of $G$ complementary to $N$ in $B$. Using the natural isomorphism $H \to T$, we view $\mathfrak{h}$ as a Cartan subalgebra of $\mathfrak{g}$. We let $\mathfrak{g}= \mathfrak{n^{-}} \oplus \mathfrak{h} \oplus \mathfrak{n}$ and $\mathfrak{b}= \mathfrak{h} \oplus \mathfrak{n}$ to be the corresponding decompositions for the Lie algebra and Borel subalgebra respectively. The adjoint action of $H$ on $\mathfrak{g}$ induces a decomposition $\mathfrak{g}=\mathfrak{n}^{-} \oplus \mathfrak{h} \oplus \mathfrak{n}^{+}$, where we regard $\mathfrak{n}^{-}=\Lie(N)$ as being spanned by \emph{negative} roots.

\begin{lemma}
\label{d2geometricfibreDtilde}
Let $m$ be the ideal sheaf of functions on $X$ vanishing on $eB$ and let $i:[eB] \to X$ denote the natural inclusion. Then:

          $$U(\mathfrak{g})/\mathfrak{n}^{-} U(\mathfrak{g}) \cong \Gamma(X,\widetilde{\mathcal{D}}_X/m \widetilde{\mathcal{D}}_X).$$

\end{lemma}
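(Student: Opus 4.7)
The plan is to construct the isomorphism as the descent of the infinitesimal action map and then to verify bijectivity by comparing associated gradeds for the order filtration. By Lemma \ref{d2diffoperatorsexhtdo} applied to $\widetilde X$, the infinitesimal $G$-action produces a ring map $\alpha:U(\mathfrak g)\to\Gamma(\widetilde X,\mathcal D_{\widetilde X})$; since this $G$-action commutes with the right $H$-action on $\widetilde X$, Lemma \ref{d2imageofdiffeq} lands it in $\Gamma(\widetilde X,\mathcal D_{\widetilde X})^H=\Gamma(X,\widetilde{\mathcal D}_X)$. Composing with the projection $\widetilde{\mathcal D}_X\twoheadrightarrow\widetilde{\mathcal D}_X/m\widetilde{\mathcal D}_X$ and taking global sections yields a left $U(\mathfrak g)$-module morphism
\[
\phi:U(\mathfrak g)\longrightarrow\Gamma(X,\widetilde{\mathcal D}_X/m\widetilde{\mathcal D}_X).
\]

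The first step is to check that $\phi$ annihilates $\mathfrak n^-U(\mathfrak g)$. Because $\mathfrak n^-=\Lie(N)$ is the stabilizer Lie algebra at every point of the fiber $\xi^{-1}(eB)\cong H$ ($N$ is normal in $B$ and so fixes this $H$-orbit pointwise), the vector field $\alpha(x)$ on $\widetilde X$ vanishes identically on $\xi^{-1}(eB)$ for each $x\in\mathfrak n^-$. Writing $\alpha(x)$ locally in a basis of $\mathcal T_{\widetilde X}$, its coefficients lie in the ideal sheaf $\xi^{-1}(m)\mathcal O_{\widetilde X}$ of this fiber, so $\alpha(x)\in m\mathcal D_{\widetilde X}$; taking $H$-invariants (using that $m\subset\mathcal O_X$ is $H$-fixed) gives $\alpha(x)\in m\widetilde{\mathcal D}_X$. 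Since $m\widetilde{\mathcal D}_X$ is stable under right multiplication by $\widetilde{\mathcal D}_X$, I conclude $\alpha(\mathfrak n^-U(\mathfrak g))\subset m\widetilde{\mathcal D}_X$, so $\phi$ descends to a morphism $\bar\phi:U(\mathfrak g)/\mathfrak n^-U(\mathfrak g)\to\Gamma(X,\widetilde{\mathcal D}_X/m\widetilde{\mathcal D}_X)$.

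The second step is to show $\bar\phi$ is an isomorphism. Since the quasi-coherent $\mathcal O_X$-module $\widetilde{\mathcal D}_X/m\widetilde{\mathcal D}_X$ is annihilated by $m$, it is supported at $eB$ and its global sections equal the stalk $\widetilde{\mathcal D}_{X,eB}/m_{eB}\widetilde{\mathcal D}_{X,eB}$. The order filtrations on $U(\mathfrak g)$ and $\widetilde{\mathcal D}_X$ induce filtrations on source and target with respect to which $\bar\phi$ is filtered. On the left, PBW gives $\gr U(\mathfrak g)\cong\Sym\mathfrak g$ and hence $\gr(U(\mathfrak g)/\mathfrak n^-U(\mathfrak g))\cong\Sym(\mathfrak g/\mathfrak n^-)$. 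On the right, choosing an affine open $U\ni eB$ trivialising the $H$-torsor $\xi$, Proposition \ref{d2basicpropdtilde} gives $\gr\widetilde{\mathcal D}_X\cong\Sym_{\mathcal O_X}\widetilde{\mathcal T}_X$, and passing to the fiber at $eB$ yields $\Sym(T_{eB}X\oplus\mathfrak h)$. The induced graded morphism is $\Sym$ applied to $\mathfrak g/\mathfrak n^-\to T_{eN}\widetilde X\cong T_{eB}X\oplus\mathfrak h$, which is an isomorphism because $G$ acts transitively on $\widetilde X$ with stabilizer $N$ at $eN$ and the splitting of $T_{eN}\widetilde X$ is given by the tangent to the fiber and the derivative of $\xi$. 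Hence $\gr\bar\phi$ is an isomorphism, and since both filtrations are exhaustive and separated with $R$-flat graded pieces, a standard induction on filtration degree shows $\bar\phi$ itself is an isomorphism.

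The main technical obstacle is the filtration bookkeeping on the right-hand side, in particular confirming that the order filtration on the stalk $\widetilde{\mathcal D}_{X,eB}$ interacts cleanly with the quotient by $m_{eB}$ so that its associated graded is $\Sym(T_{eB}X\oplus\mathfrak h)$; this rests on the local product structure $\widetilde{\mathcal D}_X(U)\cong\mathcal D_X(U)\otimes_R U(\mathfrak h)$ from Proposition \ref{d2basicpropdtilde}. Once this graded identification is in place, the conclusion is forced by the general fact that a filtered map inducing an isomorphism on associated gradeds is itself an isomorphism.
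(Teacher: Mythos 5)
Your proof is correct and follows essentially the same route as the paper: build the map from the infinitesimal action $\alpha$, check it kills $\mathfrak{n}^{-}U(\mathfrak{g})$ because the vector fields coming from $\Lie(N)$ vanish along $\xi^{-1}(eB)$, and conclude by comparing associated gradeds via Proposition \ref{d2basicpropdtilde}. The only (harmless) difference is that you verify vanishing on the whole fibre directly, whereas the paper records $\rho(\mathfrak{n}^{-})\subset m_N\mathcal{D}_{G/N}$ at the base point of $G/N$ and then descends through $(\xi_*)^H$.
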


\begin{proof}
Let $Y=G/N$ denote the basic affine space, let $x_0=eN$ denote the base point and $m_N$ denote the ideal sheaf of functions vanishing on $x_0$. 

Let $\rho:\mathfrak{g} \to G/N$ encode the infinitesimal action of $\mathfrak{g}$ on $G/N$ induce by the action of $G$ on $G/N$ by left multiplication. Since the stabiliser of $x_0$ with respect to this action is $N$ we have that:

$$ \rho(\mathfrak{n}^{-}) \subset m_N \mathcal{D}_{G/N}.$$

Applying the descent functor $(\xi_*)^H$ we obtain a map $\tilde{\rho}:\mathfrak{n}^{-} \to (m_N \mathcal{D}_{G/N})^H \cong (m_N)^H \widetilde{\mathcal{D}}_X$.  By construction $\tilde{\rho}$ is just the restriction of the infinitesimal action of $\mathfrak{g}$ on $G/B$ to $\mathfrak{n}^{-}$. By considering the following diagram

\begin{center}
\begin{tikzcd}

& G/N \arrow[d]  & x_0=N/N \arrow[l] \arrow[d] \\
& G/B   & eB  \arrow[l]
\end{tikzcd}
\end{center}

we observe that $(m_N)^H \widetilde{\mathcal{D}}_X \subset m \widetilde{\mathcal{D}}_X$, so $\tilde{\rho}(\mathfrak{n}^{-}) \subset m \widetilde{\mathcal{D}}_X$. Therefore we obtain a surjective map 

$$ U(\mathfrak{g})/\mathfrak{n}^{-} U(\mathfrak{g}) \to \Gamma(X,\widetilde{\mathcal{D}}_X/m \widetilde{\mathcal{D}}_X).$$

By passing to the associated grading rings and applying Proposition \ref{d2basicpropdtilde} $ii)$ we obtain an isomorphism $\gr( U(\mathfrak{g})/\mathfrak{n}^{-} U(\mathfrak{g})) \to \Gamma(X,\gr (\widetilde{\mathcal{D}}_X/m \widetilde{\mathcal{D}}_X))$, so the claim follows.

\end{proof}

\begin{corollary}
\label{d2geometricfibreDlambda}
Let $\lambda \in \mathfrak{h}^*$ and let $m$ the ideal sheaf of functions on $X$ vanishing on $eB$. Further, let $R_{\lambda}$ be the $U(\mathfrak{h})$-module induced by the $\mathfrak{h}$ module on which $\mathfrak{h}$ acts by $\lambda$. Then

          $$R_{\lambda} \uset{U(\mathfrak{h})} (U(\mathfrak{g})/\mathfrak{n}^{-}U(\mathfrak{g})) \cong \Gamma(X,\mathcal{D}_{\lambda}/m \mathcal{D}_{\lambda}). $$

\end{corollary}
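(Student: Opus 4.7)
The plan is to deduce this corollary from Lemma \ref{d2geometricfibreDtilde} by applying the functor $-\otimes_{U(\mathfrak{h})} R_\lambda$ to both sides, exploiting that the relevant sheaf is a skyscraper at $eB$. I break the argument into three steps.

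First, I would establish the sheaf-level identification
$$\mathcal{D}_\lambda/m\mathcal{D}_\lambda \;\cong\; (\widetilde{\mathcal{D}}_X/m\widetilde{\mathcal{D}}_X)\otimes_{U(\mathfrak{h})} R_\lambda.$$
By Definition \ref{d2Dlambdardefinition}, $\mathcal{D}_\lambda = \widetilde{\mathcal{D}}_X/\widetilde{\mathcal{D}}_X\cdot \ker(\lambda)$, where $\ker(\lambda)\subset U(\mathfrak{h})$ is viewed inside $\widetilde{\mathcal{D}}_X$ via the central embedding $j$ of Proposition \ref{d2basicpropdtilde}(iii). Since $j(U(\mathfrak{h}))$ lies in the centre, the left ideal $m\widetilde{\mathcal{D}}_X$ and the two-sided ideal $\widetilde{\mathcal{D}}_X\cdot\ker(\lambda)$ can be collapsed in either order, so both sides above are naturally identified with $\widetilde{\mathcal{D}}_X/(m\widetilde{\mathcal{D}}_X+\widetilde{\mathcal{D}}_X\cdot\ker(\lambda))$.

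Second, I would take global sections. The quasi-coherent $\mathcal{O}_X$-module $\widetilde{\mathcal{D}}_X/m\widetilde{\mathcal{D}}_X$ is annihilated by $m$, so it is a skyscraper sheaf at the closed point $eB$; in particular $\Gamma(X,-)$ on this sheaf (and on its quotient $\mathcal{D}_\lambda/m\mathcal{D}_\lambda$) coincides with the stalk at $eB$, which is exact and commutes with the purely algebraic operation $-\otimes_{U(\mathfrak{h})} R_\lambda$. Combining with Lemma \ref{d2geometricfibreDtilde} and the commutativity of $U(\mathfrak{h})$, one obtains
$$\Gamma(X,\mathcal{D}_\lambda/m\mathcal{D}_\lambda)\;\cong\;\Gamma(X,\widetilde{\mathcal{D}}_X/m\widetilde{\mathcal{D}}_X)\otimes_{U(\mathfrak{h})} R_\lambda\;\cong\;(U(\mathfrak{g})/\mathfrak{n}^-U(\mathfrak{g}))\otimes_{U(\mathfrak{h})} R_\lambda\;\cong\;R_\lambda\otimes_{U(\mathfrak{h})}U(\mathfrak{g})/\mathfrak{n}^-U(\mathfrak{g}).$$

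The main obstacle is verifying that the $U(\mathfrak{h})$-module structures on the two sides of Lemma \ref{d2geometricfibreDtilde} match, which is what licenses the tensor product over $U(\mathfrak{h})$ in the final step. On $\Gamma(X,\widetilde{\mathcal{D}}_X/m\widetilde{\mathcal{D}}_X)$ the $\mathfrak{h}$-action is through the central embedding $j$, which is the derivative of the right $H$-action on $\widetilde{X}$, whereas on $U(\mathfrak{g})/\mathfrak{n}^-U(\mathfrak{g})$ it is right multiplication by $\mathfrak{h}\subset\mathfrak{g}$; the isomorphism of Lemma \ref{d2geometricfibreDtilde}, however, is induced by the left-action map $\alpha$. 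At the point $eB$, the left and right infinitesimal $\mathfrak{h}$-actions on $\widetilde{X}$ induce the same tangent vector, so $\alpha(h)-j(h)$ lies in $m\widetilde{\mathcal{D}}_X$ for all $h\in\mathfrak{h}$, and the compatibility can be confirmed by passing to associated graded pieces via Proposition \ref{d2basicpropdtilde}(ii), exactly as in the proof of Lemma \ref{d2geometricfibreDtilde}.
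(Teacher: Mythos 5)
Your proposal is correct and follows essentially the same route as the paper: identify $\mathcal{D}_\lambda/m\mathcal{D}_\lambda$ with $(\widetilde{\mathcal{D}}_X/m\widetilde{\mathcal{D}}_X)\uset{U(\mathfrak{h})} R_\lambda$ using centrality of $j(U(\mathfrak{h}))$, commute $\Gamma(X,-)$ with the tensor product because the sheaf is a skyscraper at $eB$, and then invoke Lemma \ref{d2geometricfibreDtilde}. Your final paragraph verifying that the left-action map $\alpha$ and the central embedding $j$ induce matching $U(\mathfrak{h})$-structures on the fibre is a point the paper's chain of isomorphisms leaves implicit, and it is worth making explicit.
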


\begin{proof}

We have:

\begin{equation}
\begin{split}
\Gamma(X, \mathcal{D}_{\lambda}/m \mathcal{D}_{\lambda})&= \Gamma(X, (i_*) \mathcal{O}_{eB} \uset{\mathcal{O}_X} \widetilde{\mathcal{D}}_X) \uset{U(\mathfrak{h})} R_{\lambda} \\
&\cong \Gamma(X, \widetilde{\mathcal{D}}_X)/m \widetilde{\mathcal{D}}_X) \uset{U(\mathfrak{h})} R_{\lambda})\\
&\cong \Gamma(X, R_{\lambda} \uset{U(\mathfrak{h})} \widetilde{\mathcal{D}}_X)/m \widetilde{\mathcal{D}}_X) \text{ (by Proposition \ref{d2basicpropdtilde})} \\
& \cong R_{\lambda} \uset{U(\mathfrak{h})} \Gamma(X, \widetilde{\mathcal{D}}_X)/m \widetilde{\mathcal{D}}_X) \\
& \cong R_{\lambda} \uset{U(\mathfrak{h})} U(\mathfrak{g})/\mathfrak{n}^{-} U(\mathfrak{g}) \text{ (by Lemma \ref{d2geometricfibreDtilde})}.
\end{split}
\end{equation}
\end{proof}

\subsection{Applications of the localisation mechanism}

We retain the notation from the previous subsection. Recall that for any weight $\lambda: \mathfrak{h} \to R$, we defined $\chi_{\lambda}$ the corresponding central character and we let $U(\mathfrak{g})^{\lambda}= U(\mathfrak{g})/ \ker(\chi_{\lambda}) U(\mathfrak{g})$.

By \cite[Section 6.10]{Annals}, we have a map $\varphi:U(\mathfrak{g})^{\lambda} \to \mathcal{D}_{\lambda}$; by functoriality of deformation, we obtain a map $\varphi:U(\mathfrak{g})_r^{\lambda} \to \mathcal{D}_{\lambda,r}$. Define the localisation functor $\Loc^{\lambda,r}:\Mod(U(\mathfrak{g})_r^{\lambda}) \to \Mod(\mathcal{D}_{\lambda,r}$). This functor is adjoint to the global sections functor $\Gamma(X,-): \Mod(\mathcal{D}_{\lambda,r}) \to \Mod(U(\mathfrak{g})_r^{\lambda}).$

\begin{proposition}

Let $L$ be a closed subgroup of $G$. The functors $\Loc^{\lambda,r}$ and $\Gamma(X,-)$ induce a pair of adjoint functors between $\Mod(U(\mathfrak{g})_r^{\lambda},L)$ and $\Mod(\mathcal{D}_{\lambda,r},L)$.

\end{proposition}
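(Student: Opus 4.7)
The plan is to reduce everything to the two preservation results already established, Proposition \ref{d2classicalloccomring} and Proposition \ref{d2classicalglocomring}, together with the standard (non-equivariant) adjunction $\Loc^{\lambda,r} \dashv \Gamma(X,-)$ between $\Mod(U(\mathfrak{g})_r^{\lambda})$ and $\Mod(\mathcal{D}_{\lambda,r})$.

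First I would check that both functors restrict to the equivariant categories at the level of objects. Since $\mathcal{D}_{\lambda,r}$ is a $G$-htdo by Corollary \ref{d2Dlambdaishtdo}, it is a fortiori an $L$-htdo, and the map $\varphi: U(\mathfrak{g})_r^{\lambda} \to \mathcal{D}_{\lambda,r}$ factors through $i_{\mathfrak{g}}$ in the obvious way. Proposition \ref{d2classicalloccomring} applied to $\mathcal{D} = \mathcal{D}_{\lambda,r}$ then shows that $\Loc^{\lambda,r}(M) = \mathcal{D}_{\lambda,r} \otimes_{U(\mathfrak{g})_r^{\lambda}} M$ is $L$-equivariant whenever $M$ is; and Proposition \ref{d2classicalglocomring} shows that $\Gamma(X,\mathcal{N})$ is an $L$-equivariant $U(\mathfrak{g})_r^{\lambda}$-module whenever $\mathcal{N}$ is $L$-equivariant.

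Next I would verify that morphisms are preserved. For $\Gamma(X,-)$ this is immediate, since an $L$-equivariant morphism of $\mathcal{D}_{\lambda,r}$-modules is by definition a $\mathcal{D}_{\lambda,r}$-linear, $L$-equivariant $\mathcal{O}_X$-module map, and taking global sections sends $L$-equivariant $\mathcal{O}_X$-module maps to $L$-equivariant maps of $U(\mathfrak{g})_r^{\lambda}$-modules via the reformulation in \eqref{easyweakOmodequivariant}. For $\Loc^{\lambda,r}$, given an $L$-equivariant map $f : M \to N$, the map $\id_{\mathcal{D}_{\lambda,r}} \otimes f$ is $L$-equivariant with respect to the twisted action $l\cdot(D \otimes m) = (l \star D) \otimes (l.m)$ used in Lemma \ref{d2weaklyBBloc}, because $f$ commutes with the $L$-action on $M$ and $N$.

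The main work is to show that the natural adjunction isomorphism
\[
\Hom_{\mathcal{D}_{\lambda,r}}\bigl(\Loc^{\lambda,r}(M),\mathcal{N}\bigr) \;\cong\; \Hom_{U(\mathfrak{g})_r^{\lambda}}\bigl(M,\Gamma(X,\mathcal{N})\bigr)
\]
restricts to a bijection between the $L$-equivariant morphisms on each side. This is a direct check: the unit $M \to \Gamma(X,\Loc^{\lambda,r}(M))$ is given by $m \mapsto 1 \otimes m$ and the counit $\Loc^{\lambda,r}(\Gamma(X,\mathcal{N})) \to \mathcal{N}$ by $D \otimes s \mapsto D.s$, both of which are visibly $L$-equivariant under the twisted actions, using axioms \ref{d2htdodef} i) and \ref{d2equivarianthtdomoddef} ii). Thus the hom bijection preserves the equivariance condition in both directions. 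The only potential subtlety — and really the only place where anything could go wrong — is the compatibility of the derived $r\mathfrak{l}$-actions, but this has already been absorbed into Lemmas \ref{d2diffBB} and \ref{d2diffBBglo}, so no further argument is needed.
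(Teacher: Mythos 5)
Your proposal is correct and follows essentially the same route as the paper, which simply cites Corollary \ref{d2Dlambdaishtdo} together with Propositions \ref{d2classicalloccomring} and \ref{d2classicalglocomring}; your additional verification that the unit and counit are equivariant and that the hom-bijection restricts to equivariant morphisms fills in details the paper leaves implicit.
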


\begin{proof}

This follows by combining Corollary \ref{d2Dlambdaishtdo} and Propositions \ref{d2classicalloccomring} and \ref{d2classicalglocomring}. 
\end{proof}

In general, the functors above do not provide an equivalence of categories. However, when $R$ is a field of characteristic $0$ we get the well-known Beilinson-Bernstein localisation. We will also need the following proposition:

\begin{proposition}
\label{d2envelopingalgebratoDlambda}
Assume that $R$ is a field of characteristic $0$. Then the map $\varphi:U(\mathfrak{g})^{\lambda} \to \Gamma(X,\mathcal{D}_{\lambda})$ is an isomorphism.

\end{proposition}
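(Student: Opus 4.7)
The plan is to pass to associated graded algebras and identify both sides with the coordinate ring $\mathcal{O}(\mathcal{N})$ of the nilpotent cone $\mathcal{N}\subset\mathfrak{g}^*$, using Kostant's theorem on the source and the Springer resolution on the target. First I would equip $U(\mathfrak{g})^\lambda$ with the quotient PBW filtration and $\Gamma(X,\mathcal{D}_\lambda)$ with the filtration induced by the order filtration on $\mathcal{D}_\lambda$; both filtrations are exhaustive and separated, and $\varphi$ is strictly compatible with them, so it suffices to show that $\gr\varphi$ is an isomorphism of graded algebras.

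On the source side, Kostant's theorem gives $\gr Z(\mathfrak{g})\cong S(\mathfrak{g})^G$ and identifies $S(\mathfrak{g})/S(\mathfrak{g})\cdot S(\mathfrak{g})_+^G \cong \mathcal{O}(\mathcal{N})$, with $\mathcal{N}$ normal. Since $\chi_\lambda$ necessarily kills the augmentation ideal on the graded level, $\gr\ker(\chi_\lambda) = S(\mathfrak{g})_+^G$ inside $S(\mathfrak{g})^G$, and hence $\gr U(\mathfrak{g})^\lambda \cong \mathcal{O}(\mathcal{N})$. On the target side, $\gr\mathcal{D}_\lambda \cong \Sym_{\mathcal{O}_X}\mathcal{T}_X \cong q_*\mathcal{O}_{T^*X}$ where $q:T^*X\to X$ is the cotangent bundle; using the Hesselink--Kempf vanishing $H^i(X,\Sym^n\mathcal{T}_X)=0$ for $i>0$, an induction on filtration degree collapses the filtration spectral sequence and yields $\gr\Gamma(X,\mathcal{D}_\lambda) \cong \Gamma(X,\gr\mathcal{D}_\lambda) = \Gamma(T^*X,\mathcal{O}_{T^*X})$. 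The Springer resolution $\pi:T^*X\to\mathcal{N}$ is proper and birational onto the normal variety $\mathcal{N}$, so $\pi_*\mathcal{O}_{T^*X}=\mathcal{O}_\mathcal{N}$ and hence $\Gamma(T^*X,\mathcal{O}_{T^*X}) = \mathcal{O}(\mathcal{N})$.

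The last step is to check that the induced map $\gr\varphi:\mathcal{O}(\mathcal{N})\to\mathcal{O}(\mathcal{N})$ is the identity, which reduces to verifying that the principal symbol of $\alpha:U(\mathfrak{g})\to\Gamma(X,\widetilde{\mathcal{D}}_X)$ agrees with the classical comoment map of the $G$-action on $X$ lifted to $T^*X$; this is essentially tautological from the construction of $\alpha$ via the infinitesimal action $\varphi':\mathfrak{g}\to\mathcal{T}_X(X)$. The main obstacle in this approach is the combination of Kostant's normality theorem for $\mathcal{N}$ and the Hesselink--Kempf cohomology vanishing for $\Sym^n\mathcal{T}_X$; these are the nontrivial geometric inputs and together encode the fact that $T^*X\to\mathcal{N}$ is a symplectic resolution.
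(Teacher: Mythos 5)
The paper does not actually prove this proposition: it is stated without a proof environment and is in effect outsourced to the citations for Theorem \ref{d2equivariantbb} (Beilinson--Bernstein and Kashiwara's notes). Your sketch is precisely the standard argument behind that citation, and it is essentially correct: reduce to the associated graded via the positive exhaustive filtrations, identify $\gr U(\mathfrak{g})^{\lambda}$ and $\gr\Gamma(X,\mathcal{D}_{\lambda})$ with $\mathcal{O}(\mathcal{N})$ using Kostant's theorem on one side and the vanishing $H^{i}(X,\Sym^{n}\mathcal{T}_X)=0$ for $i>0$ plus normality of $\mathcal{N}$ and properness of the Springer map on the other, and match the symbols via the comoment map. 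Two points deserve more care than you give them. First, the logic of the filtration step runs the other way from how you phrase it: strict compatibility is not something you assume in order to reduce to $\gr\varphi$; rather, because both filtrations are positive, exhaustive and separated, bijectivity of $\gr\varphi$ \emph{implies} bijectivity of $\varphi$ (by induction on filtration degree), and strictness comes out as a consequence. Second, the identity $\gr\bigl(\ker(\chi_{\lambda})\,U(\mathfrak{g})\bigr)=S(\mathfrak{g})\cdot S(\mathfrak{g})^{G}_{+}$ is not automatic from $\gr\ker(\chi_{\lambda})=S(\mathfrak{g})^{G}_{+}$: the graded ideal of a product of ideals can a priori be strictly larger than the product of the graded ideals, and here one needs Kostant's freeness theorem ($S(\mathfrak{g})$ is free, in particular flat, over $S(\mathfrak{g})^{G}$) to rule this out. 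Finally, since $K$ is only assumed to be a field of characteristic $0$ and $G$ is split, you should either note that Kostant's results and the Springer resolution work in this generality for split groups or reduce to $\overline{K}$ by faithfully flat base change. With these caveats your route is a complete and self-contained proof, whereas the paper simply quotes the result; what your approach buys is independence from the localisation literature, at the cost of importing the nontrivial geometric inputs (normality of $\mathcal{N}$ and the cohomology vanishing) that you correctly identify as the heart of the matter.
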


\begin{theorem} (\cite{BB}, \cite[Section 9.2]{Kashi})
\label{d2equivariantbb}
Assume that $R$ is a field of characteristic $0$. Let $\lambda$ be a dominant weight and  $L$  a closed subgroup of $G$.  Then the functor $\Loc^{\lambda}$ is an equivalence of categories between $\Mod(U(\mathfrak{g})^{\lambda},L)$ and the quotient category  $\Mod(\mathcal{D}_{\lambda},L)/ \ker \Gamma$. A quasi-inverse is given by $\Gamma(X,-)$. 

Furthermore, under this equivalence finitely generated $U(\mathfrak{g})^{\lambda}$-modules correspond to coherent $\mathcal{D}_{\lambda}$-modules and the global sections functor is exact.

If $\lambda$ is also regular, $\ker \Gamma=0$, so one obtains an equivalence of categories between $\Mod(U(\mathfrak{g})^{\lambda},L)$ and $\Mod(\mathcal{D}_{\lambda},L)$. 

The same statements hold if we remove the $L$-equivariance from both sides, i.e. setting $L=e$.

\end{theorem}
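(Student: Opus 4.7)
The approach is to bootstrap the $L$-equivariant statement from the classical (non-equivariant) Beilinson--Bernstein theorem, which is the case $L=e$ and is cited from \cite{BB} and \cite[Section 9.2]{Kashi}. By the preceding proposition, which combines Corollary \ref{d2Dlambdaishtdo} with Propositions \ref{d2classicalloccomring} and \ref{d2classicalglocomring}, the functors $\Loc^{\lambda}$ and $\Gamma(X,-)$ restrict to an adjoint pair between $\Mod(U(\mathfrak{g})^{\lambda},L)$ and $\Mod(\mathcal{D}_{\lambda},L)$. The first observation I would record is that the forgetful functor from the equivariant category to the non-equivariant category is faithful, exact, and conservative: a morphism in the equivariant category is an isomorphism if and only if its underlying morphism of $U(\mathfrak{g})^{\lambda}$- (resp.\ $\mathcal{D}_{\lambda}$-)modules is.

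Next, for $M \in \Mod(U(\mathfrak{g})^{\lambda},L)$ and $\mathcal{M} \in \Mod(\mathcal{D}_{\lambda},L)$, the unit $\eta_{M}\colon M \to \Gamma \circ \Loc^{\lambda}(M)$ and counit $\varepsilon_{\mathcal{M}}\colon \Loc^{\lambda} \circ \Gamma(\mathcal{M}) \to \mathcal{M}$ of the equivariant adjunction are, by construction, morphisms of equivariant objects whose underlying maps are the unit and counit of the non-equivariant adjunction. The classical Beilinson--Bernstein theorem says that $\eta_{M}$ is always an isomorphism and that $\varepsilon_{\mathcal{M}}$ is an isomorphism precisely when $\mathcal{M} \notin \ker \Gamma$, both checked on underlying non-equivariant objects. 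Conservativity of the forgetful functor then upgrades these to isomorphisms in the equivariant category, giving the desired equivalence with the quotient category. Exactness of $\Gamma$ in the equivariant setting and the correspondence between finitely generated modules and coherent modules transfer directly from the non-equivariant case since they depend only on underlying structures; and the regular case follows at once, because $\ker \Gamma = 0$ non-equivariantly and hence a fortiori equivariantly.

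The main point requiring care is the precise identification of the Serre quotient $\Mod(\mathcal{D}_{\lambda},L)/\ker \Gamma$. One must verify that $\ker \Gamma$ taken inside $\Mod(\mathcal{D}_{\lambda},L)$ agrees with the equivariant objects of the non-equivariant $\ker \Gamma$, and that forming the Serre quotient in the equivariant category is compatible with the non-equivariant quotient. This is formal once one knows exactness of $\Gamma$ in $\Mod(\mathcal{D}_{\lambda},L)$ (which is inherited from the non-equivariant case via the exact forgetful functor), so that $\ker \Gamma$ is indeed a Serre subcategory and the universal property of the quotient is available. No new analytic input is required beyond the non-equivariant Beilinson--Bernstein theorem; the entire proof reduces to the general principle that an adjunction between ambient categories restricts to an adjoint equivalence between well-behaved subcategories whenever the unit and counit remain natural isomorphisms and the forgetful functor is conservative.
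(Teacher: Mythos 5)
The paper gives no proof of this theorem: it is quoted as a known result with pointers to \cite{BB} and \cite[Section 9.2]{Kashi}, and the only in--paper preparation is the preceding proposition, which records that the localisation--global sections adjunction restricts to the equivariant categories. So there is no in--paper argument to compare against; what can be assessed is whether your bootstrapping sketch is sound.

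It essentially is, and your method (transfer from the non-equivariant Beilinson--Bernstein theorem through the forgetful functor, using that it is faithful, exact and conservative, and that the equivariant unit and counit forget to the non-equivariant ones) is the standard route and very likely the one taken in \cite[Section 9.2]{Kashi}. One correction is needed, though, in how you invoke the classical theorem: the counit $\varepsilon_{\mathcal{M}}\colon \Loc^{\lambda}\Gamma(\mathcal{M}) \to \mathcal{M}$ is \emph{not} an isomorphism precisely when $\mathcal{M} \notin \ker\Gamma$. Already in the non-equivariant case one can take $\mathcal{M} = \mathcal{K}\oplus\mathcal{N}$ with $\mathcal{K}\in\ker\Gamma$ nonzero and $\mathcal{N}$ generated by global sections; then $\Gamma(\mathcal{M}) = \Gamma(\mathcal{N})$, the counit has image contained in $\mathcal{N}$ and cokernel $\mathcal{K}$, so it fails to be onto even though $\Gamma(\mathcal{M})\neq 0$. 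What the classical theorem for $\lambda$ dominant actually gives is: $\Gamma$ is exact, the unit $\eta_{M}$ is an isomorphism for every $M$, and for every $\mathcal{M}$ the kernel and cokernel of $\varepsilon_{\mathcal{M}}$ lie in $\ker\Gamma$. The equivalence with the Serre quotient then follows from the general Gabriel-type criterion (an exact functor between abelian categories with a fully faithful left adjoint induces an equivalence from the quotient by its kernel), not from the counit being an isomorphism on any genuine subcategory. With that substitution your argument goes through: conservativity of the forgetful functor promotes the isomorphism $\eta_{M}$ to the equivariant category, the equivariant $\Gamma$ inherits exactness because short exact sequences in the equivariant category are detected on underlying modules, and $\ker\Gamma$ inside $\Mod(\mathcal{D}_{\lambda},L)$ is exactly the full subcategory of equivariant objects whose underlying module lies in the non-equivariant $\ker\Gamma$. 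Your final sentence, that ``the unit and counit remain natural isomorphisms,'' should be tempered accordingly; the counit never becomes a natural isomorphism before passing to the quotient.
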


As an easy consequence we obtain:

\begin{corollary}
\label{d2glolocisomor}
Assume that $R$ is a field of characteristic 0 and let $\lambda$ be a dominant weight. Consider $M \in \Mod(U(\mathfrak{g})^{\lambda},L)$. Then 

                    $$\Gamma(X,\Loc^{\lambda}M) \cong M.$$

\end{corollary}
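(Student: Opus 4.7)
The plan is to verify that the unit $\eta_M \colon M \to \Gamma(X, \Loc^\lambda M)$ of the adjunction between $\Loc^\lambda$ and $\Gamma(X,-)$ is an isomorphism, first on free $U(\mathfrak{g})^\lambda$-modules and then on arbitrary ones by a right-exactness argument. For $M = U(\mathfrak{g})^\lambda$ itself we have $\Loc^\lambda M = \mathcal{D}_\lambda \otimes_{U(\mathfrak{g})^\lambda} U(\mathfrak{g})^\lambda \cong \mathcal{D}_\lambda$, and Proposition \ref{d2envelopingalgebratoDlambda} identifies $\eta_M$ with the isomorphism $\varphi \colon U(\mathfrak{g})^\lambda \xrightarrow{\sim} \Gamma(X, \mathcal{D}_\lambda)$. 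For a free module $F = \bigoplus_{i \in I} U(\mathfrak{g})^\lambda$, $\Loc^\lambda$ commutes with direct sums as a tensor product (equivalently, as a left adjoint), and $\Gamma(X, -)$ commutes with direct sums of quasi-coherent sheaves on the Noetherian scheme $X$ (writing direct sums as filtered colimits of their finite subsums); hence $\eta_F$ is again an isomorphism.

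For a general $M \in \Mod(U(\mathfrak{g})^\lambda, L)$, I would pick any free presentation $F_1 \to F_0 \to M \to 0$. The functor $\Loc^\lambda$ is right exact, and $\Gamma(X,-)$ is exact on $\QCoh(\mathcal{D}_\lambda)$ when $\lambda$ is dominant: Theorem \ref{d2equivariantbb} supplies this for coherent modules, and it extends to all quasi-coherent ones because every such module is a filtered colimit of its coherent submodules and sheaf cohomology commutes with filtered colimits on a Noetherian scheme. Thus $\Gamma(X,-) \circ \Loc^\lambda$ is right exact, and naturality of $\eta$ produces a commutative diagram with exact rows
\begin{center}
\begin{tikzcd}
F_1 \arrow[r] \arrow[d, "\eta_{F_1}"] & F_0 \arrow[r] \arrow[d, "\eta_{F_0}"] & M \arrow[r] \arrow[d, "\eta_M"] & 0 \\
\Gamma(X, \Loc^\lambda F_1) \arrow[r] & \Gamma(X, \Loc^\lambda F_0) \arrow[r] & \Gamma(X, \Loc^\lambda M) \arrow[r] & 0
\end{tikzcd}
\end{center}
in which the first two vertical maps are isomorphisms by the previous paragraph. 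The short five-lemma then forces $\eta_M$ to be an isomorphism, and it is $L$-equivariant by construction of the adjunction, so the isomorphism holds in $\Mod(U(\mathfrak{g})^\lambda, L)$.

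The main technical point I expect to have to justify carefully is the extension of exactness of $\Gamma(X,-)$ from coherent to arbitrary quasi-coherent $\mathcal{D}_\lambda$-modules (and the parallel commutation with infinite direct sums). Both rest on the higher cohomology vanishing $H^i(X, \mathcal{M}) = 0$ for $i \geq 1$ and $\mathcal{M}$ coherent that is contained in Theorem \ref{d2equivariantbb}, combined with the commutation of sheaf cohomology with filtered colimits on the Noetherian scheme $X$; these are standard consequences of the Beilinson-Bernstein framework and I would invoke them rather than reprove them.
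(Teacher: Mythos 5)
Your argument is correct, but it takes a genuinely different route from the paper, which offers no independent proof at all: Corollary \ref{d2glolocisomor} is presented as an immediate consequence of Theorem \ref{d2equivariantbb}, because the assertion there that $\Gamma(X,-)$ is a quasi-inverse to $\Loc^{\lambda}$ (landing in the quotient category $\Mod(\mathcal{D}_{\lambda},L)/\ker\Gamma$) already contains the natural isomorphism $\Gamma\circ\Loc^{\lambda}\cong \id$. What you do instead is reprove that unit-is-an-isomorphism statement from scratch, using only two inputs: the isomorphism $\varphi\colon U(\mathfrak{g})^{\lambda}\to\Gamma(X,\mathcal{D}_{\lambda})$ of Proposition \ref{d2envelopingalgebratoDlambda}, and the exactness of $\Gamma(X,-)$ for dominant $\lambda$, combined with the free-presentation and five-lemma device. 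This is the classical Beilinson--Bernstein argument for one half of the equivalence and it is sound; its mild advantage is that it does not invoke the full equivalence of categories, only the exactness of global sections and the computation of $\Gamma(X,\mathcal{D}_{\lambda})$, so it is the right skeleton to have in hand if one later wants the statement in settings where the full equivalence is unavailable. The point you flag as needing care --- extending exactness of $\Gamma(X,-)$ and its commutation with infinite direct sums from coherent to arbitrary quasi-coherent $\mathcal{D}_{\lambda}$-modules --- is genuinely needed, since the corollary is stated for arbitrary $M\in\Mod(U(\mathfrak{g})^{\lambda},L)$ rather than finitely generated ones, and your justification (every quasi-coherent $\mathcal{D}_{\lambda}$-module is the filtered union of its coherent $\mathcal{D}_{\lambda}$-submodules, and cohomology on the Noetherian scheme $X$ commutes with filtered colimits) is the standard and correct way to supply it.
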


In the next sections, we will apply the theorem in two particular cases: $B$ is a Borel subgroup of $G$ acting on $X=G/B$ and $G=\{(g,g)| g \in G\} \subset G \times G$ acting diagonally on the flag variety of $G \times G$, which is $X \times X$. Notice, we are using a slight abuse notation and identify $G$ with the diagonal subgroup.


\section{Pullback of modules over deformed \texorpdfstring{\\ homogeneous}{homogeneous} twisted differential operators}
\label{d2sectionpullbackfromdoubleflagvariety}

We retain the notation from the beginning of the previous section: recall that  $G$ denotes a connected, simply-connected, split semisimple, smooth affine algebraic group scheme over $R$, $\mathfrak{g}=\Lie(G)$, $B$ a closed and flat Borel subscheme of $G$ and $X=G/B$ the flag scheme. We consider the triangular decomposition $\mathfrak{g}=\mathfrak{n}^{-} \oplus \mathfrak{h} \oplus \mathfrak{n}^+$, we let $\mathfrak{b}^{-}:=\mathfrak{n}^{-} \oplus \mathfrak{h}$, $\mathfrak{b}:=\mathfrak{h} \oplus \mathfrak{n}^+$.

By construction, $X$ is a $G$-homogeneous space; furthermore we have by \cite[II,1.10(2)]{Jan1} and by Lemma \ref{d2Hloctriviallytorsor} $ii)$ that Assumption \ref{d2loctrivialassumption} is satisfied. Therefore, we may apply the machinery developed in Sections  \ref{d2sectionBoBr} and \ref{d2sectionlocmec}.

We consider homogeneous twisted differential operators on the double flag variety $X \times X$ of the group $G \times G$. The Cartan algebra of $r\mathfrak{g} \times r\mathfrak{g}$ is given by $r\mathfrak{h} \times r\mathfrak{h}$, so picking a weight of the deformed Cartan Lie subalgebra is equivalent to picking a pair of weights $\lambda,\mu \in r\mathfrak{h}^*$. We have by Corollary \ref{d2Dlambdaishtdo} that $\mathcal{D}_{\lambda,\mu,r}$ is an $r$-deformed $G \times G$-htdo on $X \times X$. In particular,  it is an $r$-deformed $G$-htdo with respect to the diagonal $G$ action defined in Section \ref{d2sectionBoBr}. Recall also that $i_l:X \to X \times X$ denotes the inclusion into the left copy. We have by Corollary \ref{d2leftinclusionalaBorho-Brylinski} that $\mathcal{D}_{\lambda,r} \cong i_l^{\#} \mathcal{D}_{\lambda,\mu,r}$ is an $r$-deformed $B$-htdo and the functor $i_l^{\#}: \Coh(\mathcal{D}_{\lambda,\mu,r},G) \to \Coh(\mathcal{D}_{\lambda,r},B)$ is an equivalence of categories.
From now on, until the end of this section, we will assume that $\lambda,\mu:r\mathfrak{h} \to R$ are dominant weights. 

\begin{definition}
Let $i:eB \to X$ be the natural inclusion and let $R$ to be trivial $\mathcal{O}_{eB}$-module. We let $\mathcal{M}_{\mu,r}$  be the right $\mathcal{D}_{\mu,r}$-module defined by:

                   $$\mathcal{M}_{\mu,r}:=i_* R \uset{\mathcal{O}_X} \mathcal{D}_{\mu,r}.$$

\end{definition}

\begin{lemma}
\label{d2Mlambdaproperties}
\leavevmode
\begin{enumerate}[label=\roman*)]
\item{$\mathcal{M}_{\mu,r}$ is coherent as a right $\mathcal{D}_{\mu,r}$-module.}
\item{Let $\mathcal{M}_{\mu}=\mathcal{M}_{\mu,1}$. Then $\Gamma(X,\mathcal{M}_{\mu})=R_{\mu} \uset{U(\mathfrak{b}^{-})} U(\mathfrak{g})$; we will use $T(\mu)$ to denote this module.}
\end{enumerate}
\end{lemma}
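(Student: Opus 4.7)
Here is the plan.

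For part (i), my first move is to identify $i_\ast R \cong \mathcal{O}_X/m$ as $\mathcal{O}_X$-modules, where $m$ denotes the ideal sheaf of the closed point $eB \in X$. Tensoring this cyclic presentation with $\mathcal{D}_{\mu,r}$ on the right over $\mathcal{O}_X$ then yields
\[
\mathcal{M}_{\mu,r} \;=\; i_\ast R \uset{\mathcal{O}_X} \mathcal{D}_{\mu,r} \;\cong\; \mathcal{D}_{\mu,r}/m\mathcal{D}_{\mu,r},
\]
which is a cyclic right $\mathcal{D}_{\mu,r}$-module generated by the image of $1$. Coherence then follows at once from this cyclic presentation.

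For part (ii), I would specialise (i) to $r=1$ to get $\mathcal{M}_\mu \cong \mathcal{D}_\mu/m\mathcal{D}_\mu$, and then apply Corollary \ref{d2geometricfibreDlambda} with $\lambda = \mu$ to obtain
\[
\Gamma(X,\mathcal{M}_\mu) \;\cong\; R_\mu \uset{U(\mathfrak{h})} \bigl( U(\mathfrak{g})/\mathfrak{n}^- U(\mathfrak{g})\bigr).
\]
The remaining task is purely algebraic: identify this module with $R_\mu \uset{U(\mathfrak{b}^-)} U(\mathfrak{g})$. For this I would view $U(\mathfrak{h})$ as a $(U(\mathfrak{h}), U(\mathfrak{b}^-))$-bimodule via the surjection $U(\mathfrak{b}^-) \twoheadrightarrow U(\mathfrak{h})$ that kills $\mathfrak{n}^-$, and invoke associativity of tensor products to write
\[
R_\mu \uset{U(\mathfrak{b}^-)} U(\mathfrak{g}) \;\cong\; R_\mu \uset{U(\mathfrak{h})} \bigl( U(\mathfrak{h}) \uset{U(\mathfrak{b}^-)} U(\mathfrak{g})\bigr).
\]
It then suffices to establish the $(U(\mathfrak{h}), U(\mathfrak{g}))$-bimodule isomorphism
\[
U(\mathfrak{h}) \uset{U(\mathfrak{b}^-)} U(\mathfrak{g}) \;\cong\; U(\mathfrak{g})/\mathfrak{n}^- U(\mathfrak{g}),
\]
which is a short PBW check: the map $1 \otimes g \mapsto g + \mathfrak{n}^- U(\mathfrak{g})$ is manifestly surjective, and well-definedness plus injectivity follow from the decomposition $U(\mathfrak{b}^-) \cong U(\mathfrak{n}^-) \otimes_R U(\mathfrak{h})$, using that $h \cdot \mathfrak{n}^- U(\mathfrak{g}) \subset \mathfrak{n}^- U(\mathfrak{g})$ for $h \in \mathfrak{h}$ (because $[\mathfrak{h}, \mathfrak{n}^-] \subset \mathfrak{n}^-$).

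The main (and really only) obstacle here is notational bookkeeping: keeping track of left versus right bimodule structures throughout, since $\mathcal{M}_{\mu,r}$ is a right $\mathcal{D}_{\mu,r}$-module, $R_\mu$ carries a right $U(\mathfrak{b}^-)$-action in the statement, and Corollary \ref{d2geometricfibreDlambda} was stated symmetrically. Once the handedness is pinned down, the argument is a routine combination of the geometric fibre computation from the previous subsection with the PBW theorem; there is no substantive new input.
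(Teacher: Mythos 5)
Your proposal is correct and follows essentially the same route as the paper: identify $\mathcal{M}_{\mu,r}$ with the cyclic module $\mathcal{D}_{\mu,r}/m\mathcal{D}_{\mu,r}$ for coherence, and compute global sections via the geometric fibre computation of Corollary \ref{d2geometricfibreDlambda} followed by the PBW identification $R_\mu \uset{U(\mathfrak{h})} U(\mathfrak{g})/\mathfrak{n}^- U(\mathfrak{g}) \cong R_\mu \uset{U(\mathfrak{b}^-)} U(\mathfrak{g})$ (which the paper asserts without detail and you correctly fill in). The only point worth making explicit in part (i) is that local finite generation yields coherence because $\mathcal{D}_{\mu,r}$ is a sheaf of Noetherian rings (via $\gr\mathcal{D}_{\mu,r} \cong \Sym_{\mathcal{O}_X}(r\mathcal{T}_X)$), which the paper records and your sketch leaves implicit.
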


\begin{proof}

Since $\mathcal{D}_{\mu,r}$ is in particular an $r$-deformed tdo, we have by Definition \ref{d2tdodefinition} that $\gr \mathcal{D}_{\mu,r} \cong \Sym_{\mathcal{O}_X}(r \mathcal{T}_X)$ is a sheaf of Noetherian rings, so $\mathcal{D}_{\mu,r}$ is a sheaf of Noetherian rings. Therefore, coherence is equivalent to locally finite generation. Let $U \subset X$ be affine open; if $eB \notin U$, then $\mathcal{M}_{\mu,r}(U)=0$. If $eB \in U$, then $1 \otimes 1$ is a generator for $\mathcal{M}_{\mu,r}(U)$.

For the second part, we have by construction that $\mathcal{M}_{\mu} \cong i_*(i^* \mathcal{D}_{\mu}) \cong T_{eB} (\mathcal{D}_{\mu})$ (the geometric fibre at the identity). Let $m$ be the ideal sheaf of functions on $X$ vanishing on $eB$. We have:

\begin{equation}
\begin{split}
 T_{eB} (\mathcal{D}_{\mu}) &= \Gamma(X,\mathcal{D}_{\mu}/ m \mathcal{D}_{\mu}) \\
                                &=R_{\mu} \uset{U(\mathfrak{h})}  U(\mathfrak{g})/\mathfrak{n}^{-}U(\mathfrak{g}) \text { (by Corollary \ref{d2geometricfibreDlambda}) } \\
                                &=R_{\mu} \uset{U(\mathfrak{b}^{-})} U(\mathfrak{g}). \qedhere
\end{split}
\end{equation}
\end{proof}

\begin{lemma}
\label{d2pullbackofDlambdalambda}
Let $p_r:X \times X \to X$ denote the projection onto the right factor. Then

$$p_r^{-1}(\mathcal{M}_{\mu,r}) \uset{p_r^{-1} \mathcal{D}_{\mu,r}} \mathcal{D}_{\lambda,\mu,r} \cong i_{l_*} i_l^{*} \mathcal{D}_{\lambda,\mu,r}.$$

\end{lemma}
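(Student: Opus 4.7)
The plan is to identify both sides of the claimed isomorphism with the same quotient $\mathcal{D}_{\lambda,\mu,r}/\mathcal{J}\cdot \mathcal{D}_{\lambda,\mu,r}$, where $\mathcal{J} \subset \mathcal{O}_{X\times X}$ denotes the ideal sheaf of the closed subvariety $i_l(X) = X \times \{eB\}$. The guiding observation is that $\mathcal{J}$ is generated, via the canonical map $p_r^{-1}\mathcal{O}_X \to \mathcal{O}_{X\times X}$, by the image of $p_r^{-1}m$, where $m \subset \mathcal{O}_X$ is the ideal sheaf of $eB$ (the same $m$ used in Corollary \ref{d2geometricfibreDlambda} and Lemma \ref{d2Mlambdaproperties}).

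For the left hand side I would first rewrite $i_*R \cong \mathcal{O}_X/m$, so that tensoring on the right by $\mathcal{D}_{\mu,r}$ identifies $\mathcal{M}_{\mu,r}$ with $\mathcal{D}_{\mu,r}/m\cdot\mathcal{D}_{\mu,r}$ as a right $\mathcal{D}_{\mu,r}$-module. Applying the exact functor $p_r^{-1}$ yields
$$
p_r^{-1}\mathcal{M}_{\mu,r}\;\cong\;p_r^{-1}\mathcal{D}_{\mu,r}\big/\,p_r^{-1}m\cdot p_r^{-1}\mathcal{D}_{\mu,r}.
$$
Because $\mathcal{D}_{\lambda,\mu,r}$ is the $r$-deformed $(\lambda,\mu)$-twisted htdo on the product $X \times X$, it carries a natural ring map $p_r^{-1}\mathcal{D}_{\mu,r} \to \mathcal{D}_{\lambda,\mu,r}$ restricting to $p_r^{-1}\mathcal{O}_X \to \mathcal{O}_{X\times X} \hookrightarrow \mathcal{D}_{\lambda,\mu,r}$ on functions, and hence a left $p_r^{-1}\mathcal{D}_{\mu,r}$-module structure on $\mathcal{D}_{\lambda,\mu,r}$. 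Using the standard identity $(A/I)\otimes_A B \cong B/(I\cdot B)$ for a right ideal $I \subseteq A$ and a left $A$-module $B$, the tensor product collapses to
$$
p_r^{-1}\mathcal{M}_{\mu,r}\otimes_{p_r^{-1}\mathcal{D}_{\mu,r}}\mathcal{D}_{\lambda,\mu,r}\;\cong\;\mathcal{D}_{\lambda,\mu,r}\big/\,p_r^{-1}m\cdot \mathcal{D}_{\lambda,\mu,r},
$$
where $p_r^{-1}m$ acts through its image in $\mathcal{O}_{X\times X} \subset \mathcal{D}_{\lambda,\mu,r}$ by left multiplication.

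For the right hand side, since $i_l$ is a closed immersion with ideal sheaf $\mathcal{J}$, one has the standard identity $i_{l_*}i_l^{*}\mathcal{F}\cong \mathcal{F}/\mathcal{J}\mathcal{F}$ for any quasi-coherent $\mathcal{O}_{X\times X}$-module $\mathcal{F}$ (recall that $i_l^{*}$ in the statement is $\mathcal{O}$-module pullback, as per the conventions of the paper). Taking $\mathcal{F} = \mathcal{D}_{\lambda,\mu,r}$ with its left $\mathcal{O}_{X\times X}$-structure gives $i_{l_*}i_l^{*}\mathcal{D}_{\lambda,\mu,r} \cong \mathcal{D}_{\lambda,\mu,r}/\mathcal{J}\cdot \mathcal{D}_{\lambda,\mu,r}$. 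Since $\mathcal{J}$ is generated in $\mathcal{O}_{X\times X}$ by the image of $p_r^{-1}m$, and since elements of $\mathcal{O}_{X\times X}$ commute with $p_r^{-1}m \subset \mathcal{O}_{X\times X}$ inside $\mathcal{D}_{\lambda,\mu,r}$, the two left submodules $\mathcal{J}\cdot\mathcal{D}_{\lambda,\mu,r}$ and $p_r^{-1}m\cdot\mathcal{D}_{\lambda,\mu,r}$ coincide, completing the identification.

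The only non-routine ingredient is the existence and compatibility of the natural ring homomorphism $p_r^{-1}\mathcal{D}_{\mu,r}\to \mathcal{D}_{\lambda,\mu,r}$ restricting to $p_r^{-1}\mathcal{O}_X \to \mathcal{O}_{X\times X}$ on scalars; I expect this to be the main technical point, but it is essentially the product description of the htdo on $X \times X$, obtained from the functoriality of $(-)^{\#}$ of \cite{Sta1} applied to the smooth $G$-equivariant projection $p_r$. All remaining steps are routine consequences of exactness of $p_r^{-1}$ and the projection-formula identity $i_{l_*}i_l^{*}\mathcal{F} \cong \mathcal{F}/\mathcal{J}\mathcal{F}$ for closed immersions.
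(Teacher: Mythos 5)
Your proof is correct and takes a genuinely different, somewhat more elementary route than the paper. The paper expands $i_{l_*}i_l^*\mathcal{D}_{\lambda,\mu,r}$ formally, reduces the claim to $p_r^{-1}\mathcal{M}_{\mu,r}\cong i_{l_*}\mathcal{O}_X\otimes_{\mathcal{O}_{X\times X}}p_r^{-1}\mathcal{D}_{\mu,r}$, and proves that by invoking flat base change $p_r^*(i_*R)\cong i_{l_*}(p^*R)$ along the Cartesian square formed by $i_l,\ p_r$ and the projection/inclusion of $\{eB\}$. You instead bypass the base-change lemma entirely: you identify both sides with the explicit quotient $\mathcal{D}_{\lambda,\mu,r}\big/\mathcal{J}\cdot\mathcal{D}_{\lambda,\mu,r}$, using $i_*R\cong\mathcal{O}_X/m$, exactness of $p_r^{-1}$, the collapse $(A/I)\otimes_AB\cong B/IB$, and the standard identity $i_{l_*}i_l^*\mathcal{F}\cong\mathcal{F}/\mathcal{J}\mathcal{F}$ for a closed immersion $i_l$ with ideal sheaf $\mathcal{J}$, together with the observation that $\mathcal{J}$ is generated by the image of $p_r^{-1}m$. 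Your approach has the advantage of making the answer visible as a specific concrete quotient and of not relying on the Stacks Project base-change citation; the paper's approach is somewhat more canonical and sets up the Cartesian square that recurs in the later global-sections arguments. You correctly flag that the real load-bearing input is the natural ring map $p_r^{-1}\mathcal{D}_{\mu,r}\to\mathcal{D}_{\lambda,\mu,r}$ giving $\mathcal{D}_{\lambda,\mu,r}$ a left $p_r^{-1}\mathcal{D}_{\mu,r}$-module structure; the paper's proof also uses this implicitly (in the step inserting $p_r^{-1}\mathcal{D}_{\mu,r}\otimes_{p_r^{-1}\mathcal{D}_{\mu,r}}$) and does not spell it out either, so no points are lost there, though a one-line justification via the exterior tensor description of $\mathcal{D}_{\lambda,\mu,r}$ (used later in Section \ref{d2sectionglosec}) would make either proof cleaner.
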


\begin{proof}
\label{d2pullbackunderiltdo}

We have

\begin{equation}
\begin{split}
i_{l_*}i_l^* \mathcal{D}_{\lambda,\mu,r} &=i_{l_*}(\mathcal{O}_X \uset{i_l^{-1} \mathcal{O}_{X \times X}} i_l^{-1} \mathcal{D}_{\lambda,\mu,r}) \\
                &\cong i_{l_*} \mathcal{O}_X \uset{\mathcal{O}_{X \times X}} \mathcal{D}_{\lambda, \mu,r} \\
                & \cong i_{l_*} \mathcal{O}_X \uset{\mathcal{O}_{X \times X}} p_r^{-1} \mathcal{D}_{\mu,r} \uset{p_r^{-1} \mathcal{D}_{\mu,r}} \mathcal{D}_{\lambda,\mu,r}.
\end{split}
\end{equation}

Therefore, it is enough to prove that
              $$p_r^{-1} \mathcal{M}_{\mu,r} \cong  i_{l_*} \mathcal{O}_X \uset{\mathcal{O}_{X \times X}} p_r^{-1} \mathcal{D}_{\mu,r}. $$

Consider the following Cartesian square:

\begin{center}
\begin{tikzcd}

&X \arrow[r,"p"] \arrow[d,"i_l"] &eB \arrow[d,"i"] \\
&X \times X \arrow[r,"p_r"]   &X, 
\end{tikzcd}
\end{center}

where $i$ and $p$ denote the natural inclusion and projection. Consider the constant sheaf $R$ on the trivial scheme $eB$. We then have by \cite[\href{https://stacks.math.columbia.edu/tag/02KG}{02KG}]{StackProject} 

 $$p_r^*(i_*R) \cong i_{l_*} (p^* R).$$   
              
Here the pullback is taken in the category of right modules instead of left modules. Since $p^* R \cong \mathcal{O}_X$ as right $\mathcal{O}_X$-modules, we obtain that as right $\mathcal{O}_{X \times X}$-modules

 $$i_{l_*} \mathcal{O}_X \cong p_r^* (i_* R).$$
 
Therefore, we obtain

\begin{equation}
\begin{split}
 i_{l_*} \mathcal{O}_X \uset{\mathcal{O}_{X \times X}} p_r^{-1} \mathcal{D}_{\mu,r} & \cong p_r^* (i_* R) \uset{\mathcal{O}_{X \times X}} p_r^{-1} \mathcal{D}_{\mu,r} \\
                   & \cong p_r^{-1}(i_* R) \uset{p_r^{-1} \mathcal{O}_X} \mathcal{O}_{X \times X} \uset{\mathcal{O}_{X \times X}} p_r^{-1} \mathcal{D}_{\mu,r} \\
                   & \cong p_r^{-1}(i_* R) \uset{p_r^{-1} \mathcal{O}_X} p_r^{-1} \mathcal{D}_{\mu,r} \\
                   & \cong p_r^{-1}(i_* R \uset{\mathcal{O}_X} \mathcal{D}_{\mu,r}) \\
                   & \cong p_r^{-1} \mathcal{M}_{\mu,r}. \qedhere
\end{split}
\end{equation} 
\end{proof}

\begin{corollary}
\label{d2twistedpullbacksimplified}
Let $\mathcal{M}$ be a coherent $\mathcal{D}_{\lambda,\mu,r}$-module. Then
$$i_{l_*}i_l^{\#} \mathcal{M} \cong p_r^{-1}(\mathcal{M}_{\mu,r}) \uset{p_r^{-1} \mathcal{D}_{\mu,r}} \mathcal{M}.$$

\end{corollary}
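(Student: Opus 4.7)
The plan is to chain together tensor product manipulations, using Lemma \ref{d2pullbackofDlambdalambda} as the key input. Observe first that at the level of $\mathcal{O}$-modules, $i_l^{\#}\mathcal{M}$ agrees with the ordinary pullback $i_l^* \mathcal{M} = \mathcal{O}_X \otimes_{i_l^{-1}\mathcal{O}_{X\times X}} i_l^{-1}\mathcal{M}$, and $i_{l_*}$ is just the pushforward at the level of sheaves.

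The first step is to write
\[
i_{l_*} i_l^{\#} \mathcal{M} \cong i_{l_*} \mathcal{O}_X \otimes_{\mathcal{O}_{X\times X}} \mathcal{M},
\]
which is the standard projection-formula-type identity for a closed immersion applied to $\mathcal{M}$ regarded as an $\mathcal{O}_{X\times X}$-module (this is a purely sheaf-theoretic calculation using that $i_l$ is a closed immersion, so $i_{l_*}$ is exact and $i_{l_*} i_l^{-1}\mathcal{N} \cong i_{l_*}\mathcal{O}_X \otimes_{\mathcal{O}_{X\times X}} \mathcal{N}$ for any $\mathcal{O}_{X\times X}$-module $\mathcal{N}$).

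Next, I factor the $\mathcal{O}$-module tensor product through the $\mathcal{D}_{\lambda,\mu,r}$-module structure on $\mathcal{M}$:
\[
i_{l_*} \mathcal{O}_X \otimes_{\mathcal{O}_{X\times X}} \mathcal{M}
\;\cong\; \bigl(i_{l_*}\mathcal{O}_X \otimes_{\mathcal{O}_{X\times X}} \mathcal{D}_{\lambda,\mu,r}\bigr) \otimes_{\mathcal{D}_{\lambda,\mu,r}} \mathcal{M}.
\]
Here we use the canonical isomorphism $\mathcal{D}_{\lambda,\mu,r} \otimes_{\mathcal{D}_{\lambda,\mu,r}} \mathcal{M} \cong \mathcal{M}$, and associativity of the tensor product for the right $\mathcal{O}_{X\times X}$-, $(\mathcal{O}_{X\times X}, \mathcal{D}_{\lambda,\mu,r})$-bi- and left $\mathcal{D}_{\lambda,\mu,r}$-module structures on the three factors.

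Applying the same projection-formula identity to the sheaf of rings $\mathcal{D}_{\lambda,\mu,r}$ itself gives $i_{l_*}\mathcal{O}_X \otimes_{\mathcal{O}_{X\times X}} \mathcal{D}_{\lambda,\mu,r} \cong i_{l_*} i_l^* \mathcal{D}_{\lambda,\mu,r}$, and Lemma \ref{d2pullbackofDlambdalambda} then rewrites this as $p_r^{-1}(\mathcal{M}_{\mu,r}) \otimes_{p_r^{-1}\mathcal{D}_{\mu,r}} \mathcal{D}_{\lambda,\mu,r}$. Substituting and using associativity one more time yields
\[
i_{l_*} i_l^{\#}\mathcal{M} \;\cong\; p_r^{-1}(\mathcal{M}_{\mu,r}) \otimes_{p_r^{-1}\mathcal{D}_{\mu,r}} \mathcal{D}_{\lambda,\mu,r} \otimes_{\mathcal{D}_{\lambda,\mu,r}} \mathcal{M} \;\cong\; p_r^{-1}(\mathcal{M}_{\mu,r}) \otimes_{p_r^{-1}\mathcal{D}_{\mu,r}} \mathcal{M},
\]
as required.

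The main technical care needed is in the second step: keeping track of the bimodule structures (left vs.\ right) on $i_{l_*}\mathcal{O}_X$, $\mathcal{D}_{\lambda,\mu,r}$ and $\mathcal{M}$, since $\mathcal{D}_{\lambda,\mu,r}$ is a noncommutative sheaf of rings. All of these structures are however standard for $r$-deformed tdo's, and $\mathcal{D}_{\lambda,\mu,r}$ is locally free over $\mathcal{O}_{X\times X}$, so the tensor product associativities we need go through without derived corrections, and the proof reduces to the chain of isomorphisms above.
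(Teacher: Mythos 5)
Your proposal is correct and follows essentially the same route as the paper: both arguments insert a copy of $\mathcal{D}_{\lambda,\mu,r}$ into the tensor product defining $i_{l_*}i_l^*\mathcal{M}$, invoke the projection formula for the closed immersion $i_l$, and then apply Lemma \ref{d2pullbackofDlambdalambda} to identify $i_{l_*}i_l^*\mathcal{D}_{\lambda,\mu,r}$ with $p_r^{-1}(\mathcal{M}_{\mu,r}) \uset{p_r^{-1}\mathcal{D}_{\mu,r}} \mathcal{D}_{\lambda,\mu,r}$ before cancelling. The only difference is the order in which you push forward versus factor through the $\mathcal{D}$-module structure, which is immaterial.
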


\begin{proof}

We have

\begin{equation}
\begin{split}
i_{l_*} i_l^{\#} \mathcal{M} &\cong i_{l_*}i_l^* \mathcal{M} \\
                             &\cong i_{l_*}( \mathcal{O}_X \uset{i_l^{-1} \mathcal{O}_{X \times X}} i_l^{-1} \mathcal{M}) \\
                             &\cong i_{l_*}( \mathcal{O}_X \uset{i_l^{-1} \mathcal{O}_{X \times X}} i_l^{-1} \mathcal{D}_{\lambda,\mu,r} \uset{i_l^{-1} \mathcal{D}_{\lambda,\mu,r}} i_l^{-1} \mathcal{O}_{X \times X} \uset{i_l^{-1} \mathcal{O}_{X \times X}} i_l^{-1} \mathcal{M})\\
&\cong i_{l_*} (i_l^*\mathcal{D}_{\lambda,\mu,r} \uset{i_l^{-1} \mathcal{D}_{\lambda,\mu,r}} i_l^{-1} \mathcal{M}) \\
          &\cong i_{l_*} i_l^*\mathcal{D}_{\lambda,\mu,r} \uset{\mathcal{D}_{\lambda,\mu,r}} \mathcal{M} \text{ (Since $i_l$ is closed)} \\
         &\cong p_r^{-1}(\mathcal{M}_{\mu,r}) \uset{p_r^{-1} \mathcal{D}_{\mu,r}} \mathcal{D}_{\lambda,\mu,r} \uset{\mathcal{D}_{\lambda,\mu,r}} \mathcal{M}  \text{ (By Lemma \ref{d2pullbackofDlambdalambda})} \\
         &\cong  p_r^{-1}(\mathcal{M}_{\mu,r}) \uset{p_r^{-1} \mathcal{D}_{\mu,r}} \mathcal{M}. \qedhere
\end{split}
\end{equation}
\end{proof}

\subsection{Global sections under the pullback}
\label{d2sectionglosec}

\textbf{Throughout this subsection, we will assume that $R=K$ is a field of characteristic $0$.}

Recall that by equation \eqref{d2quotientuguglambdalamba}, we have $U(\mathfrak{g} \times \mathfrak{g})^{\lambda,\mu} \cong U(\mathfrak{g})^{\lambda} \otimes U(\mathfrak{g})^{\mu}.$ We aim to prove the following theorem: 
\begin{theorem}
\label{d2globalsectionspullback}
Let $\lambda,\mu$ be dominant weights and let $\mathcal{M}$ be a coherent $G$-equivariant $\mathcal{D}_{\lambda,\mu}$-module. Then we obtain an isomorphism of left $U(\mathfrak{g})^{\lambda}$-modules
      $$\Gamma(X,i_l^{\#} \mathcal{M}) \cong T(\mu) \uset{U(\mathfrak{g})^{\mu}} \Gamma(X \times X, \mathcal{M}).$$

\end{theorem}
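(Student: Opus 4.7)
The plan is to apply the global sections functor to the sheaf-level identity in Corollary~\ref{d2twistedpullbacksimplified}, use a projection-formula-type identity for the proper projection $p_r\colon X \times X \to X$ to transfer the computation to $X$, and then exploit the exactness of $\Gamma(X,-)$ on coherent $\mathcal{D}_\mu$-modules at dominant $\mu$ (Theorem~\ref{d2equivariantbb}). Since $i_l$ is a closed immersion, $i_{l_*}$ preserves global sections, so Corollary~\ref{d2twistedpullbacksimplified} gives
\[
\Gamma(X, i_l^{\#}\mathcal{M}) \;\cong\; \Gamma\bigl(X \times X,\, p_r^{-1}\mathcal{M}_\mu \otimes_{p_r^{-1}\mathcal{D}_\mu}\mathcal{M}\bigr),
\]
and the task becomes to identify the right-hand side with $T(\mu) \otimes_{U(\mathfrak{g})^\mu} \Gamma(X \times X, \mathcal{M})$.

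\textbf{Projection formula step.} I will establish the identity
\[
p_{r_*}\bigl(p_r^{-1}\mathcal{M}_\mu \otimes_{p_r^{-1}\mathcal{D}_\mu}\mathcal{M}\bigr) \;\cong\; \mathcal{M}_\mu \otimes_{\mathcal{D}_\mu} p_{r_*}\mathcal{M},
\]
where $p_{r_*}\mathcal{M}$ is coherent as a $\mathcal{D}_\mu$-module by properness of $p_r$. The verification is local on the base: over any affine $U \subset X$ with $eB \notin U$ both sides vanish because $\mathcal{M}_\mu|_U = 0$, and over $U \ni eB$ the skyscraper-like structure $\mathcal{M}_\mu \cong i_{*}R \otimes_{\mathcal{O}_X}\mathcal{D}_\mu$ reduces the claim to a finite manipulation of tensor products. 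Applying $\Gamma(X,-)$ then reduces the theorem to showing
\[
\Gamma\bigl(X,\, \mathcal{M}_\mu \otimes_{\mathcal{D}_\mu} p_{r_*}\mathcal{M}\bigr) \;\cong\; T(\mu) \otimes_{U(\mathfrak{g})^\mu} \Gamma(X \times X, \mathcal{M}).
\]

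\textbf{Beilinson--Bernstein on the base.} Because $\mu$ is dominant, Theorem~\ref{d2equivariantbb} says that $\Gamma(X,-)$ is exact on coherent $\mathcal{D}_\mu$-modules. Choose a presentation $\mathcal{D}_\mu^{\oplus a} \to \mathcal{D}_\mu^{\oplus b} \to p_{r_*}\mathcal{M} \to 0$. Both of the functors $\mathcal{F} \mapsto \Gamma(X, \mathcal{M}_\mu \otimes_{\mathcal{D}_\mu} \mathcal{F})$ and $\mathcal{F} \mapsto T(\mu) \otimes_{U(\mathfrak{g})^\mu} \Gamma(X, \mathcal{F})$ are right exact and commute with direct sums, and they agree tautologically on $\mathcal{F} = \mathcal{D}_\mu$ (both reducing to $\Gamma(X, \mathcal{M}_\mu) = T(\mu)$, by Lemma~\ref{d2Mlambdaproperties}(ii)), so a five-lemma argument delivers the required isomorphism. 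Substituting $\Gamma(X, p_{r_*}\mathcal{M}) = \Gamma(X \times X, \mathcal{M})$ and $\Gamma(X,\mathcal{M}_\mu) = T(\mu)$ concludes the chain of isomorphisms. The left $U(\mathfrak{g})^\lambda$-module structure is preserved throughout, since every manipulation involves only the right-factor $\mathcal{D}_\mu$-action and leaves the left-factor $\mathcal{D}_\lambda$-action untouched.

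\textbf{Main obstacle.} The technically delicate step is the projection formula in the second paragraph: $p_r^{-1}\mathcal{M}_\mu$ is not quasi-coherent as an $\mathcal{O}_{X \times X}$-module, so one cannot simply invoke a standard formulation. The argument relies on the support of $\mathcal{M}_\mu$ being the single point $eB$, which localises everything to a neighborhood of $eB$ in the base; care is needed to keep track of the $\mathcal{D}_\mu$-module (rather than just $\mathcal{O}_X$-module) structures when comparing the two sides, and to ensure the identification is natural in $\mathcal{M}$ so that the subsequent five-lemma argument applies.
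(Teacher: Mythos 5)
There is a genuine gap, and it sits exactly where you flag the ``main obstacle.'' First, the assertion that $p_{r_*}\mathcal{M}$ is coherent as a $\mathcal{D}_\mu$-module ``by properness of $p_r$'' is false: already for $\mathcal{M}=\mathcal{D}_{\lambda,\mu}$ one has, over an affine open $U\subset X$,
$$p_{r_*}\mathcal{D}_{\lambda,\mu}(U)\;\cong\;\Gamma(X,\mathcal{D}_\lambda)\uset{K}\mathcal{D}_\mu(U)\;\cong\;U(\mathfrak{g})^\lambda\uset{K}\mathcal{D}_\mu(U),$$
which is free of infinite rank over $\mathcal{D}_\mu(U)$. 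Properness controls $\mathcal{O}$-coherence of pushforwards of $\mathcal{O}$-coherent sheaves, but a coherent $\mathcal{D}_{\lambda,\mu}$-module is not $\mathcal{O}$-coherent, and the plain sheaf pushforward is not the $\mathcal{D}$-module direct image. Consequently the finite presentation $\mathcal{D}_\mu^{\oplus a}\to\mathcal{D}_\mu^{\oplus b}\to p_{r_*}\mathcal{M}\to 0$ on which your five-lemma step rests need not exist; and even allowing infinite ranks, a surjection from a free $\mathcal{D}_\mu$-module requires $p_{r_*}\mathcal{M}$ to be generated over $\mathcal{D}_\mu$ by its global sections, which is not automatic for dominant non-regular $\mu$ (the counit $\Loc^\mu\Gamma\to\id$ need not be surjective) and is not established here.

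Second, the projection-formula step carries all of the analytic content and the sketch does not address it. Both sides are supported at $eB$, so the comparison over $U\ni eB$ amounts to comparing $\Gamma(X\times U,\mathcal{M})/m(U)\Gamma(X\times U,\mathcal{M})$ with $\Gamma(X\times U,\mathcal{M}/\mathcal{I}\mathcal{M})$, where $\mathcal{I}$ is the ideal sheaf of $X\times\{eB\}$. Since $X\times U$ is not affine and $\mathcal{I}\mathcal{M}$ is only an $\mathcal{O}$-submodule (not a $\mathcal{D}$-submodule), taking sections does not commute with this quotient without a vanishing statement for cohomology along the left $X$-factor. This is precisely where the dominance of $\lambda$ has to enter, and your argument never uses it --- which cannot be right, since the exactness of $\Gamma(X,-)$ on $\mathcal{D}_\lambda$-modules is what makes $\Gamma(X,i_l^{\#}(-))$ right exact. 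The paper sidesteps both problems by staying on $X\times X$: it shows that $\mathscr{F}(\mathcal{M})=\Gamma(X,i_l^{\#}\mathcal{M})$ and $\mathscr{J}(\mathcal{M})=T(\mu)\otimes_{U(\mathfrak{g})^\mu}\Gamma(X\times X,\mathcal{M})$ are both right exact (Theorem \ref{d2equivariantbb} applied to $\lambda$ and to $(\lambda,\mu)$), constructs a natural map between them, checks agreement on $\mathcal{D}_{\lambda,\mu}$ via equation \eqref{d2globalsectionDlambdalambdamtdo}, and runs the five lemma on a finite free presentation of $\mathcal{M}$ itself, where coherence over $\mathcal{D}_{\lambda,\mu}$ is actually available. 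If you want to salvage your route, you would end up proving your projection formula by exactly this kind of presentation-plus-right-exactness argument for $\mathcal{M}$ on $X\times X$, at which point the detour through $p_{r_*}$ buys nothing.
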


We should make several remarks before we proceed to prove this.  We view $\Gamma(X \times X,\mathcal{M})$ as a left $U(\mathfrak{g})^{\mu}$-module by defining $y.m=(1 \otimes y).m$ for $y \in U(\mathfrak{g})^{\mu}$ and $m \in \Gamma(X \times X, \mathcal{M})$. Further, we note that  the space  $T(\mu) \uset{U(\mathfrak{g})^{\mu}} \Gamma(X \times X, \mathcal{M})$ has the structure of a left $U(\mathfrak{g})^{\lambda}$-module via $x.(m \otimes m')=m \otimes (x \otimes 1).m'$ for $x \in U(\mathfrak{g})^{\lambda}$, $m \in T(\mu)$ and $m' \in \Gamma(X \times X,\mathcal{M})$.

In order to prove this theorem, we need to introduce additional notation. Let $Y,Z$ be $K$-schemes, $\mathcal{M}$ an $\mathcal{O}_Y$-module and $\mathcal{N}$ an $\mathcal{O}_Z$-module. We will use the notation:

\begin{equation}
\mathcal{M} \boxtimes \mathcal{N}:= \mathcal{O}_{Y \times Z} \uset{q_Y^{-1} \mathcal{O}_Y \otimes q_Z^{-1} \mathcal{O}_Z} q_Y^{-1} \mathcal{M} \otimes q_Z^{-1} \mathcal{N} \cong q_Y^* \mathcal{M} \uset{ \mathcal{O}_{Y \times Z}} q_Z^* \mathcal{N},
\end{equation}

where $q_Y:Y \times Z \to Y$ and $q_Z:Y \times Z \to Z$ denote the natural projections. We refer to \cite[Section I.1.5]{HTT} for the basic properties of box tensor product. We will also identify $X$ with $X \times eB$ and the map $i_l$ with $\id \times i$; recall $i:eB \to X$ denotes the natural inclusion. We have that:

\begin{equation}
\begin{split}
i_l^* \mathcal{D}_{\lambda,\mu} &\cong (\id \times i) \mathcal{D}_{\lambda,\mu} \\
&\cong (\id \times i) (\mathcal{D}_{\lambda} \boxtimes \mathcal{D}_{\mu}) \text{ (by \cite[Section 5.4]{Gin3})} \\
& \cong \mathcal{D}_{\lambda} \boxtimes i^* \mathcal{D}_{\mu} \\
& \cong \mathcal{D}_{\lambda} \uset{K} \Gamma(eB,i^* \mathcal{D}_{\mu}) \\
& \cong \mathcal{D}_{\lambda} \uset{K} \Gamma(X,i_* i^*\mathcal{D}_{\mu})  \\
& \cong \mathcal{D}_{\lambda} \uset{K} \Gamma(X,\mathcal{M}_{\mu}) \\
& \cong \mathcal{D}_{\lambda} \uset{K} T(\mu) \text{ (by Lemma \ref{d2Mlambdaproperties}).} 
\end{split}
\end{equation}

Therefore we obtain that as left $\mathcal{D}_\lambda$-modules $i_l^{\#} \mathcal{D}_{\lambda,\mu} \cong \mathcal{D}_{\lambda} \uset{K} T(\mu)$, so by Theorem \ref{d2equivariantbb}, we have that as left $U(\mathfrak{g})^{\lambda}$-modules:

\begin{equation}
\label{d2globalsectionDlambdalambdamtdo}
\Gamma(X,i_l^{\#} \mathcal{D}_{\lambda,\mu}) \cong U(\mathfrak{g})^{\lambda} \uset{K} T(\mu).
\end{equation}

\begin{proof}[Proof of theorem \ref{d2globalsectionspullback}]
Let $\mathscr{F},\mathscr{J}:\Coh(\mathcal{D}_{\lambda,\mu},G) \to \Mod(U(\mathfrak{g})^{\lambda},B)$ defined by

\begin{equation}
\begin{split}
 \mathscr{F}(\mathcal{M}):&=\Gamma(X,i_l^{\#} \mathcal{M}), \\
 \mathscr{J}(\mathcal{M}):&=T(\mu) \uset{U(\mathfrak{g})^{\mu}} \Gamma(X \times X, \mathcal{M}).
\end{split}
\end{equation}
The theorem follows if we can prove that $\mathscr{F}(\mathcal{M}) \cong \mathscr{J}(\mathcal{M})$. First, we extend the functors $\mathscr{F},\mathscr{J}$ to the category $\WCoh(\mathcal{D}_{\lambda,\mu},G)$; this is the category of coherent $\mathcal{D}_{\lambda,\mu}$-modules that are equivariant $\mathcal{O}$-modules together with $G$-equivariant morphisms. The reason to do this is that $\mathcal{D}_{\lambda,\mu} \in \WCoh(\mathcal{D}_{\lambda,\mu},G)$, but $\mathcal{D}_{\lambda,\mu} \notin\Coh(\mathcal{D}_{\lambda,\mu},G)$.

Since $i_l^{\#} \cong i_l^*$ as $\mathcal{O}$-modules, $i_l^{\#}$ is an equivalence of Abelian categories and  $\lambda$ is dominant, we obtain by Theorem \ref{d2equivariantbb} that $\mathscr{F}$ is right exact. Furthermore since $\lambda$ and $\mu$ are dominant weights, by the same theorem applied on $\mathcal{D}_{\lambda,\mu}$-modules, $\mathscr{J}$ is also right exact.

Next, let us construct a map from $\mathscr{J}(\mathcal{M}) \to \mathscr{F}(\mathcal{M})$. Let $\mathcal{N}=i_{l_*}i_l^* \mathcal{D}_{\lambda,\mu}$ and consider the space $\mathcal{N} \uset{ \mathcal{D}_{\lambda,\mu}} \mathcal{M}$. By construction, this is isomorphic to $i_{l_*}i_l^* \mathcal{M}$. Therefore, we have

\begin{equation}
\begin{split}
\mathscr{F}(\mathcal{M})&= \Gamma(X,i_l^{\#} \mathcal{M}) \\
              &= \Gamma(X,i_l^* \mathcal{M}) \\
              &\cong \Gamma(X \times X, i_{l_*} i_l^* \mathcal{M})   \\    
              &\cong \Gamma(X \times X, \mathcal{N}  \uset{\mathcal{D}_{\lambda,\mu}} \mathcal{M}). 
\end{split}
\end{equation}

Therefore, it is enough to construct a map from $\mathscr{J}(\mathcal{M})$ to $\Gamma(X \times X, \mathcal{N} \uset{\mathcal{D}_{\lambda,\mu}} \mathcal{M})$. This reduces to proving that $\mathscr{J}(\mathcal{M}) \cong \Gamma(X \times X,\mathcal{N}) \uset{\Gamma(X \times X,\mathcal{D}_{\lambda,\mu})} \Gamma(X \times X,\mathcal{M})$. Let $\mathcal{B}:=\Gamma(X \times X,\mathcal{N}) \uset{\Gamma(X \times X,\mathcal{D}_{\lambda,\mu})} \Gamma(X \times X,\mathcal{M})$.

We have 

\begin{equation}
\begin{split}
\mathcal{B}&=\Gamma(X \times X,\mathcal{N}) \uset{\Gamma(X \times X,\mathcal{D}_{\lambda,\mu})} \Gamma(X \times X,\mathcal{M})\\ &= \Gamma(X \times X,i_{l_*} i_l^* \mathcal{D}_{\lambda,\mu}) \uset{\Gamma(X \times X,\mathcal{D}_{\lambda,\mu})} \Gamma(X \times X,\mathcal{M}) \\
     &\cong U(\mathfrak{g})^{\lambda} \uset{K} T(\mu) \uset{U(\mathfrak{g})^{\lambda} \uset{K} U(\mathfrak{g})^{\mu}} \Gamma(X \times X,\mathcal{M})  \\
      &\cong  T(\mu) \uset{U(\mathfrak{g})^{\mu}} \Gamma(X \times X, \mathcal{M}) \\
      &\cong \mathscr{J}(\mathcal{M}).
\end{split}
\end{equation}

Finally, we have that

\begin{equation}
\begin{split}
\mathscr{J}(\mathcal{D}_{\lambda,\mu}) &= T(\mu) \uset{U(\mathfrak{g})^{\mu}} \Gamma(X \times X, \mathcal{D}_{\lambda,\mu}) \\
          & \cong T(\mu) \uset{U(\mathfrak{g})^{\mu}} U(\mathfrak{g})^{\lambda} \uset{K} U(\mathfrak{g})^{\mu} \\
       & \cong U(\mathfrak{g})^{\lambda} \uset{K} T(\mu) \\
          & \cong \Gamma(X,i_l^{\#} \mathcal{D}_{\lambda,\mu}) \text{ (by equation \eqref{d2globalsectionDlambdalambdamtdo})} \\
          & \cong \mathscr{F}(\mathcal{D}_{\lambda,\mu}).
\end{split}
\end{equation}

The claim follows by picking a presentation $(\mathcal{D}_{\lambda,\mu})^n \to (\mathcal{D}_{\lambda,\mu})^m \to \mathcal{M} \to 0$ and applying the Five Lemma.
\end{proof}

As a corollary, we obtain using Corollary \ref{d2twistedpullbacksimplified}:

\begin{corollary}
Let $\lambda,\mu$ be dominant weights and let $\mathcal{M} \in \Coh(\mathcal{D}_{\lambda,\mu},G)$. Then:

$$\Gamma(X \times X,p_r^{-1} (\mathcal{M}_{\mu}) \uset{p_r^{-1} \mathcal{D}_{\mu}} \mathcal{M}) \cong  T(\mu) \uset{U(\mathfrak{g})^{\mu}} \Gamma(X \times X, \mathcal{M}).$$

\end{corollary}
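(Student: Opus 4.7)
The plan is to deduce this corollary by combining the two main results of the subsection, namely Corollary \ref{d2twistedpullbacksimplified} and Theorem \ref{d2globalsectionspullback}, with the elementary fact that pushforward along a closed immersion is compatible with taking global sections. There is essentially no new content in the corollary beyond putting these pieces together.

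More precisely, I would first invoke Corollary \ref{d2twistedpullbacksimplified} specialised at $r=1$ to obtain the isomorphism of sheaves on $X \times X$
\[
i_{l_*} i_l^{\#} \mathcal{M} \;\cong\; p_r^{-1}(\mathcal{M}_{\mu}) \uset{p_r^{-1} \mathcal{D}_{\mu}} \mathcal{M}.
\]
Taking global sections of both sides commutes with the direct image $i_{l_*}$ along the closed immersion $i_l : X \hookrightarrow X \times X$, so the left hand side becomes
\[
\Gamma(X \times X, i_{l_*} i_l^{\#} \mathcal{M}) \;\cong\; \Gamma(X, i_l^{\#} \mathcal{M}),
\]
while the right hand side is exactly the global sections appearing in the statement of the corollary.

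The last step is to identify $\Gamma(X, i_l^{\#} \mathcal{M})$ with $T(\mu) \uset{U(\mathfrak{g})^{\mu}} \Gamma(X \times X, \mathcal{M})$, which is precisely the content of Theorem \ref{d2globalsectionspullback}. Stringing the three isomorphisms together yields the required isomorphism of left $U(\mathfrak{g})^{\lambda}$-modules. I do not foresee any real obstacle; the only mildly delicate point is to ensure that all isomorphisms are in the category of left $U(\mathfrak{g})^{\lambda}$-modules (and not merely of $K$-vector spaces), but this was already checked in the proof of Theorem \ref{d2globalsectionspullback}, so nothing new needs to be verified here.
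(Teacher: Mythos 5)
Your proposal is correct and is exactly the argument the paper intends: the corollary is stated immediately after Theorem \ref{d2globalsectionspullback} with the remark that it follows ``using Corollary \ref{d2twistedpullbacksimplified}'', i.e.\ by combining $i_{l_*} i_l^{\#} \mathcal{M} \cong p_r^{-1}(\mathcal{M}_{\mu}) \uset{p_r^{-1}\mathcal{D}_{\mu}} \mathcal{M}$ with $\Gamma(X \times X, i_{l_*}(-)) = \Gamma(X,-)$ and then applying the theorem. Nothing further is needed.
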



\section{Primitive/prime ideals in the universal enveloping algebra of a semisimple Lie algebra}
\label{d2sectionproofofDuflos}

We retain the notations from \ref{d2sectionglosec}, recall that $R=K$ is a field of characteristic $0$. In this section, we aim to finish the proof of Duflo's Theorem, that is we aim to prove:

\begin{theorem}
\label{d2twistedduflotheorem}
Let $\lambda: \mathfrak{h} \to K$ be a dominant weight and let $I$ be a primitive/prime ideal in $U(\mathfrak{g})^{\lambda}$. Then

           $$I=\Ann(L(\mu)) \text{ for some $\mu \in \mathfrak{h}^*$}.$$
\end{theorem}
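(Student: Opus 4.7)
The plan is to combine Proposition \ref{injectionbetweenlatticeoftwosidedidealsandsubmodulesofverma} --- the injectivity of $\mathcal{F}: I \mapsto IM(\lambda)$ on two-sided ideals of $U(\mathfrak{g})^{\lambda}$ --- with standard finite-length arguments in category $\mathcal{O}$. Given a prime ideal $I \subset U(\mathfrak{g})^{\lambda}$, I set $M := M(\lambda)/IM(\lambda)$. Then $IM = 0$ by construction, and $M$ is a quotient of the Verma module $M(\lambda)$; it therefore lies in category $\mathcal{O}$ with central character $\chi_{\lambda}$, and by Proposition \ref{d2vermammodulesproperties} it has finite length. Let $L(\mu_1),\ldots,L(\mu_n)$ be its composition factors (with $\mu_i \in \mathfrak{h}^*$) and write $J_i := \Ann L(\mu_i)$; note that each $J_i$ is a two-sided ideal of $U(\mathfrak{g})^{\lambda}$ since $L(\mu_i)$ has central character $\chi_{\lambda}$.

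Next, I fix a composition series $0 = M_0 \subset M_1 \subset \cdots \subset M_n = M$ with $M_i/M_{i-1} \cong L(\mu_i)$. Each inclusion $J_i \cdot M_i \subset M_{i-1}$ gives, by iteration, $J_1 J_2 \cdots J_n \cdot M = 0$, whence $J_1\cdots J_n \cdot M(\lambda) \subset IM(\lambda)$. Equivalently, $(I + J_1\cdots J_n)M(\lambda) = IM(\lambda) = \mathcal{F}(I)$, so by the injectivity of $\mathcal{F}$ we get $I + J_1\cdots J_n = I$, i.e.\ $J_1\cdots J_n \subset I$. Primality of $I$ then yields $J_i \subset I$ for some $i$. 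Conversely, $IM = 0$ together with the fact that $L(\mu_i)$ is a subquotient of $M$ gives $I \subset \Ann L(\mu_i) = J_i$. Combining, $I = J_i = \Ann L(\mu_i)$, as required. The primitive case is subsumed, since a primitive ideal is automatically prime (if $I = \Ann V$ for $V$ simple and $JK \subset I$, then either $KV = 0$ so $K \subset I$, or $KV = V$ and then $JV = J(KV) = 0$ so $J \subset I$).

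The main obstacle is entirely Proposition \ref{injectionbetweenlatticeoftwosidedidealsandsubmodulesofverma}: the injectivity of $\mathcal{F}$ is the nontrivial input, and it is precisely what the geometric machinery of Sections \ref{d2sectionBoBr}--\ref{d2sectionpullbackfromdoubleflagvariety} is built to supply --- the Borho-Brylinski type equivalence $i_l^{\#} : \Coh(\mathcal{D}_{\lambda,\mu},G) \simeq \Coh(\mathcal{D}_{\lambda},B)$ on the double flag variety, composed with localisation and global sections (the diagram $\Gamma \circ i_l^{\#} \circ \Loc$ from the introduction). Once that proposition is granted, the present theorem reduces to the short composition-factor and primality argument above, with no further geometric input required.
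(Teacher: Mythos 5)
Your proof is correct and follows essentially the same route as the paper: the paper likewise reduces everything to the injectivity of $\mathcal{F}$ (packaged as Corollary \ref{d2annihilatorofidealverma}, $I=\Ann(M(\lambda)/IM(\lambda))$) and then runs the identical composition-series and primality argument on $M(\lambda)/IM(\lambda)$. Your repackaging of the key step as $(I+J_1\cdots J_n)M(\lambda)=IM(\lambda)$ followed by injectivity is only a cosmetic variant of the paper's use of that corollary.
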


For the rest of this section, fix $\lambda \in \mathfrak{h}^*$ a dominant weight and let $\lambda^*=-w_{o} \lambda$; if $\lambda$ is dominant, $\lambda^*$ will also be dominant. Recall further that $\tau$ induces an isomorphism $ U(\mathfrak{g})^{\lambda^{\op}} \cong U(\mathfrak{g})^{\lambda^*}  $.

Consider the functor $\mathscr{F}:\Mod_{\fg}(U(\mathfrak{g} \times \mathfrak{g})^{\lambda^*,\lambda},G) \to \Mod_{\fg}(U(\mathfrak{g})^{\lambda^*},B)$ defined by

$$\mathscr{F}(M):= \Gamma(X,i_l^{\#} \circ \Loc^{\lambda^*,\lambda}(M)).$$

\begin{proposition}
\label{d2functorFisexact}
The functor $\mathscr{F}$ is exact and $\mathscr{F}(M) \cong T(\lambda) \uset{U(\mathfrak{g})^{\lambda}} M$ as $U(\mathfrak{g})^{\lambda^*}$-modules.

\end{proposition}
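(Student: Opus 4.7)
The plan is to derive the stated isomorphism directly from Theorem \ref{d2globalsectionspullback} and Corollary \ref{d2glolocisomor}, then establish exactness of $\mathscr{F}$ by chasing the three constituent functors in its definition.

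Since $\lambda$ is dominant, so is $\lambda^{*}=-w_{o}\lambda$, and hence $(\lambda^{*},\lambda)$ is a dominant weight of $\mathfrak{g}\times\mathfrak{g}$. Setting $\mathcal{M}:=\Loc^{\lambda^{*},\lambda}(M)$, I first apply Theorem \ref{d2globalsectionspullback} (with its $(\lambda,\mu)$ specialised to $(\lambda^{*},\lambda)$) to obtain a natural isomorphism $\Gamma(X,i_{l}^{\#}\mathcal{M})\cong T(\lambda)\uset{U(\mathfrak{g})^{\lambda}}\Gamma(X\times X,\mathcal{M})$. Corollary \ref{d2glolocisomor} identifies $\Gamma(X\times X,\Loc^{\lambda^{*},\lambda}(M))$ with $M$, yielding the required $U(\mathfrak{g})^{\lambda^{*}}$-linear isomorphism $\mathscr{F}(M)\cong T(\lambda)\uset{U(\mathfrak{g})^{\lambda}}M$.

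For exactness, I take a short exact sequence $0\to M_{1}\to M_{2}\to M_{3}\to 0$ in $\Mod_{\fg}(U(\mathfrak{g}\times\mathfrak{g})^{\lambda^{*},\lambda},G)$ and apply the right-exact functor $\Loc^{\lambda^{*},\lambda}$ to obtain an exact sequence
$$0\to K\to\Loc^{\lambda^{*},\lambda}(M_{1})\to\Loc^{\lambda^{*},\lambda}(M_{2})\to\Loc^{\lambda^{*},\lambda}(M_{3})\to 0$$
of coherent $G$-equivariant $\mathcal{D}_{\lambda^{*},\lambda}$-modules, where $K$ denotes the (coherent) kernel of the leftmost non-trivial arrow. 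Dominance of $(\lambda^{*},\lambda)$ together with Theorem \ref{d2equivariantbb} ensures that $\Gamma(X\times X,-)$ is exact on this category; combined with $\Gamma\circ\Loc^{\lambda^{*},\lambda}\cong\id$ (Corollary \ref{d2glolocisomor}), this forces $\Gamma(X\times X,K)=0$. Now apply the exact equivalence $i_{l}^{\#}$ of Corollary \ref{d2leftinclusionalaBorho-Brylinski} and then $\Gamma(X,-)$, which is exact on coherent $\mathcal{D}_{\lambda^{*}}$-modules by dominance of $\lambda^{*}$. This produces an exact sequence
$$0\to\Gamma(X,i_{l}^{\#}K)\to\mathscr{F}(M_{1})\to\mathscr{F}(M_{2})\to\mathscr{F}(M_{3})\to 0,$$
and the leftmost term vanishes by a second application of Theorem \ref{d2globalsectionspullback} to $K$, giving $\Gamma(X,i_{l}^{\#}K)\cong T(\lambda)\uset{U(\mathfrak{g})^{\lambda}}\Gamma(X\times X,K)=0$.

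The main obstacle is that $\Loc^{\lambda^{*},\lambda}$ need not be left exact in the absence of regularity, so an extra kernel $K$ appears in the localised sequence and obstructs direct exactness of $\mathscr{F}$. The argument sidesteps this by observing that $K$ lies in $\ker\Gamma(X\times X,-)$ and then using Theorem \ref{d2globalsectionspullback} as a transport mechanism: any module annihilated by $\Gamma(X\times X,-)$ is also annihilated by $\Gamma(X,-)\circ i_{l}^{\#}$.
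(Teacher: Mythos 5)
Your proposal is correct and follows essentially the same route as the paper: the isomorphism comes from Theorem \ref{d2globalsectionspullback} together with Corollary \ref{d2glolocisomor}, and exactness is obtained by localising, isolating the kernel $K$ of the leftmost localised arrow, showing $\Gamma(X\times X,K)=0$ via exactness of global sections and $\Gamma\circ\Loc\cong\id$, and then transporting this vanishing through $i_l^{\#}$ using Theorem \ref{d2globalsectionspullback}. The only cosmetic difference is that you cite Corollary \ref{d2leftinclusionalaBorho-Brylinski} for exactness of $i_l^{\#}$ where the paper cites Theorem \ref{d2alaBorho-Brylinski}, which is if anything the more precise reference.
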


\begin{proof}

Let $\mathcal{M}:=\Loc^{\lambda^*,\lambda}(M)$; we have by Corollary \ref{d2glolocisomor} that $\Gamma(X \times X, \mathcal{M}) \cong M$ and by Theorem \ref{d2equivariantbb} that $\mathcal{M} \in \Coh(\mathcal{D}_{\lambda^*,\lambda},G)$ , so the second claim follows from Theorem \ref{d2globalsectionspullback}. Consider a short exact sequence of finitely generated $G$-equivariant $U(\mathfrak{g} \times \mathfrak{g})^{\lambda^*,\lambda}$-modules:

$$ 0 \to N \to M \to P \to 0.$$

We let $\mathcal{N},\mathcal{M}, \mathcal{P}$ be the localisations of $N,M$ and $P$ respectively and let $\mathcal{K}=\ker(\mathcal{N} \to \mathcal{M})$. Since the functor $\Loc^{\lambda^*,\lambda}$ is right exact we obtain an exact sequence

$$ 0 \to \mathcal{K} \to \mathcal{N} \to \mathcal{M} \to \mathcal{P} \to 0.$$

Since $i_l^{\#}$ is an equivalence of Abelian categories by Theorem \ref{d2alaBorho-Brylinski}, $i_l^{\#}$ is in particular exact. Furthermore, by Theorem \ref{d2equivariantbb} the global sections functor is also exact, so we obtain an exact sequence

$$ 0 \to \Gamma(X,i_l^{\#} \mathcal{K}) \to \Gamma(X,i_l^{\#} \mathcal{N}) \to \Gamma(X,i_l^{\#} \mathcal{M}) \to \Gamma(X,i_l^{\#} \mathcal{P}) \to 0.$$

Applying Theorem \ref{d2globalsectionspullback} we obtain an exact sequence
\begin{equation}
\begin{split}
0 &\to T(\lambda) \uset{U(\mathfrak{g})^{\lambda}} \Gamma(X \times X,\mathcal{K}) \to T(\lambda) \uset{U(\mathfrak{g})^{\lambda}} N \to \\
&\to T(\lambda) \uset{U(\mathfrak{g})^{\lambda}} M \to T(\lambda) \uset{U(\mathfrak{g})^{\lambda}} P \to 0.
\end{split}
\end{equation}

The claim follows since $\Gamma(X \times X, \mathcal{K})=0$ by definition of  $\mathcal{K}$, Theorem \ref{d2equivariantbb} and Corollary \ref{d2glolocisomor}.
\end{proof}

\begin{lemma}
\label{d2Fisalmostinjectiveonobjects}

Let $M \in \Mod_{\fg}(U(\mathfrak{g} \times \mathfrak{g})^{\lambda^*,\lambda},G)$ and assume that $\mathscr{F}(M)=0$. Then $M=0$.

\end{lemma}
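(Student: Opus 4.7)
The plan is to use the machinery already assembled: an equivariant Beilinson-Bernstein type statement on $X \times X$ together with the vanishing-of-global-sections result Corollary \ref{d2zeroglobalsectionsil} obtained from the Borho-Brylinski equivalence.

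First I would set $\mathcal{M} := \Loc^{\lambda^*,\lambda}(M)$. Since both $\lambda$ and $\lambda^*=-w_{o}\lambda$ are dominant (the pair $(\lambda^*,\lambda)$ is a dominant weight for $\mathfrak{g} \times \mathfrak{g}$), Theorem \ref{d2equivariantbb} applied to the group $G \times G$ acting on the double flag variety $X \times X$ guarantees that $\mathcal{M} \in \Coh(\mathcal{D}_{\lambda^*,\lambda}, G \times G)$; restricting the equivariance structure to the diagonal $G \subset G \times G$ (the setup of Section \ref{d2sectionBoBr}), we have $\mathcal{M} \in \Coh(\mathcal{D}_{\lambda^*,\lambda}, G)$. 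Moreover, by Corollary \ref{d2glolocisomor}, we have the key identification
\[
\Gamma(X \times X, \mathcal{M}) \cong M.
\]

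Next I would unpack the hypothesis $\mathscr{F}(M) = 0$: by definition this says $\Gamma(X, i_l^{\#} \mathcal{M}) = 0$. Since $\mathcal{M} \in \Coh(\mathcal{D}_{\lambda^*,\lambda}, G)$, Corollary \ref{d2zeroglobalsectionsil} applies directly and yields
\[
\Gamma(X \times X, \mathcal{M}) = 0.
\]
Combining this with the isomorphism above gives $M \cong 0$, as required.

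There is really no hard step here: the whole point of the earlier sections is that Corollary \ref{d2zeroglobalsectionsil} reduces the vanishing on $X \times X$ to the vanishing of the pullback's global sections on $X$, and then the (equivariant) Beilinson-Bernstein equivalence on $X \times X$ for the dominant weight $(\lambda^*,\lambda)$ lets us pass from the sheaf $\mathcal{M}$ back to the module $M$. The only point that deserves a line of verification is that $(\lambda^*,\lambda)$ is dominant on $\mathfrak{h} \times \mathfrak{h}$, which is immediate from $w_0$ sending $\bm{\phi}^+$ to $-\bm{\phi}^+$, so that $\lambda$ dominant implies $\lambda^*$ dominant.
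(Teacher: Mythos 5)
Your proposal matches the paper's own proof step for step: set $\mathcal{M} = \Loc^{\lambda^*,\lambda}(M)$, use Corollary \ref{d2glolocisomor} to identify $\Gamma(X \times X,\mathcal{M})$ with $M$, unpack $\mathscr{F}(M)=0$ as $\Gamma(X,i_l^{\#}\mathcal{M})=0$, and apply Corollary \ref{d2zeroglobalsectionsil}. The only difference is that you spell out the (correct and necessary) hypotheses that $(\lambda^*,\lambda)$ is dominant and that $\mathcal{M}\in\Coh(\mathcal{D}_{\lambda^*,\lambda},G)$, which the paper leaves implicit; otherwise the argument is identical.
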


\begin{proof}

Let $\mathcal{M}=\Loc^{\lambda^*,\lambda}(M)$. Then, by assumption, we have that $\Gamma(X,i_l^{\#} \mathcal{M})=0$. Applying Corollary \ref{d2zeroglobalsectionsil}, we obtain $\Gamma(X \times X, \mathcal{M})=0$, so by Corollary \ref{d2glolocisomor}, $M=0$.
\end{proof}

We now specialise to two sided ideals in $U(\mathfrak{g})^{\lambda}$; recall that a two-sided ideal $I$ can be viewed as a module over $U(\mathfrak{g})^{\lambda^*} \otimes U(\mathfrak{g})^{\lambda}$ via $(x \otimes y).i=yi\tau(x)$ for $x \in U(\mathfrak{g})^{\lambda^*},y \in U(\mathfrak{g})^{\lambda}$ and $i \in I$. Further, by Corollary \ref{d2lambdaequivariancetwosided}, $I \in \Mod_{\fg}(U(\mathfrak{g} \times \mathfrak{g})^{\lambda^*,\lambda},G)$, so $\mathscr{F}(I)$ is well-defined. As a corollary, we obtain immediately:

\begin{corollary}
\label{d2Fpreservesstrictinclusionofideals}

Let $I,J$ be two-sided ideals in $U(\mathfrak{g})^{\lambda}$ such that $I \subseteq J$. Assume that $\mathscr{F}(I) \cong \mathscr{F}(J)$. Then $I=J$.

\end{corollary}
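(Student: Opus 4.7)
The plan is to deduce the corollary by a standard three-step argument: build a short exact sequence involving $I$, $J$, and $J/I$; apply the exact functor $\mathscr{F}$; and invoke the "almost injectivity on objects" property of $\mathscr{F}$ to conclude that the quotient vanishes. Concretely, start from the inclusion $I \subseteq J$ and form the short exact sequence
\[ 0 \to I \to J \to J/I \to 0 \]
inside the category $\Mod_{\fg}(U(\mathfrak{g} \times \mathfrak{g})^{\lambda^*,\lambda},G)$. Before using this, one should verify that $J/I$ is indeed an object of this category: finite generation is inherited from $J$ (since the ambient ring $U(\mathfrak{g})^{\lambda}$ is Noetherian, hence so is $U(\mathfrak{g} \times \mathfrak{g})^{\lambda^*,\lambda}$), and the $G$-equivariance on $J/I$ descends from the equivariance of $J$ (which is valid by Corollary \ref{d2lambdaequivariancetwosided}, combined with the fact that $I$ is a $G$-stable submodule of $J$, since the Adjoint action preserves both ideals).

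Next, apply $\mathscr{F}$ to this short exact sequence. By Proposition \ref{d2functorFisexact} the functor is exact, so one obtains
\[ 0 \to \mathscr{F}(I) \to \mathscr{F}(J) \to \mathscr{F}(J/I) \to 0. \]
The assumption $\mathscr{F}(I) \cong \mathscr{F}(J)$ is to be read (consistently with the intended application to Proposition \ref{injectionbetweenlatticeoftwosidedidealsandsubmodulesofverma}) as the statement that the canonical map $\mathscr{F}(I) \hookrightarrow \mathscr{F}(J)$ induced by the inclusion $I \hookrightarrow J$ is itself an isomorphism; under the identifications $\mathscr{F}(I) \cong T(\lambda) \otimes_{U(\mathfrak{g})^{\lambda}} I$ and $\mathscr{F}(J) \cong T(\lambda) \otimes_{U(\mathfrak{g})^{\lambda}} J$, this corresponds to the equality of images $I \cdot T(\lambda) = J \cdot T(\lambda)$ inside the Verma module. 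Granted this, the third term of the exact sequence is forced to vanish, i.e. $\mathscr{F}(J/I) = 0$.

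Finally, apply Lemma \ref{d2Fisalmostinjectiveonobjects} to the finitely generated equivariant module $J/I$: vanishing of $\mathscr{F}(J/I)$ forces $J/I = 0$, hence $I = J$.

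The main technical friction is clarifying what the isomorphism $\mathscr{F}(I) \cong \mathscr{F}(J)$ means (an abstract isomorphism would not be enough without additional finite length input); reading it via the inclusion map is both natural and sufficient, and is how the corollary gets used downstream. All the heavy lifting has been packaged into Proposition \ref{d2functorFisexact} (exactness of $\mathscr{F}$) and Lemma \ref{d2Fisalmostinjectiveonobjects} (faithfulness on objects), so the corollary itself amounts to a clean diagram chase.
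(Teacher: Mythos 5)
Your proof is essentially identical to the paper's: the same short exact sequence $0 \to I \to J \to J/I \to 0$, exactness of $\mathscr{F}$ from Proposition \ref{d2functorFisexact}, and Lemma \ref{d2Fisalmostinjectiveonobjects} to conclude $J/I=0$. You have in fact been slightly more careful than the paper, which passes from the abstract hypothesis $\mathscr{F}(I) \cong \mathscr{F}(J)$ to $\mathscr{F}(J/I)=0$ without remarking that one should read the isomorphism as being induced by the inclusion (as it is in the downstream application); your reading is the correct one.
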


\begin{proof}

Consider the short exact sequence:

$$0 \to I \to J \to J/I \to 0.$$

By Proposition \ref{d2functorFisexact}, $\mathscr{F}$ is an exact functor, so we obtain an exact sequence

$$0 \to \mathscr{F}(I) \to \mathscr{F}(J) \to \mathscr{F}(J/I) \to 0.$$ 

By assumption, we have $\mathscr{F}(I) \cong \mathscr{F}(J)$, so $\mathscr{F}(J/I)=0$. The claim follows by Lemma \ref{d2Fisalmostinjectiveonobjects}.
\end{proof}

\begin{lemma}
\label{d2computeFonideals}
Let $I$ a two-sided ideal in $U(\mathfrak{g})^{\lambda}$. Then as $U(\mathfrak{g})^{\lambda^*}$-modules we have:
  
            $$\mathscr{F}(I) \cong  T(\lambda)I.$$

\end{lemma}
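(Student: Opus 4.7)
The plan is to realise $I$ as the kernel of the quotient map $U(\mathfrak{g})^{\lambda} \twoheadrightarrow U(\mathfrak{g})^{\lambda}/I$ in the category $\Mod_{\fg}(U(\mathfrak{g} \times \mathfrak{g})^{\lambda^*,\lambda},G)$, apply the exact functor $\mathscr{F}$, and identify the resulting kernel with $T(\lambda) I$.

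First I would verify that the short exact sequence
\[
0 \to I \to U(\mathfrak{g})^{\lambda} \to U(\mathfrak{g})^{\lambda}/I \to 0
\]
lies in $\Mod_{\fg}(U(\mathfrak{g} \times \mathfrak{g})^{\lambda^*,\lambda},G)$. The two-sided ideal $I$ is equivariant by Corollary \ref{d2lambdaequivariancetwosided}; the same argument applied to $U(\mathfrak{g})^\lambda$ itself (as a two-sided ideal of itself) gives equivariance of $U(\mathfrak{g})^\lambda$, and $U(\mathfrak{g})^\lambda/I$ inherits equivariance as a quotient. Each module is finitely generated as a bimodule, being cyclic on $1$, $1$, and $1+I$ respectively.

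Next I would apply $\mathscr{F}$ and invoke Proposition \ref{d2functorFisexact} twice: exactness yields
\[
0 \to \mathscr{F}(I) \to \mathscr{F}(U(\mathfrak{g})^{\lambda}) \to \mathscr{F}(U(\mathfrak{g})^{\lambda}/I) \to 0,
\]
while the identification $\mathscr{F}(M) \cong T(\lambda) \uset{U(\mathfrak{g})^{\lambda}} M$ gives $\mathscr{F}(U(\mathfrak{g})^{\lambda}) \cong T(\lambda)$ and $\mathscr{F}(U(\mathfrak{g})^{\lambda}/I) \cong T(\lambda)/T(\lambda)I$, since tensoring a right module with a cyclic left quotient of the ring is a standard right quotient. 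The induced middle map $T(\lambda) \to T(\lambda)/T(\lambda) I$ is the quotient projection (it sends $t \otimes 1 \mapsto t \otimes \overline{1}$ under the natural identifications), so its kernel is $T(\lambda) I$. Hence $\mathscr{F}(I) \cong T(\lambda) I$ as $U(\mathfrak{g})^{\lambda^*}$-modules, where the $U(\mathfrak{g})^{\lambda^*}$-structure on the right-hand side is inherited from that on $T(\lambda)$ via the anti-involution $\tau$.

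There is no real obstacle here beyond bookkeeping; the only subtlety is tracking the left/right module structures carefully (recall $T(\mu) = R_\mu \uset{U(\mathfrak{b}^-)} U(\mathfrak{g})$ is a \emph{right} $U(\mathfrak{g})^\lambda$-module, while $I$ acquires its left $U(\mathfrak{g})^\lambda$-structure through $(1 \otimes y)\cdot i = yi$ and its left $U(\mathfrak{g})^{\lambda^*}$-structure through $(x\otimes 1)\cdot i = ix^\tau$), so that the natural map $T(\lambda) \uset{U(\mathfrak{g})^\lambda} I \to T(\lambda)$, $t \otimes i \mapsto t\cdot i$, lands in $T(\lambda) I$ and intertwines the $U(\mathfrak{g})^{\lambda^*}$-actions.
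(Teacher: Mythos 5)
Your proposal is correct and follows essentially the same route as the paper: apply the exact functor $\mathscr{F}$ to the short exact sequence $0 \to I \to U(\mathfrak{g})^{\lambda} \to U(\mathfrak{g})^{\lambda}/I \to 0$, identify the middle term with $T(\lambda)$ and the right term with $T(\lambda)/T(\lambda)I$, and read off the kernel. The paper packages the last step as a commutative diagram plus the Five Lemma (showing the natural surjection $T(\lambda) \uset{U(\mathfrak{g})^{\lambda}} I \to T(\lambda)I$ is injective), whereas you compute the kernel of the quotient map directly — a cosmetic difference; your extra checks on equivariance and finite generation of the terms of the sequence are a welcome addition.
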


We should remark that $U(\mathfrak{g})^{\lambda^*}$ acts on $T(\lambda)I$ via $x.(ti)=t(x.i)=ti\tau(x)$ for $x \in U(\mathfrak{g})^{\lambda^*}, t \in T(\lambda), i \in I$ and the isomorphism is natural in $I$.
\begin{proof}
 Consider the following short exact sequence:

$$ 0 \to I \to U(\mathfrak{g})^{\lambda} \to U(\mathfrak{g})^{\lambda}/I \to 0.$$

Applying the exact functor $\mathscr{F}$ and using Proposition \ref{d2functorFisexact}, we obtain an exact sequence:

$$0 \to  T(\lambda) \uset{U(\mathfrak{g})^{\lambda}} I \to T(\lambda) \to T( \lambda) \uset{U(\mathfrak{g})^{\lambda}} U(\mathfrak{g})^{\lambda}/I \to 0.  $$

This exact sequence fits into the commutative diagram:

\begin{center}
\begin{tikzcd}
&0 \arrow[r] &T(\lambda) \uset{U(\mathfrak{g})^{\lambda}} I \arrow[d] \arrow[r] & T(\lambda)   \arrow[r] \arrow[d]  &T(\lambda) \uset{U(\mathfrak{g})^{\lambda}} U(\mathfrak{g})^{\lambda}/I \arrow[r] \arrow[d] &0 \\
&0 \arrow[r] & T(\lambda)I   \arrow[r] & T(\lambda)  \arrow[r] & T( \lambda) \uset{U(\mathfrak{g})^{\lambda}} U(\mathfrak{g})^{\lambda}/I \arrow[r] &0.
\end{tikzcd}
\end{center}

It is easy to see that the first map is a surjection and the second and the third maps are isomorphisms. Furthermore, by the Five Lemma, the first map is injective, so indeed we get $\mathscr{F}(I) \cong T(\lambda)I $. 
\end{proof}

 Let $\sigma: \mathfrak{g} \to \mathfrak{g}$ denote the Chevalley involution that swaps $\mathfrak{n}^+$ and $\mathfrak{n}^{-}$ and fixes $\mathfrak{h}$ . Then $\sigma$ extends to an anti-automorphism of $U(\mathfrak{g})$ that fixes the center by \cite[Exercise 1.10]{Hu1}, so it descends to an anti-automorphism $\sigma:U(\mathfrak{g})^{\lambda} \to U(\mathfrak{g})^{\lambda}$. Recall that by construction we have $T(\lambda)=K_{\lambda} \uset{U(\mathfrak{b}^{-})} U(\mathfrak{g})$.
 
\begin{lemma}
\label{d2TlambdaMlambdaiso}

The map

$$\phi:T(\lambda) \to M(\lambda), \qquad \phi(k \otimes x)=\sigma(x) \otimes k, \quad k \in K_{\lambda}, x \in U(\mathfrak{g})$$

is a $K$-linear isomorphism of vector spaces satisfying $\phi(tu)=\sigma(u)\phi(t)$ for all $u \in U(\mathfrak{g})$ and $t \in T(\lambda)$.

\end{lemma}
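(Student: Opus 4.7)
The strategy has three steps: check that $\phi$ descends to the tensor product, verify the intertwining identity by direct computation, and deduce bijectivity from PBW together with the fact that $\sigma$ restricts to a linear isomorphism $U(\mathfrak{n}^+) \to U(\mathfrak{n}^-)$.

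First, I will verify that $\phi$ is well-defined on $T(\lambda) = K_\lambda \otimes_{U(\mathfrak{b}^-)} U(\mathfrak{g})$. The key observation is that $\sigma$ fixes $\mathfrak{h}$ pointwise and sends $\mathfrak{n}^-$ into $\mathfrak{n}^+$, so it restricts to an anti-isomorphism $U(\mathfrak{b}^-) \to U(\mathfrak{b})$. Moreover, the characters $\lambda$ used to define $K_\lambda$ on both sides kill $\mathfrak{n}^-$ (on the $T$-side) and $\mathfrak{n}^+$ (on the $M$-side), while agreeing on $\mathfrak{h}$, which gives $\lambda(\sigma(b)) = \lambda(b)$ for all $b \in U(\mathfrak{b}^-)$. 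Given $b \in U(\mathfrak{b}^-)$, $k \in K_\lambda$ and $x \in U(\mathfrak{g})$, one computes
\[
\phi(k \cdot b \otimes x) = \lambda(b)\sigma(x) \otimes k \quad \text{and} \quad \phi(k \otimes bx) = \sigma(x)\sigma(b) \otimes k = \sigma(x) \otimes \sigma(b) \cdot k = \lambda(\sigma(b)) \sigma(x) \otimes k,
\]
and the two agree by the character identity above. Thus $\phi$ is a well-defined $K$-linear map.

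Next, the intertwining property $\phi(tu) = \sigma(u)\phi(t)$ follows on pure tensors: for $t = k \otimes x$ and $u \in U(\mathfrak{g})$,
\[
\phi((k \otimes x) u) = \phi(k \otimes xu) = \sigma(xu) \otimes k = \sigma(u)\sigma(x) \otimes k = \sigma(u) \phi(t),
\]
using that $\sigma$ is an anti-automorphism. Linearity extends this to all of $T(\lambda)$.

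Finally, for bijectivity I invoke the PBW decomposition $U(\mathfrak{g}) \cong U(\mathfrak{b}^-) \otimes_K U(\mathfrak{n}^+)$ (as a left $U(\mathfrak{b}^-)$-module) and $U(\mathfrak{g}) \cong U(\mathfrak{n}^-) \otimes_K U(\mathfrak{b})$ (as a right $U(\mathfrak{b})$-module), which yield
\[
T(\lambda) \cong K_\lambda \otimes_K U(\mathfrak{n}^+) \quad \text{and} \quad M(\lambda) \cong U(\mathfrak{n}^-) \otimes_K K_\lambda.
\]
Under these identifications, a representative element of $T(\lambda)$ has the form $k \otimes y$ with $y \in U(\mathfrak{n}^+)$, and $\phi$ sends it to $\sigma(y) \otimes k$ with $\sigma(y) \in U(\mathfrak{n}^-)$. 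Since $\sigma$ restricts to a $K$-linear isomorphism $U(\mathfrak{n}^+) \to U(\mathfrak{n}^-)$ (its square fixes $\mathfrak{n}^\pm$ pointwise up to an anti-automorphism sign, or directly because $\sigma$ is an involutive anti-automorphism), the induced map $K_\lambda \otimes U(\mathfrak{n}^+) \to U(\mathfrak{n}^-) \otimes K_\lambda$ is a bijection, hence so is $\phi$. The only subtlety worth double-checking is the compatibility of characters used in the well-definedness step, which is the sole place one uses that $\sigma$ fixes $\mathfrak{h}$ and swaps $\mathfrak{n}^\pm$ — and that is not really an obstacle.
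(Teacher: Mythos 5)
Your proof is correct and takes essentially the same route as the paper's: the intertwining computation is identical, and you simply fill in the two steps the paper leaves as ``easy to check'' --- well-definedness via the $U(\mathfrak{b}^-)$-balanced/character compatibility $\lambda(\sigma(b))=\lambda(b)$, and bijectivity via the PBW identifications $T(\lambda)\cong K_\lambda\otimes_K U(\mathfrak{n}^+)$ and $M(\lambda)\cong U(\mathfrak{n}^-)\otimes_K K_\lambda$ together with the fact that $\sigma$ restricts to a linear isomorphism $U(\mathfrak{n}^+)\to U(\mathfrak{n}^-)$.
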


\begin{proof}

Since $\sigma$ swaps $\mathfrak{n}^+$ and $\mathfrak{n}^{-}$ and fixes $\mathfrak{h}$, it is easy to check that the map $\phi$ is well-defined. Furthermore, as $\sigma$ is a $K$-linear anti-automorphism, $\phi$ is a $K$-linear isomorphism of vector spaces.

Finally, we have for $k \otimes x \in T(\lambda)$ and $u \in U(\mathfrak{g})$:

\begin{equation}
\begin{split}
\phi((k \otimes x)u)&=\phi(k \otimes xu)\\
                    &=\sigma(xu) \otimes k\\
                    &=\sigma(u)\sigma(x) \otimes k\\
                    &=\sigma(u)\phi(k \otimes x). \qedhere
\end{split}
\end{equation}

\end{proof} 

In particular, we obtain that if $I$ is a two-sided ideal in $U(\mathfrak{g})^{\lambda}$, $\phi(T(\lambda)I)=\sigma(I)M(\lambda).$

\begin{corollary}
\label{d2annihilatorofidealverma}
Let $I$ a two-sided ideal in $U(\mathfrak{g})^{\lambda}$. Then 
              $$I=\Ann(M(\lambda)/IM(\lambda)).$$

\end{corollary}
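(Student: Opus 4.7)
The plan is to transport the problem through the $K$-linear bijection $\phi: T(\lambda) \to M(\lambda)$ from Lemma \ref{d2TlambdaMlambdaiso} and then apply Corollary \ref{d2Fpreservesstrictinclusionofideals}. Set $K := \Ann(M(\lambda)/IM(\lambda))$; the inclusion $I \subseteq K$ is immediate from $I M(\lambda) \subseteq I M(\lambda)$, so the real task is to prove the reverse inclusion.

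To that end, I would introduce the auxiliary two-sided ideal $J := \{u \in U(\mathfrak{g})^{\lambda} : T(\lambda) u \subseteq T(\lambda) I\}$, i.e.\ the annihilator in $U(\mathfrak{g})^{\lambda}$ of the right $U(\mathfrak{g})^{\lambda}$-module $T(\lambda)/T(\lambda) I$. A direct check shows that $J$ is indeed two-sided (because $I$ is) and contains $I$, and by construction $T(\lambda) J = T(\lambda) I$. By the naturality asserted in Lemma \ref{d2computeFonideals}, the inclusion $I \hookrightarrow J$ induces on $\mathscr{F}$-values the inclusion $T(\lambda) I \hookrightarrow T(\lambda) J$, which is an equality; consequently the natural map $\mathscr{F}(I) \to \mathscr{F}(J)$ is an isomorphism, and Corollary \ref{d2Fpreservesstrictinclusionofideals} gives $J = I$.

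It remains to identify $J$ in terms of an annihilator in $M(\lambda)$. Using the intertwining rule $\phi(tu) = \sigma(u)\phi(t)$ together with the already noted $\phi(T(\lambda) I) = \sigma(I) M(\lambda)$, one sees that $T(\lambda) u \subseteq T(\lambda) I$ if and only if $\sigma(u) M(\lambda) \subseteq \sigma(I) M(\lambda)$, equivalently $\sigma(u) \in \Ann(M(\lambda)/\sigma(I) M(\lambda))$. Therefore $J = \sigma\bigl(\Ann(M(\lambda)/\sigma(I) M(\lambda))\bigr)$. Combining with $I = J$ and applying $\sigma$ yields $\sigma(I) = \Ann(M(\lambda)/\sigma(I) M(\lambda))$; since $\sigma$ is an involution on the lattice of two-sided ideals of $U(\mathfrak{g})^{\lambda}$, substituting $\sigma(I)$ for $I$ (noting $\sigma^2 = \mathrm{id}$) produces $I = \Ann(M(\lambda)/I M(\lambda))$, as required.

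The main obstacle is the left/right asymmetry: Lemma \ref{d2computeFonideals} naturally delivers the right-$U(\mathfrak{g})^{\lambda}$-submodule $T(\lambda) I \subseteq T(\lambda)$, whereas $K$ is defined via the left action on $M(\lambda)$. The Chevalley involution $\sigma$ is the device that reconciles the two sides, and it forces us to apply Corollary \ref{d2Fpreservesstrictinclusionofideals} to the right annihilator $J$ rather than directly to $K$, because the crucial equality $T(\lambda) J = T(\lambda) I$ is built into the definition of $J$ but is not available for $K$.
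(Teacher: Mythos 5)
Your proposal is correct and follows essentially the same route as the paper: both arguments rest on the exactness of $\mathscr{F}$, the identification $\mathscr{F}(I)\cong T(\lambda)I$ from Lemma \ref{d2computeFonideals}, the $\sigma$-intertwining bijection $\phi$ of Lemma \ref{d2TlambdaMlambdaiso}, and Corollary \ref{d2Fpreservesstrictinclusionofideals}. The only difference is organisational --- the paper applies $\mathscr{F}$ to $\sigma(I)\subseteq\sigma(\Ann(M(\lambda)/IM(\lambda)))$ directly, whereas you work with the auxiliary ideal $\{u : T(\lambda)u\subseteq T(\lambda)I\}$ on the $T(\lambda)$ side and conjugate by the involution $\sigma$ only at the end.
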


\begin{proof}

Let $J:=\Ann(M(\lambda)/IM(\lambda))$. Since $ I(M(\lambda)/IM(\lambda))=0,$ we obtain that $I \subseteq J$, so $\sigma(I) \subseteq \sigma(J)$. We remark that since $\sigma$ is anti-automorphism of $U(\mathfrak{g})^{\lambda}$, $\sigma(I)$ and $\sigma(J)$ are two-sided ideals in $U(\mathfrak{g})^{\lambda}$. Since $\mathscr{F}$ preserves injections, we obtain $\mathscr{F}(\sigma(I)) \subseteq \mathscr{F}(\sigma(J))$.

Consider the following  diagram:

\begin{center}
\begin{tikzcd}
&\mathscr{F}(\sigma(I)) \arrow[r] \arrow[d] & \mathscr{F}(\sigma(J)) \arrow[d]\\
&T(\lambda)\sigma(I) \arrow[r] \arrow[d,"\phi"] & T(\lambda) \sigma(J) \arrow[d,"\phi"]\\
&IM(\lambda) \arrow[r] &JM(\lambda)

\end{tikzcd}
\end{center}

By construction, the bottom diagram commutes and by the definition of $J$, we have $JM(\lambda)=IM(\lambda)$. Using Lemma \ref{d2TlambdaMlambdaiso}, we get $T(\lambda)\sigma(I)=T(\lambda)\sigma(J)$. Furthermore, since the isomorphism in Lemma \ref{d2computeFonideals} is natural on ideals, we obtain that the top diagram also commutes. Therefore, $\mathscr{F}(\sigma(I)) \cong \mathscr{F}(\sigma(J))$. The claim now follows by Corollary \ref{d2Fpreservesstrictinclusionofideals} since $\sigma$ is an anti-automorphism.
\end{proof}

We now have all the ingredients to prove Duflo's Theorem; the idea of the proof was first given by Dixmier in \cite[Problem 30]{Dix}.
\begin{proof}[Proof of Theorem \ref{d2twistedduflotheorem}]

Let $I$ be a prime ideal in $U(\mathfrak{g})^{\lambda}$. Then, by Corollary \ref{d2annihilatorofidealverma}, we have $I=\Ann(M(\lambda)/IM(\lambda))$. Since $M(\lambda)/IM(\lambda)$ is quotient of $M(\lambda)$, we have by Proposition \ref{d2vermammodulesproperties} that there exists a finite composition series 

 $$0=M_0 \subset M_1 \subset M_2 \subset \ldots \subset M_n=M(\lambda)/IM(\lambda).$$

Let $I_i:=\Ann(M_i/M_{i-1})$ for $1 \leq i \leq n$. Then

\begin{equation}
\begin{split}
   I_1I_2 \ldots I_n(M(\lambda)/IM(\lambda))&= I_1I_2 \ldots I_n M_n \\
                                            &=I_1I_2 \ldots I_{n-1} M_{n-1} \\
                                            &= \ldots \\
                                            &=0,
\end{split}
\end{equation}
so $I_1I_2 \ldots I_n \subset I$. Since the ideal $I$ is prime, there exists $1 \leq j \leq n$ such that $I_j \subset I$. On the other hand, by construction $I \subset I_j$. Thus, we obtain that $I=I_j=\Ann(M_j/M_{j-1})$. The claim follows since by Proposition \ref{d2vermammodulesproperties}, $M_j/M_{j-1} \cong L(\mu)$ for some $\mu \in\mathfrak{h}^*.$

Since any primitive ideal is in particular prime, we obtain that any primitive ideal in $U(\mathfrak{g})^{\lambda}$ is the annihilator of some $L(\mu)$.
\end{proof}

In the case when $K$ is algebraically closed, for example, $K=\mathbb{C}$, we may characterise all the primitive ideals in $U(\mathfrak{g})$.

\begin{corollary}
Assume that $K$ is an algebraically closed field of characteristic $0$ and let $I$ be a primitive ideal in $U(\mathfrak{g})$. Then 
 
            $$I=\Ann(L(\mu)) \text{ for some } \mu:\mathfrak{h} \to K.$$

\end{corollary}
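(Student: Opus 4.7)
The strategy is to reduce immediately to Theorem \ref{d2twistedduflotheorem} by showing that every primitive ideal of $U(\mathfrak{g})$ has $K$-rational infinitesimal central character, which then lets us pass to a quotient $U(\mathfrak{g})^{\lambda}$ for some dominant $\lambda$.

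Write $I=\Ann_{U(\mathfrak{g})}(M)$ for a simple $U(\mathfrak{g})$-module $M$. The first step is to show that $Z(\mathfrak{g})$ acts on $M$ via a $K$-valued character. Since $U(\mathfrak{g})$ is countably generated as a $K$-algebra and $M$ is cyclic, $\dim_K M$ is at most countable; combined with $K$ algebraically closed of characteristic zero, Quillen's lemma (or Dixmier's version of Schur's lemma) gives $\End_{U(\mathfrak{g})}(M)=K$, so any central element acts as a scalar and we obtain $\chi \colon Z(\mathfrak{g})\to K$ with $Z(\mathfrak{g})$ acting on $M$ through $\chi$.

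Next, by the Harish-Chandra isomorphism together with $K$ algebraically closed, the $K$-point $\chi$ of $\Spec Z(\mathfrak{g})$ is of the form $\chi_\lambda$ for some $\lambda\in\mathfrak{h}^*$. Since every $W$-orbit under the shifted dot action contains a dominant weight and $\chi_\lambda=\chi_{w\cdot\lambda}$, we may replace $\lambda$ by a dominant weight without changing the character. Then $\ker(\chi_\lambda)\subset\Ann_{Z(\mathfrak{g})}(M)$, which gives $\ker(\chi_\lambda)U(\mathfrak{g})\subset I$, so $I$ descends to a two-sided ideal $\bar I$ of $U(\mathfrak{g})^{\lambda}$. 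Moreover $\bar I=\Ann_{U(\mathfrak{g})^{\lambda}}(M)$ and $M$ remains simple as a $U(\mathfrak{g})^{\lambda}$-module, so $\bar I$ is primitive in $U(\mathfrak{g})^{\lambda}$.

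Finally, apply Theorem \ref{d2twistedduflotheorem} to the dominant weight $\lambda$ and the primitive ideal $\bar I\subset U(\mathfrak{g})^{\lambda}$: there exists $\mu\in\mathfrak{h}^*$ with $\bar I=\Ann_{U(\mathfrak{g})^{\lambda}}(L(\mu))$. Lifting back along the projection $U(\mathfrak{g})\twoheadrightarrow U(\mathfrak{g})^{\lambda}$, and using that $L(\mu)$ is simple over $U(\mathfrak{g})$ as well, we conclude $I=\Ann_{U(\mathfrak{g})}(L(\mu))$, as required. The only non-formal step is the existence of a central character on $M$; for $K=\mathbb{C}$ this is classical, and in general it is the Quillen/Dixmier countable-dimension argument above, which is the main (and only real) obstacle in the proof.
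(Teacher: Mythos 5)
Your proposal is correct and follows essentially the same route as the paper: the paper's one-line proof simply cites Dixmier's Theorems 8.4.3--8.4.4 for exactly the facts you spell out (every primitive ideal has a $K$-rational central character via Quillen's lemma, which by Harish--Chandra is $\chi_\lambda$ for some $\lambda$ that may be taken dominant in its $W\cdot$-orbit), and then applies Theorem \ref{d2twistedduflotheorem}. The only small caution is that the ``countable-dimension'' Schur argument needs $K$ uncountable, so for a general algebraically closed field one should lean on Quillen's lemma as you indicate.
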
 

\begin{proof}
This follows by combining Proposition \ref{d2vermammodulesproperties}, Theorem \ref{d2twistedduflotheorem} and Theorems \cite[8.4.3-8.4.4 d)]{Dix}.
\end{proof}

\bibliography{duflo2}
\bibliographystyle{plain}

\end{document}